\documentclass[11pt]{amsart}
\usepackage[utf8]{inputenc}
\usepackage{amsmath}
\usepackage{amssymb}
\usepackage{amsthm}
\usepackage{thmtools}
\usepackage{thm-restate}
\usepackage{mathtools}
\usepackage{comment}
\usepackage[most]{tcolorbox}
\usepackage[autopunct=true]{csquotes}
\usepackage{xcolor}

\usepackage[export]{adjustbox}

\calclayout                           

\usepackage[hyperfootnotes=false, hypertexnames=false]{hyperref}
\hypersetup{
    colorlinks=true,
    linkcolor=blue,
    urlcolor=blue,
    citecolor=blue,
}

\usepackage{enumerate}
\usepackage{graphicx}
\usepackage{tikz-cd}
\usepackage[labelfont=bf]{caption}
\usepackage{accents}
\usepackage{color}

\usepackage{todonotes}

\usepackage{tcolorbox}
\usepackage[capitalise]{cleveref}

\usepackage[shortlabels]{enumitem}
\usepackage[normalem]{ulem}

\usetikzlibrary{arrows.meta, bending, positioning}

\DeclareMathOperator{\Hull}{Hull}

\DeclareMathOperator{\Aut}{Aut}

\DeclareMathOperator{\diam}{diam}

\DeclareMathOperator{\tr}{tr}

\DeclareMathOperator{\Span}{Span}

\DeclareMathOperator{\rank}{rk}
\DeclareMathOperator{\Stab}{Stab}
\DeclareMathOperator{\Haus}{H}
\DeclareMathOperator{\CH}{CH}

\DeclareMathOperator{\relint}{rel-int}
\DeclareMathOperator{\rpd}{\Pb(\Rb^d)}
\DeclareMathOperator{\RP}{\Pb(\Rb^d)}

\DeclareMathOperator{\SL}{SL}

\DeclareMathOperator{\SO}{SO}

\DeclareMathOperator{\PSL}{PSL}
\DeclareMathOperator{\PGL}{PGL}
\DeclareMathOperator{\PO}{PO}

\DeclareMathOperator{\Cc}{\mathcal{C}}

\DeclareMathOperator{\Fc}{\mathcal{F}}

\DeclareMathOperator{\Ic}{\mathcal{I}}
\DeclareMathOperator{\Kc}{\mathcal{K}}

\DeclareMathOperator{\Nc}{\mathcal{N}}
\DeclareMathOperator{\Oc}{\mathcal{O}}
\DeclareMathOperator{\Pc}{\mathcal{P}}

\DeclareMathOperator{\Sc}{\mathcal{S}}

\DeclareMathOperator{\Wc}{\mathcal{W}}
\DeclareMathOperator{\Vc}{\mathcal{V}}

\DeclareMathOperator{\Bb}{\mathbb{B}}

\DeclareMathOperator{\Hb}{\mathbb{H}}

\DeclareMathOperator{\Nb}{\mathbb{N}}
\DeclareMathOperator{\Pb}{\mathbb{P}}
\DeclareMathOperator{\Rb}{\mathbb{R}}

\DeclareMathOperator{\Zb}{\mathbb{Z}}

\DeclareMathOperator{\asrk}{asrk}

\newcommand{\abs}[1]{\left|#1\right|}

\newcommand{\norm}[1]{\left\|#1\right\|}
\newcommand{\wt}[1]{\widetilde{#1}}
\newcommand{\wh}[1]{\widehat{#1}}

\DeclareMathOperator{\com}{CoM}

\DeclareMathOperator{\bdry}{\partial \Omega}
\DeclareMathOperator{\hil}{d_{\Omega}}

\newcommand{\Isom}{\mathrm{Isom}}

\newcommand{\set}[1]{\left\{#1 \right\}}
\newcommand{\seq}[1]{\left(#1 \right)}

\renewcommand{\emptyset}{\varnothing}

\newcommand{\ctrlf}{\Lambda}
\DeclareMathOperator{\vrtx}{Vert}
\DeclareMathOperator{\rF}{r_{\rm PES}}
\DeclareMathOperator{\rEuc}{r_{\rm Euc}}
\DeclareMathOperator{\rk}{rk_{\Rb}}
\DeclareMathOperator{\omegalim}{\omega\text{-}\lim}

 \definecolor{orange-red}{rgb}{1.0, 0.27, 0.0}

\newcommand{\Seq}[1]{$(#1)_{n\in\mathbb{N}}$}
\newcommand{\wrt}{with respect to}
\newcommand{\pcd}{properly convex domain}
\newcommand{\lht}{local Hausdorff topology}

\newcommand{\omegaGen}[1]{\Omega^{(#1)}_{(\delta,D)}}

\newcommand{\ngh}[2]{\mathcal{N}_{#2}\left(#1\right)}
\newcommand{\clngh}[2]{\mathcal{N}_{\le#2}\left(#1\right)}

\numberwithin{equation}{section}

\theoremstyle{plain}
\newtheorem{proposition}{Proposition}[section]
\newtheorem{theorem}[proposition]{Theorem}
\newtheorem{lemma}[proposition]{Lemma}
\newtheorem{claim}{Claim}[proposition]
\newtheorem{corollary}[proposition]{Corollary}
\newtheorem{fact}[proposition]{Fact}

\newtheorem{question}[proposition]{Question}
\newtheorem{example}[proposition]{Example}
\newtheorem{definition}[proposition]{Definition}
\newtheorem{notation}[proposition]{Notation}
\newtheorem*{notation*}{Notation}
\newtheorem{observation}[proposition]{Observation}

\theoremstyle{remark}
\newtheorem{remark}[proposition]{Remark}
\newtheorem*{remark*}{Remark}

\usepackage[style=alphabetic,maxbibnames=20,safeinputenc,backend=biber,backref=true]{biblatex}
\title[Coarse higher medians and convex projective geometry]{Coarse higher medians, symmetric spaces, and convex projective geometry}
\author{Mitul Islam and Grazia Rago}
\date{}
\address{School of Mathematics, Tata Institute of Fundamental Research, Mumbai 400005}
\email{mitul@math.tifr.res.in}
\address{Dipartimento di Matematica, Universit\`{a} di Bologna, Bologna 40126}
\email{grazia.rago2@unibo.it}

\begin{document}

\begin{abstract} We introduce a notion of coarse $r$-median spaces that is a higher-rank analog of Bowditch's coarse median spaces. Our notion is stable under quasi-isometries and recovers Bowditch's coarse medians when $r$ equals 1. We prove that several families of higher rank-symmetric spaces admit coarse $r$-medians with $r$ being the rank of the symmetric space. In particular, our list includes plenty of examples that are known to not admit Bowditch's coarse medians. Our main tools come from convex projective geometry, and we prove the existence of coarse higher medians on all divisible as well as quasi-homogeneous properly convex domains.  
\end{abstract}

\maketitle


\section{Introduction}

Starting with the pioneering work of Gromov \cite{Gromov_1987}, hyperbolicity has played a central role in coarse geometry and geometric group theory. Even beyond the strictly hyperbolic setting, the presence of hyperbolic-like directions has driven much of the field over last several decades, through notions such as rank-one or  Morse geodesics \cite{WB1995,HS2014,MC2017}, acylindrical hyperbolicity \cite{DS2016}, and hierarchical hyperbolicity \cite{BHS2017,BHS2019}, to name a few. 

In contrast, many natural spaces of interest—most notably higher-rank symmetric spaces of non-compact type—exhibit a complete absence of hyperbolic-like directions. These spaces are characterized by an abundance of higher-dimensional flats: every geodesic is contained in a totally geodesic copy of $\Rb^k$  for  $k\geq 2$; hence the term \emph{higher-rank}. This feature often eludes the hyperbolicity-inspired frameworks mentioned above. Some higher-rank symmetric spaces like $\Hb^2\times \Hb^2$ (i.e. products of hyperbolic spaces) have been studied using the theory of coarse medians introduced by Bowditch (\cite{BowditchCoarse}, see  \cref{sec:equiv_with_bowditch_median}). 
But most of the higher-rank symmetric spaces, e.g. $\SL_d(\Rb)/\SO(d)$ for any $d\geq 3$, lie beyond the ambit of Bowditch's theory of coarse medians (\cite{Haettel}, see \cref{thm:haettel_no_coarse_1_median}).

In this paper, we introduce a notion of a \emph{coarse $r$-median structure}, which provides a natural higher-rank analogue of Bowditch's theory of coarse median spaces \cite{BowditchCoarse}. Our definition is stable under quasi-isometries and recovers Bowditch's coarse medians when $r=1$, thereby extending a central tool of coarse geometry beyond the setting of rank-one (and their products).

Bowditch's theory of coarse median provides a unified viewpoint on a large class of non-positively curved spaces -- Gromov hyperbolic spaces, mapping class groups of surfaces, finite-dimensional CAT(0) cube complexes, and products of such spaces. Bowditch's medians often have interesting representation-theoretic consequences for a group, e.g. obstruction to  \emph{Property (T)} \cite{CDH2010}. Behrstock-Minsky \cite{BM2011} used such medians to establish \emph{Rapid Decay (RD) property} for mapping class groups. More generally, any (finite rank) coarse median group  has \emph{property (RD)} \cite[Theorem 9.1]{bowditchEmbedding}.  In this context, a famous conjecture is that property (RD) holds for uniform lattices in $\SL_d(\Rb)$ (more generally, any semisimple Lie group) \cite{valette2002introduction}. 
Although \cite{Lafforgue} gave a positive answer for $\SL_3(\Rb)$  (see \cite{IC2016} for the other cases that are known), the conjecture remains open for $\SL_d(\Rb)$ for any $d\geq 4$. Our  quest for ``higher-median" structures on $\SL_d(\Rb)/\SO(d)$ for $d\geq 3$ is partly motivated by such questions.

Our main results show that these coarse $r$-medians arise naturally in a broad range of higher-rank spaces. In particular, we prove that all divisible as well as quasi-homogeneous properly convex domains admit canonical coarse $r$-median structures, yielding a large and geometrically rich class of examples beyond the scope of existing median-type theories (\cref{thm:main_coarse_r_median_on_pcd}). This includes many higher-rank symmetric spaces where classical coarse medians are known not to exist, e.g. $\SL_d(\Rb)/\SO(d)$ for $d\geq 3$ (see Theorems \ref{thm:coarse_median_irred_symm_sp} and \ref{thm:coarse_median_red_symm_sp}). Taken together, these results suggest that coarse $r$-medians capture an essential feature of higher-rank geometry, analogous to the role of coarse medians in the hyperbolic setting. 

Previously, Bader-Lazarovich \cite{BaderLazarovich2023} had investigated CAT(0) 2-complexes and observed properties analogous to 2-medians. They had wondered whether there is a theory of coarse 2-median spaces and if rank-2 symmetric spaces fits into this framework. We answer this in the affirmative. However, the original Bader-Lazarovich construction doesn't exactly fit into our framework, because they lack a certain kind of `weak convexity' (see \cref{sec:bader_lazarovich_examples}).

It is worth remarking that often, our notion of coarse $r$-medians capture finer information than rank of the symmetric space. For example, let  us compare two rank-2 symmetric spaces: the reducible $\Hb^2 \times \Hb^2$ and the irreducible $\SL_3(\Rb)/\SO(3)$. Let $r_0(X)$ denote the minimal $r$ for which $X$ admits a coarse $r$-median structure. Then, we show $r_0(\Hb^2 \times \Hb^2)=1$ while $r_0(\SL_3(\Rb)/\SO(3))=2$ (see \cref{sec:rank_vs_coarse_median}). This  discrepancy between $r_0$ and the rank of the symmetric space indicates that the coarse $r$-median structure can distinguish between the reducible rank-2 space $\Hb^2\times \Hb^2$ and the irreducible rank-2 space $\SL_3(\Rb)/\SO(3)$.

 In another direction, our work may also be interpreted as identifying a certain kind of higher-rank hyperbolicity phenomena. Previously, Kleiner-Lang \cite{kleinerlang} had studied several kinds of higher-rank hyperbolicity phenomena in symmetric spaces. Although both these works are guided by the same philosophy that hyperbolicity should manifest above the dimension of the maximal flats, but  our goals and methods are significantly  different. Our approach is geometric -- we  use the Hilbert metric geometry of symmetric spaces to build certain special convex simplices, and exploit the slimness of such simplices. On the other hand, \cite{kleinerlang} uses analytic methods to develop a general theory of higher rank hyperbolicity akin to Gromov's theory of hyperbolicity, e.g. stability of maximal quasi-flats, Morse lemma, slimness of quasi-simplices, etc. Finally, while our work is spiritually similar to the combinatorial higher-rank hyperbolicity of \cite{JorgensenLang}, the precise relationship between the two frameworks remain unclear and invites  further study (see \cref{sec:jorgensen-lang_hyp}).

\subsection*{Summary of the main results} Our approach to coarse $r$-medians is spiritually closer to Niblo-Wright-Zhang's perspective \cite{NWZ2021} on Bowditch's coarse medians, rather than the original  perspective of Bowditch in \cite{BowditchCoarse}. In this paper, our main tools will come from convex projective geometry -- properly convex domains, Hilbert metrics, and their convexity. 

\subsection{Convex projective geometry and symmetric spaces} 

A properly convex domain is an open set $\Omega \subset \RP$ such that $\overline{\Omega}$ (closure of $\Omega$ in $\RP$) projects to a compact convex domain in some affine chart. All our properly convex domains in the paper will have dimension at least one.
The projective symmetries of a properly convex domain $\Omega$ are encoded in its automorphism group  $\Aut(\Omega):=\{g \in \PGL_d(\Rb): g \Omega = \Omega \}$.
We call a properly convex domain \emph{divisible} if there exists a discrete subgroup $\Gamma < \Aut(\Omega)$ such that $\Omega/\Gamma$ is compact. 
We will work with a natural $\Aut(\Omega)$-invariant distance $\hil$ on $\Omega$ called the Hilbert metric. The Hilbert metrics are often not geodesically  unique   and the metric geometry of $(\Omega,\hil)$ is rarely CAT(0). But a major advantage of the Hilbert metric is that it plays well with the convexity that $\Omega$ inherits from the affine chart. For example, projective lines in $\Omega$ are geodesics for the Hilbert metric.

A prominent example of a properly convex domain is the projective ball $\Bb$ in $\Pb(\Rb^{d})$. In this case, $(\Bb,d_{\Bb})$ is the well-known Beltrami-Klein model of the real hyperbolic space $\Hb^{d-1}$. This is an example of a symmetric domain, i.e. the automorphism group, conjugate to $\PO(1,d-1)$, is reductive and acts transitively on $\Bb$.  Several other families of symmetric spaces, including $\SL_d(\Rb)/\SO(d)$ for $d\geq 3$, can be modeled using symmetric domains; see Sections \ref{sec:intro_appl_to_symm_space} \& \ref{sec:symmetric_domains} and \cref{table:projective_symmetric_domains}.  For $\SL_d(\Rb)/\SO(d)$, this symmetric domain has  a simple description. It is the projectivization of the cone of $d\times d$ real symmetric positive definite matrices. 

Due to the existence of uniform lattices in semi-simple Lie groups, all the symmetric domains are divisible. However, not all divisible convex domains are symmetric. In fact, non-symmetric non-hyperbolic domains exist in every dimension $n \geq 2$ \cite{YB2006,BDL2018,BV2025}. In this paper, we will work with a slight weakening of the divisibility condition. Following Zimmer \cite{Z2018}, we will call a properly convex domain $\Omega$ \emph{quasi-homogeneous} if there is a compact set $K\subset \Omega$ such that $\Aut(\Omega) \cdot K=\Omega$.

\subsection{Slim simplices in properly convex domains}
\label{sec:slim_simplices_hilbert_geom}

A properly convex domain $\Omega$ inherits a natural notion of \emph{convexity} from the affine chart that contains $\overline{\Omega}$. A set $C \subset \Omega$ convex in such an affine chart also satisfies a \emph{convexity property for the Hilbert metric}: for any $x,y \in C$, there exists a Hilbert metric geodesic (namely, the projective line in $\Omega$ joining $x$ and $y$) that is contained in $C$. 
To any non-empty set $X \subset \overline{\Omega}$, we can associate its convex hull 
\begin{align*}
    \CH_\Omega(X)=\bigcap \left\{ D : D \subset \overline{\Omega} \text{ is a convex set containing } X\right\}.
\end{align*}
We will use $\CH_{\Omega}(a_0,\dots,a_k)$ to denote the convex hull of the set $\{a_0,\dots,a_k\} \subset \overline{\Omega}$ (also see \cref{notn:CH}).

We will primarily focus on the study of \emph{compact $k$-simplices $S$ in $\Omega$}. They are convex hulls of $(k+1)$ points  that lie in $\Omega$. These $(k+1)$ points are called the \emph{vertices} of the simplex and we denote the corresponding ordered tuple $(x_0,\dots,x_{k+1})$ by $\vrtx(S)$. Depending on the relative position of these points, the dimension of the simplex would either equal $k$ or be strictly less than $k$. In the former case, we will call the simplex  \emph{non-degenerate} and in the latter case, we will call the simplex \emph{degenerate}. If a simplex $S$ has  $\vrtx(S)=(x_0,\dots,x_k)$, then the $i$-\emph{facet} $F^i(S)$ is the convex hull of 
all but the $i$-th vertex, i.e. $F^i(S):=\CH_\Omega(x_0,\dots,\hat{x}_i,\dots,x_k)$. For more details, see \cref{sec:defn_of_simplices} (also see \cref{fig:simplex_defn_example_1}). 

For compact simplices in $\Omega$, we introduce a slimness condition that generalizes the slim triangle condition from Gromov hyperbolic metric spaces \cite{GH2013}. Below $\ngh{C}{\delta}$ is the tubular $\delta$-neighborhood (in $\hil$) of the set $C$. We adopt the convention that $\ngh{C}{0}=C$.  
\begin{definition}
    \label{defn:slim_simplices}
    Suppose $\Omega$ is a properly convex domain and $\delta \geq 0$. We will say that a compact $k$-simplex $S$ is $\delta$-slim, if for all $0 \leq i \leq k$ we have $$
    F^i(S) \subset \bigcup_{\substack{j=0 \\ j\neq i}}^{k} \Nc_{\delta}(F^j(S)).
    $$
\end{definition}

Our first result is to relate the slimness of simplices in $\Omega$ to certain special convex subsets of $\Omega$ called properly embedded non-degenerate simplex, henceforth abbreviated as PES. A PES is the convex projective analogue of a totally geodesic flat in CAT(0) geometry. The precise definition is as follows: the convex hull $S:=\CH_{\Omega}(x_0,\dots,x_k)$ of $(k+1)$ points $x_0,\dots,x_k  \in \partial \Omega$ is called \emph{$k$-dimensional PES in $\Omega$} provided $S\cap \Omega \neq \emptyset$, all facets are contained in $\bdry$ (i.e. $\cup_{i=0}^kF^i(S) \subset \partial \Omega$), and $S$ is non-degenerate (i.e. $\dim\Pb(\Span S)=k$). For a $k$-dimensional PES $S$, we write $\dim(S)=k$. Observe that since the vertices (and facets) of a PES lie in $\partial \Omega$, a PES \emph{is not} a compact $k$-simplex in $\Omega$. Rather it is only a $k$-simplex in $\Omega$.  See \cref{sec:PES} and \cref{fig:simplex_defn_example_1} for details.

 As a PES is analogous to a CAT(0) flat, we define the projective simplex rank of a domain in analogy with the Euclidean rank of a CAT(0) space. 
\begin{definition}
     The \emph{projective simplex rank} of a properly convex domain $\Omega \subset \RP$, denoted by $\rF(\Omega)$, is the maximum dimension of any PES in $\Omega$, i.e. 
    \begin{equation*}
        \rF(\Omega)\coloneqq \sup \set{\dim(S): S \text{ is a PES in } \Omega}.
    \end{equation*}
\end{definition}
Note that $\rF(\Omega) \geq 1$, since any properly convex domain of dimension at least one contains bi-infinite geodesics. Furthermore, $\rF(\Omega)\le d-1$, with equality if and only if $\Omega$ is  $(d-1)$-projective simplex domain (\cref{defn:projective_simplex_domain}).  
On the other hand, the first author had introduced a notion of `rank one' convex projective manifolds $\Omega/\Gamma$ in \cite{I2024}. This rank $\rF(\Omega)$ is distinct from that notion; there are examples of `rank one' manifolds $\Omega/\Gamma$ where $\rF(\Omega)>2$ (see \cref{rem:isolated_simplices_rank_one}).

\begin{theorem}[\cref{proof:slim_simplex_above_pes_rank}]
\label{thm:slim_simplex_above_pes_rank}
    Suppose $\Omega$ is a quasi-homogeneous properly convex domain. Then there exists $\delta_0=\delta_0(\Omega)> 0$ such that: if $m \geq  \rF(\Omega)+1$ and $S$ is a compact $m$-simplex in $\Omega$, then $S$ is $\delta_0$-slim. 
\end{theorem}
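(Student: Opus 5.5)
We argue by contradiction, using compactness of the space of pointed quasi-homogeneous domains modulo automorphisms together with the \lht. Suppose no uniform $\delta_0$ exists. Then there are integers $m_n \geq \rF(\Omega)+1$, compact $m_n$-simplices $S_n\subset\Omega$, indices $i_n$, and points $p_n\in F^{i_n}(S_n)$ with
\[
\hil\bigl(p_n, F^j(S_n)\bigr)\geq n \quad \text{for all } j\neq i_n.
\]
We first reduce to bounded $m$. By Carathéodory in dimension $d-1$, the point $p_n$ already lies in a face of $F^{i_n}(S_n)$ spanned by at most $d$ vertices. Passing to the subsimplex spanned by these vertices and one extra vertex of $S_n$ keeps $p_n$ in a facet, while the other facets of the subsimplex lie in facets of $S_n$; so their distance to $p_n$ can only grow. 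We should keep the dimension at least $\rF(\Omega)+1$, which is possible since $\rF(\Omega)+1\le d$. After passing to a subsequence, we may therefore assume $m_n=m$ and $i_n=0$ are fixed. Using quasi-homogeneity, choose $g_n\in\Aut(\Omega)$ with $g_np_n\in K$, and replace $S_n$ by $g_nS_n$. Passing to a further subsequence, $p_n\to p\in\Omega$ and the vertices converge, $x^n_j\to x_j\in\overline\Omega$.

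Next we analyze the limit $S=\CH_\Omega(x_0,\dots,x_m)$. Since $p_n\in F^0(S_n)$ and $F^0(S_n)\to F^0(S)$, we get $p\in F^0(S)\cap\Omega$. For each $j\neq0$, the facets $F^j(S_n)$ escape every bounded neighbourhood of $p$. Hausdorff limits of convex sets in $\overline\Omega$ are convex, so this forces $F^j(S)\cap\Omega=\emptyset$: a point of $F^j(S)$ in $\Omega$ would be approximated by points of $F^j(S_n)$ at bounded distance from $p_n$. A convex subset of $\overline\Omega$ that misses $\Omega$ lies in $\partial\Omega$, so $F^j(S)\subset\partial\Omega$ for all $j\ge1$. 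In particular every vertex $x_j$ lies in $\partial\Omega$, because each vertex belongs to some $F^j$ with $j\ge1$ (this uses $m\ge2$). It remains to handle $F^0(S)$. Since $p\in F^0(S)\cap\Omega$, the set $S\cap\Omega$ is nonempty.

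The main obstacle is that $S$ need not be a PES. It may be degenerate, and its facet $F^0(S)$ meets $\Omega$. The plan is to extract a PES of dimension $m\geq\rF(\Omega)+1$ and so reach a contradiction. I would try the following. All vertices lie in $\partial\Omega$, and the point $p\in\Omega$ lies in the relative interior of some face of $F^0(S)$. Consider the cone from the vertex $x_0$ over $F^0(S)$, or better over a minimal face $\sigma$ of $F^0(S)$ containing $p$ together with $x_0$. Every proper face of $\CH(\sigma,x_0)$ that misses $\sigma$'s interior lies in some $F^j(S)$ with $j\ge1$, hence in $\partial\Omega$.

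Iterating this reduction is delicate. Non-degeneracy in the limit can fail, since vertices can collide or become projectively dependent. To address this I would choose the normalization more carefully beforehand, also using facet-distance conditions. The key point is that every facet through $x_0$ lies in $\partial\Omega$, which should force the simplex spanned by the surviving distinct vertices to be properly embedded. Its dimension must then be compared with $m$. I expect the true argument to use an induction on $m$ via the facet $F^0$. If the limit is degenerate, some lower-dimensional simplex with all facets in the boundary yields a PES of dimension at least $\rF(\Omega)+1$ after applying the same extraction. Making this dimension count rigorous is where most of the work lies.
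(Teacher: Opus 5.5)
There is a genuine gap. It sits exactly at what you call the main obstacle, and it comes from recentering at the wrong point. Your $p_n$ lies on $F^0(S_n)$ and is far only from the \emph{other} facets. So the limit tells you that $F^j(S)\subset\partial\Omega$ for $j\ge 1$ while $F^0(S)$ meets $\Omega$.

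What this yields directly is a PES of dimension $m-1$, not $m$. Indeed, $F^0(S)=\CH_\Omega(x_1,\dots,x_m)$ meets $\Omega$, and its facets $F^{\{0,j\}}(S)\subset F^j(S)$ lie in $\partial\Omega$. It cannot be degenerate, since it would then be the union of its facets and miss $p$. But an $(m-1)$-dimensional PES contradicts the definition of $\rF(\Omega)$ only when $m\ge\rF(\Omega)+2$.

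Nor can the missing dimension be ``extracted'' from the limiting configuration by convex geometry alone. Take the quarter disc $\{u>0,\ v>0,\ u^2+v^2<1\}$, a properly convex domain in an affine chart of $\Pb(\Rb^3)$, with $x_0=(0,0)$, $x_1=(\tfrac12,0)$, $x_2=(0,\tfrac12)$. This realises exactly your picture with $m=2$: $[x_0,x_1],[x_0,x_2]\subset\partial\Omega$ and $(x_1,x_2)\subset\Omega$. Yet this domain has no $2$-dimensional PES. So your last paragraph would have to use quasi-homogeneity a second time, and the proposal gives no mechanism for that.

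The missing ingredient is a KKM step that lets you recenter at a point far from \emph{every} facet simultaneously. The claim is: if a compact $m$-simplex $S$ is not $n$-slim, there is $c\in\relint(S)$ with $\hil\bigl(c,\bigcup_i F^i(S)\bigr)>n/3$.

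To see this, suppose not, so the closed $n/3$-neighbourhoods of the facets cover $S$. Relabel cyclically, attaching the neighbourhood of $F^{j+1}(S)$ to the vertex $v_j$. Then every proper face $\CH_\Omega(v_i : i\in I)$ omits some $v_{j+1}$ with $j\in I$, so it lies in $F^{j+1}(S)$. The Knaster--Kuratowski--Mazurkiewicz lemma therefore gives a common point, so the $n/2$-centroid of $S$ is non-empty. A non-empty $\delta$-centroid forces $2\delta$-slimness: decompose each facet into cones $\CH_\Omega(p^i,F^{\{i,j\}}(S))$ from a point $p^i$ near the centroid and apply the maximum principle. Hence $S$ would be $n$-slim, a contradiction.

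Now recenter $c_n$ into $K$. Your own limiting argument then applies to all facets at once: every $F^i(S)\subset\partial\Omega$, while $c\in S\cap\Omega$. $S$ is non-degenerate, because a degenerate simplex is the union of its facets. Hence $S$ is an $m$-dimensional PES with $m\ge\rF(\Omega)+1$, a contradiction.

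Your Carath\'eodory reduction is harmless but unnecessary: compact simplices of dimension $\ge d$ are degenerate, hence $0$-slim.
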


\begin{remark}\label{rmk:slim_simplices_symmetric}
    Suppose $\Omega$ is the projective model  of the symmetric space $X_d:=\SL_d(\Rb)/\SO(d)$. Then $\rF(\Omega)=d-1$, the real rank of $\SL_d(\Rb)$ (see \cref{lem: flat rank for symmetric spaces}).  Let $D$ be the Riemannian distance on $\Omega$ associated to a  $\SL_d(\Rb)$-invariant Riemannian distance on $X_d$ (see \cref{sec:intro_appl_to_symm_space}). Then \cref{thm:slim_simplex_above_pes_rank} holds for this distance $D$ as well. That is, there exists $\wh{\delta}_0$ so that for any compact $m$-simplex $S$ in $\Omega$ with $m\geq d$,  $D(F^i(S),\cup_{j\neq i}F^j(S)) <\wh{\delta}_0$ for all $i=0,\dots,m$. This is  true for any symmetric space admitting a convex projective model, not just $X_d$. For the proof of this remark, see \cref{proof:slim_simplices_symmetric}. 
\end{remark}

This theorem should be interpreted as a higher-rank hyperbolicity result for the Hilbert metric: isometrically embedded $m$-simplices are slim whenever $m$ is strictly bigger than $\rF(\Omega)$. Indeed, $\rF(\Omega)=1$ is precisely the case where $(\Omega,\hil)$ is Gromov hyperbolic. In the paper, we actually prove a stronger equivalence result (\cref{prop: duality between properly embedded simplices and slim simplices}) which implies \cref{thm:slim_simplex_above_pes_rank}. Moreover, we prove a boundary characterization of slimnness of simplices (\cref{prop:boundary_char_of_slimness}). As a corollary of these results, we can recover a result of Benoist relating Gromov hyperbolicity of $(\Omega,\hil)$ to the topological boundary  $\partial \Omega$ of $\Omega$; see \cref{sec:proof_of_benoist_corollary}.
\begin{corollary}[{part of \cite[Theorem 1.1]{B2004}}]
\label{cor:benoist_hyperbolicity_result}
    Suppose $\Omega$ is a quasi-homogeneous properly convex domain. Then the following are equivalent:
    \begin{enumerate}
    \item $\rF(\Omega)=1$;
        \item $(\Omega,\hil)$ is a Gromov hyperbolic metric space; and
        \item there are no non-trivial projective line segments in $\partial \Omega$.
    \end{enumerate}
\end{corollary}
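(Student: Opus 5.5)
We prove the cycle of implications $(1)\Rightarrow(2)\Rightarrow(3)\Rightarrow(1)$, using \cref{thm:slim_simplex_above_pes_rank} for the first and elementary Hilbert geometry for the other two.

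\textbf{$(1)\Rightarrow(2)$.} Assume $\rF(\Omega)=1$. Then $m=2\ge \rF(\Omega)+1$, so \cref{thm:slim_simplex_above_pes_rank} applies to $m=2$: every compact $2$-simplex in $\Omega$ is $\delta_0$-slim. A compact $2$-simplex with vertices $x,y,z\in\Omega$ has as its facets the projective segments $[y,z]$, $[x,z]$, $[x,y]$. These are Hilbert geodesics. Hence every triangle in $(\Omega,\hil)$ whose sides are projective segments is $\delta_0$-slim.

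For Gromov hyperbolicity we must handle arbitrary geodesic triangles, since Hilbert geodesics need not be unique. The cleanest route is to avoid this issue. The space $(\Omega,\hil)$ is a proper geodesic space, and the projective segments form a geodesic bicombing. Hyperbolicity can then be verified through the Gromov four-point / product condition. That condition only requires slimness of triangles in a fixed family of geodesics, provided the family is a bicombing. Concretely, slim projective triangles give the insize bound, which in turn bounds the Gromov product $(y|z)_x$ up to $\hil(x,[y,z])$ plus a constant. From this the $\delta$-inequality on Gromov products follows by the standard argument. So $(\Omega,\hil)$ is Gromov hyperbolic.

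\textbf{$(2)\Rightarrow(3)$.} Suppose $\partial\Omega$ contains a nontrivial segment $[a,b]$. Choose $p\in\Omega$ and consider the open $2$-simplex $T=\mathrm{int}\,\CH_\Omega(a,b,p)$ and points deep inside it. Use the standard comparison: for points of $T$, $\hil$ is bounded below by $d_T$ restricted to neighborhoods of $[a,b]$, and bounded above by $d_{T'}$ for a suitable larger triangle. One then produces arbitrarily fat projective triangles. Take points $x_n,y_n\to$ distinct interior points of $(a,b)$ along rays from $p$, and $z_n$ at the midpoint direction. The side $[x_n,y_n]$ stays at Hilbert distance tending to infinity from $[p,x_n]\cup[p,y_n]$, because the cross-ratio computation degenerates near a boundary segment. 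This contradicts hyperbolicity.

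Quasi-homogeneity is not even needed for this step, although one may instead rescale by automorphisms to obtain a limit domain containing a properly embedded $2$-simplex, whose Hilbert metric is bi-Lipschitz to a normed plane.

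\textbf{$(3)\Rightarrow(1)$.} Suppose $\rF(\Omega)\ge2$. Then there is a PES $S$ of dimension $\ge2$. Its facets lie in $\partial\Omega$ and have dimension $\ge1$, so they contain nontrivial segments of $\partial\Omega$.

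\textbf{Main obstacle.} The delicate step is $(1)\Rightarrow(2)$: passing from slimness of projective-segment triangles to slimness of all geodesic triangles. If the bicombing argument proves awkward, the fallback is the equivalence in \cref{prop: duality between properly embedded simplices and slim simplices}. That equivalence should give uniform slimness for all quasi-simplices, using quasi-homogeneity to run a compactness and limiting argument on triangles with fixed vertex near $K$.
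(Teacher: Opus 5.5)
\textbf{The arrow $(2)\Rightarrow(3)$ fails, and it is the only arrow leaving $(2)$ in your cycle.} Your other two arrows are fine. For $(3)\Rightarrow(1)$: a PES of dimension $\ge 2$ has a nondegenerate edge lying in $\partial\Omega$. For $(1)\Rightarrow(2)$ you take a different route from the paper, which proves $(3)\Rightarrow(2)$ using that $\hil$ is uniquely geodesic when $\partial\Omega$ contains no segments (\cref{prop: uniquely geodesic}). Your four-point argument is valid:
\begin{itemize}
\item $\hil(x,\gamma)\ge (y|z)_x$ for every geodesic $\gamma$ from $y$ to $z$;
\item $\delta_0$-slimness of the projective triangle gives $\hil(x,[y,z])\le (y|z)_x+2\delta_0$;
\item combining these with slimness of $[x,z]$ relative to $[x,y]\cup[y,z]$ yields Gromov's condition with constant $3\delta_0$.
\end{itemize}

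\textbf{Why the fatness claim is false.} Work in an affine chart and let $h_t$ be the homothety with centre $p$ and ratio $t$. Put $x_n:=h_{t_n}(x)$ and $y_n:=h_{t_n}(y)$ with $t_n\to 1$. These lie on rays from $p$ and converge to the distinct points $x,y\in(a,b)$. The line through $x_n,y_n$ meets $\CH_\Omega(p,a,b)\subset\overline{\Omega}$ in $[h_{t_n}(a),h_{t_n}(b)]$. Hence $\hil(x_n,y_n)\le\frac12\log[h_{t_n}(a),x_n,y_n,h_{t_n}(b)]=\frac12\log[a,x,y,b]$, a constant. So $[x_n,y_n]$ stays within bounded distance of $x_n$. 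The same comparison with smaller ratios shows the triangles $\CH_\Omega(p,x_n,y_n)$ are uniformly slim.

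Changing the rates does not help. In the projective simplex domain, with $[a,b]$ an edge, all of $\CH_\Omega(p,x,y)\cap\Omega$ lies within bounded Hilbert distance of a single geodesic ray, so every such triangle is uniformly slim. The underlying reason is that $x$, $y$ and all of $(x,y)$ lie in one open face of $\partial\Omega$. \cref{fact: lower semicontinuity of extended distance} therefore permits finite limiting distances, and there is no cross-ratio degeneration to exploit.

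\textbf{The fix: send the vertices to the endpoints of a maximal segment.} Replace $[a,b]$ by $\ell\cap\overline{\Omega}$, where $\ell$ is its projective line. Then $a,b\notin F_\Omega(w)$ for $w\in(a,b)$, since an open segment of $\overline{\Omega}$ through $a$ and $w$ would lie in $\ell$ and extend past $a$. Take $a_n\to a$ and $b_n\to b$ in $\Omega$, and $w_n\in[a_n,b_n]$ with $w_n\to w$.

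Suppose $\hil(w_n,[p,a_n]\cup[p,b_n])$ stayed bounded along a subsequence. Then nearest points $q_n$ converge to some $q\in[p,a]\cup[p,b]$. This $q$ must lie in $\partial\Omega$, because $w_n\to\partial\Omega$. By \cref{fact: lower semicontinuity of extended distance} it then lies in $F_\Omega(w)$. Since $[p,a)\cup[p,b)\subset\Omega$, this forces $q\in\{a,b\}$, a contradiction. So the projective, hence geodesic, triangles $\CH_\Omega(p,a_n,b_n)$ are not uniformly slim.

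This is the case $m=1$ of the argument for $(2)\Rightarrow(3)$ in \cref{prop:boundary_char_of_slimness}, with $[a,b]$ a $1$-dimensional PES in the face $F_\Omega(w)$. It needs no quasi-homogeneity, so your side remark is correct once the construction is fixed. Your rescaling alternative is too vague to count as a proof of this step.

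\textbf{What the repaired proof buys.} With this repair your cycle $(1)\Rightarrow(2)\Rightarrow(3)\Rightarrow(1)$ is complete. It uses quasi-homogeneity only through \cref{thm:slim_simplex_above_pes_rank}. It also avoids the paper's $(2)\Rightarrow(1)$, which passes through \cref{lem: slimness induced in higher dim} (resting on the KKM-based \cref{prop: center of a simplex}) and \cref{prop: duality between properly embedded simplices and slim simplices}.
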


\subsection{Coarse $r$-medians in convex projective geometry}
\label{sec:coarse_medians_hilbert_geom}

By virtue of the convexity of the Hilbert metric, the higher-rank hyperbolicity flavour of \cref{thm:slim_simplex_above_pes_rank} could be massively strengthened. This strengthening is precisely the  notion of a coarse $r$-median on the domain $\Omega$. When $r=1$, our notion of coarse $r$-median coincides with the classical notion of coarse medians due to Bowditch (\cref{defn:bowditch_defn-coarse_median}). Hence, to motivate the notion of coarse $r$-medians, we start with the case of coarse 1-median (i.e. Bowditch's coarse median) in $\Hb^2$. 

Let $x_0,x_1,x_2 \in \Hb^2$ and let $T$ be the geodesic triangle in $\Hb^2$ spanned by these three points. It is a classical fact that $T$ is slim and most of its area is concentrated near the `centroid' of $T$. One way of capturing this `centroid' is by evaluating $\mu(x_0,x_1,x_2)$ where $\mu$ is Bowditch's median map (\cref{defn:bowditch_defn-coarse_median}). But a more geometric approach is to fix a $\delta >0$ sufficiently large and then take the intersection of the $\delta$-neighborhoods of the edges of $T$. This intersection coarsely approximates $\mu(x_0,x_1,x_2)$.

\subsubsection*{Coarse centroid} Motivated by this intuition and \cref{thm:slim_simplex_above_pes_rank}, we introduce an analogous notion of (coarse) `centroid' for simplices in $\Omega$: given a simplex $S$ spanned by $(k+1)$ points, we consider the intersection of $\delta$-neighborhoods of its facets $F^i(S)$. 

\begin{definition}
\label{defn:coarse_centroid}
      Suppose $\Omega \subset \RP$ is a properly convex domain, $\delta\geq 0$, and $k\in \Nb$. We define the $\delta$-centroid of $(k+1)$ points $x_0,\dots,x_k\in \Omega$ as the set  
    \begin{equation*}
         C_\delta(x_0,\dots,x_k):=\bigcap_{i=0}^k\Nc_{\delta}\left( \CH_\Omega(x_0,\dots,\wh{x}_i,\dots,x_k)\right).
    \end{equation*}
   The $\delta$-centroid of a compact $k$-simplex $S$ in $\Omega$ is defined to be the $\delta$-centroid of $\vrtx(S)$ and it will be naturally denoted by $C_\delta(S)$. 
\end{definition}

We prove in \cref{thm:slimness_gives_centroid} below that for $\delta>0$ sufficiently large, any compact $k$-simplex has a non-empty $\delta$-centroid. Our proof combines \cref{thm:slim_simplex_above_pes_rank} with certain other convexity properties that are specific to convex projective geometry. The main technical ingredient is a classical lemma from convex geometry (\cref{lem:KKM}); see \cref{sec:slim_simplices_non_empty_centroid}.

\begin{theorem}[\cref{sec:proof_of_slimness_gives_centroid}]
\label{thm:slimness_gives_centroid}
    Suppose $\Omega \subset \RP$ is a quasi-homogeneous properly convex domain, and $m\geq \rF(\Omega)+1$. There exists $\delta'_0>0$ such that, for any $(m+1)$ points $x_0,\dots,x_m \in \Omega$ and any $\delta \geq \delta_0'$,  $$C_{\frac{\delta}{2}}(x_0,\dots,x_m) \neq \emptyset. $$ 
    In particular, we may choose $\delta_0':=2\delta_0$, where $\delta_0$ is as in \cref{thm:slim_simplex_above_pes_rank}.
\end{theorem}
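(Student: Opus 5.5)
The approach is to use a KKM-type argument on the simplex $S=\CH_\Omega(x_0,\dots,x_m)$, as the paper announces (\cref{lem:KKM}). Fix $\delta\geq \delta_0':=2\delta_0$, so $\delta/2\geq\delta_0$, and set $U_i:=\Nc_{\delta/2}(F^i(S))\cap S$. We may assume $S$ is non-degenerate. If it is degenerate, we perturb the vertices slightly inside $\Omega$; slimness of the perturbed simplex, closedness of the neighborhoods under limits, and compactness then pass the conclusion back, at the cost of replacing open neighborhoods by closed ones. Using barycentric coordinates, identify $S$ with the standard $m$-simplex. The goal is to check the KKM covering hypothesis in its ``dual'' form: each face of $S$ is covered by the sets attached to the facets containing that face. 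The KKM lemma will then give $\bigcap_i U_i\neq\emptyset$, and any point of this intersection lies in $C_{\delta/2}(x_0,\dots,x_m)$.

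The dual form of the KKM lemma I would use is the following (this is the Sperner/KKM variant for facet covers). Let $\Delta$ be a simplex with facets $F^0,\dots,F^m$, and let $A_i\subset\Delta$ be closed sets with $F^i\subset A_i$ and $\Delta=\bigcup_i A_i$. Then $\bigcap_i A_i\neq\emptyset$. To reduce to the standard KKM statement, apply it to the complements relative to opposite vertices. Alternatively, argue by Brouwer: if the intersection were empty, one obtains a map $\Delta\to\partial\Delta$ homotopic to the identity on the boundary, which is a contradiction. In our case $F^i(S)\subset U_i$ holds trivially. So the substantive hypothesis is the covering $S=\bigcup_i \overline{U_i}$, and more precisely this covering on every face.

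The main obstacle is this covering step: an interior point $p\in S$ must lie within $\delta/2$ of some facet. Slimness (\cref{thm:slim_simplex_above_pes_rank}) only controls points on facets, so I would exploit that $\dim S=m\geq \rF(\Omega)+1$. Take $p\in S$ and the line from $x_0$ through $p$. It meets $F^0(S)$ at a point $q$, and $p$ lies in the compact simplex $S'=\CH_\Omega(p, x_1,\dots,x_m)$, or more usefully in a cone decomposition of $S$ into simplices having $p$ as a vertex. Concretely, $S=\bigcup_i\CH_\Omega(p,F^i(S))$, and each piece $S_i:=\CH_\Omega(x_0,\dots,p,\dots,x_m)$ (replacing $x_i$ by $p$) is again a compact $m$-simplex. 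If $S$ is non-degenerate, then at least one $S_i$ is non-degenerate. Applying $\delta_0$-slimness to the facet of $S_i$ opposite some $x_j$ with $j\neq i$, that facet contains $p$ as a vertex. Slimness gives that $p$ is within $\delta_0$ of another facet of $S_i$: either a facet containing $p$ (useless) or $F^i(S)$. This alone is not decisive, so I would instead apply slimness to $S_i$ with $i$ chosen so that $p$ lies in the facet opposite $x_j$. A cleaner route is to use the full-dimensional version: apply \cref{thm:slim_simplex_above_pes_rank}, or its strengthening \cref{prop: duality between properly embedded simplices and slim simplices}, inductively on faces. We would prove by induction on $k$ that every point of a $k$-face is within $\delta_0$ of the union of proper faces of that face, using slimness of the $m$-dimensional simplex obtained by adjoining vertices. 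I expect to need the stronger proposition here, and this is where the factor $2$ in $\delta_0'=2\delta_0$ absorbs the two-step estimate.

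Finally, intersecting over $i$ gives a point in every $\Nc_{\delta/2}(F^i(S))$, and therefore $C_{\delta/2}(x_0,\dots,x_m)\neq\emptyset$ for all $\delta\geq 2\delta_0$. Monotonicity in $\delta$ is immediate.
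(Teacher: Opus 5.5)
\textbf{Gap: the covering of interior points.} Your dual form of KKM is correct; it is \cref{lem:KKM} after the cyclic relabelling $A_i\mapsto A_{i+1}$. Its face conditions are automatic, so the whole argument rests on one claim: every point of $\relint(S)$ lies within $\delta/2=\delta_0$ of $\partial S$. Slimness only constrains points \emph{lying on facets}, and none of your attempts turns it into control of interior points.
\begin{itemize}
\item In the simplices $S_i$ where $x_i$ is replaced by $p$, the point $p$ is a vertex of every facet except $F^i(S)$. So slimness of $S_i$ says nothing about $p$, as you noticed.
\item The proposed induction on faces cannot start. The claim is already false for $k=1$: the midpoint of a long edge $[x_i,x_j]$ is far from both endpoints. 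It also fails for large triangles inside a $2$-dimensional PES.
\end{itemize}
The interior covering does hold a posteriori. However, the only derivation from $\delta_0$-slimness I see goes through a point of $C_{\delta_0}(S)\cap S$ plus the maximum principle, which is circular. An independent recentering argument, as in the proof of \cref{prop: duality between properly embedded simplices and slim simplices}, would give a uniform bound $\delta_1$ on $\sup_{q\in S}\hil(q,\partial S)$. That yields \emph{some} $\delta_0'$, but not the asserted $\delta_0'=2\delta_0$. Also, the factor $2$ is not absorbing any two-step estimate; it is only the $\delta/2$ in the definition of the centroid.

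\textbf{The missing idea: apply KKM to a facet, not to $S$.} Slimness says exactly that $F^0(S)\subset\bigcup_{i=1}^m\Nc_{\delta_0}(F^i(S))$. By compactness of $F^0(S)$ there is $\varepsilon>0$ such that the closed sets $U_i=\{x\in F^0(S):\hil(x,F^i(S))\le\delta_0-\varepsilon\}$, $1\le i\le m$, cover $F^0(S)$. Each $U_i$ contains $F^{\{0,i\}}(S)$, the facet of the $(m-1)$-simplex $F^0(S)$ opposite $x_i$, since that facet lies in $F^i(S)$. These are precisely the hypotheses of your dual KKM on $F^0(S)$, so some $p\in F^0(S)$ lies in every $U_i$. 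Trivially $p$ is also within $0$ of $F^0(S)$, so $p\in C_{\delta_0}(S)\subset C_{\delta/2}(S)$ for all $\delta\ge 2\delta_0$. This is how the paper argues, via \cref{prop: center of a simplex} combined with \cref{thm:slim_simplex_above_pes_rank}. For degenerate $S$, the paper uses Helly's theorem, which gives $\bigcap_iF^i(S)\neq\emptyset$ directly. That avoids your perturbation step, which would only produce closed neighborhoods at the endpoint $\delta=2\delta_0$.
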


We must emphasize that for $m \geq 3$, going from \cref{thm:slim_simplex_above_pes_rank} to \cref{thm:slimness_gives_centroid} is a major upgrade. We do not see any reason, a priori, to expect the analogue of \cref{thm:slimness_gives_centroid} in a general metric space with `slim simplices' property when $m \geq 3$. However, we must also remark on the very special case of $m=2$. For any quasi-homogeneous properly convex domain $\Omega$, $\delta$-slimness of $2$-simplices is equivalent to the non-emptiness of $\frac{\delta}{2}$-centroid for $3$-tuples (see \cref{lem:non-empty centroid implies slimness}).

Now consider $r=m-1$ so that $r\geq \rF(\Omega)$. Fix any $\delta \geq \delta_0'$ as in \cref{thm:slimness_gives_centroid}. Then given any $(r+2)$-tuple $(x_0,\dots,x_{r+1})\in \Omega^{r+2}$, we would like to think of the existence of the `coarse centroid' $C_{\frac{\delta}{2}}(x_0,\dots,x_{r+2})$ as the \emph{coarse $r$-median structure} on $(\Omega,\hil)$. Moreover, we would like to think of 
\begin{equation}
\label{eqn:informal_coarse_r_median_map}
     \Omega^{r+2} \ni(x_0,\dots,x_{r+2}) \mapsto C_{\frac{\delta}{2}}(x_0,\dots,x_{r+2}) \in \Pc(\Omega) \setminus \{\emptyset,\Omega\}
\end{equation} as the \emph{coarse $r$-median map}, in analogy with Bowditch's median map. However there is a subtlety that arises here, already in the classical case $r=1$. Indeed, Bowditch's median map should take values in $\Omega$ while ours takes values in $\Pc(\Omega)$. In order to resolve this, we must prove a uniform bound the diameter of $C_{\frac{\delta}{2}}(x_0,x_1,x_2)$, where the bound depends only on $\delta$. As it turns out, this uniform bound does exist  when $r=1$. But the case $r\geq 2$ is a lot more complicated as we will now explain.

 \begin{figure}[h]
        \centering
        \includegraphics[scale=0.2]{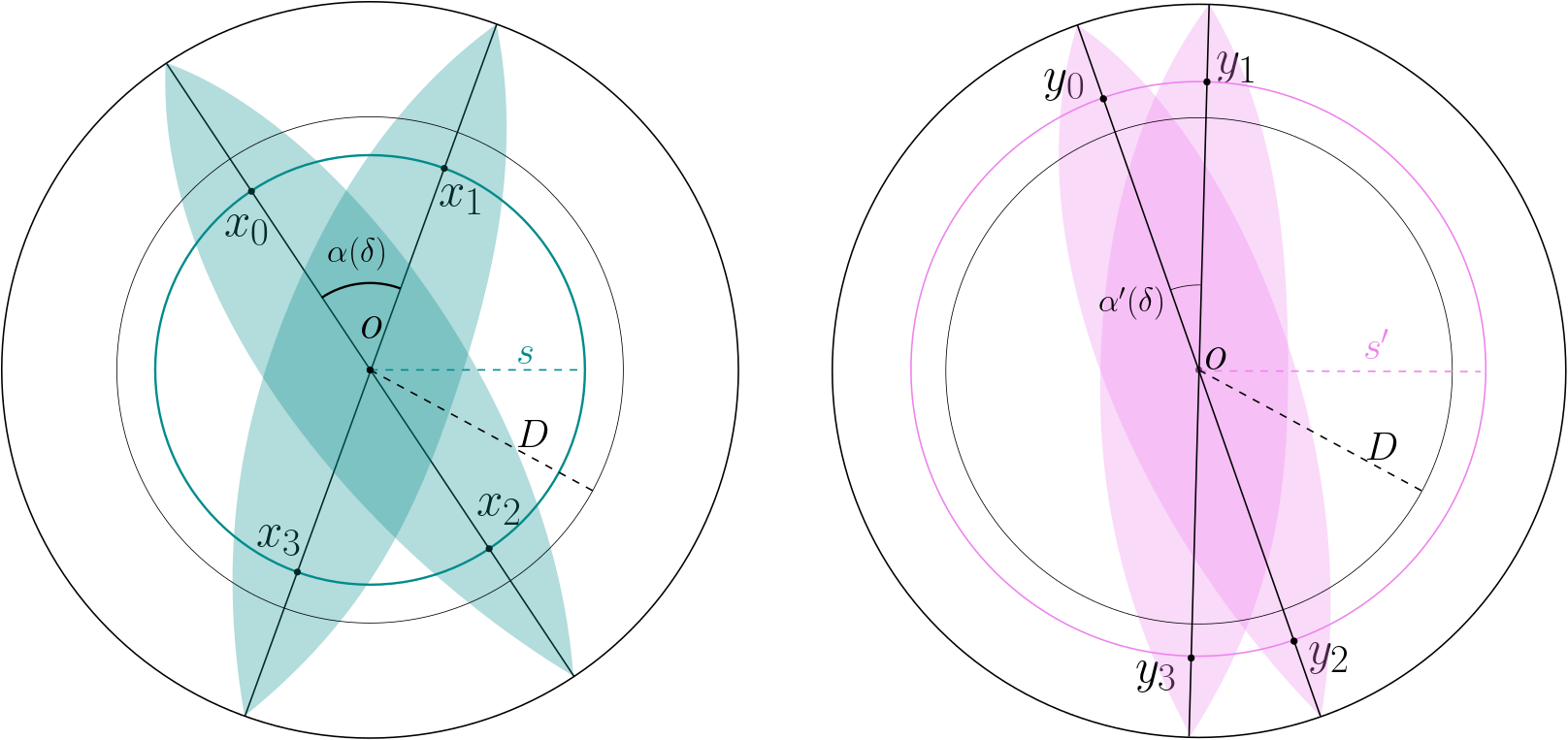}
        \caption{Two examples of 4-tuples in $\Hb^2$, all of them lying on a hyperbolic circle around $o$: (left) generic 4-tuple; (right) non-generic 4-tuple. See \cref{example: alg_non_generic} for details.}
        \label{fig: GenericityExample1}
\end{figure}

\subsubsection*{Generic points} Consider the projective model of $\Hb^2$ -- a unit disc centered at the origin $o$ in an affine chart of $\Rb\Pb^2$.
Let $r=2$ and $\delta>0$ be sufficiently large. Consider a 4-tuple $\mathbf{x}=(x_0,\dots,x_3)$ in $(\Hb^2)^4$ as in \cref{fig: GenericityExample1}. Here, the points $x_0,\dots,x_3$ lie on a hyperbolic circle around $o$ of radius $s(\mathbf{x})$. As we can observe from \cref{fig: GenericityExample1}, $\diam(C_{\frac{\delta}{2}}(x_0,\dots,x_3))$ depends on the angle $\alpha(\mathbf{x})$ at $o$ between $[x_0,x_2]$ and $[x_1,x_3]$ as well as the radius $s(\mathbf{x})$. As the picture on the right of \cref{fig: GenericityExample1} shows, we can move the points $x_0,\dots,x_3$ and make $\diam(C_{\frac{\delta}{2}}(x_0,\dots,x_3)) \to \infty$ while sending $\alpha(\mathbf{x}) \to 0$ and $s(\mathbf{x}) \to \infty$. However, in this case, the simplex spanned by $x_0,\dots,x_3$ increasingly resembles a bi-infinite geodesic in $\Hb^2$, that is, a strictly lower dimensional simplex. 
We will call such 4-tuples \emph{non-generic}, from the viewpoint of the Hilbert metric. On the other hand, the picture on the left of \cref{fig: GenericityExample1} shows that if $\alpha(\mathbf{x})$ is bounded away from $0$, then we can indeed get an upper bound on $\diam(C_{\frac{\delta}{2}}(x_0,\dots,x_3))$,  that depends only on $\delta$. We call such 4-tuples \emph{generic}. The precise definition of generic and non-generic points is below, see \cref{def: generic_points_in_Hilbert_new}. See \cref{sec: generic_pts_and_diam_bounds} for further examples and discussion. Below we use the notation: if $v_0,\dots,v_{r+2}\in \Omega$ and $I=\{j_1 < \dots < j_k\} \subset \{0,\dots,r+2\}$, then $\CH_{\Omega}(v_i:i\in I):=\CH_{\Omega}(v_{j_1},\dots,v_{j_k})$. 

\begin{definition}\label{def: generic_points_in_Hilbert_new}
    Let $\Omega\subseteq\RP$ be a properly convex domain and $\delta,D\ge0$.   
    Then, we say that the $(r+2)$-tuple  $(v_0,\dots,v_{r+1})\in\Omega^{r+2}$ is \emph{$(\delta,D)$-generic} provided the following holds: if $I_0,\dots,I_{q} \subset \{0,\dots,r+1\}$ are non-empty proper subsets  such that  
    \begin{equation*}
       \bigcap_{i=0}^{q}I_i=\emptyset ~\text{ and }~ \diam\left(\bigcap_{i=0}^{q}\Nc_{\delta} \bigg( \CH_\Omega\left( v_{j} : j\in I_i\right) \bigg) \right) >D, 
    \end{equation*}
    then we must have $|I_0|=\dots=|I_q|=r+1.$
    Otherwise, we say that the tuple $(v_0,\dots,v_{r+1})$ is \emph{$(\delta,D)$-non-generic}.
\end{definition}

In \cref{def: generic_points_in_Hilbert_new}, we have to talk about a quantitative notion of genericity, namely  $(\delta,D)$-genericity, since this is the way to formalize what we meant by $\alpha \to 0$ and $s \to \infty$ in the example above. Here $\delta$ is the parameter that we use to define $C_{\frac{\delta}{2}}(\cdot)$,  while $D$ determines coarse scale, beyond which $C_{\frac{\delta}{2}}(\cdot)$ stops resembling a ``point" to us. On the other hand, the example in \cref{fig: GenericityExample1} also indicates that the notion of genericity is closed related to ideal simplices in $\Omega$ and their degenerations. For example, the picture on the right in \cref{fig: GenericityExample1} could be seen as an ideal 4-simplex in $\Hb^2$ that is degenerating towards a bi-infinite geodesic, as the angle $\alpha \to 0$. For $r\geq 2$, the combinatorial data of such degenerations could be a lot more complex. One should think of the subsets $I_1,\dots,I_q$ of $\{0,\dots,r+1\}$ in \cref{def: generic_points_in_Hilbert_new} as the combinatorial data that keeps track of this. Finally we remark that there are plenty of generic points whenever $X$ is an infinite set: there are infinitely many $(\delta,D)$-generic tuples when $\delta>0$ and $D$ is sufficiently large (\cref{rmk:existence_generic_tuples_pcd}).

 The notion of $(\delta,D)$ genericity is important because the coarse $r$-median map of \cref{eqn:informal_coarse_r_median_map}, when restricted to this set of $(\delta,D)$-generic tuples, is essentially a map taking values in $\Omega$ (more precisely, it is within a bounded distance from  a $\Omega$-valued map, see \cref{rmk:relation_coarse_median_map_on_generic}(3)). The precise result is as follows. The main tools in its proof are Lemmas \ref{newlem: slim simplices have bounded centroid} and \ref{lem: slim simplices have bounded centroid_uniform_version}. 

\begin{proposition}[\cref{sec:proof_of_CM_2_for_pcd}]
\label{prop:CM_2_for_pcd}
    Suppose $\Omega\subset \RP$ is a quasi-homogeneous properly convex domain and $r \geq \rF(\Omega)$. There exist $\delta'_0 > 0$ 
    and a function $\Psi:[\delta'_0,\infty) \times [\delta'_0,\infty) \to [0,\infty)$,  
    non-decreasing in both coordinates, such that the following holds: if $\delta,D \geq \delta'_0$, and $(x_0,\dots,x_{r+1}) \in \Omega^{r+2}$ is a $(\delta,D)$-generic tuple, then $C_{\frac{\delta}{2}}(x_0,\dots,x_{r+1})\neq\emptyset$, and $$\diam_{\hil}\left(C_{\frac{\delta}{2}}(x_0,\dots,x_{r+1})\right) \leq \Psi(\delta,D).$$
\end{proposition}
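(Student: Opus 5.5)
Non-emptiness is immediate. Since $r+1\geq \rF(\Omega)+1$, \cref{thm:slimness_gives_centroid} applied with $m=r+1$ gives $C_{\frac{\delta}{2}}(x_0,\dots,x_{r+1})\neq\emptyset$ for every $\delta\geq \delta_0'=2\delta_0$. This holds for any $(r+2)$-tuple, generic or not, so we take $\delta_0'$ at least $2\delta_0$ and enlarge it later if needed. The real content is the diameter bound. I would prove it by contradiction, using a compactness argument that exploits quasi-homogeneity.

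Suppose no such $\Psi(\delta,D)$ exists for some fixed $\delta,D\geq\delta_0'$. Then there are $(\delta,D)$-generic tuples $\mathbf{x}^n=(x^n_0,\dots,x^n_{r+1})$ and points $p_n,q_n\in C_{\frac{\delta}{2}}(\mathbf{x}^n)$ with $\hil(p_n,q_n)\to\infty$. Choose a compact $K$ with $\Aut(\Omega)K=\Omega$ and translate by automorphisms, which preserve convex hulls, neighbourhoods and genericity, so that $p_n\in K$. Pass to a subsequence so that $p_n\to p\in\Omega$, $q_n\to q\in\partial\Omega$, and every $x^n_j\to x_j\in\overline{\Omega}$. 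Each facet hull $\CH_\Omega(x^n_j:j\neq i)$ then converges in the local Hausdorff topology to a convex set $F_i\subset\overline\Omega$. The limit points and the segment $[p,q)$ lie within $\frac\delta2$ of $F_i\cap\Omega$ for every $i$, so the $F_i$ share the common unbounded piece of $C_{\frac\delta2}$ near the ray towards $q$.

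The key step is to convert this unbounded common region into a violation of genericity. The vertices $x_j$ that escape to $\partial\Omega$ and the facets containing $[p,q)$ in their neighbourhoods should organise into a \emph{lower-dimensional} configuration. Concretely, I want non-empty proper subsets $I_0,\dots,I_q$ with empty common intersection, not all of size $r+1$, such that $\bigcap_i\Nc_\delta(\CH_\Omega(x^n_j:j\in I_i))$ contains long pieces near $[p_n,q_n]$, of length $>D$ for large $n$. This contradicts $(\delta,D)$-genericity.

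To find such subsets, I would use the slimness of \cref{thm:slim_simplex_above_pes_rank}, or the stronger equivalence in \cref{prop: duality between properly embedded simplices and slim simplices}. Along the segment $[p_n,q_n]$, each point is $\frac\delta2$-close to every facet. Slimness lets one replace closeness to a facet by closeness to a lower-dimensional face. Here a facet is itself a simplex of dimension $r\geq\rF(\Omega)$ when $r>\rF(\Omega)$. In the case $r=\rF(\Omega)$, I would instead use the boundary characterization \cref{prop:boundary_char_of_slimness}: a segment in $\partial\Omega$ limiting towards $q$ forces the limit configuration to be degenerate. Since there are finitely many subsets, I would pigeonhole on a subsegment of length $>D$. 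This gives a collection of faces, at least one of cardinality $\le r$, whose $\delta$-neighbourhoods (note $\frac\delta2$ becomes $\delta$ after a triangle-inequality step) all contain that subsegment and whose index sets have empty intersection.

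Finally, define $\Psi(\delta,D)$ as the supremum of the diameters. The argument above shows it is finite. Monotonicity is automatic: increasing $\delta$ or $D$ enlarges the centroids and weakens genericity. More precisely, if $\delta\le\delta'$ and $D\le D'$, a $(\delta',D')$-generic tuple need not be $(\delta,D)$-generic, so one must take $\Psi(\delta,D):=\sup_{\delta''\le\delta,\,D''\le D}$ of the raw bounds, or argue directly that the raw bound is monotone.

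The main obstacle is the combinatorial extraction step. One must show that an unbounded $\frac\delta2$-centroid forces some subfamily of faces, not all of size $r+1$ and with empty common index set, to have a common $\delta$-neighbourhood of diameter $>D$. This presumably is the content of \cref{newlem: slim simplices have bounded centroid} and its uniform version \cref{lem: slim simplices have bounded centroid_uniform_version}, which I would prove first by induction on the dimension of the faces.
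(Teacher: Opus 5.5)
Your outer shell is exactly the paper's proof. It consists of non-emptiness from \cref{thm:slimness_gives_centroid} with $m=r+1$; $\Psi(\delta,D)$ defined as a supremum of $\diam C_{\frac{\delta'}{2}}(V)$ over tuples $V$ that are $(\delta',D)$-generic for some $\delta'\in[\delta'_0,\delta]$; finiteness from \cref{lem: slim simplices have bounded centroid_uniform_version}; and monotonicity from \cref{obs: generic_increase_parameters}. Note that raising $\delta$ makes genericity \emph{stronger}, not weaker; this is why the supremum over $\delta'$ and a bound uniform on compact $\delta$-intervals are needed.

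Invoking that lemma as a black box would finish the proof. However, your proposal neither establishes nor correctly anticipates its content, and the mechanism you sketch fails in the principal case $r=\rF(\Omega)$.

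Your slimness idea does work when $r\geq\rF(\Omega)+1$. Each facet $F^i$ is then a $\delta_0$-slim compact $r$-simplex, so, as in the proof of \cref{lem: slimness induced in higher dim}, $F^i\subset\bigcup_{j\neq i}\Nc_{\delta_0}(F^{\{i,j\}})$. Since $\delta\geq 2\delta_0$, for each $i$ every point of $C_{\frac{\delta}{2}}$ lies at distance less than $\frac{\delta}{2}+\delta_0\leq\delta$ from $F^{\{i,j\}}$ for some $j\neq i$. The traces of the convex sets $\Nc_{\delta}(F^{\{i,j\}})$ on a segment $[a,b]\subset C_{\frac{\delta}{2}}$ are subintervals. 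A nested pigeonhole over $i=0,\dots,r+1$ then gives a subsegment of length at least $\hil(a,b)/(r+1)^{r+2}$ inside $\bigcap_i\Nc_\delta(F^{\{i,j_i\}})$. The index sets $\{0,\dots,r+1\}\setminus\{i,j_i\}$ have size $r$ and empty intersection, so genericity directly gives $\diam C_{\frac{\delta}{2}}\leq(r+1)^{r+2}D$.

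For $r=\rF(\Omega)$, however, this breaks down. The facets are $\rF(\Omega)$-simplices, which can be arbitrarily fat (apply \cref{cor: PES contain arbitrarily fat} inside a PES of maximal dimension). So closeness to a facet cannot be traded for closeness to a codimension-two face. \cref{prop:boundary_char_of_slimness} only bounds the rank of boundary faces; it does not produce index sets of size at most $r$.

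The missing idea is the paper's rank-free two-face argument. Recentre at the \emph{midpoint} $c_n$ of a long segment $[a_n,b_n]\subset C_{\frac{\delta}{2}}(S_n)$, not at an endpoint. Your normalisation $p_n\in K$ leaves a ray $[p,q)$ with only one ideal end, and a single boundary face gives no evident way to shrink index sets. With the midpoint, $a_n\to a$ and $b_n\to b$ lie in open boundary faces $A=F_\Omega(a)$ and $B=F_\Omega(b)$. Since $(a,b)\subset\Omega$, these satisfy $A\cap\overline{B}=\emptyset$.

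Let $S=\CH_\Omega(v^0,\dots,v^k)$ be the limit of the $S_n$ (here $k=r+1$), and set $S_L:=\CH_\Omega(v^l: l\in L)$, $I=\{i: v^i\in\overline{A}\}$, $J=\{i: v^i\in\overline{B}\}$. One then shows:
\begin{itemize}
\item $S\cap\overline{A}=S_I$;
\item for each $i\in I$, $S_{I\setminus\{i\}}$ meets $A$ within $d_A$-distance $\frac{\delta}{2}$ of $a$, and likewise for $B$;
\item $|I\setminus J|\geq 2$ and $|J\setminus I|\geq 2$.
\end{itemize}
Then \cref{fact: maximum principle for extended distance} puts $(a,b)$ in the closed $\frac{\delta}{2}$-neighbourhood of $S_{(I\cup J)\setminus\{i,j\}}$ for $i\in I\setminus J$, $j\in J\setminus I$, and of $S_{(I\cup J)\setminus\{p\}}$ for $p\in I\cap J$. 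These index sets have empty common intersection, and the former have at most $k-1=r$ elements. The slack $\frac{\delta}{2}+\varepsilon<\delta$ transfers the containment to $S_n$ for large $n$, contradicting $(\delta,D)$-genericity. No rank hypothesis enters this bound (rank is used only for non-emptiness), which is exactly why it covers $r=\rF(\Omega)$.
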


\subsubsection*{Coarse $r$-median structure} With all the ingredients at our disposal, we will now sum up the above discussion. This summary will serve as an explanation for why the metric space $(\Omega,\hil)$ carries  a coarse $r$-median structure. In fact, our discussion here will serve as the guiding principle for the general definition of a coarse $r$-median structure on any metric space (\cref{defn:coarse_r_median}). For the rest of this discussion, fix a quasi-homogeneous properly convex domain $\Omega \subset\RP$ and $r= \rF(\Omega)$.

\begin{enumerate}[leftmargin=*,label=(\alph*)]
\item \emph{Any $(k+1)$ points $x_0,\dots,x_k \in \Omega$ can be `filled' by certain filling maps $F_k:\Omega^{k+1}\to \Pc(\Omega)\setminus\{\emptyset,\Omega\}$ defined via
$$F_k(x_0,\dots,x_k):=\CH_{\Omega}(x_0,\dots,x_k).$$
Taking these filling maps together, we think of them as the family $$F:=\{F_i:\Omega^{i+1} \to \Pc(\Omega)\setminus\{\emptyset,\Omega\}\}_{i=0,\dots,r}.$$
Moreover, these filling maps have several properties that one naturally expects: $F_0(x_0)=\{x_0\}$; $F_k(x_0,\dots,x_{k}) \subseteq F_{k+1}(x_0,\dots,x_{k+1})$ with equality if $x_k=x_{k+1}$; $F_k$ is invariant under permutation of inputs;  and $F_k$ has convex image.}
\end{enumerate}
Indeed, the existence of these filling maps is a special structure and is not available in any metric space. In fact, we believe that such geometrically meaningful `fillings' are key to the existence of a coarse median structure.  These filling maps interact the metric geometry of $\hil$ in the following particularly interesting ways -- let $C_{\delta}(x_0,\dots,x_{r+1}):=\bigcap_{i=0}^{r+1}\ngh{F_r(x_0,\dots,\wh{x_i},\dots,x_{r+1})}{\delta},$ then:

\begin{enumerate}[leftmargin=*,label=(\alph*),resume]
\item \emph{\cref{thm:slimness_gives_centroid}: There is a $\delta_0>0$ such that the coarse centroid $C_{\frac{\delta_0}{2}}(x_0,\dots,x_{r+1})$ of any $(r+2)$-tuple is non-empty. }
\item \emph{\cref{prop:CM_2_for_pcd}: There is a uniform bound $\Psi(\delta,D)$ on $\diam \left( C_{\frac{\delta}{2}}(x_0,\dots,x_{r+1}) \right)$ for any $(\delta,D)$-generic $(r+2)$-tuple $(x_0,\dots,x_{r+1})$, whenever $\delta,D \geq \delta_0$.}  
\end{enumerate}
When we say that the metric space $(\Omega,\hil)$ has a \emph{coarse $r$-median structure}, we mean that \emph{$(\Omega,\hil)$ can be equipped with the pair  $(F,\Psi)$ -- where $F$ is as in (a)  and $\Psi$ is as in (c) respectively --  such that the pair satisfies the points (a), (b), and (c) above.} The precise statement will be \cref{thm:main_coarse_r_median_on_pcd}, which we will state after giving the general definition of coarse $r$-median space. Further we must remark that although our above discussion was only for $r=\rF(\Omega)$, our \cref{thm:main_coarse_r_median_on_pcd} is more general and we can put a coarse $r$-median structure on $(\Omega,\hil)$ for all $r \geq \rF(\Omega)$.

\subsubsection*{Coarse $r$-median map} The coarse $r$-median structure on $(\Omega,\hil)$ lets us define an \emph{$\Aut(\Omega)$-invariant $r$-median map} in the following way (details in \cref{sec: Coarse rF-median map}): for any $(\delta,D)$-generic tuple $(x_0,\dots,\allowbreak x_{r+1})$ with $\delta,D$ sufficiently large, set 
\begin{equation*}
    \mu_{\delta}(x_0,\dots,x_{r+1}):=\com\left(C_{\frac{\delta}{2}}(x_0,\dots,x_{r+1})\right).
\end{equation*}
Here, $\com(\cdot)$ is the center of mass for the Hilbert metric (\cref{sec:com}). When $\Omega \cong \SL_d(\Rb)/\SO(d)$, this $\mu_{\delta}$ is a $\SL_d(\Rb)/\SO(d)$-valued map that is $\SL_d(\Rb)$-invariant. We may interpret this as a $\SL_d(\Rb)$-invariant ``barycenter" map on $(r+2)$-tuples in $\SL_d(\Rb)/\SO(d)$. Recently, Bucher-Savini \cite{BucherSavini} have also constructed barycenter maps, but the relationship between their maps and our $\mu_{\delta}$ is unclear.

\subsection{Definition of coarse $r$-median structure on a general metric space}
\label{sec:coarse_medians_general_defn}
Our notion of a coarse $r$-median structure is essentially a coarsening of the conditions $(a), (b), \text{ and }(c)$ from above. We say a metric space $(X,d)$ has a \emph{coarse $r$-median structure} if one can find a family of coarse ``filling" functions $F=\{ F_i:X^{i+1}\to \Pc(X) \setminus\{\emptyset,X\}\}_{i=0,\dots,r}$ on $X$ such that $(a),(b) \text{ and }(c)$ from above are satisfied. However, the conditions  -- $(a),(b) \text{ and }(c)$ -- all must be reinterpreted in a coarse sense for this purpose. 

First, a general metric space doesn't have a useful notion of convex hull. So, the convexity of the image of $F_k$ in $(a)$ must be reinterpreted in a coarse geometric sense. This is done by introducing a control functions $\ctrlf:[0,\infty) \to [0,\infty)$. The same applies for the set containment conditions in $(a)$. The result of this coarsening process is the notion of a \emph{coarse $r$-filling}  $(F,\ctrlf)$ on $(X,d)$, see \cref{defn:coarse_filling_weak}. On a proper geodesic metric space $(X,d)$, $\ctrlf$ can be chosen to be affine (recall that affine functions on $\Rb$ are of the form $x \mapsto ax+b$). The notion of \emph{$(\delta,D)$-generic tuples} can then be easily generalized; see \cref{def: generic_tuples_general}.

Next, in $(b)$ and $(c)$, we need to fix a non-constant affine function $\lambda:[\delta_0,\infty) \to [0,\infty)$ and look at $C_{\lambda(\delta)}(\cdot)$ instead of $C_{\delta}(\cdot)$. The extra flexibility offered by this affine function $\lambda$ is crucial in making the coarse $r$-median structure  quasi-isometry invariant. These new function $\ctrlf$ and $\lambda$ are the ingredients needed for the general definition. 
The \emph{coarse $r$-median structure on $(X,d)$} is then defined to be $(F,\ctrlf,\delta_0,\lambda,\Psi)$ where $(F,\ctrlf)$ is a coarse $r$-filling; $\delta_0 >0$; $\lambda$ is a non-constant affine function; and $\Psi$ is the diameter-bounding function such that the `coarsened' versions of $(a),(b) \text{ and } (c)$ are satisfied. See \cref{defn:coarse_r_median} for the precise definition.

With the definition in place, we can now state our main result about coarse $r$-medians on properly convex domains.

\begin{theorem}[\cref{sec: proof_of_pcd_are_coarse_median}]\label{thm:main_coarse_r_median_on_pcd}
Suppose $\Omega$ is a quasi-homogeneous properly convex domain. Then $\Omega$ admits a coarse $r$-median, for any $r \ge \rF(\Omega)$.
\end{theorem}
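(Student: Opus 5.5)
The proof will be a verification: assemble the data $(F,\ctrlf,\delta_0,\lambda,\Psi)$ required by \cref{defn:coarse_r_median} and check its axioms using \cref{thm:slimness_gives_centroid} and \cref{prop:CM_2_for_pcd}. Fix $r\ge \rF(\Omega)$. Take the filling maps $F_k(x_0,\dots,x_k):=\CH_\Omega(x_0,\dots,x_k)$ for $0\le k\le r$, as in item (a).

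The first step is to show that $(F,\ctrlf)$ is a coarse $r$-filling in the sense of \cref{defn:coarse_filling_weak}. The exact properties are immediate:
\begin{itemize}
\item $F_0(x)=\{x\}$;
\item $F_k$ is monotone under adding vertices, with equality when a vertex is repeated;
\item $F_k$ is permutation invariant.
\end{itemize}
Moreover $F_k$ is never empty. It is never all of $\Omega$ either: $\Omega$ is open and non-compact, while $F_k$ is compact.

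The coarse convexity condition will be checked by taking $\ctrlf$ to be the identity (or $\ctrlf(t)=t+c$). The input here is that projective segments are $\hil$-geodesics, and that tubular neighborhoods of convex sets are controlled. Concretely, if $y_0,\dots,y_k$ lie in $\Nc_t(F_k(x_0,\dots,x_k))$, then $\CH_\Omega(y_0,\dots,y_k)$ should lie in $\Nc_{\ctrlf(t)}(F_k(x_0,\dots,x_k))$. I expect to derive this from the standard fact that for convex $C$ the function $d_\Omega(\cdot,C)$ is quasi-convex along projective lines. More precisely, the $t$-neighborhood of a convex set in Hilbert geometry is convex only up to a bounded error, and one can pass from a segment to a simplex by induction on $k$ (cone off one vertex at a time). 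If the definition instead asks for coarse-Lipschitz dependence on the inputs, I will use the same fact, applied with one input moved at a time.

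The second step is axiom (b). Let $\delta_0'=2\delta_0$ as in \cref{thm:slimness_gives_centroid} and choose $\lambda(\delta)=\delta/2$. By \cref{thm:slimness_gives_centroid} with $m=r+1\ge\rF(\Omega)+1$, the set $C_{\lambda(\delta)}(x_0,\dots,x_{r+1})$ is non-empty for all $\delta\ge\delta_0'$. In the paper's notation this set is exactly the intersection of the $\lambda(\delta)$-neighborhoods of $F_r$ applied to the facets.

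The third step is axiom (c). Here the genericity notion of \cref{def: generic_tuples_general} should specialize to \cref{def: generic_points_in_Hilbert_new} once $F$ is the convex hull. Then \cref{prop:CM_2_for_pcd} supplies the function $\Psi(\delta,D)$, which is non-decreasing in both arguments and bounds $\diam C_{\delta/2}$ on $(\delta,D)$-generic tuples. Set the constant of the structure to $\max$ of the two $\delta_0'$ constants.

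The main obstacle I anticipate is bookkeeping: ensuring that the general definition's coarse containments and neighborhoods, taken with respect to $\ctrlf$, match exactly the sets appearing in \cref{prop:CM_2_for_pcd}. If the general genericity uses $\ctrlf$-enlarged neighborhoods, I would absorb the discrepancy by reparametrizing $\delta$. This is possible because $\ctrlf$ is affine, so one can replace $\delta$ by $\ctrlf(\delta)$ and $\Psi$ by $\Psi\circ(\ctrlf\times\mathrm{id})$, using the monotonicity of $\Psi$.
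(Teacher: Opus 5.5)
Your proposal is correct and follows essentially the paper's route: $F_k=\CH_\Omega$ with $\ctrlf=\mathrm{id}$, (CM1) from \cref{thm:slimness_gives_centroid} with $\lambda(\delta)=\delta/2$, and (CM2) from \cref{prop:CM_2_for_pcd}. Your hedges are unnecessary, because the maximum principle makes $\Nc_R(C)$ exactly convex (\cref{cor: convexity of neighborhoods}), which gives (F4) directly, and for the convex-hull filling the two genericity notions coincide verbatim (\cref{rmk: two_def_genericity}), so no reparametrization of $\delta$ is needed.
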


The coarse $r$-median structure that we construct is invariant under the action of $\Aut(\Omega)<\Isom(\Omega,\hil)$. However, we emphasize that, in general, a coarse $r$-median structure has no reason to be invariant under  isometries of the metric.

\subsubsection{Quasi-isometry invariance} \label{sec:intro_qi}The coarse $r$-median structure on a metric space is a quasi-isometry invariant, i.e. if two spaces $(X,d)$ and $(X',d')$ are quasi-isometric, then one of them admits a coarse $r$-median if and only if the other one does as well. In particular, this implies that to prove the existence of a coarse median on $(X,d)$, we may work with any other metric $d_1$ in the quasi-isometry class of $d$. See \cref{prop: median quasi-isometry}.

\subsubsection{Generalization of Bowditch's coarse medians}(\cref{sec:equiv_with_bowditch_median}) Our notion truly generalizes Bowditch's classical notion of coarse medians. Indeed, coarse 1-medians in our sense, with affine parameters, exactly coincides with Bowditch's notion (\cref{prop:equiv_coarse_1_median}). One should also compare it with the coarse interval structure of \cite{NWZ2021} (\cref{defn:nwz_coarse_interval}). Moreover, we discover higher medians on spaces that do not admit Bowditch's coarse medians, e.g. a coarse 2-median on $\SL_3(\Rb)/\SO(3)$ (compare \cref{thm:haettel_no_coarse_1_median}). 

\subsubsection{Products} \label{sec:intro_prod} Product of coarse $r$-median spaces are coarse $r$-median (\cref{prop: product of coarse medians}).

\subsubsection{Applications to symmetric spaces}\label{sec:intro_appl_to_symm_space} The quasi-isometry invariance leads us to a corollary of \cref{thm:main_coarse_r_median_on_pcd} regarding coarse $r$-medians on irreducible symmetric spaces. Recall that an irreducible symmetric space of non-compact type is a space of the form $G/K$ where $G$ is a connected simple Lie group with trivial center and $K$ is a maximal compact subgroup of $G$ (see for instance \cite[Chapter 2]{E1996}). We say that such a $G/K$ \emph{admits a convex projective model} if $G/K$ is $G$-equivariantly diffeomorphic to a  properly convex domain $\Omega$, i.e. there is an isomorphism $f: G \to \Aut(\Omega)^0$  and a diffeomorphism $\phi: G/K \to \Omega$ such that $\phi (gx)=f(g)\phi(x)$ for all $x \in G/K$ and $g\in G$. Here $\Aut(\Omega)^0$ is the identity component of  $\Aut(\Omega)$. Such symmetric spaces essentially correspond to the irreducible symmetric domains (\cref{rem:irred_sym_sp_and_domain}). They are completely classified; see \cref{table:projective_symmetric_domains} for the full list. 

Suppose $\Omega$ is the convex projective model for $G/K$. As $G/K$ is an irreducible symmetric space, it carries a $G$-invariant Riemannian distance $d$ that is unique up to scaling. This induces an $\Aut(\Omega)$-invariant Riemannian distance $d_f$ on $\Omega$ defined by: $d_f(x,y):=d(f^{-1}(x),f^{-1}(y))$. We call $d_f$ \emph{`the' associated $G$-invariant Riemannian distance on $\Omega$} (also unique up to scaling). The Hilbert metric on $\Omega$ is in the same quasi-isometry class as this associated distance $d_f$ (\cref{lem:riem_QI_hilbert}). Hence:

\begin{theorem}[\cref{sec:appl_to_symm_spaces}]\label{thm:coarse_median_irred_symm_sp}
Suppose $G/K$ is an irreducible symmetric space of non-compact type with $d$ denoting its (unique up to scaling) $G$-invariant Riemannian distance function. In addition, suppose $G/K$ admits a convex projective model (see \cref{table:projective_symmetric_domains} for the complete list). Then $(G/K,d)$ admits a coarse $r$-median for any $r \geq \rank_{\Rb}(G)$, where $\rank_{\Rb}(G)$ denotes the real rank of $G$.  
    
    In particular, this implies that $\SL_n(\Rb)/\SO(n)$ admits a coarse $(n-1)$-median for all $n \geq 2$.
\end{theorem}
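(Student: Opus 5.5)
The plan is to transfer the coarse $r$-median structure from a convex projective model of $G/K$ to $(G/K,d)$ by quasi-isometry invariance, after identifying the projective simplex rank of the model with the real rank of $G$.

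\textbf{Step 1: transfer to the model.} Fix the convex projective model: an isomorphism $f:G\to \Aut(\Omega)^0$ and a $G$-equivariant diffeomorphism $\phi:G/K\to\Omega$. Transporting $d$ by $\phi$ gives the associated $G$-invariant Riemannian distance $d_f$ on $\Omega$, and $\phi:(G/K,d)\to(\Omega,d_f)$ is an isometry. By \cref{lem:riem_QI_hilbert}, $(\Omega,d_f)$ is quasi-isometric to $(\Omega,\hil)$ via the identity map. By \cref{prop: median quasi-isometry}, it therefore suffices to show that $(\Omega,\hil)$ admits a coarse $r$-median for every $r\ge \rank_{\Rb}(G)$.

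\textbf{Step 2: apply the main theorem.} The domain $\Omega$ is homogeneous, since $\Aut(\Omega)^0\cong G$ acts transitively. In particular it is quasi-homogeneous: take $K$ to be a single point in the definition. \cref{thm:main_coarse_r_median_on_pcd} then gives a coarse $r$-median on $(\Omega,\hil)$ for every $r\ge\rF(\Omega)$.

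\textbf{Step 3: compare ranks (main obstacle).} It remains to show $\rF(\Omega)\le\rank_{\Rb}(G)$; in fact I would prove equality. For $\SL_n(\Rb)/\SO(n)$ this is \cref{lem: flat rank for symmetric spaces}, which gives $\rF=n-1=\rank_{\Rb}\SL_n(\Rb)$, and this yields the ``in particular'' clause. For the other entries of \cref{table:projective_symmetric_domains} I would either check case by case or argue uniformly as follows.

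\emph{Lower bound.} A maximal $\Rb$-split torus $A\cong\Rb^k$, with $k=\rank_{\Rb}(G)$, is diagonalizable in the projective representation. The orbit $A\cdot x$ of a point $x$ on a maximal flat is the interior of a PES whose vertices are the weight-space directions. For the symmetric cone of positive definite matrices, these vertices are the rank-one diagonal matrices.

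\emph{Upper bound.} Let $S$ be a $k$-dimensional PES in $\Omega$. Because $\Omega$ is homogeneous, its interior, with the Hilbert metric, is isometric to a normed $\Rb^k$. One route is to show that a PES in a symmetric domain is contained in the orbit closure of a maximal split torus, via the structure of faces of symmetric cones: the faces are lower-rank symmetric cones, and the vertices of $S$ can be taken to be pairwise ``orthogonal'' extreme rays. This bounds $k$ by the Jordan-algebra rank, which equals the real rank.

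An alternative route avoids face structure. The interior of $S$ is a quasi-isometric embedding of $\Rb^k$ into $(\Omega,d_f)$. Since $(G/K,d)$ has asymptotic rank equal to $\rank_{\Rb}(G)$, no quasi-flat of higher dimension exists, so $k\le\rank_{\Rb}(G)$. This coarse argument is the version I would try first, since it is uniform across the table.

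Combining the three steps, $\rF(\Omega)\le\rank_{\Rb}(G)$, so for every $r\ge\rank_{\Rb}(G)$ the space $(\Omega,\hil)$, and hence $(G/K,d)$, admits a coarse $r$-median.
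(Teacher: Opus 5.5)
Your proposal is correct and follows the paper's proof. Transitivity gives quasi-homogeneity, \cref{thm:main_coarse_r_median_on_pcd} gives a coarse $r$-median on $(\Omega,\hil)$ for $r\ge\rF(\Omega)$, and \cref{lem:riem_QI_hilbert} together with \cref{prop: median quasi-isometry} transfers it to $(G/K,d)$. You read \cref{lem: flat rank for symmetric spaces} as covering only $\SL_n$, but it already gives $\rF(\Omega)=\rank_{\Rb}(G)$ for every entry of \cref{table:projective_symmetric_domains}. Its proof of the one inequality actually needed, $\rF(\Omega)\le\rank_{\Rb}(G)$, is exactly your asymptotic-rank argument, so your lower bound and face-structure route are unnecessary (and a PES is isometric to a normed $\Rb^k$ regardless of homogeneity).
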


In the case of reducible symmetric spaces where each factor admits a convex projective model, we have the following.

\begin{theorem}[\cref{sec:appl_to_symm_spaces}]
    \label{thm:coarse_median_red_symm_sp}
    Consider the symmetric space of non-compact type $Y=\prod_{i=1}^n\left( G_i/K_i \right)$ such that each $G_i/K_i$ satisfies the assumptions of \cref{thm:coarse_median_irred_symm_sp}. Let $d$ be a $(\prod_{i=1}^n G_i)$-invariant Riemannian distance function on $Y$. Then $(Y,d)$ admits a coarse $r$-median for any $r \geq r_*$, where $ r_*:= \max_{1 \leq i \leq n}\rank_{\Rb}(G_i)$.
\end{theorem}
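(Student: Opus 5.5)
The plan is to reduce to \cref{thm:coarse_median_irred_symm_sp} factor by factor and then combine the factors using the product and quasi-isometry results. Let $Y=\prod_{i=1}^n G_i/K_i$ and write $d_i$ for the $G_i$-invariant Riemannian distance on $G_i/K_i$. Since $r\ge r_*\ge \rank_{\Rb}(G_i)$ for every $i$, \cref{thm:coarse_median_irred_symm_sp} gives each $(G_i/K_i,d_i)$ a coarse $r$-median, with the same $r$ in every factor. This common choice of $r$ is the reason for taking $r\geq r_*$: in each factor we use the freedom ``for any $r\ge \rank_{\Rb}(G_i)$'' to raise the index up to $r$.

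Next I would apply \cref{prop: product of coarse medians} to conclude that $\prod_i (G_i/K_i)$ admits a coarse $r$-median. The product there carries some fixed product metric, say the $\ell^1$ or $\ell^\infty$ sum $d_{\mathrm{prod}}$ of the $d_i$; whichever convention that proposition uses, all such choices are bi-Lipschitz to each other.

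Finally I would compare $d_{\mathrm{prod}}$ with the given $\prod_i G_i$-invariant Riemannian distance $d$ on $Y$. The space $Y$ is the Riemannian product of the irreducible factors, up to rescaling each factor by a positive constant, because of the de Rham decomposition. An invariant metric on a product of irreducible symmetric spaces is a product of invariant metrics on the factors, and each of these is unique up to scaling. Hence $d=\big(\sum_i c_i d_i^2\big)^{1/2}$ for some constants $c_i>0$, which is bi-Lipschitz to $d_{\mathrm{prod}}$. The identity map is therefore a quasi-isometry, and \cref{prop: median quasi-isometry} transfers the coarse $r$-median to $(Y,d)$.

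The main obstacle is this last identification: one must check that an arbitrary $\prod_i G_i$-invariant Riemannian metric really splits as a weighted product. This holds because the isotropy representation of $\prod_i K_i$ on $\bigoplus_i \mathfrak p_i$ decomposes into pairwise inequivalent irreducible summands, so any invariant inner product is block diagonal and a multiple of the standard one on each block. The exception would be a factor that is locally $\Hb^2$-like with repeated isomorphic factors, where one would have to rule out cross terms between isomorphic summands. Here invariance under the full product group, with each $G_i$ acting only on its own factor, already forces the cross terms to vanish. Everything else is formal bookkeeping with the cited propositions.
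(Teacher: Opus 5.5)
Your proposal is correct and follows the paper's proof exactly: apply \cref{thm:coarse_median_irred_symm_sp} to each factor with the common index $r\geq r_*$, use \cref{prop: product of coarse medians} to get a coarse $r$-median for the $\ell_1$-product distance, and transfer it to $(Y,d)$ via bi-Lipschitz equivalence and \cref{prop: median quasi-isometry}. The one addition is that you justify the bi-Lipschitz comparison, which the paper simply asserts, by applying Schur's lemma to the isotropy representation of $\prod_i K_i$. Your aside about an $\Hb^2$-type exception is unnecessary: the summands are already pairwise inequivalent as $\prod_i K_i$-modules, and that alone rules out cross terms.
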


\subsubsection{Asymptotic cones} \label{sec:intro_asym_cone}
Many affine buildings arise as asymptotic cones of higher-rank symmetric spaces. Given our Theorems \ref{thm:coarse_median_irred_symm_sp} and \ref{thm:coarse_median_red_symm_sp}, it is natural to investigate the existence of coarse $r$-medians on affine buildings. We prove that the coarse $r$-median structure can be passed onto asymptotic cones, under a certain technical condition on the function $\Psi(\delta,D)$.  
\begin{proposition}[\cref{cor:coarse_median_on_asymp_cone}]\label{prop:median_on_asym_cone_intro}
    Suppose $(X,d)$ is a geodesic metric space equipped with a coarse $r$-median $(F,\ctrlf,\delta_0,\lambda,\Psi)$ that is $\Psi$-affine (i.e. the function $(\delta,D) \mapsto \Psi(\delta,D)$ is affine in both coordinates). Then any asymptotic cone of $X$ admits an $r$-median structure.
\end{proposition}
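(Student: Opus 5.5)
We follow the standard ultralimit scheme. Fix a non-principal ultrafilter $\omega$, scaling factors $s_n\to\infty$ and basepoints $p_n\in X$. Let $X_\omega$ be the asymptotic cone, i.e. the ultralimit of $(X,d/s_n,p_n)$. An ($\omega$-exact) $r$-median structure on $X_\omega$ should consist of filling maps $F^\omega_k:X_\omega^{k+1}\to\Pc(X_\omega)$ satisfying the exact (non-coarse) versions of (a), (b), (c). For these conditions all additive constants vanish, and the multiplicative constants survive only through the affine functions. For points $\mathbf{x}=(x_0,\dots,x_k)$ with $x_i=[(x_i^n)]$, we define
\[
F^\omega_k(\mathbf{x}) := \omegalim F_k(x_0^n,\dots,x_k^n),
\]
the set of all $\omega$-limits of sequences $y_n\in F_k(x^n_0,\dots,x^n_k)$ with $d(y_n,p_n)=O(s_n)$. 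This set is closed, and by the bounded-distance conditions in the coarse filling it does not depend on the chosen representatives. The first step is to pass each property of the coarse $r$-filling $(F,\ctrlf)$ to the cone. Since $\ctrlf$ may be taken affine, say $\ctrlf(t)=at+b$, every coarse containment $F_k\subset \Nc_{\ctrlf(\cdot)}(F_{k+1})$ becomes an exact containment once we rescale by $s_n$, because $b/s_n\to0$. Likewise, the coarse convexity of the images becomes an exact convexity statement, with the multiplicative constant $a$ attached to it.

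The second step is centroid non-emptiness, condition (b). For $\delta\ge0$ in the cone, we apply (b) at scale $\delta s_n$, which is at least $\delta_0$ for large $n$, to the tuple $(x^n_i)$. This produces points in $C_{\lambda(\delta s_n)}(x^n_0,\dots,x^n_{r+1})$, which lie within $O(s_n)$ of the $x_i^n$. Writing $\lambda(t)=\alpha t+\beta$ and rescaling, their limits lie in $C^\omega_{\alpha\delta}(\mathbf{x})$. In particular we get $C^\omega_0\neq\emptyset$, after also taking the limit $\delta\to0$ along a diagonal sequence; this uses that nested nonempty closed neighborhoods in the cone still intersect, which follows from countable saturation of ultralimits. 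So the cone has an exact $\lambda_\omega$-centroid with $\lambda_\omega(\delta)=\alpha\delta$.

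The third step is the diameter bound (c), and I expect it to be the main obstacle. Here $\Psi$-affineness is essential: we have $\Psi(\delta,D)\le c_1\delta+c_2D+c_3$. We need genericity in the cone to lift to genericity in $X$ at the scales $(\delta s_n, Ds_n)$, for $\omega$-a.e. $n$. Suppose $\mathbf{x}$ is $(\delta,D)$-generic in $X_\omega$, meaning the intersections of $\delta$-neighborhoods of sub-fillings indexed by sets $I_i$ with empty common intersection have diameter at most $D$ unless all $|I_i|=r+1$. We must show that the tuples $(x^n_i)$ are $(\delta' s_n, D's_n)$-generic for slightly enlarged $D'$ and slightly shrunk $\delta'$. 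The proof goes by contradiction. If $\omega$-almost every $x^n$ were non-generic, then, since there are only finitely many combinatorial patterns $(I_0,\dots,I_q)$, some fixed pattern would occur $\omega$-a.s. The corresponding intersections of neighborhoods have diameter greater than $D's_n$. Taking limits of two far-apart points in these intersections would give non-genericity in the cone, with parameters degraded only by the affine constants; for this we need an exact containment of limits of neighborhoods into neighborhoods of limits. Once genericity lifts, the bound $\diam C_{\lambda(\delta s_n)}\le \Psi(\delta s_n,D s_n)$ rescales to $\diam C^\omega\le c_1\delta+c_2D$. This gives an affine, homogeneous $\Psi_\omega$.

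The remaining technical care is in the bookkeeping of parameter shifts. Genericity in the cone must be defined with a little slack, or we use monotonicity of $\Psi$, so that the degraded parameters still yield a valid conclusion. Finally we check that geodesicity of $X$ makes $X_\omega$ geodesic, as the definition requires. This assembles $(F^\omega,\ctrlf_\omega,0,\lambda_\omega,\Psi_\omega)$ into an $r$-median structure on $X_\omega$.
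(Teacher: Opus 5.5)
Your route is the paper's: rescale $\ctrlf,\lambda,\Psi$ by $s_n$, take the ultralimit of the fillings, transfer (F0)--(F4) and (CM1), and prove (CM2) by lifting genericity from the cone to $\omega$-almost every level through the finitely many index patterns (\cref{lem: generic points ultralimit}, \cref{prop: median for ultralimit}). One case, however, is not covered by your mechanism.

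An $r$-median needs $\delta_0^\omega=0$, so (CM2) must also hold at $\delta=0$. There $\lambda_\omega(0)=0$, and the centroid is the exact intersection $\bigcap_j F^\omega_r(\mathbf a_0,\dots,\widehat{\mathbf a_j},\dots,\mathbf a_{r+1})$. Lifting $(0,D)$-genericity with a ``slightly shrunk'' $\delta'$ only gives genericity at level $n$ at scale $0$, and that is useless. A point of the exact intersection is represented by $x_n$ with $d(x_n,F_r(\cdots))=o(s_n)$ along $\omega$. To place $x_n$ in a level-$n$ centroid controlled by (CM2), you need genericity at a scale $\eta s_n$ with a fixed $\eta>0$; in any case (CM2) in $X$ is only available at scales $\ge\delta_0$. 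Monotonicity of $\Psi$ does not help, because $(0,D)$-genericity is weaker than $(\eta,D)$-genericity.

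The missing step is an upgrade in the opposite direction: for each $\varepsilon>0$ there is $\eta>0$ such that $(a^0_n,\dots,a^{r+1}_n)$ is $(\eta s_n,(D+\varepsilon)s_n)$-generic for $\omega$-a.e.\ $n$. The paper proves this by contradiction in the second case of the proof of \cref{prop: median for ultralimit}:
\begin{itemize}
\item Otherwise, for each $m$ some index pattern witnesses non-genericity at scale $s_n/m$, $\omega$-a.s.
\item One fixed pattern then works for all $m$.
\item A diagonal choice $m(n)\to\infty$ produces two points of $\bigcap_j F^\omega_{|I_j|-1}(V_j)$ at distance $\ge D+\varepsilon$, contradicting $(0,D)$-genericity.
\end{itemize}
With this in hand, every point of $\bigcap_j F^\omega_r(\cdots)$ lies $\omega$-a.s.\ in the level-$n$ $\lambda(\eta' s_n)$-centroid, since $\lambda(\eta' s_n)/s_n\to\alpha\eta'>0$. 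Letting first $\eta'\to0$ and then $\varepsilon\to0$ gives the bound $c_2D$. This is the same diagonal/saturation device you used for (CM1), but it must be applied here too.

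Three smaller corrections.
\begin{itemize}
\item For $\delta>0$ you cannot ``define genericity with slack''; the definition is fixed. The slack instead comes from the open neighbourhoods. A point of $C^\omega_{\alpha\delta}$ satisfies $d_\omega(\mathbf x,F^\omega_r(\cdots))<\alpha\delta-\mu$ for some $\mu>0$. That is what lets you put $x_n$ into the level-$n$ centroid at the shrunk scale $(\delta-\nu)s_n$, where lifted genericity holds.
\item Geodesicity is not needed to make the cone geodesic; the definition of an $r$-median does not require it. Its actual role is to make $\ctrlf$ affine via \cref{prop: affine_control_function weak}, which your first step uses without saying so.
\item For (CM1) you do not need a diagonal in $\delta$. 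The fixed radius $\lambda(\delta_0)$ rescales to $\lambda(\delta_0)/s_n\to0$, which directly gives a point of the exact intersection.
\end{itemize}
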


Higher-rank symmetric spaces are geodesic spaces and our \cref{thm:coarse_median_irred_symm_sp} produces coarse $r$-medians on many of them. Then, by virtue of \cref{prop:median_on_asym_cone_intro}, it suffices to verify the $\Psi$-affine condition to get a $r$-median structure on the corresponding affine buildings. However this turns out be challenging and we could not verify the $\Psi$-affine condition.  So, for example, we do not know whether $\tilde{A}_2$ buildings admit a coarse $2$-median. However, we must remark that ours is only a sufficient condition -- there could be other independent constructions that do not require this  assumption.

\subsection{Rank of a symmetric space and coarse higher medians}
\label{sec:rank_vs_coarse_median}
The coarse $r$-median on a symmetric space often records finer information than the rank of the symmetric space. To illustrate this, let  us compare and contrast two examples of rank-2 symmetric spaces: the reducible example $\Hb^2 \times \Hb^2$ and the irreducible example $\SL_3(\Rb)/\SO(3)$. It is a classical fact that $\Hb^2 \times \Hb^2$ admits a coarse $1$-median (\cite{BowditchCoarse} and \cref{prop:equiv_coarse_1_median}). On the other hand, by \cref{thm:coarse_median_irred_symm_sp}, $\SL_3(\Rb)/\SO(3)$ admits a coarse $2$-median. However, $\SL_3(\Rb)/\SO(3)$ cannot admit a coarse $1$-median. This non-existence of coarse $1$-median follows from the following theorem of Haettel.
\begin{theorem}[\cite{Haettel}]
\label{thm:haettel_no_coarse_1_median}
    Let $X$ be a symmetric space of non-compact type and $d$ be an invariant Riemannian distance on $X$. There exists a coarse $1$-median on $(X,d)$ if and only if $X=\prod_{i=1}^n X_i$ where each $X_i$ is a rank one symmetric space of non-compact type and $n \geq 1$.
\end{theorem}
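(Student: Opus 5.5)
For the forward direction I would rely on existing results and not argue from scratch. If $X=\prod_{i=1}^n X_i$ with each $X_i$ a rank one symmetric space of non-compact type, then each $(X_i,d_i)$ is Gromov hyperbolic. Bowditch showed that hyperbolic spaces carry coarse medians, of rank one in his sense \cite{BowditchCoarse}. A finite product of coarse median spaces is coarse median, with the product median taken coordinatewise. An invariant Riemannian distance on $X$ is bi-Lipschitz to the $\ell^2$ (hence $\ell^1$) combination of the factor distances, and Bowditch's notion is quasi-isometry invariant. Together these give a coarse $1$-median on $(X,d)$; in the language of this paper one can instead invoke \cref{prop:equiv_coarse_1_median}, \cref{prop: product of coarse medians} and \cref{prop: median quasi-isometry}.

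For the converse, suppose $(X,d)$ carries a coarse $1$-median and, for contradiction, that some de Rham factor $X_1$ is irreducible of rank $k\geq 2$. The plan is to pass to asymptotic cones. First I would note that the coarse median restricts, up to bounded error, to a coarse median on $X_1$. This uses a median-quasiconvex retraction onto a factor, or simply that $X_1$ is quasi-isometrically embedded as a median-quasiconvex subset, which I would check via the coarse median interval. Next I would show the coarse median has finite rank, bounded by $\dim X$, by controlling quasi-cubes through polynomial volume growth of balls in flats. Bowditch's theorem then says that any asymptotic cone $X_1^\omega$, which is a thick irreducible Euclidean building of rank $k$ by Kleiner--Leeb, inherits a genuine topological median algebra structure. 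This median is Lipschitz, of rank at most $\dim X$, and its intervals are compatible with the metric.

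Next I would restrict attention to an apartment $A\cong\Rb^k$ of $X_1^\omega$. Here I would use the classification of rank-$k$ Lipschitz medians on $\Rb^k$: such a median is the $\ell^1$-type median associated with $k$ linearly independent linear forms, i.e. coordinatewise medians in some basis. I also need the median to preserve apartments, i.e. an apartment is median-convex up to the cone limit. That should follow because flats in $X_1$ are quasi-isometrically embedded and Morse-like in the higher-rank sense, but it requires care. Granting it, consider the median "walls": the kernels of the $k$ linear forms. I would argue that they must be invariant under the local symmetries of the building. At branching points, where several apartments share a half-apartment, the medians computed in different apartments must agree on their overlaps. This should force each median hyperplane to be a wall of the spherical Weyl arrangement, and the whole set of $k$ hyperplanes to be invariant under the Weyl group $W$.

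The contradiction then comes from the Weyl group. For irreducible $W$ of rank $k\ge 2$, the reflecting hyperplanes are not mutually orthogonal: the root system is not of type $A_1^k$. So no $W$-invariant family of exactly $k$ independent hyperplanes among the walls gives a median. Concretely, $W$ acts irreducibly, and a $W$-invariant set of $k$ hyperplanes would produce a decomposition making $W$ reducible. The main obstacle is the previous step: forcing the median on an apartment to be aligned with the Weyl walls and to be coherent across branching. I would attack it by exploiting thickness. Take three apartments pairwise sharing half-apartments bounded by a wall $H$. The median of points chosen in the three open halves must lie on $H$. This forces $H$ to be median-convex, hence to be one of the coordinate hyperplanes of the $\ell^1$ structure, and applying this to every wall gives more than $k$ coordinate directions when $W$ is irreducible.
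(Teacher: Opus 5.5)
Your forward direction is fine; it is the classical part (Bowditch medians on the hyperbolic factors, products, quasi-isometry invariance). The paper gives no proof of this theorem at all; it is quoted from \cite{Haettel}. So your converse has to stand on its own, and it has a genuine gap exactly where the content lies: nothing in it ties the median $\mu_\omega$ on the asymptotic cone to the building structure. Bowditch's $\mu_\omega$ is merely a Lipschitz median of bounded rank. It is not invariant under any automorphism of the building, so there is no a priori reason for it to ``see'' apartments, walls or branching. Each step that needs this is either assumed or false.

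(i) You explicitly grant that apartments are median subalgebras, and the appeal to flats being ``Morse-like'' does not supply this.

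(ii) The claimed classification of Lipschitz rank-$k$ medians on $\Rb^k$ is false. For any bi-Lipschitz homeomorphism $\phi$ of $\Rb^k$, the map $(a,b,c)\mapsto\phi^{-1}\bigl(\mathrm{med}(\phi(a),\phi(b),\phi(c))\bigr)$, with $\mathrm{med}$ the coordinatewise median, is a Lipschitz median of rank $k$. Its convex half-spaces are bounded by the generally non-planar hypersurfaces $\phi^{-1}(\{x_i=c\})$. Such examples survive in asymptotic cones. On $\Rb^2$ take $\phi(x,y)=(x,y+x\sin\log|x|)$. Each asymptotic cone of this example (fixed base point) carries the median conjugate to $\mathrm{med}$ by $(x,y)\mapsto(x,y+x\sin(\log|x|+\theta))$, for some $\theta$ depending on $\omega$ and the scaling sequence. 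So there are no ``kernels of linear forms'' to compare with Weyl walls unless you add a homogeneity argument, which a non-equivariant coarse median does not provide.

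(iii) The claim that the median of three points taken in the three open half-apartments at a wall $H$ lies on $H$ is unsupported. Nothing confines the median intervals $\{\mu_\omega(a,x,b)\}$ to the union of those half-apartments. Even an interval meeting $H$ would not put $\mu_\omega(a,b,c)$ on $H$. The further step ``hence $H$ is median-convex'' is not argued, and the $W$-invariance of the ``median walls'' is asserted without any mechanism.

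Your final count is correct: a Weyl group not of type $A_1^k$ has more than $k$ wall directions. But it only applies once one knows that every wall is convex for a linear $\ell^1$-type median on each apartment, and that is essentially the whole theorem.

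The preliminary reductions are also off, though they matter less. A coarse median on $X_1\times X_2$ need not respect the product, so there is no reason for $X_1\times\{p\}$ to be median-quasiconvex. You can drop this step: the cone of $X$ is itself a building whose Weyl group is not of type $A_1^r$, which is all your count uses. The rank bound via ``polynomial volume growth of balls in flats'' does not work either. Symmetric spaces have exponential growth, and the cubes coming from Bowditch's approximating median algebras need not lie near flats. An upper bound would have to come from something like the dimension of the cones (Kleiner--Leeb) or the asymptotic rank (\cref{fact:asrk_equals_reuc}). In any case, the bound $\dim X$ that you state is not the rank-$k$ input your apartment step actually uses.
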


To a metric space $(X,d)$, we now associate constant $r_0(X,d)$ which is the minimal value of $r$ for which $(X,d)$ admits a coarse $r$-median. Then, by the above discussion, $$r_0(\Hb^2\times\Hb^2,d_{\Hb^2\times\Hb^2})=1 \text{ and } r_0(\SL_3(\Rb)/\SO(3),d_{\SL_3(\Rb)/\SO(3)})=2,$$ where $d_X$ denotes the natural invariant Riemannian distance on $X$ in both examples. In the reducible case of $\Hb^2 \times \Hb^2$, rank equals 2 while $r_0=1$. This maybe interpreted as the coarse median structure detecting the finer information that $\Hb^2\times \Hb^2$ is a product of two underlying rank-1 spaces. In the irreducible case of $\SL_3(\Rb)/\SO(3)$, $r_0$ coincides with rank.

The equality of $r_0$ with rank in the irreducible cases of $\SL_2(\Rb)$ and $\SL_3(\Rb)$ case inspires the following question whose answer is unknown to the authors.
\begin{question}
\label{ques:r0_vs_rPES}
    Is $r_0(\SL_d(\Rb)/\SO(d),d_{\SL_d(\Rb)/\SO(d)})=\rF(\SL_d(\Rb)/\SO(d))=d-1$ for all $d\geq 2$?
\end{question}
Indeed, by \cref{thm:coarse_median_irred_symm_sp}, we know that $r_0 \geq d-1$. So the missing ingredient is a theorem on non-existence of coarse $r$-medians when $r<d-1$, akin to Haettel's result for coarse $1$-medians. Such a non-existence result, if true, would be very special to the symmetric space case. Indeed, the analogue of \cref{ques:r0_vs_rPES} (i.e. the equality of $r_0$ and $\rF$) for a general divisible domain has a negative answer. There are divisible domains where $r_0(\Omega,\hil)=1$ but $\rF(\Omega) \geq 2$; see \cref{sec:coarse_1_median_in_conv_proj_geom}.

\subsection{Non-existence of coarse $r$-medians} 
\label{sec:bridson_discussion}
While discussing non-existence of coarse $r$-medians, it is extremely important to remember the quasi-isometry class of the metric in question.  Note that for any $d\geq 3$, there exists a diffeomorphism $\Theta_d: \SL_d(\Rb)/\SO(d) \to \Rb^{\frac{d^2+d-2}{2}}$. Moreover $(\Rb^k,\ell_1)$ always admits a coarse $1$-median \cite{BowditchCoarse}. Then, with the pull-back metric $d_1=\Theta_d^* \circ \ell_1$, $(\SL_d(\Rb)/\SO(d), d_1)$ admits a coarse $1$-median. However, the catch is that $\Theta_d$ is an arbitrary homeomorphism and is not $\SL_d(\Rb)$-invariant. In absence of this $\SL_d(\Rb)$-invariance,  $d_1$ is no longer in the quasi-isometry class of the invariant Riemannian distance on $\SL_d(\Rb)/\SO(d)$. Hence, this coarse 1-median is not of interest to us. In fact, \cref{thm:haettel_no_coarse_1_median} implies that such these abstract diffeomorphisms $\Theta_d$ will never be $\SL_d(\Rb)$ invariant. We thank Martin Bridson for the conversation that led us to \cref{sec:bridson_discussion}.

\subsection{Factor rank} In \cite{BowditchCoarse}, Bowditch discusses the notion of `rank' for coarse 1-median spaces. To avoid any confusion with other notions of rank in this paper, we will call this the `factor rank'. The name `factor rank' is motivated by the following example: both $\Hb^2$ and $\Hb^2 \times \Hb^2$ are coarse 1-medians spaces, but $\Hb^2$ has rank 1 while $\Hb^2 \times \Hb^2$ has rank 2. In the case of coarse $r$-medians as well, it is natural to expect an analogue. For example, let $Z:=\SL_4(\Rb)/\SO(4)$ and note that both $Z$ and $Z \times Z$  are coarse 3-median spaces. There should be a notion of `factor rank' that takes the value 1 for $Z$, and 2 for $Z \times Z$. However, this `factor rank' will be distinct from the projective simplex rank, e.g. $\rF(Z)=3$ while its factor rank should be 1. However, we do not yet have a definition of this `factor rank' for coarse $r$-median spaces. We hope to revisit this in a later paper.

\subsection{Bader-Lazarovich examples} \label{sec:bader_lazarovich_examples}
 In \cite{BaderLazarovich2023}, Bader-Lazarovich  give a well-defined notion of 2-median for CAT(0) polygonal complexes. Similar to our approach, they introduce 2-fillings that play well with the ambient CAT(0) metric. In particular, for any 4-tuple, they show that the coarse centroid is either an interval (`non-generic' configuration in our parlance) or a single point (`generic' configuration in our parlance). In spite of these similarities, their work doesn't exactly fit into our framework. To be precise, the 2-fillings they construct, fail to satisfy our weak convexity condition $(F4)$ of \cref{defn:coarse_filling_weak}; see \cite[Example 4.2]{BaderLazarovich2023}.

\subsection{J\o{}rgensen-Lang higher-rank hyperbolicity} \label{sec:jorgensen-lang_hyp}
J\o{}rgensen-Lang \cite{JorgensenLang} introduced a combinatorial notion of higher-rank $(n,*)$- hyperbolicity. The relationship between this notion and our coarse $r$-medians becomes nebulous, once we venture beyond the Gromov hyperbolic world.
For instance, consider a quasi-homogeneous properly convex domain $(\Omega,\hil)$ and let $r=\rF(\Omega)$. Here, the property of having slim $(r+1)$-simplices may intuitively suggest that the space is $(r,*)$-hyperbolic. However, this expectation is obstructed by the geometry of PES, as we now explain.

By \cite[Prop. 2.6]{JorgensenLang}, a normed vector space is $(n,*)$-hyperbolic if and only if it is finite-dimensional with a polyhedral norm, in which case the minimal such $n$ equals the number of pairs of opposite facets of the unit ball. Then a 2d PES in $\Omega$ is $(3,*)$-hyperbolic, but not $(2,*)$-hyperbolic. This is because the Hilbert metric on a 2d PES is induced by a hexagonal norm.  Now consider a divisible properly convex domain $\Omega$ with $\rF(\Omega)=2$ \cite{YB2006}. Then $(\Omega,\hil)$ cannot be $(2,*)$-hyperbolic (see above) but it admits a coarse 2-median (Th. \ref{thm:main_coarse_r_median_on_pcd}).

\subsection*{Acknowledgements} MI thanks Martin Bridson for enlightening conversations, especially about \cref{sec:bridson_discussion}. GR thanks Stefano Francaviglia and Beatrice Pozzetti for many illuminating discussions. The authors thank Thomas Haettel, Graham Niblo, Jiawen Zhang, Pierre-Louis Blayac, and Elia Fioravanti for their comments. MI and GR acknowledge support from DFG  338644254 and INdAM-GNSAGA respectively. Both authors thank Anna Wienhard and MPI-MIS (Max Planck Institute for Mathematics in the Sciences) for facilitating this collaboration. This work started when both authors were at MPI-MIS. MI also thanks University of Bologna and the Conference on Geometry \& Dynamics 2025 (TIFR-Mumbai and London Mathematical Society global meeting). 

\subsection*{Notational conventions} 
\hypertarget{link:notn_conv}
Suppose $(X,d)$ is a metric space. 
\begin{itemize}
    \item If $x \in X$ and $r>0$, we write $B_d^X(x,r):=\{ w \in X \mid d(x,y)<r\}$.
    \item  If $\emptyset \neq F \subset X$ and $r>0$, $\Nc^X_r(F):=\{ w \in X \mid d(w,F)<r\}$. 
    \item We set the convention that $\Nc^X_0(F)=F$ for any $\emptyset \neq F \subset X$.
    \item If $\emptyset \neq A \subset X$, $\diam_d(A):= \sup \{ d(x,y): x,y\in A \}$, and $\diam_d(\emptyset)=0$. 

\end{itemize}   
If $(X,d)$ is clear from context, we will write $B(x,r)$ (or $B_d(x,r)$), $\Nc_r(F)$, and $\diam(A)$.

\section{Preliminaries on Convex Projective Geometry}

\subsection{Properly convex domains and convexity}  

Recall that a \emph{properly convex domain} is a non-empty open set $\Omega\subset\RP$ that is a bounded convex domain in some affine chart of $\RP$. We equip $\Omega$ with the subspace topology from $\RP$ and denote by $\overline{\Omega}$ its closure in $\RP$. Further $\partial \Omega:=\overline{\Omega}-\Omega$. 

Given any distinct $x,y \in \RP$, there are two connected components of $\Pb(\Span\{x,y\})\setminus\{x,y\}$, so that the notion of a `projective line segment joining $x$ and $y$' is not well-defined. However, in presence of a properly convex domain $\Omega$, we can make a well-defined choice. 
For any $x,y \in \overline{\Omega}$, there is exactly one connected component of $\Pb(\Span\{x,y\})-\{x,y\}$ which intersects $\overline{\Omega}$. Let $[x,y]$ denote the closure of this unique connected component that intersects $\overline{\Omega}$. We call $[x,y]$ \emph{the} `projective segment joining $x$ and $y$'.  Further, we call $(x,y):=[x,y]-\{x,y\}$ \emph{the} `open projective segment joining $x$ and $y$'. By convention, we set $[x,x]=\{x\}$ and $(x,x)=\emptyset$.

 Using this notion of projective segments joining points in $\overline{\Omega}$, we have a well-defined notion of convex hull of points in $\overline{\Omega}$. 
\begin{definition}\label{def: convex_hull_pcd}
    For any non-empty subset $X$ of a properly convex domain $\Omega$, we denote by $\CH_\Omega(X)$ the intersection of all convex subsets of $\overline{\Omega}$ that contains $X$. 
\end{definition}
In general $\CH_\Omega(X)$ is a subset of $\overline{\Omega}$ and it may lie entirely in $\partial \Omega$ (see \cref{fig:simplex_defn_example_1}(a)). If we need to extract the part that lies in $\Omega$, we will explicitly write $\CH_\Omega(X) \cap \Omega$.
\begin{notation}
\label{notn:CH}
    For the sake of cleaner notation, we will at times abuse the convex hull notation in the following ways:
    \begin{itemize}
        \item for a $(k+1)$-tuple $(x_0,\dots,x_k)\in \overline{\Omega}^{k+1}$, we will write $\CH_{\Omega}(x_0,\dots,x_k)$ to denote $\CH_{\Omega}(\{x_0,\dots,x_k\})$.
        \item if $A,B \subset \overline{\Omega}$, we will write $\CH_{\Omega}(A,B)$ to denote $\CH_{\Omega}(A \cup B)$. In particular, if $A=\{a\}$, $\CH_{\Omega}(a,B):=\CH_{\Omega}(\{a\} \cup B)$.   
    \end{itemize}
\end{notation}

\subsection{Hilbert metric}
 Given $x,y\in\Omega$, let $x_{\infty},y_{\infty} \in \partial \Omega$ be such that $\Pb(\Span\{x,y\}) \cap \overline{\Omega}=[x_{\infty},y_{\infty}]$ and, along the projective segment $[x_\infty,y_\infty]$, the points are ordered as $x_\infty,x,y,y_\infty$. Then the distance between $x$ and $y$ in the \emph{Hilbert metric} $\hil$ is defined as
$$
\hil(x,y)=\dfrac{1}{2}\log{[x_\infty,x,y,y_\infty]},
$$
where $[x_\infty,x,y,y_\infty]=\dfrac{\abs{y-x_\infty}}{\abs{x-x_\infty}}\dfrac{\abs{x-y_\infty}}{\abs{y-y_\infty}}$ is the cross-ratio of the four points lying on a copy of $\Rb\Pb^1$. Note that the metric depends only on a choice of cross-ratio $[\cdot,\cdot,\cdot,\cdot]$ on $\Rb\Pb^1$ and we fix this choice once and for all in this paper.

 Moreover, the metric space $(\Omega,\hil)$ (sometimes called a Hilbert geometry) is a complete and proper metric space whose induced topology coincides with the subspace topology from $\RP$. Since projective transformations preserve the cross-ratio of four points on a $\Rb\Pb^1$, the automorphism group $\Aut(\Omega)$ acts by isometries on $(\Omega,\hil)$.

For the Hilbert metric, the projective segments $[x,y]$ are geodesics joining $x$ and $y$. However, in general, there could be many more geodesics for $\hil$ that join $x$ and $y$. The uniqueness of geodesics depends on the geometry of the boundary $\partial\Omega$.

\begin{proposition}[{\cite[Proposition 2]{deLaHarpe}}]\label{prop: uniquely geodesic}
    Let $\Omega\subset\rpd$ be a \pcd\ and $x \neq y\in\Omega$. Let $x_\infty,y_\infty\in\partial\Omega$ be such that $[x_\infty,y_\infty]=\Pb(\Span\{x,y\}) \cap \overline{\Omega}$. Then $[x,y]$ is the unique $\hil$-geodesic joining  $x$ and $y$ if and only if $x_\infty$ and $y_\infty$ are not contained in coplanar open projective segments in $\partial\Omega$.
\end{proposition}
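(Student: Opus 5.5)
The plan has two directions. Working in an affine chart containing $\overline{\Omega}$, let $L=\Pb(\Span\{x,y\})$, so that $L\cap\overline{\Omega}=[x_\infty,y_\infty]$, with the four points ordered $x_\infty,x,y,y_\infty$.

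\textbf{Sufficiency of the non-coplanarity condition.} Suppose there is a point $z\in\Omega\setminus[x,y]$ with $\hil(x,z)+\hil(z,y)=\hil(x,y)$. I will show that this forces $x_\infty$ and $y_\infty$ into coplanar open segments of $\partial\Omega$.

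Work in the projective plane $P$ spanned by $x,y,z$. The slice $P\cap\Omega$ is a properly convex planar domain, and its Hilbert metric agrees with the restriction of $\hil$. So we reduce to dimension two.

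Let $a,a'$ be the endpoints of the line $xz$ on $\partial\Omega$, and $b,b'$ those of the line $zy$. The classical argument (de la Harpe, or Busemann's proof) projects from suitable intersection points:
\begin{itemize}
\item take $p$ to be the intersection of the lines $a x_\infty$ and $b' y_\infty$ (or the analogous pair);
\item perspectivity from $p$ carries the segment through $x,z$ and the segment through $z,y$ onto $L$;
\item cross-ratios are preserved, and convexity gives $[x_\infty,x,\cdot,\cdot]\le[a,x,z,a']$-type comparisons.
\end{itemize}
This yields $\hil(x,z)+\hil(z,y)\ge\hil(x,y)$, the triangle inequality. Equality holds exactly when the boundary arcs used in the comparison are themselves straight, i.e. when $[a,x_\infty]$ and $[y_\infty,b']$ (suitably chosen) lie in $\partial\Omega$.

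Since $z\notin L$, these segments are non-degenerate and lie in the plane $P$. They are therefore coplanar open segments of $\partial\Omega$ containing $x_\infty$ and $y_\infty$ in their interiors, or at least meeting them. The bookkeeping of when $x_\infty$ is interior versus an endpoint needs care: one perturbs $z$ slightly to one side to get segments containing $x_\infty,y_\infty$ in their relative interiors.

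\textbf{Necessity of the condition.} Conversely, suppose $x_\infty\in(a_1,a_2)\subset\partial\Omega$ and $y_\infty\in(b_1,b_2)\subset\partial\Omega$ with both segments in a common plane $P$ (which then contains $L$). I construct an explicit second geodesic.

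Pick a point $p$ on the intersection of the lines $a_1b_1$-type extensions; concretely, take the two lines spanned by the boundary segments, meeting at $p\in P$. Choose $z$ near $L$ on the appropriate side. Perspectivity from $p$ maps the lines $xz$ and $zy$ so that their boundary endpoints land on the straight boundary segments. The cross-ratio computation then gives equality in the triangle inequality, so $\hil(x,z)+\hil(z,y)=\hil(x,y)$.

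The piecewise projective path $x\to z\to y$ is then a geodesic distinct from $[x,y]$. This step is a direct computation with the cross-ratio.

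\textbf{Main obstacle.} I expect the delicate part to be the equality analysis in the first direction. One must track precisely which boundary segments become straight, and handle the degenerate cases:
\begin{itemize}
\item $p$ at infinity, i.e. the relevant lines are parallel;
\item $x_\infty$ an endpoint rather than an interior point of the straight piece.
\end{itemize}
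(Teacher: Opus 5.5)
The paper does not prove this statement; it quotes it from de la Harpe. Your outline is the classical perspectivity/cross-ratio argument, but it has a genuine gap in the equality analysis of the first direction, which is where the content of the proposition lies.

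\textbf{The equality analysis.} Take $a$ and $b'$ to be the endpoints of the lines $zx$ and $zy$ lying beyond $x$ and beyond $y$. With your center $p=(ax_\infty)\cap(b'y_\infty)$, the perspectivities onto $L$ send $a\mapsto x_\infty$ and $b'\mapsto y_\infty$ \emph{by construction}, so $a$ and $b'$ carry no information about equality. Write $w,\alpha',\beta$ for the images of $z,a',b$. Then $\hil(x,z)=\tfrac12\log[x_\infty,x,w,\alpha']$ and $\hil(z,y)=\tfrac12\log[\beta,w,y,y_\infty]$. So equality holds iff $\alpha'=y_\infty$ and $\beta=x_\infty$, i.e.\ iff $b$ lies on the line $ax_\infty$ and $a'$ lies on the line $b'y_\infty$.

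Your criterion ``$[a,x_\infty]$ and $[y_\infty,b']$ lie in $\partial\Omega$'' is necessary but not sufficient. Take the convex pentagon with vertices $(0,0),(-5,-5),(15,-5),(10,0),(5,\tfrac52)$, and $x=(2,0)$, $y=(8,0)$, $z=(5,1)$. Then $a=(-1,-1)$ and $b'=(11,-1)$ lie on the edges through the corners $x_\infty=(0,0)$ and $y_\infty=(10,0)$. Yet $\hil(x,z)+\hil(z,y)=\log\frac{16}{3}>\log 4=\hil(x,y)$.

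Because you only extracted a one-sided segment ending at $x_\infty$, you then appeal to perturbing $z$. That step is not valid: the identity $\hil(x,z)+\hil(z,y)=\hil(x,y)$ is known only for points of the given geodesic, and it is not preserved when $z$ is moved.

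\textbf{The fix.} No perturbation is needed.
\begin{itemize}
\item First record that $z\notin L$, since on $L\setminus[x,y]$ the triangle inequality is strict. This is also needed before you can form the plane $P$.
\item Then, in $P$, the points $a$ and $b$ lie strictly on opposite sides of $L$, and so do $a'$ and $b'$.
\item The correct equality condition therefore gives $x_\infty\in(a,b)$ and $y_\infty\in(a',b')$.
\item A chord of $\overline{\Omega}$ whose relative interior contains a point of $\partial\Omega$ lies entirely in $\partial\Omega$ (take a supporting hyperplane at that point).
\item Hence $(a,b)$ and $(a',b')$ are coplanar open segments of $\partial\Omega$ containing $x_\infty$ and $y_\infty$ in their interiors.
\end{itemize}
Equivalently, you can project from the standard center $c=(ab)\cap(a'b')$. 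Convexity puts $ab\cap L$ in $[x_\infty,x)$ and $a'b'\cap L$ in $(y,y_\infty]$. This gives the inequality, with equality iff these two points are $x_\infty$ and $y_\infty$.

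\textbf{The converse.} Your converse is right in outline, but ``$z$ near $L$ on the appropriate side'' should read ``$z\notin L$ close to a point of the open segment $(x,y)$'' (either side of $L$ works). This choice is what forces $a,b\in(a_1,a_2)$ and $a',b'\in(b_1,b_2)$, so that projecting from the intersection of the two boundary lines is an exact cross-ratio match. It also puts the projected point $w$ in $(x,y)$. By contrast, for $z$ close to $(x_\infty,x)$ the same computation yields a strict inequality.
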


Furthermore, we have the following lemma about extendability of projective geodesics. 
\begin{lemma}
\label{lem:top_char_bdry_points}
    Suppose $\Omega\subset \RP$ is a properly convex domain, $z \in \Omega$, and $x \in \overline{\Omega}$. Then $x \in \Omega$
    if and only if there exists $y \in \overline{\Omega}$ such that $x \in [z,y)$.
\end{lemma}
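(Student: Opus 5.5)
The plan is to work in an affine chart $\mathbb{A}\subset\RP$ in which $\overline{\Omega}$ is a compact convex set, and to use only two facts:
\begin{itemize}
\item for $p,q\in\overline{\Omega}$, the projective segment $[p,q]$ is the usual affine segment in $\mathbb{A}$ (it is the component of $\Pb(\Span\{p,q\})\setminus\{p,q\}$ meeting $\overline{\Omega}$, closed up);
\item since $\Omega$ is an open convex set, $\Omega=\operatorname{int}(\overline{\Omega})$.
\end{itemize}
Note that $\Omega$ is open, so $\Omega\cap\partial\Omega=\emptyset$. Throughout, $[z,y)$ means $[z,y]\setminus\{y\}$.

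\emph{Forward direction.} Suppose $x\in\Omega$.
\begin{itemize}
\item If $x=z$, pick any $y\in\overline{\Omega}\setminus\{z\}$. Such a point exists because $\dim\Omega\geq 1$. Then $z\in[z,y)$.
\item If $x\neq z$, consider the line $\Pb(\Span\{z,x\})$. Its intersection with $\overline{\Omega}$ is a compact segment $[a,b]$ with endpoints $a,b\in\partial\Omega$, ordered as $a,z,x,b$. Since $x\in\Omega$ and $b\in\partial\Omega$, we have $x\neq b$. Taking $y:=b$ gives $x\in[z,y)$.
\end{itemize}

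\emph{Reverse direction.} Suppose $x\in[z,y)$ with $y\in\overline{\Omega}$. If $x=z$, then $x\in\Omega$ and we are done. Otherwise, in the chart $\mathbb{A}$ we can write $x=(1-t)z+ty$ for some $t\in(0,1)$; here $t<1$ is exactly because $x\neq y$.
\begin{itemize}
\item Since $z\in\Omega$ and $\Omega$ is open, there is $\varepsilon>0$ with $B(z,\varepsilon)\subset\Omega$ (Euclidean ball in $\mathbb{A}$).
\item By convexity of $\overline{\Omega}$, the set $(1-t)B(z,\varepsilon)+ty$ lies in $\overline{\Omega}$.
\item This set is precisely the Euclidean ball $B(x,(1-t)\varepsilon)$, which has positive radius because $t<1$.
\item Hence $x\in\operatorname{int}(\overline{\Omega})=\Omega$.
\end{itemize}

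The only non-formal ingredient is the identity $\operatorname{int}(\overline{\Omega})=\Omega$ for an open convex set. One can justify it by applying the same cone argument to a point $w\in\operatorname{int}(\overline{\Omega})$: pushing slightly beyond $w$ away from an interior point $z$ stays in $\overline{\Omega}$, and the cone from $B(z,\varepsilon)$ to that farther point then contains a neighbourhood of $w$ inside $\Omega$. I expect this step, together with the degenerate case $x=z$, to be the only places needing care.
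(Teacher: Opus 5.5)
Your proof is correct, but it takes a more explicit route than the paper's.

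\textbf{How the two routes differ.} The paper restricts to the line $L=\Pb(\Span\{z,x\})$, notes that $\Omega\cap L$ is a one-dimensional properly convex domain, hence projectively an open interval, and declares the result. You instead stay in an ambient affine chart. You get the forward direction from the chord $L\cap\overline{\Omega}=[a,b]$, and the reverse direction from the cone (line-segment) principle together with $\operatorname{int}(\overline{\Omega})=\Omega$.

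\textbf{What each route buys.} Both rest on the same convex-geometry input. The paper's ``hence the result'' silently needs $\overline{\Omega}\cap L=\overline{\Omega\cap L}$ for a line through the interior point $z$, so that the given $y\in\overline{\Omega}\cap L$ lies in the closure of the one-dimensional domain. That identity is exactly the statement $[z,y)\subset\Omega$ for $y\in\overline{\Omega}$, which your cone argument proves. So your version is longer but makes this step visible. It also handles the case $x=z$ of the forward direction properly: you need some $y\neq z$, which uses $\dim\Omega\geq 1$. The paper's ``without loss of generality $z\neq x$'' passes over this case.

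\textbf{One imprecision to repair.} Your justification of $\operatorname{int}(\overline{\Omega})=\Omega$ says ``the same cone argument'' works with apex $w'\in\overline{\Omega}$. That argument used convexity of $\overline{\Omega}$, so it only places the cone inside $\overline{\Omega}$, not inside $\Omega$. You need one approximation step replacing the apex by nearby points of $\Omega$.

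\textbf{A cleaner alternative.} You can avoid $\operatorname{int}(\overline{\Omega})=\Omega$ altogether in the reverse direction:
\begin{itemize}
\item Write $x=(1-t)z+ty$ with $t\in(0,1)$, and choose $y_n\in\Omega$ with $y_n\to y$.
\item Set $z_n:=z+\frac{t}{1-t}(y-y_n)$. Then $x=(1-t)z_n+ty_n$.
\item Since $z_n\to z$ and $\Omega$ is open, $z_n\in\Omega$ for large $n$.
\item Convexity of $\Omega$ then gives $x\in\Omega$.
\end{itemize}
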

\begin{proof}
    Without loss of generality, we may assume that $z \neq x$ so that $\Omega \cap \Pb(\Span\{z,x\})$ is a  1-dimensional properly convex domain. Any 1-dimensional properly convex domain  is projectively equivalent to $\{[t:1]| 0<t<1\}$ in $\Pb(\Rb^2)$. Hence the result.
\end{proof}

\subsection{Closure of a set in a properly convex domain} Fix a properly convex domain $\Omega\subset \RP$. If $X \subset \overline{\Omega}$, then there are two natural ways of taking its closure --  either  in $\overline{\Omega}$ or in $\RP$. We observe that they coincide. 

\begin{observation}
\label{obs:closure_of_subset_equivalent}
    If $X \subset \overline{\Omega}$, then the closure of $X$ in $\overline{\Omega}$ coincides with the closure of $X$ in $\RP$. We will denote by $\overline{X}$ the closure of $X$ in $\RP$. 
\end{observation}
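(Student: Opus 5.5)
The statement concerns $X \subset \overline{\Omega}$, and the claim is that the closure of $X$ taken inside $\overline{\Omega}$ (with the subspace topology from $\RP$) agrees with its closure in $\RP$. My plan is to reduce this to two standard facts from point-set topology.

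First, recall that for any subspace $Y$ of a topological space $Z$ and any $X \subset Y$, the closure of $X$ in $Y$ equals $\overline{X}^{Z} \cap Y$, where $\overline{X}^{Z}$ denotes the closure in $Z$. I will apply this with $Y=\overline{\Omega}$ and $Z=\RP$, which gives that the closure of $X$ in $\overline{\Omega}$ is $\overline{X}^{\RP}\cap\overline{\Omega}$.

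Second, $\overline{\Omega}$ is by definition the closure of $\Omega$ in $\RP$, so it is a closed subset of $\RP$. Since $X\subset\overline{\Omega}$ and $\overline{\Omega}$ is closed, minimality of the closure gives $\overline{X}^{\RP}\subset\overline{\Omega}$. Hence $\overline{X}^{\RP}\cap\overline{\Omega}=\overline{X}^{\RP}$, and the two closures coincide. This also justifies the unambiguous notation $\overline{X}$.

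There is no real obstacle here. The only point worth stating explicitly is that $\overline{\Omega}$ is closed in $\RP$, and that follows immediately from its definition as a closure (equivalently, since $\overline{\Omega}$ is compact in an affine chart and $\RP$ is Hausdorff).
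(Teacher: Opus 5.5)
Your proposal is correct and follows essentially the same route as the paper: both reduce to showing the $\RP$-closure of $X$ lies in $\overline{\Omega}$, which holds because $\overline{\Omega}$ is closed in $\RP$. You additionally make explicit the standard subspace fact (closure in $\overline{\Omega}$ equals the $\RP$-closure intersected with $\overline{\Omega}$) that the paper leaves implicit in its ``it suffices to show.''
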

\begin{proof}
    Ley $Y$ denote the closure of $X$ in $\RP$. Indeed, it suffices to show that $Y\subset \overline{\Omega}$. As $X \subset \overline{\Omega}$ and $\overline{\Omega}$ is a closed subset of $\RP$, it is obvious that $Y \subset \overline{\Omega}$. 
\end{proof}
\begin{definition}
\label{defn:closure_of_X_in_clOm} We say that $X\subset \overline{\Omega}$ is \emph{closed in }$\overline{\Omega}$ provided $\overline{X}=X$.
\end{definition}
If $Y\subset \Omega$, there is yet another natural way of taking its closure -- in the subspace topology of $\Omega$. We will not use this notion much, but we record it for completeness.

\begin{definition} 
\label{defn:closure_of_X_in_Om}
We say that $Y\subset \Omega$ is \emph{closed in $\Omega$} provided $\Omega \setminus Y$ is open. In fact, $Y \subset \Omega$ is closed in $\Omega$ if and only if $Y=\overline{Y}\cap \Omega$. 
\end{definition}

\subsection{Faces in the boundary}

As we already mentioned, in a \pcd\ $\Omega\subset\RP$, the geometry of the boundary $\partial\Omega$ heavily influences the Hilbert metric $\hil$. To analyze this geometry, it is useful to decompose the boundary into relatively open convex subsets. For $x\in\overline{\Omega}$, we denote the \emph{open face of $x$} as 
$$
F_\Omega(x)=\{x\}\cup\{y\in\overline{\Omega}\mid x\text{ and }y\text{ are contained in an open segment of }\overline{\Omega}\}.
$$
For a non-empty set $X\subset \overline{\Omega}$, we will use the notation $F_{\Omega}(X)=\cup_{x \in X}F_{\Omega}(x)$. 

These open faces satisfy the following properties.
\begin{fact}\label{fact: behaviour of faces of Omega}
Let $\Omega\subset\RP$ be a \pcd. Then, for any $x,y\in\overline{\Omega}$:
\begin{enumerate}
    \item $y\in F_\Omega(x)$ if and only if $F_\Omega(y)=F_\Omega(x)$.
    \item  If $y\in\partial F_\Omega(x)$, then $F_\Omega(y)\subset \partial F_\Omega(x)$.
    \item If $z\in(x,y)$, then $(p,q)\subset F_\Omega(z)$ for all $p\in F_\Omega(x)$ and $q\in F_\Omega(y)$.
\end{enumerate}
\end{fact}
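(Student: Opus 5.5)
The plan is to work in an affine chart in which $\overline{\Omega}$ is a compact convex set $K$, and reduce all three statements to the classical facial structure of compact convex sets (as in Rockafellar, \emph{Convex Analysis}, \S18). Recall that a \emph{face} of $K$ is a convex subset $G\subset K$ such that whenever an open segment $(a,b)\subset K$ meets $G$, we have $a,b\in G$. For $x\in K$, let $G(x)$ be the smallest face containing $x$; it exists because intersections of faces are faces. The key intermediate claim is
\begin{equation*}
F_\Omega(x)=\relint G(x).
\end{equation*}
Once this identification holds, everything follows from the standard fact that the relative interiors of the faces of $K$ partition $K$.

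To prove the identification, I would argue both inclusions.
\begin{itemize}
\item \emph{$F_\Omega(x)\subset \relint G(x)$.} Let $y\in F_\Omega(x)$, so $x,y$ lie in an open segment $(a,b)\subset K$. Since $G(x)$ is a face meeting $(a,b)$, we get $a,b\in G(x)$. Hence $y\in G(x)$, and $x,y$ both lie in the relative interior of the segment $[a,b]\subset G(x)$. Standard arguments then show $G(y)=G(x)$, and that a point $x$ lying in an open segment through $y$ lies in $\relint G(y)$.
\item \emph{$\relint G(x)\subset F_\Omega(x)$.} Let $y\in\relint G(x)$ with $y\neq x$. The point $x$ is also in $\relint G(x)$: otherwise $x$ would lie in a proper face of $G(x)$, which is a face of $K$ smaller than $G(x)$ and containing $x$, a contradiction. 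Because both $x$ and $y$ are relative interior points of $G(x)$, the line through them can be extended slightly beyond each of them inside $G(x)$. This produces an open segment of $K$ containing both $x$ and $y$.
\end{itemize}

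With the identification in hand, the three items follow.

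For (1): $y\in F_\Omega(x)$ means $y\in\relint G(x)$. Since relative interiors of faces partition $K$, this holds if and only if $G(y)=G(x)$, which is equivalent to $F_\Omega(y)=F_\Omega(x)$. In particular, the relation ``lying in a common open segment'' is transitive. This transitivity is the point where the face-theoretic description is genuinely needed rather than a direct segment-juggling argument.

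For (2): Suppose $y\in\partial F_\Omega(x)=\overline{G(x)}\setminus \relint G(x)$; here $G(x)$ is closed because $K$ is compact. Then $y$ lies in the relative boundary of the compact convex set $G(x)$. A supporting hyperplane argument puts $y$ in a proper face $G'$ of $G(x)$. This $G'$ is a face of $K$, so $G(y)\subset G'$. Therefore
\begin{equation*}
F_\Omega(y)=\relint G(y)\subset G'\subset \partial F_\Omega(x).
\end{equation*}

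For (3): Let $z\in(x,y)$, $p\in F_\Omega(x)$, $q\in F_\Omega(y)$, and $w\in(p,q)$. I will show $G(w)=G(z)$.
\begin{itemize}
\item \emph{$G(z)\subset G(w)$.} Since $w\in(p,q)$, the face $G(w)$ contains $p$ and $q$. Since $p\in\relint G(x)$, the point $p$ lies in an open segment through $x$ by the identification above, so $G(w)$ contains $x$; likewise it contains $y$. Hence $G(w)$ contains $z$, and so $G(z)\subset G(w)$.
\item \emph{$G(w)\subset G(z)$.} Symmetrically, $G(z)$ contains $x$ and $y$ because $z\in(x,y)$. It therefore contains $G(x)\ni p$ and $G(y)\ni q$, hence contains $w$. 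So $G(w)\subset G(z)$.
\end{itemize}
Thus $w\in\relint G(w)=\relint G(z)=F_\Omega(z)$.

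I expect the main obstacle to be the identification $F_\Omega(x)=\relint G(x)$, and specifically the step showing that a point lying in an open segment with $x$ lies in $\relint G(x)$. I would handle it by citing Rockafellar Theorem 18.2 rather than reproving it.
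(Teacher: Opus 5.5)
The paper records this as a standard fact and gives no proof of its own, so there is no argument of theirs to compare against. Your proposal is correct, and it is the standard justification: identify $F_\Omega(x)$ with $\relint G(x)$, then use that the relative interiors of the faces of the compact convex set $\overline{\Omega}$ partition it (Rockafellar, Theorem 18.2).

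Two steps deserve one more line each.

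\emph{Order of the two inclusions.} Establish $x\in\relint G(x)$ before proving $F_\Omega(x)\subset\relint G(x)$, because that inclusion needs the fact for $y$. Once $x,y\in(a,b)$, both faces $G(x)$ and $G(y)$ meet $(a,b)$, so both contain $[a,b]$. Minimality then gives $G(x)=G(y)$, hence $y\in\relint G(y)=\relint G(x)$. Alternatively, Theorem 18.2 gives this directly: $(a,b)$ is a relatively open convex subset of $\overline{\Omega}$, so it lies in a single $\relint G$.

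\emph{The inclusion $G'\subset\partial F_\Omega(x)$ in (2).} This needs $G'\cap\relint G(x)=\emptyset$, which holds because a face of $G(x)$ that meets $\relint G(x)$ is all of $G(x)$. You can also skip the supporting hyperplane. The point $y$ lies in $G(x)\setminus\relint G(x)$, so $G(y)\subsetneq G(x)$, and the partition property then gives $\relint G(y)\subset G(x)\setminus\relint G(x)$.

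Your argument for (3), showing $G(w)=G(z)$ by minimality in both directions, is complete as written.
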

These properties allow us to define an equivalence relation on $\overline{\Omega}$. Given $x,y\in\overline{\Omega}$, we say that $x\sim_\Omega y$ if and only if $F_\Omega(x)=F_\Omega(y)$. 
Then, an \emph{open face} of $\Omega$ is the equivalence class of a point \wrt\ $\sim_\Omega$. Hence, we get a partition of $\overline{\Omega}$ into the union of its open faces. In particular, $F_{\Omega}(x)=\Omega$ if and only if $x \in \Omega$. When $x \in \partial \Omega$, $F_{\Omega}(x) \subset \partial \Omega$ and we will call these the \emph{open faces of $\Omega$ in the boundary}. 

Note that any open face $F_{\Omega}(x)$ is a properly convex domain in $\Pb(\Span F_{\Omega}(x))$. So it can be equipped with its own Hilbert metric $d_{F_{\Omega}(x)}$. Using these, we can define an extended distance on $\overline{\Omega}$ as follows: given $x, y \in \overline{\Omega}$,
$$
\operatorname{d}_{\overline{\Omega}}(x,y)=\begin{cases}
    \operatorname{d}_{F_\Omega(x)}(x,y),&\text{if } F_\Omega(x)=F_\Omega(y),\\
    +\infty, &\text{otherwise}.
\end{cases}
$$
We now list some facts about this extended Hilbert distance. 

\begin{fact}[{\cite[Lemma 2.11]{IslamWeisman}}]\label{fact: lower semicontinuity of extended distance}
Let $\Omega\subset\RP$ be a \pcd. Let \Seq{x_n},\Seq{y_n}$\subset\Omega$ be two sequences of points. Suppose that \Seq{x_n} and \Seq{y_n} converge, respectively, to $x_\infty\in\partial\Omega$ and $y_\infty\in\partial\Omega$. Then,
$$
\operatorname{d}_{\overline{\Omega}}(x_\infty,y_\infty)\le\liminf_{n\to\infty}\hil(x_n,y_n).
$$
In particular, if $\liminf_{n\to\infty}\hil(x_n,y_n) <\infty$, then $x_{\infty} \in F_{\Omega}(y_\infty)$. 
\end{fact}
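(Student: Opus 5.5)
We prove the inequality directly from the definition of the Hilbert metric via cross-ratios, working in an affine chart containing $\overline{\Omega}$.

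\emph{Reductions.} Set $L:=\liminf_{n}\hil(x_n,y_n)$. If $L=\infty$ there is nothing to prove, and if $x_\infty=y_\infty$ the left-hand side is $0$. So assume $L<\infty$ and $x_\infty\neq y_\infty$. Pass to a subsequence along which $\hil(x_n,y_n)\to L$, and for which $x_n\neq y_n$ for all $n$.

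\emph{Set-up.} For each $n$, let $a_n,b_n\in\partial\Omega$ be the endpoints of $\Pb(\Span\{x_n,y_n\})\cap\overline{\Omega}$, ordered as $a_n,x_n,y_n,b_n$. By compactness of $\partial\Omega$, pass to a further subsequence with $a_n\to a$ and $b_n\to b$. The ordering passes to the limit, so $a,x_\infty,y_\infty,b$ lie in this order on one projective line.

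\emph{Key step: $a\neq x_\infty$ and $b\neq y_\infty$.} I expect this to be the main point. In the cross-ratio
\[
[a_n,x_n,y_n,b_n]=\frac{|y_n-a_n|}{|x_n-a_n|}\cdot\frac{|x_n-b_n|}{|y_n-b_n|},
\]
both factors are $\geq 1$ because of the ordering. Suppose $a=x_\infty$. Then $|x_n-a_n|\to0$, while $|y_n-a_n|\to|y_\infty-x_\infty|>0$. So the first factor tends to $\infty$, and the second factor, being $\geq1$, cannot compensate. Hence $\hil(x_n,y_n)\to\infty$, contradicting $L<\infty$. The argument for $b\neq y_\infty$ is symmetric, using the second factor.

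\emph{$x_\infty$ and $y_\infty$ lie in the same face.} Since $x_\infty,y_\infty\in(a,b)$ and $a\neq b$, the point $x_\infty\in\partial\Omega$ lies in the open segment $(a,b)\subset\overline{\Omega}$. If some point of $(a,b)$ were in $\Omega$, then by \cref{lem:top_char_bdry_points} (extending from that interior point) every point of the open segment $(a,b)$ would lie in $\Omega$, including $x_\infty$, a contradiction. Thus $(a,b)\subset\partial\Omega$. By the definition of open faces, $(a,b)\subset F_\Omega(x_\infty)$ and $y_\infty\in F_\Omega(x_\infty)$. This also gives the ``in particular'' clause.

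\emph{Comparing distances.} Let $F:=F_\Omega(x_\infty)$. The line through $x_\infty,y_\infty$ meets $\overline{F}$ in a segment $[a',b']\supseteq[a,b]$. The Hilbert cross-ratio is monotone: enlarging the segment decreases the distance. Therefore
\[
d_{\overline{\Omega}}(x_\infty,y_\infty)=d_F(x_\infty,y_\infty)\le \tfrac12\log[a,x_\infty,y_\infty,b].
\]
Since the four limit points are such that $a\neq x_\infty$, $x_\infty\neq y_\infty$ and $y_\infty\neq b$, the cross-ratio is continuous at $(a,x_\infty,y_\infty,b)$. Hence
\[
\tfrac12\log[a,x_\infty,y_\infty,b]=\lim_n\hil(x_n,y_n)=L,
\]
which gives $d_{\overline{\Omega}}(x_\infty,y_\infty)\le L$.
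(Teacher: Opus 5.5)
Your proof is correct and complete. The paper does not prove this statement itself; it quotes it from Islam--Weisman, so there is no argument in the paper to compare against. What you give is the standard self-contained proof:
\begin{itemize}
\item pass to limit chords $[a,b]$;
\item use the blow-up of the first (resp.\ second) cross-ratio factor to force $a\neq x_\infty$ (resp.\ $b\neq y_\infty$);
\item conclude $x_\infty,y_\infty\in(a,b)\subset F_\Omega(x_\infty)$;
\item finish with continuity of the cross-ratio and its monotonicity when $[a,b]$ is enlarged to the chord of $\overline{F_\Omega(x_\infty)}$.
\end{itemize}

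One minor remark: the step showing $(a,b)\subset\partial\Omega$ is not needed. Once $x_\infty,y_\infty$ lie in the open segment $(a,b)\subset\overline{\Omega}$, both $y_\infty\in F_\Omega(x_\infty)$ and $(a,b)\subset F_\Omega(x_\infty)$ follow directly from the definition of open faces.
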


\begin{fact}[{\cite[Lemma 2.12]{IslamWeisman}}]\label{fact: maximum principle for extended distance}
Let $\Omega\subset\RP$ be a \pcd, $x_1,x_2\in\overline{\Omega}$, $y_1\in F_\Omega(x_1)$, and $y_2\in F_\Omega(x_2)$. Then,
$$
{\operatorname{d}_{\overline{\Omega}}}^{\Haus}\left((x_1,x_2),(y_1,y_2)\right)\le\max\left\{ {\operatorname{d}}_{F_\Omega(x_1)}(x_1,y_1),{\operatorname{d}}_{F_\Omega(x_2)}(x_2,y_2) \right\}.
$$
\end{fact}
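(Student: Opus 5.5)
The plan is to reduce the statement to an interior convexity estimate for the Hilbert metric and then pass to the boundary using \cref{fact: lower semicontinuity of extended distance}. Both distances on the right-hand side are finite because $y_i\in F_\Omega(x_i)$. Set $M:=\max\{d_{F_\Omega(x_1)}(x_1,y_1),d_{F_\Omega(x_2)}(x_2,y_2)\}$. By \cref{fact: behaviour of faces of Omega}(3), for any $p\in(x_1,x_2)$ we have $(y_1,y_2)\subset F_\Omega(p)$. So every point of one open segment lies in the same open face as every point of the other, and $d_{\overline{\Omega}}$ between any $p\in(x_1,x_2)$ and any $q\in(y_1,y_2)$ is computed inside a single Hilbert geometry. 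It therefore suffices, by symmetry, to show that every $p\in(x_1,x_2)$ is within $d_{\overline{\Omega}}$-distance $M$ of $(y_1,y_2)$.

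\emph{Step 1 (interior case).} Let $a,b,a',b'\in\Omega$. I would prove that every $p\in[a,b]$ satisfies $\hil(p,[a',b'])\le\max\{\hil(a,a'),\hil(b,b')\}$. I would do this in two moves, changing one endpoint at a time.
\begin{itemize}
\item First, for triangles with a common vertex, show $\hil(p,[a,b'])\le\hil(b,b')$ for $p\in[a,b]$. Use the perspectivity centred at $a$ that sends the line $ab$ to the line $ab'$ along lines parallel to $bb'$, and let $p'$ be the image of $p$. In the plane $\Pb(\Span\{a,b,b'\})$, the chord through $p,p'$ is cut by $\partial\Omega$ at points that are no closer (projectively, as seen from $a$) than those of the chord through $b,b'$. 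Convexity of $\overline{\Omega}\cap\Pb(\Span\{a,b,b'\})$ together with monotonicity of the cross-ratio then gives $\hil(p,p')\le\hil(b,b')$. This is the classical Busemann-type argument for Hilbert geometries.
\item Second, apply the first move to the triangle $(a,b',a')$: every point of $[a,b']$ is within $\hil(a,a')$ of $[a',b']$.
\end{itemize}
Chaining the two moves naively would add the bounds. Instead I would use the same perspectivity in a single step, with the segments $[a,b]$ and $[a',b']$ parametrized compatibly so that the point matched to $p$ varies continuously. One then checks the bound by a max-principle for $s\mapsto\hil(\sigma(s),\tau(s))$ on a planar or $3$-dimensional section. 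This is the step I expect to be the main obstacle. The cleanest route is probably to verify that this function is quasiconvex along the two projective segments, reducing to the $3$-dimensional projective span of $a,b,a',b'$.

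\emph{Step 2 (approximation from inside).} Fix $o\in\Omega$. Consider the dilation of the affine chart centred at $o$ with ratio $t\in(0,1)$, and set $x_i^t,y_i^t$ to be the images of $x_i,y_i$. These points lie in $\Omega$. The dilation maps the face $F_\Omega(x_i)$ into $\Omega$, and the chord of $\Omega$ through $x_i^t,y_i^t$ contains the image of the chord of $F_\Omega(x_i)$, so $\hil(x_i^t,y_i^t)\le d_{F_\Omega(x_i)}(x_i,y_i)\le M$. Given $p\in(x_1,x_2)$, its image $p^t$ lies in $(x_1^t,x_2^t)$. Step 1 yields $q^t\in[y_1^t,y_2^t]$ with $\hil(p^t,q^t)\le M$.

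\emph{Step 3 (limit).} Take a sequence $t\to1$ along which $q^t\to q\in[y_1,y_2]$. Since $p^t\to p$, \cref{fact: lower semicontinuity of extended distance} gives $d_{\overline{\Omega}}(p,q)\le M$. In particular $q\in F_\Omega(p)$, so $q\ne y_1,y_2$ unless the segment degenerates, and in the degenerate case the claim is immediate. Hence $q\in(y_1,y_2)$. By the symmetric argument, every point of $(y_1,y_2)$ is within $M$ of $(x_1,x_2)$, which gives the Hausdorff bound.
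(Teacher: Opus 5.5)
The paper gives no proof of this statement (it is cited from Islam--Weisman), so I judge your argument on its own. The architecture is sound. \cref{fact: behaviour of faces of Omega}(3) puts both open segments in a single face. The dilation $h_t$ about $o$ is a projective map with $h_t(\overline{\Omega})\subset\Omega$. So invariance and monotonicity of the cross-ratio give $\hil(x_i^t,y_i^t)\le d_{F_\Omega(x_i)}(x_i,y_i)$, and lower semicontinuity then passes an interior estimate to the limit. That dilation is a clean way to manufacture approximating pairs with controlled distance.

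The genuine gap is Step 1, the analytic core, which you never actually prove. The perspectivity argument only handles two segments sharing a vertex, and chaining two such moves gives $\hil(a,a')+\hil(b,b')$. The proposed replacement, quasiconvexity of $s\mapsto\hil(\sigma(s),\tau(s))$ for a ``compatible'' parametrization, is only asserted. None of this is needed: the required inequality is exactly the maximum principle already recorded as \cref{lem: maximum principle}. Apply it with the convex set $C=[y_1^t,y_2^t]$ and the compact set $K=[x_1^t,x_2^t]$, whose extreme points are $x_1^t,x_2^t$:
\[
\hil\bigl(p^t,[y_1^t,y_2^t]\bigr)\le\max_{i=1,2}\hil\bigl(x_i^t,[y_1^t,y_2^t]\bigr)\le\max_{i=1,2}\hil(x_i^t,y_i^t)\le M .
\]
The infimum is attained because $[y_1^t,y_2^t]\subset\Omega$ is compact. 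With Step 1 replaced by this line, Steps 2 and 3 go through.

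Step 3 needs two small corrections.

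First, \cref{fact: lower semicontinuity of extended distance} is stated for limits in $\partial\Omega$. If $p\in\Omega$, use instead that $q^t$ eventually lies in the compact ball $\overline{B}(p,M+1)$; then $q\in\Omega$ and $\hil(p,q)\le M$ by continuity. If $p\in\partial\Omega$, then $q\in\partial\Omega$, since otherwise $\hil(p^t,q^t)\to\infty$; so the fact applies.

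Second, your claim that $q\neq y_1,y_2$ is false. If $x_1,x_2$ lie in a common open face $G$, then $p\in G$ and $y_1,y_2\in G=F_\Omega(p)$, so $q$ may well be an endpoint. This is harmless: once $q\in F_\Omega(p)$, continuity of $d_{F_\Omega(p)}(p,\cdot)$ on that relatively open face gives $d_{\overline{\Omega}}(p,(y_1,y_2))\le d_{F_\Omega(p)}(p,q)\le M$, provided $y_1\neq y_2$.

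Finally, the cases $x_1=x_2$ or $y_1=y_2$ are not ``immediate''. They fall outside the intended scope of the statement, because $(y,y)=\emptyset$ by the paper's convention.
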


\subsection{Relative interior and boundary} Next we discuss topological and metric properties of convex subset of $\overline{\Omega}$.

\begin{definition}
  Suppose $\Omega \subset \RP$ is a properly convex domain and $C\subset \overline{\Omega}$ is a convex subset. 
  \begin{enumerate}
      \item The \emph{relative interior of $C$}, denoted by $\relint(C)$, is the set of all points in $C$ that are open in $\Pb(\Span C)$. That is, $x \in \relint(C)$ if and only if there is an open set $U \subset \RP$ containing $x$ such that $U \cap \Pb(\Span C) \subset C$.
      \item The set $C$ is called \emph{relatively open} if $\relint(C)=C$.
      \item The \emph{boundary} of $C$ is $\partial C:=\overline{C}-\relint(C).$
  \end{enumerate}   
\end{definition}

\begin{lemma}
\label{lem:openness_of_relint}
    Suppose $C \subset \overline{\Omega}$ is a convex set and $x , y$ are two distinct points in $\relint(C)$. Then there is an open line segment in $C$ containing both $x$ and $y$.
\end{lemma}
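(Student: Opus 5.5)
The plan is to work inside the projective span of $C$. Set $V:=\Pb(\Span C)$. Since $C\subset\overline{\Omega}$ and $\overline{\Omega}$ is properly convex, we may fix an affine chart $\mathbb{A}\subset\RP$ containing $\overline{\Omega}$. In this chart $C$ is an ordinary convex subset of the affine subspace $\mathbb{A}\cap V$, and it spans that subspace. The projective segment $[x,y]$ defined above coincides with the affine segment in $\mathbb{A}$, because it is the component meeting $\overline{\Omega}$. The statement therefore reduces to the corresponding fact about relative interiors of convex sets in affine space.

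The key steps are as follows.

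\begin{enumerate}
\item By definition of $\relint(C)$, there are open sets $U_x\ni x$ and $U_y\ni y$ in $\RP$ with $U_x\cap V\subset C$ and $U_y\cap V\subset C$.
\item Shrinking if necessary, we may take $U_x\cap V$ and $U_y\cap V$ to be small affine balls in $\mathbb{A}\cap V$ centred at $x$ and $y$, of some radius $\epsilon>0$.
\item The affine line $L$ through $x$ and $y$ lies in $V$, since $x,y\in C\subset V$. Choose a point $x'\in L$ beyond $x$, i.e. with $x\in(x',y)$, at distance less than $\epsilon$ from $x$. Then $x'\in U_x\cap V\subset C$.
\item Similarly choose $y'\in L$ beyond $y$, i.e. with $y\in(x,y')$, inside $U_y\cap V$. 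Then $y'\in C$.
\item By convexity of $C$, the segment $[x',y']$ lies in $C$. Hence the open segment $(x',y')\subset C$ contains both $x$ and $y$, which is exactly the conclusion.
\end{enumerate}

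The only mild obstacle is bookkeeping in the first paragraph: making sure the segment used is the one inside $\overline{\Omega}$, so that convexity of $C$ applies to $[x',y']$. This works because $x',y'\in C\subset\overline{\Omega}$ and we are in an affine chart containing $\overline{\Omega}$. There the unique component of $\Pb(\Span\{x',y'\})\setminus\{x',y'\}$ meeting $\overline{\Omega}$ is the affine open segment, which contains $x$ and $y$. Otherwise the argument is routine.
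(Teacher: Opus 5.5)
Your proof is correct and matches the paper's argument. The paper also uses the relative-interior neighbourhoods of $x$ and $y$, restricted to the line through them (which lies in the span of $C$), to extend $[x,y]$ slightly beyond each endpoint inside $C$, and then concludes by convexity. The only difference is that you work in an affine chart containing $\overline{\Omega}$ while the paper phrases the same steps projectively, which is cosmetic.
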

\begin{proof}
    By convexity, $[x,y] \subset C$. We will show that we can extend this projective segment at both ends while staying inside $C$. As $x \in \relint(C)$,  there is an open set $U_x \subset \RP$ containing $x$ such that $U_x \cap \Pb(\Span \{x,y\}) \subset C$. Hence we can extend $x'\in C$ such that $[x,y] \subsetneq (x',y]$ and $(x',y] \subset C$. Thus we can extend $[x,y]$ beyond $x$ inside $C$. Extending beyond $y$ works by a similar reasoning. 
\end{proof}

The relative interior of a convex set has an interesting dichotomoy. 
 
\begin{lemma}
\label{lem:relint_dichotomy}
   If $C\subset \overline{\Omega}$ is a closed convex set, then either $C \subset \partial\Omega$ or $\relint(C) \subset \Omega$.
\end{lemma}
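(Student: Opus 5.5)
Assume $C \not\subset \partial\Omega$; the goal is then $\relint(C)\subset\Omega$. Since $C\not\subset\partial\Omega$, we can fix a point $z\in C\cap\Omega$. If $C$ is a single point there is nothing to prove, so we take an arbitrary $x\in\relint(C)$ and may suppose $x\neq z$. The idea is to extend the segment from $z$ through $x$ slightly beyond $x$ while staying in $C$, and then apply \cref{lem:top_char_bdry_points}.

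\textbf{Step 1: extending past $x$.} Because $x\in\relint(C)$, there is an open set $U\subset\RP$ containing $x$ with $U\cap\Pb(\Span C)\subset C$. Since $z,x\in C$, the projective line $\Pb(\Span\{z,x\})$ lies in $\Pb(\Span C)$. This line meets $U$ in an open arc around $x$, and that arc is contained in $C$. We choose $y$ on this arc on the side of $x$ opposite to $z$, taken on the component of the line that contains $[z,x]$. Then $x\in(z,y)\subset[z,y)$. Finally, $C\subset\overline{\Omega}$ gives $y\in\overline{\Omega}$.

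\textbf{Step 2: conclusion.} Now $z\in\Omega$, $y\in\overline{\Omega}$ and $x\in[z,y)$, so \cref{lem:top_char_bdry_points} yields $x\in\Omega$. Since $x\in\relint(C)$ was arbitrary, $\relint(C)\subset\Omega$. (This step could also be done directly: in the one-dimensional properly convex domain $\Omega\cap\Pb(\Span\{z,y\})$, the half-open segment $[z,y)$ lies in the open part.)

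\textbf{Main obstacle.} The delicate point is in Step 1: checking that the extension $y$ really lies on the segment $[z,y]$ determined by $\overline{\Omega}$. The arc near $x$ is small and contained in $C\subset\overline{\Omega}$, and the side is chosen continuing away from $z$. Hence the segment from $z$ to $y$ through $x$ is $[z,x]\cup[x,y]$, which lies in $C\subset\overline{\Omega}$, so it is the segment $[z,y]$ in the sense of the paper. Closedness of $C$ is not actually needed for this argument.
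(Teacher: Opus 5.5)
Your proof is correct and is essentially the paper's argument: you take $z\in C\cap\Omega$, use the relative-interior neighbourhood of $x$ to extend the segment from $z$ through $x$ to a point $y\in C\subset\overline{\Omega}$ beyond $x$, and conclude with \cref{lem:top_char_bdry_points}. The only difference is that you choose $y$ inside the open arc $U\cap\Pb(\Span\{z,x\})$, whereas the paper takes an endpoint of that arc; this is why, as you note, your version never uses that $C$ is closed.
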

\begin{proof}
    Suppose $C \not \subset \partial \Omega$ and let $c_0 \in C \cap \Omega$. Let $x \in \relint(C)$. If $x=c_0$, then $x \in \Omega$. So assume that $x \neq c_0$. By definition of the relative interior, there is an open set $U \subset \RP$ containing $x$ such that $U \cap \Pb(\Span C) \subset C$. Then $U \cap \Pb(\Span\{c_0,x\})=(x',x'')$ for some $x',x'' \in C \subset \overline{\Omega}$. As $x \in U$, $x \in (x',x'')$. As $x',x'' \in \Pb(\Span\{c_0,x\})$, we can switch the labels $x'$ and $x''$ if necessary and assume that the three distinct points $c_0,x,x''$ appear in this order as we travel from $c_0$ towards $x$. Thus we have found $x'' \in \overline{\Omega}$ such that $x\in [c_0,x'')$. Then \cref{lem:top_char_bdry_points} implies that $x \in \Omega$. This finishes the proof.
\end{proof}

We now analyze the case where $\relint(C) \subset \bdry$.
\begin{lemma}
\label{lem:relint_and_faces}
    Suppose $C\subset \overline{\Omega}$ is a closed convex set such that $\relint(C)\cap F_{\Omega}(x) \neq \emptyset$ for some $x \in \bdry$. Then $\relint(C) \subset F_{\Omega}(x)$. 
\end{lemma}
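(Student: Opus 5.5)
Fix $z \in \relint(C)\cap F_{\Omega}(x)$; by \cref{fact: behaviour of faces of Omega}(1) we have $F_{\Omega}(z)=F_{\Omega}(x)$. The plan is to show that every $y\in\relint(C)$ lies in $F_{\Omega}(z)$, which gives $\relint(C)\subset F_{\Omega}(x)$. The trivial case is $y=z$. Otherwise $y\neq z$ are two distinct points of $\relint(C)$.

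The key step is \cref{lem:openness_of_relint}. It gives an open projective segment $(a,b)\subset C$ that contains both $y$ and $z$. Since $C\subset\overline{\Omega}$, this segment is an open segment of $\overline{\Omega}$ containing both $z$ and $y$. By the very definition of the open face,
\[
F_{\Omega}(z)=\{z\}\cup\{w\in\overline{\Omega}\mid z \text{ and } w \text{ lie in an open segment of } \overline{\Omega}\},
\]
so $y\in F_{\Omega}(z)=F_{\Omega}(x)$, and we are done.

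The only point needing care is whether the segment produced by \cref{lem:openness_of_relint} really qualifies as an ``open segment of $\overline{\Omega}$'' in the sense used to define $F_\Omega$. Here it matters that $C\subset\overline{\Omega}$ and that segments are taken inside $\overline{\Omega}$, so the segment is the canonical projective segment $[\cdot,\cdot]$. I do not expect a real obstacle: the argument does not use closedness of $C$, and it does not need \cref{lem:relint_dichotomy}, except to remark that the hypothesis $x\in\partial\Omega$ then forces $\relint(C)\subset\partial\Omega$.
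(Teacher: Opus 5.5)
Your proof is correct and is essentially the paper's own argument: fix a point of $\relint(C)\cap F_{\Omega}(x)$, use \cref{lem:openness_of_relint} to place it and any other point of $\relint(C)$ on a common open segment in $C\subset\overline{\Omega}$, and conclude via the definition of $F_{\Omega}(\cdot)$ together with \cref{fact: behaviour of faces of Omega}. You are also right that closedness of $C$ plays no role.
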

\begin{proof}
    Let $c \in \relint(C)\cap F_{\Omega}(x)$.  If $\relint(C)=\{c\}$, then we are done. If $c' \in \relint(C)-\{c\}$, then \cref{lem:openness_of_relint} implies that there is an open projective segment in $C \subset \overline{\Omega}$ containing $c$ and $c'$. As $c \in F_{\Omega}(x)$, \cref{fact: behaviour of faces of Omega} implies that $c' \in F_{\Omega}(x)$. Hence the proof.
\end{proof}

\subsection{Center of mass}
\label{sec:com}
In a properly convex domain, one can associate a \textquote{center of mass} to any compact subset as follows. Denote by ${\Kc}_d$ the set of pairs $(\Omega,K)$ where $\Omega\subset \RP$ is a properly convex domain and $K\subset \Omega$ is a compact subset. Then, there exists a map $\com \colon \Kc_d \rightarrow \RP$
\begin{align*}
    (\Omega,K) \longmapsto \com_\Omega(K)
\end{align*}
that satisfies: for every $(\Omega,K)\in \Kc_d$, 
\begin{enumerate}
    \item $\com_\Omega(K)\in\CH_\Omega(K)$,
    \item $\com_\Omega(K)=\com_\Omega\big(\CH_\Omega(K)\big)$, and
    \item $\gamma\com_\Omega(K)=\com_{\gamma\Omega}(\gamma K)$ for any $\gamma\in\PGL_d(\mathbb{R})$.
\end{enumerate}
There are various constructions of this map, see \cite[Proposition II.31]{IZ21} or \cite[Lemma 4.2]{Marquis}.

\subsection{Maximum principle} Recall our \hyperlink{link:notn_conv}{Notational Convention}. If  $r > 0$ and $C$ is a convex subset of a properly convex domain $\Omega$, define $\ngh{C}{r}:=\{x \in \Omega : \hil(x,C)<r\}$.  Moreover $\ngh{C}{0}=C$.

Although the distance between two projective segments  in the Hilbert metric is not convex, we have a `maximum principle'  which can be used to prove the convexity of $\Nc_r(C)$.

\begin{lemma}[{\cite[Corollary 1.9]{CLT}}]\label{lem: maximum principle}
    Suppose $\Omega$ is a \pcd ~and  $C\subseteq\Omega$ is a non-empty convex subset. Then the function $\Omega \ni x \mapsto \hil(x,C) \in \Rb$ satisfies the maximum principle, i.e., for any compact set $K\subseteq\Omega$, the restriction of this function to $K$ attains its maximum at an extreme point of $K$.
\end{lemma}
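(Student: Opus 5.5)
The plan is to first prove the statement for segments and then pass to general compact sets by a Carathéodory-type induction. Throughout, write $f(x):=\hil(x,C)$. Since $\abs{f(x)-f(y)}\le \hil(x,y)$, $f$ is $1$-Lipschitz, hence continuous.

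\textbf{Step 1 (quasi-convexity along segments).} I will show that for $a,b\in\Omega$ and $z\in[a,b]$,
\[
f(z)\le \max\{f(a),f(b)\}.
\]
Fix $\epsilon>0$ and choose $a',b'\in C$ with $\hil(a,a')<f(a)+\epsilon$ and $\hil(b,b')<f(b)+\epsilon$. Since $C$ is convex, $[a',b']\subset C$. All four points lie in $\Omega$, so $F_\Omega(a)=F_\Omega(b)=\Omega$ and $\operatorname{d}_{\overline{\Omega}}$ coincides with $\hil$ on $\Omega$. Therefore \cref{fact: maximum principle for extended distance} gives
\[
\operatorname{d}^{\Haus}_{\Omega}\big((a,b),(a',b')\big)\le \max\{\hil(a,a'),\hil(b,b')\}.
\]
This bound passes to the closed segments: the endpoints of $[a,b]$ are within the required distance of $a'$ and $b'$ directly, and otherwise one uses continuity. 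Hence $f(z)\le \hil(z,[a',b'])\le \max\{f(a),f(b)\}+\epsilon$, and letting $\epsilon\to 0$ gives the claim. If $a=b$ or $a'=b'$, the estimate is immediate.

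\textbf{Step 2 (extension to simplices).} By induction on $k$, I will show that for $e_0,\dots,e_k\in\Omega$,
\[
\sup_{\CH_\Omega(e_0,\dots,e_k)} f\le \max_i f(e_i).
\]
Any $p\in\CH_\Omega(e_0,\dots,e_k)$ lies on a segment $[e_0,q]$ with $q\in\CH_\Omega(e_1,\dots,e_k)$, working in an affine chart containing $\overline{\Omega}$. Applying Step 1 and then the induction hypothesis to $q$ gives $f(p)\le\max\{f(e_0),f(q)\}\le\max_i f(e_i)$.

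\textbf{Step 3 (compact sets).} Let $K\subset\Omega$ be compact. Then $\CH_\Omega(K)$ is a compact convex subset of $\Omega$, since it is the convex hull of a compact set in an affine chart. By Krein--Milman (in finite dimensions, Minkowski together with Carathéodory), every point of $\CH_\Omega(K)$ is a convex combination of finitely many extreme points of $\CH_\Omega(K)$. Moreover, every extreme point of the convex hull of a compact set belongs to $K$. Combining this with Step 2,
\[
\max_K f\le \max_{\CH_\Omega(K)} f\le \sup\{f(e): e \text{ an extreme point}\}.
\]
The set of extreme points need not be closed, so I would take the supremum over its closure, which lies in $K$. Continuity of $f$ then gives that the maximum of $f|_K$ is attained at an extreme point, or at a limit of extreme points if the statement is read with the usual closure convention.

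\textbf{Main obstacle.} The real geometric content is the segment Hausdorff estimate in Step 1. I plan to take it directly from \cref{fact: maximum principle for extended distance}. If that fact could only be used for boundary faces, I would instead reprove it inside the projective subspace spanned by $a,b,a',b'$ (of dimension at most $3$), via a cross-ratio comparison along lines through a point of $[a,b]$. The remaining steps are routine convexity.
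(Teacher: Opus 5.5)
Your route is the standard one behind the cited \cite[Corollary 1.9]{CLT}. The paper itself gives no argument beyond the citation, and CLT likewise derive the maximum principle from a Hausdorff estimate for segments, followed by an induction over convex combinations. That estimate is exactly what \cref{fact: maximum principle for extended distance} gives when all four points lie in $\Omega$. Steps 1 and 2 are sound. Two points need tightening, the first of which is a real (if easily closed) gap.

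\textbf{Step 3.} As written, it does not prove the statement. Passing to the closure of the set of extreme points and invoking continuity only shows that the maximum is attained at a \emph{limit} of extreme points, which is exactly what your own hedge concedes. No closure is needed, and your Step 2 already suffices. Let $p$ be a maximizer of $f$ on the compact set $\CH_\Omega(K)$. Write $p$ as a convex combination of finitely many extreme points $e_0,\dots,e_m$ of $\CH_\Omega(K)$. Step 2 then gives $f(p)\le \max_i f(e_i)\le f(p)$. Hence some $e_i$, which is an extreme point of $\CH_\Omega(K)$ and so lies in $K$, is a maximizer of $f|_K$.

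\textbf{Step 1, case $a'=b'$.} This case is not ``immediate''. It is the convexity of Hilbert balls, which you genuinely need, for instance when $C$ is a single point. It does follow from the same Fact, because the Fact does not require the second segment to lie in $C$. Replace $b'$ by points $b''\in\Omega$ with $b''\neq a'$ and $b''\to a'$. For $z\in(a,b)$ you get $\hil(z,a')\le \max\{\hil(a,a'),\hil(b,b'')\}+\hil(a',b'')$, using that $[a',b'']$ is a geodesic of length $\hil(a',b'')$. Letting $b''\to a'$ gives $\hil(z,a')\le\max\{\hil(a,a'),\hil(b,a')\}$.
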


\begin{corollary}\label{cor: convexity of neighborhoods}
    Let $F\subseteq\Omega$ be a non-empty convex subset and $\delta\geq 0$. Then, both $\ngh{F}{\delta}$ and $(\overline{\Nc_{\delta}(F)}\cap \Omega)$ are convex subsets of $\Omega$.
\end{corollary}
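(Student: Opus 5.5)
The plan is to read convexity off the maximum principle of \cref{lem: maximum principle}, applied to the function $f(x):=\hil(x,F)$ on $\Omega$. Since convexity in $\Omega$ only concerns projective segments, it suffices to fix two points $x,y$ in the set in question and show that the whole segment $[x,y]$ lies in that set.

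For $\delta=0$ there is nothing to prove, since $\ngh{F}{0}=F$ is convex by hypothesis. The closed version at $\delta=0$ also needs a word: we would observe that $\overline{F}\cap\Omega$ is convex. Indeed, for $x,y\in \overline{F}\cap \Omega$, pick $x_n\to x$ and $y_n\to y$ with $x_n,y_n\in F$. Then $[x_n,y_n]\subset F$, and these segments converge to $[x,y]$, so $[x,y]\subset \overline{F}$. Since $[x,y]\subset\Omega$ by convexity of $\Omega$, we get $[x,y]\subset\overline{F}\cap\Omega$.

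Now let $\delta>0$ and $x,y\in\ngh{F}{\delta}$, so that $f(x)<\delta$ and $f(y)<\delta$. The segment $K:=[x,y]$ is a compact subset of $\Omega$, and its extreme points are $x$ and $y$. By \cref{lem: maximum principle}, $\max_K f$ is attained at $x$ or $y$, hence is $<\delta$. Therefore $K\subset\ngh{F}{\delta}$.

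For $\overline{\Nc_{\delta}(F)}\cap \Omega$, we first note that $f$ is $1$-Lipschitz, hence continuous. This gives $\overline{\Nc_{\delta}(F)}\cap\Omega\subset\{x\in\Omega: f(x)\le\delta\}$. Applying the same maximum-principle argument to a segment with endpoints satisfying $f\le\delta$ shows that the sublevel set $\{f\le\delta\}$ is convex.

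The main obstacle is the reverse inclusion $\{f\le\delta\}\subset \overline{\Nc_\delta(F)}\cap\Omega$: a point $z$ with $f(z)=\delta$ must be approximated by points with $f<\delta$. We would handle this with the projective segment from $z$ to a point $p\in F$. For $p\in F$ and $w\in(p,z)$ near $z$, the Hilbert geodesic property along the line gives
\[
f(w)\le \hil(w,F)\le \hil(w,z)+\ldots
\]
but that bound only goes the wrong way. Instead, we would use the maximum principle on a segment $[p,z']$ combined with a contraction estimate. The simpler route is to prove convexity of the closure directly: take $x,y\in\overline{\Nc_\delta(F)}\cap\Omega$ with approximating points $x_n\to x$ and $y_n\to y$ in $\Nc_\delta(F)$. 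By the first part, $[x_n,y_n]\subset\Nc_\delta(F)$. Since $[x_n,y_n]\to[x,y]$ in $\RP$, every point of $[x,y]$ is a limit of points of $\Nc_\delta(F)$, so $[x,y]\subset \overline{\Nc_\delta(F)}$. Together with $[x,y]\subset\Omega$, this proves the claim. This avoids the sublevel-set identification altogether.
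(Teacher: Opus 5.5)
Your proof is correct. It uses the same tool as the paper's, the maximum principle of \cref{lem: maximum principle}, but is organized differently.

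The paper takes $a,b\in\partial\ngh{F}{\delta}$, chooses $c,d\in F$ with $\hil(a,c)=\hil(b,d)=\delta$, and applies the maximum principle to the distance from the segment $[c,d]\subset F$ along $[a,b]$. This treats the closed neighborhood at level exactly $\delta$ in one stroke. However, it leaves three things implicit:
\begin{enumerate}
\item the reduction to boundary points;
\item the existence of such nearest points, which needs $F$ closed or an $\varepsilon$-adjustment;
\item the identification of $\{\hil(\cdot,F)\le\delta\}$ with $\overline{\Nc_\delta(F)}\cap\Omega$.
\end{enumerate}

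You instead apply the lemma to $\hil(\cdot,F)$ itself, which the lemma allows for any non-empty convex $F$. So the open neighborhood follows with strict inequalities and no nearest points. You then get the closed set by a soft limit of segments $[x_n,y_n]\to[x,y]$ in an affine chart containing $\overline{\Omega}$. This bypasses the sublevel-set identification entirely and makes your version slightly more complete.

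In a write-up, drop the abandoned digression. The obstacle it describes is not real anyway. For $p\in F$ and $w\in[z,p]$ you have $\hil(w,p)=\hil(z,p)-\hil(z,w)$, because projective segments are geodesics. So choosing $p$ with $\hil(z,p)<\delta+\eta$ and $w$ with $\hil(z,w)=\eta$, where $0<\eta<\delta$, gives $\hil(w,F)<\delta$ with $w\to z$ as $\eta\to 0$.
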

\begin{proof}
    It suffices to show that if $a,b\in\partial\ngh{F}{\delta}$ then $[a,b]\subseteq\overline{\ngh{F}{\delta}} \cap \Omega$. By definition, there exist $c,d\in F$ such that $\delta=\hil(a,c)=\hil(b,d)$. From Lemma \ref{lem: maximum principle} we have that any point of $[a,b]$ is within distance $\delta$ from $[c,d]$. Since $F$ is convex, $[c,d]\subseteq F$ and thus $[a,b]\subseteq\overline{\ngh{F}{\delta}} \cap \Omega$.
\end{proof}

\subsection{Hausdorff topology} 
We recall the classical definition of Hausdorff distance; see for example \cite[Definition 5.30]{BridsonHaefliger}. 
\begin{definition}
\label{defn:haus_dist}
    Let $(X, D)$ be a metric space and let $\mathcal{C}(X)$ be the set of all non-empty closed subsets of $X$.
  The Hausdorff distance on $\mathcal{C}(X)$ (induced by $D$) is defined by: for any $A,B \in \Cc(X)$, 
\begin{align*}
    d^{\Haus}_D(A,B):=\max \left\{ \sup_{a \in A }d(a,B), \sup_{b \in B} d(b,A) \right\}.
\end{align*}
If $(X,D)$ is compact, then $(\Cc(X),d^H_D)$ is a compact metric space.  
\end{definition}

Let $d_{\Pb}$ denote a distance on $\RP$ induced by a(ny) Riemannian metric on $\RP$. We equip $\Cc(\RP)$ with the metric topology induced by $d^H_{d_{\Pb}}$ and  $(\Cc(\RP),d^H_{d_{\Pb}})$ is compact.  
Given a properly convex domain $\Omega$, let 
\begin{equation}
\label{eqn:defn_of_C_Omega}
C_{\overline{\Omega}}=\{X \subset \overline{\Omega} \mid X \text{ is closed in } \Omega, X \neq \emptyset\}.
\end{equation}
Recall the notion of being closed in $\overline{\Omega}$ from \cref{defn:closure_of_X_in_clOm}. By \cref{obs:closure_of_subset_equivalent}, $C_{\overline{\Omega}} \subset \Cc(\RP)$. We equip $C_{\overline{\Omega}}$ with the subspace topology from $\Cc(\RP)$.

\begin{corollary}[{Corollary of \cref{fact: lower semicontinuity of extended distance}}]
\label{cor:tracking_sets_using_points}
    Suppose $\seq{C_n}$ is a sequence of closed  subsets of $\overline{\Omega}$ and $\seq{x_n}$ is a sequence in $\Omega$ such that:
    \begin{enumerate}
        \item $C_n$ converges to a closed  set $C \subset \overline{\Omega}$ and $x_n \to x \in \overline{\Omega}$,
        \item there is a constant $R>0$ such that $\left( \sup_{n\in \Nb }\hil(x_n,C_n) \right) \leq  R$. 
    \end{enumerate}
    Then $d_{F_{\Omega}(x)}(x,C) \leq R$ and, in particular, $F_{\Omega}(x) \cap C \neq \emptyset$.
\end{corollary}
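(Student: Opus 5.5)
The plan is to pick, for each $n$, a point $c_n \in C_n$ that almost realizes $\hil(x_n,C_n)$, pass to a limit, and transfer the distance bound to the limit using \cref{fact: lower semicontinuity of extended distance}.

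\emph{Choosing the points $c_n$.} Fix $\epsilon>0$. Since $\hil(x_n,C_n) \le R$, the set $C_n\cap\Omega$ is nonempty, and we can choose $c_n \in C_n\cap \Omega$ with $\hil(x_n,c_n) < R+\epsilon$. (Here $\hil(x_n,C_n)$ is understood as an infimum over $C_n\cap\Omega$.)

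\emph{Passing to a limit.} Because $\overline{\Omega}$ is compact, after passing to a subsequence we may assume $c_n \to c \in \overline{\Omega}$. The sets $C_n$ converge to $C$ in the Hausdorff topology on $\Cc(\RP)$, and $c_n \in C_n$. A limit of points $c_n \in C_n$ must lie in the Hausdorff limit $C$, since $d_{\Pb}(c,C) \le d_{\Pb}(c,c_n) + d^H_{d_\Pb}(C_n,C) \to 0$ and $C$ is closed. Therefore $c \in C$.

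\emph{Bounding the distance.} There are two cases, according to where $x$ lies.
\begin{itemize}
\item If $x \in \partial\Omega$, then $c\in\partial\Omega$ as well. Otherwise $c\in \Omega$, so $\hil(x_n,c)\le R+\epsilon+o(1)$ stays bounded while $x_n$ leaves every compact subset of $\Omega$, which is impossible. Now \cref{fact: lower semicontinuity of extended distance} gives $d_{\overline{\Omega}}(x,c) \le \liminf \hil(x_n,c_n) \le R+\epsilon < \infty$. Hence $c \in F_\Omega(x)$ and $d_{F_\Omega(x)}(x,c)\le R+\epsilon$.
\item If $x\in\Omega$, then $F_\Omega(x)=\Omega$. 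The points $c_n$ lie in the closed ball $\overline{B}(x,R+\epsilon+1)$ for $n$ large, and this ball is compact in $\Omega$ because $\hil$ is proper. So $c\in\Omega$, and continuity of $\hil$ gives $\hil(x,c)\le R+\epsilon$.
\end{itemize}

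In both cases $d_{F_\Omega(x)}(x,C)\le R+\epsilon$, and letting $\epsilon\to 0$ yields $d_{F_\Omega(x)}(x,C)\le R$. In particular this distance is finite, so some point of $C$ lies in $F_\Omega(x)$, i.e. $F_\Omega(x)\cap C\neq\emptyset$.

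The only delicate step is the boundary case: there we must make sure that the limit $c$ lands in the same open face as $x$, so that the extended distance is finite. That is exactly what \cref{fact: lower semicontinuity of extended distance} provides. It is stated for sequences in $\Omega$ converging to points of $\partial\Omega$, which is why we first check that $c\in\partial\Omega$.
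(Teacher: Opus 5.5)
Your proof is correct and follows the paper's own argument. Both choose (near-)closest points $c_n\in C_n\cap\Omega$, extract a convergent subsequence, use Hausdorff convergence to place the limit in $C$, and then apply \cref{fact: lower semicontinuity of extended distance}; your case split ($x\in\Omega$ versus $x\in\partial\Omega$, including the check that $c\in\partial\Omega$) supplies details the paper's short proof leaves implicit, since that Fact is stated only for boundary limits.
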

\begin{proof}
    For each $x_n$, pick $c_n \in C_n \cap \Omega$ such that $\hil(x_n,c_n)=\hil(x_n,C_n)< R$. Pass to a subsequence and assume that $c_n\to c \in \overline{\Omega}$. As $C_n\to C$, $c \in C$. Then \cref{fact: lower semicontinuity of extended distance} implies that $c \in F_{\Omega}(x)$ and $d_{F_{\Omega}(x)}(x,c)<R$.
\end{proof}

\subsection{Local Hausdorff topology induced by the Hilbert metric} Fix a properly convex domain $\Omega \subset \RP$. For a point $x \in \Omega$ and a radius $r > 0$, we write $B(x, r) = \{y \in \Omega \mid \hil(x, y) < r\}$. We will define a certain subset of $C_{\overline{\Omega}}$ (see \cref{eqn:defn_of_C_Omega}). Recall the notion of being closed in $\Omega$ from \cref{defn:closure_of_X_in_Om}. Now define 
\begin{align*}
    C^{o}_{\overline{\Omega}}&:=\{ Y \subset \Omega \mid Y \text{ is closed in } \Omega, Y\neq \emptyset \}\\
    &= \{ X \cap \Omega : X \in C_{\overline{\Omega}} \}\setminus \{\emptyset\}.
\end{align*}

\begin{definition}\label{def:local_hausdorff}
For $C_0 \in C^{o}_{\overline{\Omega}}$, $x_0 \in \Omega$, and $r_0, \epsilon_0 > 0$, let
$$U(C_0, x_0, r_0, \epsilon_0)\coloneqq \{C\in C^{o}_{\overline{\Omega}} \mid d_{\Omega}^{\Haus} \left( C_0 \cap B(x_0, r_0), C \cap B(x_0, r_0) \right) < \epsilon_0\}.$$
The \emph{local Hausdorff convergence topology} on $C^{o}_{\overline{\Omega}}$ induced by $\hil$ is the topology generated by the family of sets $\{U(C_0, x_0, r_0, \epsilon_0)\mid C_0\in C^{o}_{\overline{\Omega}},\ x_0\in \Omega, \ r_0,\ \epsilon_0 > 0\}$.
\end{definition}
For the sake of brevity, we will call this the \emph{local Hausdorff topology} (on $C^{o}_{\overline{\Omega}}$). As the name suggests, one  can think of this topology as a localization of the Hausdorff topology.  This has been used widely in CAT(0) geometry \cite{HK05}, in convex projective geometry \cite{IZRelaHyp, IZ24OneDimFaces}, and also in several complex variables \cite{Fra89}.

Since $(\Omega,\hil)$ is proper, \cite[Section 2]{IZ24OneDimFaces} implies that the local Hausdorff topology is second countable and we have the following: 

\begin{lemma}
\label{lem: characterization lht convergence}
A sequence  \Seq{C_n} in $C^{o}_{\overline{\Omega}}$ converges to $C \in C^{o}_{\overline{\Omega}}$ as $n\to \infty$ in the local Hausdorff topology if and only if
$$
\lim\limits_{n\to\infty} d^{\Haus}_{\Omega}(C_n\cap B(x_0,r), C\cap B(x_0,r))=0
$$
for all $x_0\in \Omega$ and $r>0$ such that $C\cap B(x_0,r)\neq \emptyset$.
\end{lemma}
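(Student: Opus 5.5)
The plan is to deduce the statement from the description of the local Hausdorff topology by its basic open sets $U(C_0,x_0,r_0,\epsilon_0)$, together with properness of $(\Omega,\hil)$. Since the local Hausdorff topology is second countable, it is first countable, so convergence of sequences detects the topology, and it suffices to compare ``$C_n$ eventually lies in every basic neighbourhood of $C$'' with the stated condition.

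\emph{Forward direction.} Suppose $C_n\to C$ and fix $x_0\in\Omega$ and $r>0$ with $C\cap B(x_0,r)\neq\emptyset$. I would show that $U(C,x_0,r,\epsilon)$ is itself a neighbourhood of $C$; it contains $C$ trivially. The generating sets form a subbasis, so an arbitrary neighbourhood of $C$ contains a finite intersection of sets $U(C_0^{(j)},x_j,r_j,\epsilon_j)$ that all contain $C$. Conversely, if $C\in U(C_0,x_0,r_0,\epsilon_0)$, then with slack $\eta:=\epsilon_0-d^{\Haus}_\Omega(C_0\cap B, C\cap B)$ (where $B=B(x_0,r_0)$) the triangle inequality for the Hausdorff distance gives $U(C,x_0,r_0,\eta)\subset U(C_0,x_0,r_0,\epsilon_0)$. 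Hence the sets $U(C,x_0,r,\epsilon)$ form a neighbourhood base at $C$ (after intersecting finitely many of them), and convergence $C_n\to C$ means exactly $d^{\Haus}_\Omega(C_n\cap B(x_0,r),C\cap B(x_0,r))\to 0$ for every $x_0$ and $r$. This gives the forward implication.

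\emph{Reverse direction.} The gap is the balls with $C\cap B(x_0,r)=\emptyset$, which the hypothesis does not cover. For such a ball I must show that $C_n\cap B(x_0,r)$ is eventually empty. This matters because the Hausdorff distance between $\emptyset$ and a nonempty set is infinite (with the convention that the distance between two empty sets is $0$). Pick $c\in C$ and set $R:=\hil(x_0,c)+1$, so that $C\cap B(x_0,R)\neq\emptyset$ and the hypothesis applies to this larger ball.

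Suppose $y_n\in C_n\cap B(x_0,r)$ along a subsequence. Since $B(x_0,r)\subset B(x_0,R)$, Hausdorff convergence on $B(x_0,R)$ gives points $c_n\in C\cap B(x_0,R)$ with $\hil(y_n,c_n)\to 0$. By properness, pass to a further subsequence so that $y_n\to y$ with $\hil(x_0,y)\le r$. Then $c_n\to y$, and since $C$ is closed in $\Omega$, we get $y\in C$.

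This only yields $y\in C\cap\overline{B(x_0,r)}$, not $C\cap B(x_0,r)=\emptyset$, so the boundary case $\hil(x_0,y)=r$ is the main obstacle. I would handle it by replacing $r$ with a slightly smaller $r'<r$, or by using the ball $B(x_0,r)$ as a whole rather than needing emptiness exactly, being careful that open balls are what appear in the definition. If this cannot be made to work, I would instead cite the general result of \cite[Section 2]{IZ24OneDimFaces} for proper metric spaces, as the paper already indicates.
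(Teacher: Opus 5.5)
The step you flag as the ``main obstacle'' is a genuine gap, and it cannot be closed. Under the convention you adopt ($d^{\Haus}(\emptyset,\emptyset)=0$ and $d^{\Haus}(\emptyset,A)=\infty$ for $A\neq\emptyset$), the ``if'' direction is false.

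Here is a counterexample. Take $x_0\in\Omega$, $r>0$, a point $y$ with $\hil(x_0,y)=r$, and points $y_n$ on the open projective segment $(x_0,y)$ with $y_n\to y$. Put $C=\{y\}$ and $C_n=\{y_n\}$.
\begin{itemize}
\item Any open ball $B(x,s)$ meeting $C$ satisfies $\hil(x,y)<s$, so it contains $y_n$ for large $n$. Hence $d^{\Haus}_{\Omega}(C_n\cap B(x,s),C\cap B(x,s))=\hil(y_n,y)\to 0$, and the hypothesis of the lemma holds.
\item But $U(C,x_0,r,1)=\{C'\in C^o_{\overline{\Omega}} : C'\cap B(x_0,r)=\emptyset\}$ is one of the generating open sets. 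It contains $C$ and contains no $C_n$, so $C_n\not\to C$.
\end{itemize}
This is exactly your boundary case $\hil(x_0,y)=r$. Shrinking $r$ cannot help, because the set with the exact radius $r$ is itself open.

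What your first half actually proves is this: with your convention, $C_n\to C$ if and only if the Hausdorff condition holds for \emph{all} $x_0\in\Omega$ and $r>0$. The lemma, with its restriction $C\cap B(x_0,r)\neq\emptyset$, is correct only under a convention in which the generating sets never force emptiness. One such reading takes $d^{\Haus}$ to be defined only between non-empty sets, as in the paper's definition of $d^{\Haus}$ on $\Cc(X)$. Then $C\in U(C_0,x_0,r_0,\epsilon_0)$ requires both $C\cap B(x_0,r_0)$ and $C_0\cap B(x_0,r_0)$ to be non-empty.

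With that reading, your triangle-inequality argument shows that every neighbourhood of $C$ contains a finite intersection of sets $U(C,x_j,r_j,\eta_j)$ with $C\cap B(x_j,r_j)\neq\emptyset$. Both directions then follow immediately. You need no compactness argument, and no second countability either, since sequential convergence is tested directly against neighbourhoods.

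The paper itself gives no argument and only cites \cite[Section 2]{IZ24OneDimFaces}. So to make your proof work, you must fix this convention explicitly rather than attempt the reverse direction as you set it up.
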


We have the following convergence results for convex hulls of finite sets.

\begin{lemma}\label{lem: convergence of simplices}
    Fix $k\in\mathbb{N}$. For each $i\in \{0,\dots,k\}$, let $\seq{v^i_n}$ be a sequence in $\overline{\Omega}$ such that $v^i_n\xrightarrow[n\to\infty]{} v^i\in\overline{\Omega}$. Set $\wh{S}_n\coloneqq \CH_\Omega(v^0_n,\dots,v^k_n)$  and $\wh{S}\coloneqq \CH_\Omega(v^0,\dots,v^k).$   Assume that $(\wh{S}_n\cap \Omega)\neq \emptyset$ for any  $n\in\mathbb{N}$ and $(\wh{S}\cap \Omega)\neq \emptyset$. Then, $$\left( \wh{S}_n\cap \Omega \right)  \xrightarrow[n\to\infty]{} \left( \wh{S}\cap \Omega   \right) \text{ in the \lht.}$$
\end{lemma}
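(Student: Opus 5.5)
By \cref{lem: characterization lht convergence}, it suffices to fix $x_0\in\Omega$ and $r>0$ with $\wh{S}\cap B(x_0,r)\neq\emptyset$ and show
\[
d^{\Haus}_{\Omega}\bigl(\wh{S}_n\cap B(x_0,r),\ \wh{S}\cap B(x_0,r)\bigr)\to 0 .
\]
I would first record the Euclidean statement. Work in an affine chart containing $\overline{\Omega}$, and note that there the convex hull of finitely many points of $\overline{\Omega}$ is the ordinary affine convex hull. The map $(w^0,\dots,w^k)\mapsto \CH(w^0,\dots,w^k)$ is continuous for the Hausdorff distance $d^H_{d_{\Pb}}$, since it is the image of the standard simplex $\Delta^k$ under $(t_i)\mapsto\sum t_i w^i$, which depends continuously on the vertices. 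Hence $\wh{S}_n\to\wh{S}$ in $(\Cc(\RP),d^H_{d_{\Pb}})$. The remaining task is to upgrade this to Hilbert-metric Hausdorff convergence on bounded balls. This is possible because on a compact subset of $\Omega$ the Hilbert metric and $d_{\Pb}$ are uniformly comparable, i.e.\ they induce the same uniform structure.

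\textbf{First half: points of $\wh{S}_n\cap B(x_0,r)$ are close to $\wh{S}\cap B(x_0,r)$.} Suppose not. Then there are $\epsilon>0$ and $p_n\in\wh{S}_n\cap B(x_0,r)$ with $\hil(p_n,\wh{S}\cap B(x_0,r))\ge\epsilon$. All $p_n$ lie in the compact set $\overline{B(x_0,r)}\subset\Omega$, so after passing to a subsequence $p_n\to p\in\overline{B(x_0,r)}$. Euclidean Hausdorff convergence gives $p\in\wh{S}$. If $\hil(x_0,p)<r$, then $p\in\wh{S}\cap B(x_0,r)$, which contradicts the assumption since $\hil(p_n,p)\to0$. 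The delicate case is $\hil(x_0,p)=r$. For this case, pick any $q\in\wh{S}\cap B(x_0,r)$, which exists by hypothesis. By convexity, $[q,p]\subset\wh{S}$. By the maximum principle, \cref{lem: maximum principle}, applied with $C=\{x_0\}$, the open segment $[q,p)$ lies in $B(x_0,r)$. This holds because $\hil(x_0,\cdot)$ is convex along projective segments, which follows from \cref{cor: convexity of neighborhoods}. Points of $[q,p)$ close to $p$ then lie within $\epsilon/2$ of $p_n$ for $n$ large, which again gives a contradiction.

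\textbf{Second half: points of $\wh{S}\cap B(x_0,r)$ are close to $\wh{S}_n\cap B(x_0,r)$.} Fix $\epsilon>0$ and cover the compact set $\overline{\wh{S}\cap B(x_0,r)}$ by finitely many points. The obstacle is again the boundary sphere, since approximants may lie just outside $B(x_0,r)$. I would handle this by pulling each $p\in\wh{S}\cap \overline{B(x_0,r)}$ slightly toward the fixed point $q$ from before, to a point $p'\in[q,p]$ with $\hil(p,p')<\epsilon/3$ and $\hil(x_0,p')<r-\eta$ for some uniform $\eta>0$. Choosing such $p'$ uniformly over the finite net is possible by compactness and the strict inequality supplied by convexity of the ball. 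Next, write $p'=\sum t_i v^i$ and set $p'_n:=\sum t_i v^i_n\in\wh{S}_n$. Then $p'_n\to p'$ in the chart. Since $p'\in\Omega$, the Hilbert metric is continuous there, so $\hil(p'_n,p')\to0$, and in particular $p'_n\in B(x_0,r)$ for large $n$. Uniformity over the finite net then finishes the argument.

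The main obstacle is exactly this boundary-of-the-ball issue, where Hausdorff convergence of truncations can fail in general. The plan above resolves it using convexity of both $\wh{S}$ and the Hilbert balls. A secondary point is that some $v^i$ may lie in $\partial\Omega$. This does not cause trouble, because all the approximations used take place at interior points $p'\in\Omega$, and $\wh{S}\cap\Omega\neq\emptyset$ guarantees that $q$ exists.
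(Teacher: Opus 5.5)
Your proof is correct and takes essentially the paper's route. Both arguments rest on compactness of closed Hilbert balls and on approximating a point $\sum_i t_i v^i$ of one hull by $\sum_i t_i v^i_n$ in the other; your Euclidean Hausdorff convergence $\wh{S}_n\to\wh{S}$ just packages the paper's barycentric-coordinate subsequence argument.

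You are more careful than the written proof about the sphere $\{\hil(x_0,\cdot)=r\}$. The paper tacitly uses two facts:
\begin{itemize}
\item a limit of points of $\wh{S}$ lying in the closed ball is a limit of points of $\wh{S}\cap B(x_0,r)$;
\item a point of $\wh{S}_n$ close to $B(x_0,r)$ is close to $\wh{S}_n\cap B(x_0,r)$.
\end{itemize}
Your auxiliary point $q\in\wh{S}\cap B(x_0,r)$ is exactly what justifies such steps.

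One justification is wrong, although the claim it supports is true. \cref{lem: maximum principle} only gives $[q,p]\subset\{\hil(x_0,\cdot)\le r\}$. Moreover, $\hil(x_0,\cdot)$ is \emph{not} convex along projective segments in general, and \cref{cor: convexity of neighborhoods} gives only convex sublevel sets. For example, take the projective simplex domain $\{[q_1:q_2:q_3]: q_i>0\}$ with $x_0=[1:1:1]$. There $\hil(x_0,[q])=\tfrac12\log(\max_i q_i/\min_i q_i)$. Along $t\mapsto[0.4+t:0.35-t:0.25]$ for $0\le t<0.1$, this equals $\tfrac12\log\bigl((0.4+t)/0.25\bigr)$, which is strictly concave.

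The correct reason for $[q,p)\subset B(x_0,r)$ is elementary convexity. $B(x_0,r)$ is open and convex in the affine chart, $q\in B(x_0,r)$, and $p$ lies in its closure. Given $z=(1-t)q+tp$ with $0\le t<1$, pick $p'\in B(x_0,r)$ near $p$. Then $q''=q+\tfrac{t}{1-t}(p-p')$ is near $q$, hence in $B(x_0,r)$, and $z\in[q'',p']\subset B(x_0,r)$.

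Finally, the pull toward $q$ in your second half is harmless but unnecessary. Since $\wh{S}\cap B(x_0,r)$ is totally bounded, you may choose the finite net inside it, and every net point then already satisfies $\hil(x_0,\cdot)<r$.
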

\begin{proof}
    Set $S_n:=\wh{S}_n\cap \Omega$ for all $n\in \Nb$ and $S:=\wh{S}\cap \Omega$.\\
    Fix $x_0\in \Omega$ and $r>0$ such that $S\cap B(x_0,r)\neq \emptyset$. Set $B_r:=B(x_0,r)$. By \cref{lem: characterization lht convergence}, it suffices to show that 
    \begin{equation}\label{eqn: convergence simplices lht 0}
        \lim_{n\to\infty}{\hil}^{\Haus}(S_n\cap B_r, S\cap B_r)=0.
    \end{equation}
    For each $n\in\mathbb{N}$, let $p_n\in \overline{S\cap B_r}$ such that
    $
    \hil\left(p_n,S_n\cap B_r\right)=\sup\limits_{p\in S\cap B_r} \hil(p,S_n\cap B_r).
    $
    As $S\subseteq\CH_\Omega(v^0,\dots,v^k)$,
    then for each $n$ there exist $a_n^0,\dots,a_n^k\in[0,1]$ such that $\sum_{i=0}^ka_n^i=1$ and $p_n=\sum_{i=0}^ka_n^iv^i$.
    Now consider the points $q_n=\sum_{i=0}^ka_n^iv_n^i$ that lie in $\overline{S_n}$. Since $v^i_n$ converges to  $v^i$ for all $i$, then $\hil\left(p_n,q_n\right)\to0$. Moreover $\lim_{n\to\infty}\hil(q_n,B_r)=0$, as   $\hil(q_n,B_r) \leq \hil(q_n,p_n)$. Thus $\hil(q_n,\overline{S_n \cap B_r})\to 0$.
    Then, 
    \begin{align}
    \label{eqn:simplex_convergence_1}
       0 &\leq \lim_{n\to\infty}\left(\sup\limits_{p\in S\cap B_r} \hil(p,S_n\cap B_r)\right)= \lim_{n\to\infty}\hil\left(p_n,S_n\cap B_r\right)\\
        &\le\lim_{n\to\infty}\hil\left(p_n,q_n\right)+\hil\left(q_n,S_n\cap B_r\right)=0. \nonumber
    \end{align}

    On the other hand, for each $n\in\mathbb{N}$, consider a point $p'_n\in\overline{S_n\cap {B_r}}$ such that $\hil(p_n',S\cap B_r)=\sup \limits_{p'\in S_n\cap B_r} \hil(p',S\cap B_r)$.    
    Suppose there is a subsequence $\seq{p_{k_n}'}$ such that $\lim_{n\to\infty}\hil(p_{k_n}',S\cap B_r)>0.$
    For each $n$, we write $p'_n = \sum_{i=0}^k a^i_n v^i_n$ where  $a_n^i$ lies in $[0,1]$ and $\sum_{i=0}^ka_n^i=1$.  Then there is a subsequence $\seq{\ell_n}$ of $\seq{k_n}$ such that  $\lim_{n\to\infty}a^i_{\ell_n}=a^i\in[0,1]$ for each $i$.  Hence $\seq{p'_{\ell_n}}$ converges to $p_{\infty}' \coloneqq \sum_{i=0}^k a^i v^i.$ Note that $p_{\infty}' \in \overline{S}$ by convexity and  $p_{\infty}' \in \overline{B_r}$ as each $p_n\in B_r$.
    Therefore 
    \begin{align*}
        0< \lim_{n\to\infty}\hil(p_{k_n}',S\cap B_r)=\lim_{n\to\infty} \hil(p'_{\ell_n},S\cap B_r) \leq \hil(p'_{\ell_n},p_{\infty}')=0,
    \end{align*}
    a contradiction. This implies that 
    \begin{align}
        \label{eqn:simplex_convergence_2}
        \lim_{n\to\infty}\left(\sup\limits_{p'\in S_{n}\cap B_r} \hil(p',S\cap B_r) \right) =\lim_{n\to\infty} \hil(p'_n,S\cap B_r)=0.
    \end{align}
    Then \cref{eqn:simplex_convergence_1} and \cref{eqn:simplex_convergence_2} imply \cref{eqn: convergence simplices lht 0}. This finishes the proof.
\end{proof}

\begin{corollary} 
\label{cor: compactness of simplices in loc hausdorff top}
Fix $k \in \Nb$. Suppose $K \subset \Omega$ is compact, and  $\wh{S}_n\coloneqq \CH_\Omega(v^0_n,\dots,v^k_n)$ is such that $\wh{S}_n \cap K \neq \emptyset$  for all $n \in \Nb$. Then there is a subsequence $\seq{\wh{S}_{k_n}}$ and a closed convex set $\wh{S}$ of $\overline{\Omega}$ with  $\wh{S}\cap \Omega \neq \emptyset$ such that  $$(\wh{S}_{k_n}\cap \Omega)\xrightarrow[n \to \infty]{}  (\wh{S}\cap \Omega) \text{ in the local Hausdorff topology.}$$ 
In particular, $\wh{S}=\CH_{\Omega}(v^0,\dots,v^k)\cap \Omega$ where $v^i:=\lim_{n\to\infty}v^i_{k_n}$.
\end{corollary}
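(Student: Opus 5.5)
The plan is a compactness argument in $\overline{\Omega}$, followed by an application of \cref{lem: convergence of simplices}. Since $\overline{\Omega}$ is compact in $\RP$, we can diagonalize over the finitely many indices $i \in \{0,\dots,k\}$. This produces a subsequence $\seq{k_n}$ along which $v^i_{k_n} \to v^i \in \overline{\Omega}$ for every $i$. We then set $\wh{S} := \CH_\Omega(v^0,\dots,v^k)$. This is a closed convex subset of $\overline{\Omega}$: it is the convex hull of finitely many points in an affine chart containing $\overline{\Omega}$, hence compact.

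The key step is to check that $\wh{S} \cap \Omega \neq \emptyset$, since this is the hypothesis needed to invoke \cref{lem: convergence of simplices}.

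First, for each $n$ pick $p_n \in \wh{S}_n \cap K$. We write $p_n = \sum_i a^i_n v^i_n$ with barycentric weights $a^i_n \in [0,1]$ summing to $1$; these are taken in a fixed affine chart containing $\overline{\Omega}$ and a fixed lift.

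Next, pass to a further subsequence along which two things converge:
\begin{itemize}
    \item $a^i_{k_n} \to a^i$ for each $i$;
    \item $p_{k_n} \to p \in K$, which is possible because $K$ is compact.
\end{itemize}
Continuity of the affine combination then gives $p = \sum_i a^i v^i \in \wh{S}$. Since $p \in K \subset \Omega$, we conclude $p \in \wh{S} \cap \Omega$. Each $\wh{S}_n \cap \Omega$ is also nonempty, because it contains $p_n$.

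With these hypotheses verified, \cref{lem: convergence of simplices} applied along this subsequence gives
\[
\wh{S}_{k_n} \cap \Omega \to \wh{S} \cap \Omega
\]
in the local Hausdorff topology, with $\wh{S} = \CH_\Omega(v^0,\dots,v^k)$. This is exactly the ``in particular'' clause of the statement (read as identifying the limit set $\wh{S}\cap\Omega$).

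The only mildly delicate point is the barycentric-coordinate bookkeeping. One must ensure that convex combinations in $\overline{\Omega}$ are computed in a single affine chart, so that limits of convex combinations are convex combinations of the limits. This is handled by fixing, once and for all, an affine chart in which $\overline{\Omega}$ is bounded, as is done in the proof of \cref{lem: convergence of simplices}. Apart from this, the argument is routine.
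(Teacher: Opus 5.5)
Your proposal is correct and matches the paper's proof essentially step for step: pass to a subsequence along which the vertices converge, pick points of $\wh{S}_n \cap K$, and use their barycentric coordinates in a fixed affine chart to produce a point of $\wh{S}\cap\Omega$, then invoke \cref{lem: convergence of simplices}. You also check explicitly that each $\wh{S}_n\cap\Omega$ is nonempty, which that lemma requires; the paper leaves this implicit.
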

\begin{proof}
    First we pass to a subsequence $\seq{k_n}$ so that $\lim_{n\to\infty}v^i_{k_n}=v^i$ exists in $\overline{\Omega}$ for all $i =0,\dots,k$. Consider $\wh{S}=\CH_{\Omega}(v^0,\dots,v^k)$. Note that it suffices to prove $\wh{S} \cap \Omega \neq \emptyset$, because then the result is immediate from \cref{lem: convergence of simplices}.  To prove that $\wh{S}\cap \Omega \neq \emptyset$, pick $u_{k_n} \in \wh{S}_{k_n} \cap K$ for each $n.$ 
    Up to passing to another subsequence, we can assume that $\seq{u_{k_n}}$ converges to some $u\in\overline{\Omega}$. As $K\subset \Omega$, $u \in K \subset \Omega$. So we only need to show that $u \in \wh{S}$. This can be done by similar arguments as in the proof of \cref{lem: convergence of simplices}: write $u_{k_n}=\sum_{i=0}^k a^i_nv^i_n$ with $a^i_n\in [0,1]$ and $\sum_{i=0}^ka^i_n=1$; then note that $u_{k_n} \to u$ provided $a^i_n\to a^i$ for each $i=0,\dots,k$; and then conclude that $u=\sum_{i=0}^ka^iv^i \in \wh{S}$.  
\end{proof}

\subsection{Projective Simplex Domain}

\begin{definition}
\label{defn:projective_simplex_domain}
We say that a properly convex domain $\Omega \subset \RP$ is a \emph{projective simplex domain in $\RP$} if $\Omega=g \cdot \{ [x_1:\dots:x_d] | x_1>0,\dots,x_d>0 \}$ for some $g \in \PGL_d(\Rb)$.
\end{definition}
In the affine chart $\RP-g \cdot \{[x_1:\cdots:x_d]| x_1+\dots+x_d=0\}$, $\Omega$ projects to an open $(d-1)$-dimensional Euclidean simplex. Hence the name `projective simplex' for such domains. Note the difference between this definition (where the properly convex domain itself is a simplex) and the notion of a $k$-simplex in some arbitrary properly convex domain $\Omega$ (see \cref{sec:defn_of_simplices}).

\subsection{Symmetric  domains}
\label{sec:symmetric_domains}

\subsubsection{Projective model for the symmetric space of $\SL_n(\mathbb{R})$.}

Let $M_n(\Rb)$ denote the set of $n\times n$ real matrices and let $\Pi_n(\mathbb{R})$ denote its subset consisting of positive definite matrices. Then $\Pi_n(\mathbb{R})$ can be identified with an open convex cone in the vector space $\mathbb{R}^{\frac{n(n+1)}{2}}$. The projectivization  $\Pb(\Pi_n(\Rb))$ of this cone is a properly convex domain in $\Pb(\Rb^{\frac{n(n+1)}{2}})$. Indeed, the proper convexity follows as $\overline{\Pb(\Pi_n(\Rb))}$ is disjoint from the projective hyperplane $\Pb(\{ A \in M_n(\Rb): \tr(A)=-1\})$.

The properly convex domain $\Pb(\Pi_n(\Rb))$ is the projective model of the symmetric space $\SL_n(\mathbb{R})/\SO(n)$. To see this, consider the action of $\SL_n(\mathbb{R})$ on $\Pb(\Pi_n(\mathbb{R}))$ via $$ g \cdot [S] \mapsto [gSg^T].$$  This action is transitive as symmetric matrices are orthogonally diagonalizable. Finally, the stabilizer of $[I_n]$ (the projective class of the identity matrix) in $\SL_n(\Rb)$ is precisely the maximal compact subgroup $\SO(n)$. Hence the conclusion.

The $\SL_n(\Rb)$-invariant Hilbert distance between  $[A],[B]\in \Pb(\Pi_n(\mathbb{R}))$ can be expressed in terms of the eigenvalues of $A^{-1}B$: if $\lambda_1 \geq \dots \geq \lambda_n >0$ are the eigenvalues of the matrix $A^{-1}B$, then the Hilbert distance is 
\begin{equation*}
d_{\Pb(\Pi_n(\mathbb{R}))}([A], [B]) = \frac{1}{2} \log \left( \frac{\lambda_1}{\lambda_n} \right).
\end{equation*}

\begin{table}[h]
\centering
\renewcommand{\arraystretch}{1.8} 
\begin{tabular}{|p{8.5cm}|c|c|} 
\hline
\textbf{Projective Domain} $\Omega$ & $\mathbf{d}=\dim(\Omega)$ & \textbf{$H$} \\
\hline

{\small $\mathbb{H}^{n-1}_{\mathbb{R}}=\mathbb{P}\left(\{x\in \mathbb{R}^n \mid x_1^2 - x_2^2 - \dots - x_n^2 > 0\}\right)$; $n \geq 3$}
& {\small$ n - 1$ }
& {\small$\PO(1,n-1)$} \\
\hline

{\small $\mathbb{P}(\Pi_n(\mathbb{R}))=\mathbb{P}(\{\text{Pos. def. sym. } n \times n \text{ real matrices}\})$; $n \geq 3$} 
& {\small $ \dfrac{n(n+1)}{2} - 1$ }
& {\small $\PSL_{n}(\mathbb{R})$} \\
\hline

{\small $\mathbb{P}(\Pi_n(\mathbb{C}))=\mathbb{P}(\{\text{Pos. def. herm. } n \times n \text{ complex mat.}\})$}
& {\small$ n^2 - 1$ }
& {\small$\PSL_{n}(\mathbb{C})$ }\\
\hline

{\small $\mathbb{P}(\Pi_n(\mathbb{H}))=\mathbb{P}(\{\text{Pos. def. herm. } n \times n \text{ quaternion mat.}\})$} 
& {\small$ 2n^2 - n - 1$}
& {\small$\PSL_{n}(\mathbb{H})$} \\
\hline

{\small $\mathbb{P}(\Pi_3(\mathbb{O}))$}
& {\small$ 26$ }
& {\small$E_{6(-26)}$} \\
\hline
\end{tabular}
\caption{Irreducible properly convex symmetric domains.}
\label{table:projective_symmetric_domains}
\end{table}

\subsubsection{Classification of irreducible symmetric domains}
 A properly convex domain $\Omega$ is called \emph{symmetric} if $\Aut(\Omega)$ is a reductive Lie group that acts transitively on $\Omega$ \cite{Benoist_survey_2008}. A properly convex domain is said to be \emph{irreducible} if it cannot be decomposed as the projective join of two properly convex domains of strictly smaller dimension. Consequently, any properly convex domain decomposes uniquely into a join of irreducible factors.
So the classification of all symmetric domains reduces to the classification of irreducible symmetric domains.

Koecher \cite{Koecher} and Vinberg \cite{VinbergHomogeneous} completely classified the \emph{irreducible symmetric domains}. 
\cref{table:projective_symmetric_domains} provides the complete list of all irreducible symmetric domains $\Omega$ and the simple Lie groups $H$ locally isomorphic to the corresponding $\Aut(\Omega)$.
For each $H$, its real rank  -- denoted by $\rk(H)$ -- is the dimension of its maximal abelian $\Rb$-diagonalizable subgroup. We call a symmetric domain $\Omega$ \emph{higher rank} if $\rk(H) \geq 2$ for its corresponding $H$. Indeed, in \cref{table:projective_symmetric_domains}, the only rank one domains (i.e. $\rk(H)=1$) are $\Hb^{n-1}_{\Rb}$ while all the other domains are higher rank.  Any symmetric domain $\Omega$ is quasi-homogeneous because $\Aut(\Omega)^0$ (the identity component of $\Aut(\Omega)$) acts transitively on $\Omega$.

\begin{remark}
\label{rem:irred_sym_sp_and_domain}
    Recall from \cref{sec:intro_appl_to_symm_space} the notion of an irreducible symmetric space $G/K$ admitting a convex projective model $\Omega$. We note that such $\Omega$ exactly corresponds to the irreducible symmetric domains, i.e. one of the domains listed in \cref{table:projective_symmetric_domains}. Indeed, a domain $\Omega$ satisfies the definition in \cref{sec:intro_appl_to_symm_space} if and only if $\Aut(\Omega)^0$ is a connected Lie group with trivial center acting transitively on $\Omega$. Hence the conclusion is obvious. 
\end{remark}

 Each irreducible symmetric domain is, in fact, divisible. Indeed, the work of Borel \cite{Borel} implies that each in case of \cref{table:projective_symmetric_domains}, $\Aut(\Omega)$ contains a discrete subgroup $\Gamma$ such that $\Omega/\Gamma$ is compact.

\begin{lemma}
\label{lem:riem_QI_hilbert}
    Suppose $\Omega$ is a properly convex domain as in \cref{table:projective_symmetric_domains} so that $\Aut(\Omega)$ is locally isomorphic to the corresponding $H$ in the table. Let $d$ be any $\Aut(\Omega)^0$-invariant, proper distance on $\Omega$. Then $(\Omega,d)$ is quasi-isometric to $(\Omega,\hil)$, where $\hil$ is the Hilbert metric on $\Omega$. 
\end{lemma}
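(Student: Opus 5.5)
The argument is a Švarc–Milnor-type comparison. Both $d$ and $\hil$ are invariant under the group $G:=\Aut(\Omega)^0$, and $G$ acts transitively on $\Omega$, since $\Omega$ is a symmetric domain. So comparing the two metrics should reduce to comparing balls around a single base point $o\in\Omega$. The plan is to show that the identity map $(\Omega,\hil)\to(\Omega,d)$ is a quasi-isometry. I will use the following structure: $\hil$ is a proper geodesic metric (projective segments are geodesics), and both metrics induce the manifold topology. For $d$ I am reading ``proper'' as: closed balls are compact and the topology is the usual one. If $d$ is also assumed to be a length or geodesic metric, the argument runs symmetrically.

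First I will establish the bound $d(x,y)\le A\,\hil(x,y)+B$. Let $K:=\overline{B_{\hil}(o,1)}$, which is compact in $\Omega$. Since $d$ is continuous, the number $M:=\sup_{z\in K} d(o,z)$ is finite. Given $x,y$, set $n:=\lceil \hil(x,y)\rceil$ and subdivide the projective segment $[x,y]$ into points $x=z_0,\dots,z_n=y$ with consecutive $\hil$-distances at most $1$. For each $i$ choose $g_i\in G$ with $g_i o=z_i$. Then $g_i^{-1}z_{i+1}\in K$, because $G$ preserves $\hil$. Invariance of $d$ now gives $d(z_i,z_{i+1})\le M$, hence $d(x,y)\le M(\hil(x,y)+1)$.

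For the reverse bound I will use properness of $d$. Coarse connectedness of $(\Omega,d)$ is needed here, and the route is to compare along $d$-geodesics if $d$ is geodesic. In the Riemannian case this holds, since $\Omega$ is connected and the distance is a length metric. The closed ball $\overline{B_d(o,1)}$ is compact, so $M':=\sup\{\hil(o,z): d(o,z)\le 1\}$ is finite. Chaining along a $d$-geodesic with steps of $d$-length at most $1$, exactly as before, gives $\hil(x,y)\le M'(d(x,y)+1)$. Both maps are the identity, so quasi-surjectivity is automatic, and this finishes the proof.

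The main obstacle is this reverse inequality when $d$ is merely an invariant proper metric and not a length metric: the chaining step needs $d$ to be coarsely geodesic. If the intended $d$ is the Riemannian distance $d_f$, which is the case used in \cref{thm:coarse_median_irred_symm_sp}, there is no issue. In general I would try to deduce large-scale geodesicity from transitivity of $G$ together with connectedness of $\Omega$: cover the $\hil$-geodesic by translates of a small $d$-ball, and invoke the standard Švarc–Milnor argument for a proper cocompact isometric action on both spaces. Failing that, I would restrict the statement to length metrics $d$.
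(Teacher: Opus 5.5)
Your argument is correct, under the length-metric hypothesis you flag, but it takes a different route from the paper. The paper uses Borel's theorem to get a uniform lattice $\Gamma<\Aut(\Omega)^0$. It then applies the \v{S}varc--Milnor lemma to the actions of $\Gamma$ on $(\Omega,d)$ and on $(\Omega,\hil)$, so both spaces are quasi-isometric to $\Gamma$.

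You instead run the \v{S}varc--Milnor chaining directly with the transitive group $\Aut(\Omega)^0$. This gains two things. First, you never need the existence of cocompact lattices. Second, you show explicitly that the identity map is a quasi-isometry, with constants $d\le M(\hil+1)$ and $\hil\le M'(d+1)$. This is exactly the form the paper later uses in the proof of \cref{rmk:slim_simplices_symmetric}; from the paper's argument it only follows after noting that an orbit $\Gamma\cdot o$ is coarsely dense for both metrics. The paper's route is shorter because it outsources everything to a standard lemma.

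Your caveat is genuine, and it applies equally to the paper's proof. \v{S}varc--Milnor needs $(\Omega,d)$ to be a length (or at least coarsely geodesic) space inducing the usual topology, and ``by the hypotheses on $d$'' does not supply this. In fact the lemma as literally stated is false. The distance $\sqrt{\hil}$ is $\Aut(\Omega)^0$-invariant, proper, and induces the usual topology. Suppose $\phi\colon(\Omega,\hil)\to(\Omega,\sqrt{\hil})$ were a $(K,L)$-quasi-isometry. Chain along a projective segment from $x$ to $y$ of $\hil$-length $n$ in unit steps. Each step has $\sqrt{\hil}$-image length at most $K+L$, hence $\hil$-image length at most $(K+L)^2$. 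This gives $\sqrt{\hil}(\phi x,\phi y)\le (K+L)\sqrt{n}$, which contradicts the lower bound $n/K-L$ for large $n$.

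So your proposed fallback, extracting coarse geodesicity from transitivity plus connectedness, cannot work. Transitivity only produces chains with small steps; it gives no linear bound on how many steps are needed. The correct fix is your final option: restrict to length metrics. That covers the Riemannian distances actually used in \cref{thm:coarse_median_irred_symm_sp}.
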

\begin{proof}
    By the fact stated right before the lemma, we can find a discrete subgroup $\Gamma<\Aut(\Omega)^0$ such that $\Gamma$ acts co-compactly on $\Omega$. By the hypotheses on $d$, $\Gamma$ also acts properly and isometrically on $(\Omega,d)$. Then, by the \v{S}varc-Milnor lemma, $\Gamma$  is quasi-isometric to both $(\Omega,d)$ and $(\Omega,\hil)$. Hence the result.
\end{proof}

\subsection{Rank of a symmetric domain}
\label{sec:rank_of_symm_domain}

\subsubsection{Asymptotic rank} Given a proper metric space $(X,d)$, we define its \emph{asymptotic rank} (denoted by $\asrk(X,d)$) as the supremum of all integers $k\ge0$ for which the following holds: there exist 
\begin{enumerate}
    \item a unit ball $B \subset \Rb^k$ \wrt\ some norm on $\mathbb{R}^k$,
    \item a sequence of subsets $(X_n)_{n\in\mathbb{N}}$ of $X$, and
    \item a diverging sequence $(r_n)_{n\in\mathbb{N}}\subseteq\mathbb{R}_{>0}$
\end{enumerate}
such that $(X_n,r_n^{-1}d)_{n\in\mathbb{N}}$ converges to $B$ \wrt\ the Gromov-Hausdorff topology.

\begin{remark}
\label{rem:normed_space_embedding_gives_lower_bound_on_asrk}
    If $(V,\norm{\cdot})$ is a normed vector space such that $V \hookrightarrow X$ is an isometric embedding into the metric space $(X,d)$, then $\asrk(X,d) \geq \dim V$.
\end{remark}

\begin{fact}[{\cite[Section 3]{Wenger2011}}]
\label{fact:asrk_QI_inv}
    If $X$ and $X'$ are quasi-isometric metric spaces, then $\asrk(X)=\asrk(X')$.
\end{fact}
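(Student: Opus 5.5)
The plan is to show $\asrk(X')\geq \asrk(X)$ whenever there is an $(L,C)$-quasi-isometry $f\colon X\to X'$. The reverse inequality then follows by applying the same argument to a quasi-inverse of $f$. So fix $k\leq\asrk(X)$, a normed unit ball $B\subset\Rb^k$, subsets $X_n\subset X$, and scales $r_n\to\infty$ with $(X_n,r_n^{-1}d)\to B$ in the Gromov--Hausdorff topology. We push everything forward by $f$. On $X_n$ we have
\[
L^{-1}r_n^{-1}d(x,y)-C r_n^{-1}\;\leq\; r_n^{-1}d'(f(x),f(y))\;\leq\; L r_n^{-1}d(x,y)+Cr_n^{-1},
\]
and the additive errors $Cr_n^{-1}\to 0$. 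The family $(f(X_n),r_n^{-1}d')$ is uniformly totally bounded, because $(X_n,r_n^{-1}d)$ is. By Gromov's compactness theorem, after passing to a subsequence, $(f(X_n),r_n^{-1}d')$ converges to a compact metric space $Y$. The inequalities above pass to the limit and give an $L$-bi-Lipschitz bijection $\phi\colon B\to Y$.

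The difficulty is that $Y$ need not be isometric to a normed ball; it is only bi-Lipschitz to one. To recover a genuine normed ball, we use metric differentiation (Kirchheim's theorem). The map $\phi$ is Lipschitz from a subset of $\Rb^k$ of positive Lebesgue measure into a metric space. Hence, for almost every $p\in B$, there is a seminorm $\norm{\cdot}_p$ on $\Rb^k$ such that
\[
d_Y(\phi(y),\phi(z))-\norm{y-z}_p=o\bigl(|y-p|+|z-p|\bigr).
\]
Since $\phi$ is bi-Lipschitz, each $\norm{\cdot}_p$ is in fact a norm. Choose such a $p$ that also lies in the interior of $B$, and let $B_p$ be the unit ball of $\norm{\cdot}_p$. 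Then, as $\varepsilon\to 0$, the rescaled sets $\bigl(\phi(p+\varepsilon B_p),\varepsilon^{-1}d_Y\bigr)$ converge in the Gromov--Hausdorff topology to $(B_p,\norm{\cdot}_p)$.

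The last step is a diagonal argument that realizes these blow-ups inside $X'$. For each $m$, fix a small $\varepsilon_m$ with
\[
d_{GH}\Bigl(\bigl(\phi(p+\varepsilon_m B_p),\varepsilon_m^{-1}d_Y\bigr),B_p\Bigr)<\tfrac1m.
\]
Next choose $n_m$ so large that $(f(X_{n_m}),r_{n_m}^{-1}d')$ is within $\varepsilon_m/m$ of $Y$. Then there is a subset $X'_m\subset f(X_{n_m})$, namely the points corresponding to $\phi(p+\varepsilon_m B_p)$ under an almost-isometry, such that $(X'_m,(\varepsilon_m r_{n_m})^{-1}d')$ is within $2/m$ of $B_p$. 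We can also arrange $\varepsilon_m r_{n_m}\to\infty$. This exhibits $B_p$ as a Gromov--Hausdorff limit of rescaled subsets of $X'$, so $\asrk(X')\geq k$.

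I expect the main obstacle to be the metric differentiation step. Kirchheim's theorem is stated for Lipschitz maps into metric spaces, and one must check that it applies here and that the blow-up convergence is uniform enough for the diagonal argument. If that route is too delicate, my fallback is Wenger's reformulation: $\asrk$ equals the largest $k$ for which some asymptotic cone contains a bi-Lipschitz image of a positive-measure compact subset of $\Rb^k$. That property is visibly invariant under quasi-isometries, since a quasi-isometry induces bi-Lipschitz maps between asymptotic cones.
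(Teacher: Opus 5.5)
Your proof is correct and is essentially the argument behind the cited fact. The paper gives no proof and defers to Wenger, whose key step is likewise Kirchheim's metric differentiation: blowing up a bi-Lipschitz copy of a positive-measure subset of $\Rb^k$ at a point of metric differentiability recovers a genuine normed unit ball as a Gromov--Hausdorff limit of rescaled subsets. The difference is only packaging: Wenger works in asymptotic cones (your fallback), where invariance is immediate since a quasi-isometry induces bi-Lipschitz maps between cones, whereas you stay with Gromov--Hausdorff limits and instead use Gromov compactness, an Arzel\`a--Ascoli step to produce $\phi$, and the diagonal choice of $\varepsilon_m$ and $n_m$.
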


\begin{proposition}
\label{lem:asrk_flat_rank_easy_direction}
    Suppose $\Omega$ is a properly convex domain. The asymptotic rank of $(\Omega,\hil)$ is at least $\rF(\Omega)$, that is $\asrk(\Omega,\hil)\geq\rF(\Omega)$.
\end{proposition}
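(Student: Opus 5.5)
The plan is to use \cref{rem:normed_space_embedding_gives_lower_bound_on_asrk}: it suffices to show that for every PES $S$ in $\Omega$ of dimension $k$, the interior $S\cap\Omega$ with the restricted Hilbert metric is isometric to a $k$-dimensional normed vector space. Taking the supremum over $k\le \rF(\Omega)$ then gives the claim. If $\rF(\Omega)$ is attained by some PES (it is finite, bounded by $d-1$), this directly yields $\asrk(\Omega,\hil)\ge \rF(\Omega)$.

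The first step is to identify the metric on $S\cap\Omega$. Let $S=\CH_\Omega(x_0,\dots,x_k)$ be a PES, and set $V=\Pb(\Span S)$, a projective $k$-plane. I claim $V\cap\Omega=\relint(S)$. Since $S$ is non-degenerate, $\relint(S)$ is open in $V$, and by \cref{lem:relint_dichotomy} (as $S\cap\Omega\neq\emptyset$) we have $\relint(S)\subset\Omega$. Conversely, $V\cap\overline\Omega$ is convex, contains $S$, and is contained in a properly convex region of $V$. If it were strictly larger than $S$, then some point of a facet $F^i(S)\subset\partial\Omega$ would lie in the relative interior of a segment in $V\cap\overline{\Omega}$ passing through $\relint(S)\subset\Omega$. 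That segment would meet $\Omega$, and by \cref{lem:top_char_bdry_points} the point on it would lie in $\Omega$, which is a contradiction. Hence $V\cap\Omega=\relint(S)$ is a projective simplex domain in $V$ in the sense of \cref{defn:projective_simplex_domain}.

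The second step is to note that for $x,y\in V\cap\Omega$, the projective line through $x$ and $y$ lies in $V$. Its intersection with $\overline\Omega$ therefore has the same endpoints as its intersection with $\overline{V\cap\Omega}=S$. So $\hil$ restricted to $S\cap\Omega$ equals the Hilbert metric of the projective simplex domain $\relint(S)$.

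The third step is the classical computation that the Hilbert metric on the open $k$-simplex $\{[y_0:\dots:y_k]:y_i>0\}$ is isometric to $\Rb^{k+1}/\Rb(1,\dots,1)$ with a norm, via $[y]\mapsto(\log y_0,\dots,\log y_k)$. Concretely,
\[
d([y],[y'])=\tfrac12\max_{i,j}\log\frac{y_i' y_j}{y_i y_j'},
\]
which is $\tfrac12$ times the norm $\max_i u_i-\min_i u_i$ of the difference vector $u=\log y'-\log y$. This is a norm on the $k$-dimensional quotient, whose unit ball is a hexagon when $k=2$. The verification computes the cross-ratio along a line: on the segment from $y$ to $y'$, the endpoints occur where some coordinate vanishes, and the cross-ratio simplifies to the stated max. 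I expect this computation to be the only real obstacle; everything else follows from the earlier lemmas. Combining the steps gives an isometric embedding of a $k$-dimensional normed space into $(\Omega,\hil)$, and \cref{rem:normed_space_embedding_gives_lower_bound_on_asrk} completes the proof.
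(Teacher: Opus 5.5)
Your proof is correct and follows the same route as the paper: restrict $\hil$ to a PES, identify it with a polyhedral normed space, and invoke \cref{rem:normed_space_embedding_gives_lower_bound_on_asrk}. Your Step 1, which shows $\Pb(\Span S)\cap\overline{\Omega}=S$ via \cref{lem:top_char_bdry_points}, supplies the justification that the paper compresses into ``by convexity of $S$''; convexity alone would not make the Hilbert metric of $S$ agree with the restriction of $\hil$.
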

\begin{proof}
    Suppose $S \subset \Omega$ is a PES. By convexity of $S$,  $d_S=\hil{\big|_{S \times S}}$. Recall that $(S,d_S)$ is isometric with $\Rb^{\dim S}$ equipped with a polyhedral norm. Thus, by \cref{rem:normed_space_embedding_gives_lower_bound_on_asrk}, $\asrk(\Omega,\hil)\geq\dim(S)$. Hence the result.  
\end{proof}

\subsubsection{Euclidean rank}

For a non-positively curved Riemannian manifold $Y$  (more generally, a complete CAT(0) space), there is a related notion of Euclidean rank. The Euclidean rank of $Y$, denoted by $\rEuc(Y)$,  is the maximal $n$ such that there is an isometric embedding of the Euclidean space $\Rb^n$ in $Y$. 

For Riemannian symmetric spaces, this notion of Euclidean rank has several characterizations. It is well known that for a Riemannian symmetric space of non-compact type $X$, $\rEuc(X)$ coincides with the real rank of the semi-simple Lie group $G=\Isom(X)^0$, the identity component of the isometry group of $X$. It also equals the asymptotic rank, as shown by the fact below.

\begin{fact}[{\cite[Theorem 3.4]{Wenger2011}}]
\label{fact:asrk_equals_reuc}
    If $(X,d)$ is a complete CAT(0) space that is proper and co-compact, then $\asrk(X,d)=\rEuc(X,d)$. In particular, the equality holds when $X$ is a Riemannian symmetric space of non-compact type.
\end{fact}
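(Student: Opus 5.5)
The inequality $\asrk(X,d)\geq\rEuc(X,d)$ is immediate and I would dispose of it first. If $\Rb^n\hookrightarrow X$ is an isometric embedding of Euclidean space, then \cref{rem:normed_space_embedding_gives_lower_bound_on_asrk}, applied with the Euclidean norm, gives $\asrk(X,d)\ge n$. Taking the supremum over all such $n$ yields the lower bound.

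For the reverse inequality I would pass to asymptotic cones; this needs only that $X$ is CAT(0) and proper. Suppose $k\le\asrk(X,d)$ is witnessed by a norm ball $B\subset\Rb^k$, subsets $X_n\subset X$ and scales $r_n\to\infty$. Fix a non-principal ultrafilter $\omega$ and basepoints $p_n\in X_n$, and let $X_\omega$ be the asymptotic cone of $X$ with respect to $(p_n)$ and $(r_n)$. The Gromov--Hausdorff convergence $(X_n,r_n^{-1}d)\to B$ produces an isometric embedding $B\hookrightarrow X_\omega$: take $\epsilon_n$-isometries $B\to X_n$ with $\epsilon_n\to 0$ and pass to the $\omega$-limit.

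Since ultralimits of CAT(0) spaces are CAT(0), $X_\omega$ is CAT(0). The image of $B$ is then a subset of a CAT(0) space on which the metric is given by a norm. Two consequences follow.
\begin{itemize}
\item The image of $B$ is geodesically convex in its own metric, so geodesics in $(B,\norm{\cdot})$ must be unique. Hence the norm is strictly convex.
\item The CAT(0) comparison for triangles inside $B$ yields the parallelogram law.
\end{itemize}
So $\norm{\cdot}$ is Euclidean, and $X_\omega$ contains an isometric copy of a Euclidean $k$-ball. Consequently $X_\omega$ has geometric dimension at least $k$, in Kleiner's sense of the supremum of topological dimensions of compact subsets.

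The last step is the main obstacle: going from a Euclidean ball in the asymptotic cone back to an honest $k$-flat in $X$. Here I would use cocompactness and invoke Kleiner's theorem on the local structure of CAT(0) spaces. For a proper, cocompact CAT(0) space, the following three quantities coincide:
\begin{enumerate}
\item the supremum of geometric dimensions of its asymptotic cones;
\item the maximal dimension of a quasi-flat;
\item the maximal dimension of an isometrically embedded flat.
\end{enumerate}
If I tried to do this step by hand, I would argue roughly as follows.
\begin{itemize}
\item Approximate the cone ball by larger and larger almost-flat $k$-dimensional configurations in $X$, namely geodesic simplices with comparison-angle defects tending to $0$.
\item Use the cocompact action to translate their centres into a fixed compact set.
\item Extract a limit by properness and Arzel\`a--Ascoli, obtaining a convex subset isometric to $\Rb^k$.
\end{itemize}
The delicate point is guaranteeing flatness of the limit, which is the content of the flat strip/flat rigidity part of Kleiner's argument. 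Granting this, $\rEuc(X,d)\ge k$, which completes the equality.

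Finally, a Riemannian symmetric space of non-compact type is a complete, proper CAT(0) manifold. Its isometry group acts transitively, so the action is in particular cocompact, and the equality applies.
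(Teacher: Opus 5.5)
Your proof is correct, and it is the standard reduction to Kleiner's theorem on cocompact CAT(0) spaces that lies behind the cited result; the paper does not prove this fact itself and simply quotes Wenger. The reduction goes as follows: the lower bound comes from the remark on embedded normed spaces, a norm ball that Gromov--Hausdorff-approximates rescaled subsets embeds isometrically in a CAT(0) asymptotic cone and is therefore Euclidean, and Kleiner then converts geometric dimension of cones into honest flats. One caution about your optional by-hand sketch of the Kleiner step: as written it would not work, because the distortion of the approximating configurations in $X$ is only $o(r_n)$, not bounded, so recentering and Arzel\`a--Ascoli alone produce no flat; since you invoke Kleiner's theorem instead, this does not affect the argument.
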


\subsubsection{Projective simplex rank}

Recall from \cref{sec:intro_appl_to_symm_space} the notion of an irreducible symmetric space $G/K$ admitting a convex projective model $\Omega$. Moreover,  such $\Omega$ are precisely the irreducible symmetric domains; see \cref{rem:irred_sym_sp_and_domain}. We will show that in this case, all notions of rank coincide with the PES-rank, by establishing the following inequalities:

\begin{equation*}
    \boxed{\rF(\Omega)
    \stackrel{\normalfont\mbox{(\ref{lem:asrk_flat_rank_easy_direction})}}{\le}
    \asrk(\Omega,\hil)
    \stackrel{\normalfont\mbox{(\ref{fact:asrk_equals_reuc})}}{=}
    \rEuc(G/K,d)
    \ =\
    \rk(G)
    \stackrel{\normalfont\mbox{(\ref{lem: flat rank for symmetric spaces})}}{\le}
    \rF(\Omega)}
\end{equation*}

\begin{lemma}
    \label{lem: flat rank for symmetric spaces}
    Suppose $G/K$ is an irreducible symmetric space with a convex projective model $\Omega$.  Then $\rF(\Omega)=\rk(G)$.
\end{lemma}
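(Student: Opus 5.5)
The plan is to prove the two inequalities separately. The chain of inequalities in the box above already gives $\rF(\Omega)\le \asrk(\Omega,\hil)=\rEuc(G/K,d)=\rk(G)$. The first inequality is \cref{lem:asrk_flat_rank_easy_direction}. The equality $\asrk(\Omega,\hil)=\asrk(G/K,d)$ follows from \cref{lem:riem_QI_hilbert} together with \cref{fact:asrk_QI_inv}, and then \cref{fact:asrk_equals_reuc} identifies this with the Euclidean rank, which classically equals $\rk(G)$. So the real content of the lemma is the reverse inequality $\rk(G)\le \rF(\Omega)$: we must exhibit a PES of dimension $\rk(G)$ in each domain of \cref{table:projective_symmetric_domains}.

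For this I would work case by case using the explicit models. For $\Hb^{n-1}_{\Rb}$ the rank is $1$, and any projective line through the ball gives a $1$-dimensional PES, since its endpoints lie on $\partial\Omega$.

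For $\Pb(\Pi_n(\mathbb{K}))$ with $\mathbb{K}\in\{\Rb,\mathbb{C},\mathbb{H}\}$, and $n=3$ for $\mathbb{O}$, the real rank of $H$ is $n-1$. Take the diagonal matrices $E_{11},\dots,E_{nn}$, viewed as points of $\partial\Omega$ since they are rank-one positive semidefinite. Let $S=\CH_\Omega([E_{11}],\dots,[E_{nn}])$, which is the projectivized set of nonnegative diagonal matrices. Then the following hold.

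\begin{enumerate}
\item $S$ is non-degenerate of dimension $n-1$, because the $E_{ii}$ are linearly independent.
\item $S\cap\Omega\neq\emptyset$, because $[I_n]\in S$.
\item Each facet $F^i(S)$ consists of diagonal matrices with $i$-th entry zero. These are singular positive semidefinite matrices and hence lie in $\partial\Omega$.
\end{enumerate}

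This gives a PES of dimension $n-1=\rk(H)$. The geometric reason is that the diagonal positive matrices form the orbit $A\cdot[I_n]$ of a maximal split torus, and its closure is exactly this simplex.

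The main point requiring care is the exceptional octonionic case $\Pb(\Pi_3(\mathbb{O}))$. Here one must confirm that the diagonal idempotents $E_{11},E_{22},E_{33}$ of the Albert algebra are boundary points, that their convex hull contains the identity, and that the facets consist of non-invertible elements in the closed cone. This follows from the Jordan-algebra description of the cone as the interior of the set of squares: an element lies in the interior of the cone exactly when it is invertible with positive spectrum. A diagonal element $\sum a_iE_{ii}$ has spectrum $\{a_i\}$, so it is in $\Omega$ if and only if all $a_i>0$, and on $\partial\Omega$ otherwise. The same spectral argument also handles the other three families uniformly, so I would present the proof in the language of Euclidean Jordan algebras with a Jordan frame of primitive idempotents $c_1,\dots,c_r$, where $r$ is the Jordan rank.

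The remaining step is to check that this Jordan rank equals $\rk(G)$ in each case. This is a table lookup: $r=n$ and $\rk=n-1$ for the $\Pi_n$ families, with $\rk(E_{6(-26)})=2$ and $r=3$. Since $\dim S=r-1$, the result is $\rF(\Omega)\ge \rk(G)$, which combined with the chain gives equality.
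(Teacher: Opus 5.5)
Your proof is correct. The upper bound $\rF(\Omega)\le\rk(G)$ is exactly the paper's argument. For the reverse inequality, however, you take a genuinely different route.

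The paper constructs nothing explicitly. It fixes a uniform lattice $\Gamma<G$ and uses Prasad--Raghunathan to find a copy of $\Zb^{\rk(G)}$ inside $\Gamma$. It then enlarges this to a maximal abelian subgroup $A$. Finally, it applies the convex projective flat torus theorem of Islam--Zimmer to get an $f(A)$-invariant PES whose dimension is the free rank of $A$, which is at least $\rk(G)$.

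You instead write the PES down directly as the Jordan-frame simplex, i.e. the diagonal simplex in $\Pb(\Pi_n(\mathbb{K}))$. You then use the spectral description of the closed cone to check that it meets $\Omega$ and that its facets lie in $\partial\Omega$.

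Your approach is elementary and more informative. It exhibits a maximal PES concretely as the closure of the orbit of a maximal split torus through a base point. On the other hand, it depends on the Koecher--Vinberg classification (\cref{table:projective_symmetric_domains} together with \cref{rem:irred_sym_sp_and_domain}) to know that $\Omega$ is projectively one of the explicit models. It also needs a case-by-case check that the Jordan rank minus one equals the real rank.

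The paper's lower bound avoids the classification. The same argument gives $\rF(\Omega)\ge r$ for any divisible $\Omega$ whose dividing group contains $\Zb^r$. The price is heavier external input: existence of cocompact lattices, Prasad--Raghunathan, and the flat torus theorem.
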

\begin{proof}
    Let $d$ be a $G$-invariant distance on $G/K$ induced by a non-positively curved Riemannian metric. By the hypothesis, there is an isomorphism $f:G \to \Aut(\Omega)^0$ and a diffeomorphism $\phi:G/K \to \Omega$ such that $\phi(g ~ \cdot )=f(g) \phi(\cdot)$. Define an $\Aut(\Omega)^0$-invariant distance $d_{\phi}$ on $\Omega$ given by $d_{\phi}(x,y)=d(\phi^{-1}(x),\phi^{-1}(y))$ for all $x,y \in \Omega$. Then $(G/K,d)$ is isometric to $(\Omega,d_{\phi})$ and, by \cref{lem:riem_QI_hilbert}, $(\Omega,d_{\phi})$ is quasi-isometric to $(\Omega,\hil)$. 

    Note that by virtue of \cref{fact:asrk_QI_inv}, $\asrk(G/K,d)=\asrk(\Omega,\hil)$. By \cref{fact:asrk_equals_reuc} and \cref{lem:asrk_flat_rank_easy_direction}, $\rF(\Omega) \leq \rEuc(G/K,d)$.  As we remarked, it is well-known that $\rEuc(G/K,d)=\rk(G)$. Thus, 
    $$\rF(\Omega) \leq \rk(G).$$

    Now it suffices to prove the reverse inequality. For that, first fix a uniform lattice $\Gamma$ in $G.$ Set $r:=\rk(G)$. Then, by \cite[Corollary 2.9]{PR1972}, $\Gamma$ contains a subgroup $A_0$ isomorphic to $\Zb^{r}$. Let $A$ be a maximal abelian subgroup of $\Gamma$ containing $A_0$. Set $A':=f(A)$

    By the flat torus theorem in convex projective geometry \cite{IZ21}, $A'$ preserves a PES $S'$ in $\Omega$, where $\dim(S')$ equals the free rank of the abelian group $A'$. Since $A'$ contains $f(A_0)$, we have that $\dim(S') \geq r$. Thus, $\rF(\Omega) \geq r=\rk(G)$.
    \end{proof}

\section{Preliminaries on simplices in properly convex domains}

\label{sec:defn_of_simplices}
\label{sec:definition_of_simplices}

\subsection{Definitions}

Suppose $\Omega$ is a properly convex domain. For us, a $k$-simplex will be the convex hull of $(k+1)$-points in $\overline{\Omega}$. The formal definition is as follows.
\begin{definition}
\label{defn:k_simplices_again} 

Let $k\geq 1$. A \emph{$k$-simplex} in $\overline{\Omega}$ is the convex hull in $\overline{\Omega}$ of $(k+1)$ points $v_0,\dots,v_k \in \overline{\Omega}$. The ordered $(k+1)$-tuple $(v_0,\dots,v_k)$ is the \emph{tuple of vertices of $S$}
(denoted by $\vrtx(S)$) and each $v_i$, $0\leq i\leq k$, is a  \emph{vertex of $S$}.\\
We call such a $k$-simplex 
 \begin{enumerate}
     \item \emph{non-degenerate}, if $\dim(\Pb(\Span\{v_0,\dots,v_k\}))=k,$ and 
     \item \emph{degenerate}, if $\dim(\Pb(\Span\{v_0,\dots,v_k\}))<k.$ 
 \end{enumerate}
\end{definition}

\begin{remark}
Often in convex projective geometry literature, when authors write `$k$-simplex', they are referring to a non-degenerate $k$-simplex in our sense. However, we will not do so in this paper. Our k-simplices may be degenerate. See \cref{sec:examples_simplices} below for examples.
\end{remark}

\begin{figure}[h]
    \centering
    \includegraphics[width=1\linewidth]{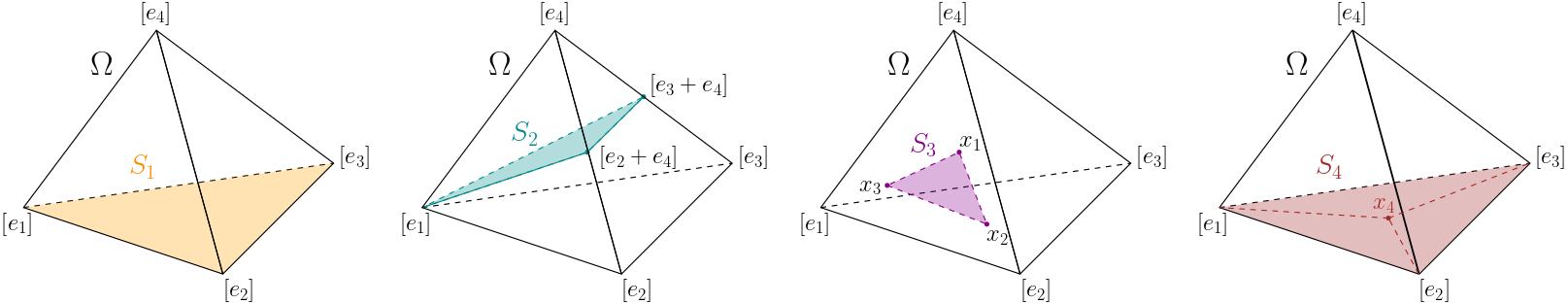}
    \caption{Different kinds of simplices in a projective simplex domain $\Omega$: (a) 2-simplex in $\overline{\Omega}$, but not in $\Omega$; (b) PES; (c) compact 2-simplex in $\Omega$; (d) degenerate 3-simplex in $\overline{\Omega}$. See Example  \ref{sec:examples_simplices} for details.}
    \label{fig:simplex_defn_example_1}
\end{figure}

Note that a $k$-simplex in $\overline{\Omega}$ is closed in $\overline{\Omega}$. However, a $k$-simplex of $\overline{\Omega}$ may very well be contained entirely in $\partial\Omega$ (see \cref{fig:simplex_defn_example_1}(a)). So we now make a definition that identifies the $k$-simplices that intersect $\Omega$ non-trivially.

\begin{definition}
    Let $S\coloneqq \CH_\Omega(v_0,\dots,v_k)$ be a $k$-simplex in $\overline{\Omega}$. We will call $S$ a \emph{$k$-simplex in $\Omega$} provided $S\cap \Omega \neq \emptyset$, or equivalently, $\relint(S) \subset \Omega$. 
\end{definition}
 An immediate application of \cref{lem:relint_dichotomy} is the following observation.
\begin{observation}
    Suppose $S$ is a $k$-simplex in $\overline{\Omega}$. Then, either $S$ is a $k$-simplex in $\Omega$, or $S\subset \partial \Omega$.  
\end{observation}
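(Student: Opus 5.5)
The observation is a direct consequence of \cref{lem:relint_dichotomy}, applied to the convex set $S$ itself.

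First I check that the lemma's hypotheses hold. Write $S=\CH_\Omega(v_0,\dots,v_k)$ with all $v_i \in \overline{\Omega}$. In an affine chart containing $\overline{\Omega}$, $S$ is the ordinary convex hull of finitely many points, which is compact. Hence $S$ is closed in $\RP$, and therefore closed in $\overline{\Omega}$ by \cref{obs:closure_of_subset_equivalent}. It is convex by definition.

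Next I apply \cref{lem:relint_dichotomy}, which gives two cases: either $S \subset \partial\Omega$, or $\relint(S)\subset \Omega$. In the second case, $\relint(S)$ is non-empty, since a non-empty finite-dimensional convex set has non-empty relative interior in its span. It follows that $S\cap\Omega\neq\emptyset$, so $S$ is a $k$-simplex in $\Omega$. In the first case, $S\subset\partial\Omega$ is exactly the other alternative of the observation.

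The two alternatives are mutually exclusive because $\Omega\cap\partial\Omega=\emptyset$, although the statement only requires the disjunction. The same argument also justifies the equivalence "$S\cap\Omega\neq\emptyset$ iff $\relint(S)\subset\Omega$" used in the preceding definition:
\begin{itemize}
\item If $S\cap\Omega\neq\emptyset$, then $S\not\subset\partial\Omega$, so $\relint(S)\subset\Omega$ by \cref{lem:relint_dichotomy}.
\item If $\relint(S)\subset\Omega$, then $S\cap\Omega\neq\emptyset$ because $\relint(S)\neq\emptyset$.
\end{itemize}

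The only point needing care is the non-emptiness of $\relint(S)$. It follows from the standard fact that a point with positive barycentric coordinates on an affinely independent spanning subset of the vertices is relatively interior.
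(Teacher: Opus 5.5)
Your proof is correct and is the paper's own argument: the paper states this observation as an immediate application of \cref{lem:relint_dichotomy} to the closed convex set $S$. Your extra checks, that $S$ is closed and that $\relint(S)\neq\emptyset$ so that $\relint(S)\subset\Omega$ forces $S\cap\Omega\neq\emptyset$, make explicit what the paper leaves implicit.
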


Even if $\relint(S) \subset \Omega$, $\partial S$ may still intersect $\partial \Omega$ non-trivially; see \cref{fig:simplex_defn_example_1} for examples. So we now idenitfy a special family consisting of $k$-simplices  that lie entirely in $\Omega$. Quite unimaginatively, we call them compact $k$-simplices in $\Omega$, as they are compact in the subspace topology on $\Omega$.

\begin{definition}
\label{defn:closed_simplex_in_Omega}
    We will say that $S$ is a \emph{compact  $k$-simplex in $\Omega$} if $S= S\cap\Omega$. 
    Equivalently, a $k$-simplex $S$  in  $\Omega$ is compact if and only if each vertex of $S$ lies in $\Omega$, i.e. $\vrtx(S) \in S^{k+1} \subset \Omega^{k+1}$. 
\end{definition}

\subsection{Example}(See \cref{fig:simplex_defn_example_1})
\label{sec:examples_simplices}
We consider the properly convex domain $\Omega:=\{ [x_0:x_1:x_2:x_3] | x_0,x_1,x_2,x_3>0 \}$. It is a 3-dimensional projective simplex domain.

We set $$[e_1]:=[1:0:0:0] \in \partial \Omega,\dots, [e_4]:=[0:0:0:1] \in \partial \Omega.$$ 
\begin{enumerate}[label=(\alph*)]
    \item $S_1:=\CH_{\Omega}([e_1],[e_2],[e_3])$ is a 2-simplex in $\overline{\Omega}$, but not a 2-simplex in $\Omega$ (since $S_1 \cap \Omega=\emptyset$).
    \item $S_2:=\CH_{\Omega}([e_1],[e_2+e_4],[e_3+e_4]).$ This is an example of a 2-simplex in $\Omega$. In particular, this is an example of a 2-dimensional PES in $\Omega$ -- a notion that we will discuss below in \cref{defn:PES}. However, $S_2$ isn't a compact 2-simplex in $\Omega$ as the vertices of $S_2$ lie in $\partial \Omega$. But it is worth noting that $S_2\cap \Omega$ is a closed subset of $\Omega$.
    \item Let $S_3:=\CH_{\Omega}(x_1,x_2,x_3)$ where $x_1=[1:1:1:1],x_2=[1:2:1:1],x_3=[2:1:1:1]$. Then $S_3$ is a non-degenerate compact $2$-simplex in $\Omega$. 
    \item All three simplices $S_1, S_2,S_3$ above are non-degenerate 2-simplices. On the other hand, $S':=\CH_{\Omega}([e_1],[e_2],[e_3],[e_1+e_2+e_3])$ is a degenerate 3-simplex in $\overline{\Omega}$. 
\end{enumerate}

\subsection{Faces of simplices}

We now introduce the notation of faces for a $k$-simplex $S$ with vertices $v_0,\dots,v_k$. This generalizes the face notation for non-degenerate simplices.  For any $I \subseteq \set{0,\dots,k}$, we call 
\begin{align*}
    F^I(S)\coloneqq \CH_\Omega(v_i\mid i\not\in I)
\end{align*}
the \emph{face opposite to the vertices indexed by $I$}. 

\begin{definition}
    A \emph{facet} of a $k$-simplex $S$ is any face of $S$ that is opposite to a single vertex. That is, the facets of $S$ are $F^{\{i\}}(S)$ for $i=0,\dots,k$ and are often denoted by $F^i(S)$. We set $\Fc(S)\coloneqq \{F^i(S) :i=0,\dots,k\}$ to be the set of all facets of $S$.
\end{definition}
\begin{observation}
\label{obs:face_of_face_in_simplex}
    Suppose $S$ is a $k$-simplex in $\overline{\Omega}$. If $0\leq i\neq j\leq k$, then $F^j(F^i(S))=F^j(S) \cap F^i(S)=F^{\{i,j\}}(S)$.
\end{observation}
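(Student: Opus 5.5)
Write $S=\CH_\Omega(v_0,\dots,v_k)$. I would prove \cref{obs:face_of_face_in_simplex} by reducing everything to convex combinations of the vertices. The first step is to show that for any $J\subseteq\{0,\dots,k\}$ with $J\neq\{0,\dots,k\}$, the face $F^J(S)=\CH_\Omega(v_l: l\notin J)$ is exactly the set of convex combinations $\sum_{l\notin J} a_l v_l$. These combinations are computed in an affine chart containing $\overline{\Omega}$, so the projective segments $[x,y]$ there are the affine segments. This gives the first equality directly. The facet $F^i(S)$ is the simplex with vertex tuple $(v_0,\dots,\wh{v_i},\dots,v_k)$. Removing $v_j$ from that tuple leaves the vertices indexed by $\{0,\dots,k\}\setminus\{i,j\}$. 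Hence $F^j(F^i(S))=\CH_\Omega(v_l : l\neq i,j)=F^{\{i,j\}}(S)$ by definition.

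For the equality with $F^j(S)\cap F^i(S)$, the inclusion $F^{\{i,j\}}(S)\subseteq F^i(S)\cap F^j(S)$ is immediate from monotonicity of $\CH_\Omega$. The reverse inclusion is where care is needed, because the paper explicitly allows degenerate simplices. Take $v_0=v_1=a$ and $v_2=b$, with $i=0$ and $j=1$. Then $F^0=[a,b]=F^1$, while $F^{\{0,1\}}=\{b\}$, so the reverse inclusion fails. I would therefore prove the intersection statement only under the hypothesis that $S$ is non-degenerate, that is, $v_0,\dots,v_k$ are linearly independent lifts. The observation's claim about the intersection has to be read with that hypothesis.

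In the non-degenerate case, lift to the chart and use barycentric coordinates, which are unique there. A point of $F^i(S)$ has coordinate $a_i=0$, and a point of $F^j(S)$ has $a_j=0$. By uniqueness, a point in both has $a_i=a_j=0$, so it lies in $F^{\{i,j\}}(S)$.

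The only real obstacle is this degenerate-case subtlety. Everything else is bookkeeping with convex hulls in an affine chart.
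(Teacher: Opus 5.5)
Your proof is correct, and it is the argument the paper leaves implicit, since the observation is stated without proof. The first equality is definitional once the vertices of $F^i(S)$ keep their original labels. For non-degenerate $S$, the intersection equality follows from uniqueness of barycentric coordinates in an affine chart.

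Your caveat is also right: the middle equality is false for degenerate simplices as the paper defines them. This is not only because of repeated vertices. Take four distinct coplanar points $v_0,v_1,v_2,v_3$ in convex cyclic order, viewed as a degenerate $3$-simplex. Then $F^0(S)\cap F^1(S)$ is the triangle spanned by $v_2$, $v_3$ and the crossing point of the diagonals, which is strictly larger than $F^{\{0,1\}}(S)=[v_2,v_3]$.

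This does not affect the rest of the paper. The intersection form is only used for non-degenerate simplices (\cref{obs:about_non_deg_simplices}(1), \cref{lemma:faces_of_PES}(3)). Where $S$ may be degenerate, such as in \cref{lem: slimness induced in higher dim}, only the unconditional identity $F^j(F^i(S))=F^{\{i,j\}}(S)$ is used.
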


For a non-degenerate simplex $S$, each facet is a codimension-1 face of $S$. The following elementary result about the faces of a non-degenerate simplex can be proven by drawing one in an affine chart. 
\begin{observation}
\label{obs:about_non_deg_simplices}
    Suppose $S$ is a non-degenerate $k$-simplex in $\overline{\Omega}$ with $k\geq 2$ and $i \neq j \in \{0,\dots,k\}$. Then
    \begin{enumerate}
         \item $F^i(S) \cap F^j(S) =F^{\{i,j\}}(S) \subset \partial F^i(S) \cap \partial F^j(S)$.
         \item $\relint(F^i(S)) \cap F^j(S) =\emptyset$.
         \item {$\partial S =\cup_{i=0}^k F^i(S)$.}
    \end{enumerate}
\end{observation}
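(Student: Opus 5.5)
The statement is the last observation: for a non-degenerate $k$-simplex $S$ with $k\ge 2$, the three facet properties (1)–(3). The plan is to reduce everything to standard affine convex geometry. Since $S\subset\overline{\Omega}$ and $\overline{\Omega}$ is compact convex in some affine chart $\mathbb{A}$, we work in $\mathbb{A}$, where $S$ is the usual convex hull of the affinely independent points $v_0,\dots,v_k$. Non-degeneracy means exactly that these points span a $k$-dimensional affine subspace $E=\Pb(\Span S)\cap\mathbb{A}$. Every point $p\in S$ then has unique barycentric coordinates $(a_0,\dots,a_k)$ with $a_i\ge 0$ and $\sum a_i=1$. Each facet is described by $F^i(S)=\{p\in S: a_i(p)=0\}$, and more generally $F^I(S)=\{a_i=0 \ \forall i\in I\}$.

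First we identify relative interiors and boundaries. The barycentric coordinate map is an affine isomorphism from $E$ to the hyperplane $\{\sum a_i=1\}\subset\Rb^{k+1}$. Hence $\relint(S)=\{a_j>0\ \forall j\}$. Similarly, $F^i(S)$ is the non-degenerate $(k-1)$-simplex on $\{v_j\}_{j\neq i}$, which spans the hyperplane $\{a_i=0\}$ of $E$. So $\relint(F^i(S))=\{a_i=0,\ a_j>0\ \forall j\neq i\}$ and $\partial F^i(S)=\{a_i=0,\ a_j=0 \text{ for some } j\neq i\}$. Here $k\ge2$ guarantees that $F^i(S)$ has at least two vertices, so it is a genuine simplex whose boundary is nonempty and described as stated. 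This is the only place non-degeneracy and $k\ge 2$ matter, and it is the step I would check most carefully: we need relint to be taken in $\Pb(\Span F^i(S))$, which has dimension $k-1$ by affine independence.

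With these descriptions the three items are immediate. For (1), $F^i(S)\cap F^j(S)=\{a_i=a_j=0\}=F^{\{i,j\}}(S)$, which is also the content of \cref{obs:face_of_face_in_simplex}. Any point of this set has $a_j=0$ with $j\neq i$, so it lies in $\partial F^i(S)$, and symmetrically it lies in $\partial F^j(S)$. For (2), a point of $\relint(F^i(S))$ has $a_j>0$, while every point of $F^j(S)$ has $a_j=0$, so the intersection is empty. For (3), $S$ is closed, so $\partial S=S\setminus\relint(S)=\{\exists i: a_i=0\}=\bigcup_i F^i(S)$.

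No real obstacle is expected. The only subtlety is that relint and boundary are defined projectively, relative to $\Pb(\Span C)$. These notions agree with the affine ones in the chart $\mathbb{A}$, because $\overline{\Omega}\subset\mathbb{A}$ and $\Pb(\Span C)\cap\mathbb{A}$ is the affine span of $C$.
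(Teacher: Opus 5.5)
Your proof is correct and follows the paper's approach. The paper only says the observation ``can be proven by drawing one in an affine chart.'' Your barycentric-coordinate argument, together with the check that the projective relative interior and boundary agree with the affine ones in a chart containing $\overline{\Omega}$, is a rigorous version of that picture.

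One small correction to your commentary. Non-degeneracy is used throughout, not only in the facet step: it is what makes barycentric coordinates unique, and you rely on that for $F^i(S)=\{a_i=0\}$. Likewise, the real role of $k\geq 2$ is to make $F^{\{i,j\}}(S)$ the hull of a non-empty vertex set; your descriptions of $\relint(F^i(S))$ and $\partial F^i(S)$ hold verbatim for $k=1$ as well.
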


\subsection{Properly Embedded Non-degenerate Simplex}
\label{sec:PES}

In the definition below, PES is an acronym for \emph{properly embedded non-degenerate simplex}. These are convex projective analogues of totally geodesic flats in CAT(0) geometry. 
\begin{definition}\label{defn:PES}
    Suppose $\Omega \subseteq \RP$ is a properly convex domain and $2 \leq k \leq (d-1)$. We say that a convex subset $S$ of $\Omega$ is a \emph{$k$-dimensional PES} in $\Omega$ if $S$ is a non-degenerate $k$-simplex in $\Omega$ and $\partial S=\cup_{i=0}^k F^i(S)\subset\partial\Omega$.
\end{definition}
As the reader may easily guess,  the name `properly embedded simplex' originates from the fact that the map $(\relint(S))\hookrightarrow \Omega$ is a proper embedding.


\begin{lemma}
\label{lemma:faces_of_PES}
    Let $S$ be a $k$-dimensional PES in $\Omega \subset \RP$ for some $ 1 \leq k \leq d-1$, and let $i\in\{0,\dots,k\}$. Then:
    \begin{enumerate}
        \item $F^i(S) \subset \partial \Omega$.
        \item $F^i(S)$ is a non-degenerate $(k-1)$-simplex in $\overline{\Omega}$.
        \item If $u_i \in \relint(F^i(S))$, then $\relint(F^i(S)) \subset F_{\Omega}(u_i)$ and $\partial F^i(S) \subset \partial F_{\Omega}(u_i)$. In particular, $F^i(S)$ is a $(k-1)$-dimensional PES in $F_{\Omega}(u_i)$.
        \item $F_{\Omega}(F^i(S)) \subset \bdry$.
    \end{enumerate}
\end{lemma}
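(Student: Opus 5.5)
Parts (1) and (2) are immediate from the definitions, part (3) is the heart of the argument, and part (4) then follows from (3).

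For (1), \cref{defn:PES} says $\partial S=\cup_{j}F^j(S)\subset \partial\Omega$, so in particular $F^i(S)\subset \partial\Omega$. For (2), $S$ is non-degenerate, so its $k+1$ vertices $v_0,\dots,v_k$ are projectively independent. Hence the $k$ vertices $v_j$, $j\neq i$, are also independent, and $\dim \Pb(\Span\{v_j:j\neq i\})=k-1$. Therefore $F^i(S)$ is a non-degenerate $(k-1)$-simplex in $\overline{\Omega}$.

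For (3), I would apply \cref{lem:relint_and_faces} to the closed convex set $C=F^i(S)$. If $u_i\in\relint(F^i(S))$, then $u_i\in\partial\Omega$ by (1), so $\relint(C)\cap F_\Omega(u_i)\ni u_i$ is non-empty. The lemma then gives $\relint(F^i(S))\subset F_\Omega(u_i)$.

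Next I would show $\partial F^i(S)\subset \partial F_\Omega(u_i)$. By \cref{obs:about_non_deg_simplices}(3) applied to the non-degenerate simplex $F^i(S)$ (when $k-1\ge 2$; the cases $k\le 2$ are handled directly with segments and points), $\partial F^i(S)$ is the union of the faces $F^{\{i,j\}}(S)$ with $j\neq i$. Each of these is contained in $F^j(S)$.

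The key claim is that no point $p$ of $F^{\{i,j\}}(S)$ can lie in $F_\Omega(u_i)$. Suppose it did. Pick $u_j\in\relint(F^j(S))$; by the same argument as above, $\relint(F^j(S))\subset F_\Omega(u_j)$. Because $S$ is non-degenerate, the segment from a point of $\relint(F^i(S))$ through $p$ can be extended slightly beyond $p$ into $\relint(F^j(S))$ while staying inside $S\subset\overline{\Omega}$. So $p$ would lie in the interior of an open segment joining a point of $F_\Omega(u_i)$ and a point of $F_\Omega(u_j)$.

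I would turn this into a contradiction as follows. Take $q\in\relint(S)\subset\Omega$ and consider the open segment in $S$ through $p$ towards $q$ (or through $p$ and points on both sides of $p$ in the $2$-plane spanned by $p$, $q$ and a point of $\relint F^i$). Since $p\in F_\Omega(u_i)$ and $p$ lies in the relative interior of the face $F_\Omega(u_i)$, we can move from $p$ within $F_\Omega(u_i)$ in a direction "opposite" to $F^j(S)$. Combined with convexity of $\overline{\Omega}$, this produces a point of $\Omega$ arbitrarily close to $\relint F^j(S)$ on an open segment inside $\overline{\Omega}$ through a point of $F^j(S)$. Then \cref{fact: behaviour of faces of Omega}(3) forces a point of $F^j(S)$ into $\Omega$, contradicting (1).

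This is the main obstacle. A clean route is \cref{fact: behaviour of faces of Omega}(3): if $p\in(a,b)$ with $a\in F_\Omega(u_i)=F_\Omega(p)$ and $b$ chosen so that $b\in\Omega$ is near $S$, then $F_\Omega(p)=\Omega$, which is absurd. Thus $\partial F^i(S)\cap F_\Omega(u_i)=\emptyset$. Since $\partial F^i(S)\subset\overline{F_\Omega(u_i)}$, it follows that $\partial F^i(S)\subset\partial F_\Omega(u_i)$.

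With both inclusions in hand, $F^i(S)$ is a non-degenerate $(k-1)$-simplex in the properly convex domain $F_\Omega(u_i)$ with boundary in $\partial F_\Omega(u_i)$, i.e.\ a PES there.

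For (4), take $x\in F^i(S)$. If $x\in\relint F^i(S)$, then $F_\Omega(x)=F_\Omega(u_i)\subset\partial\Omega$, since $u_i\in\partial\Omega$. Otherwise $x\in\partial\Omega$ by (1), and open faces of boundary points lie in $\partial\Omega$. Hence $F_\Omega(F^i(S))\subset\partial\Omega$.
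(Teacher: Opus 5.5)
Your treatment of (1), (2) and (4) is fine, and so is the first inclusion $\relint(F^i(S))\subset F_\Omega(u_i)$ in (3): invoking \cref{lem:relint_and_faces} is equivalent to the paper's direct use of \cref{lem:top_char_bdry_points}. The gap is in the key claim of (3), namely that no $p\in F^{\{i,j\}}(S)$ lies in $F_\Omega(u_i)$.

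\textbf{First attempt.} You assert that the segment from a point of $\relint(F^i(S))$ through $p$ extends beyond $p$ into $\relint(F^j(S))$ while staying in $S$. This is false. That segment lies in $\Pb(\Span F^i(S))$, and since $S$ is non-degenerate, $S\cap \Pb(\Span F^i(S))=F^i(S)$. So past $p\in\partial F^i(S)$ the line leaves $S$. It can never meet $\relint(F^j(S))$ at all, because the points of $\relint(F^j(S))$ have positive $v_i$-coordinate.

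\textbf{The ``clean route''.} This is circular. Since $p\in\partial\Omega$ by (1), \cref{lem:top_char_bdry_points} shows there is never an open segment $(a,b)\ni p$ with $a\in\overline{\Omega}$ and $b\in\Omega$. You never explain how the hypothesis $p\in F_\Omega(u_i)$ would produce such a configuration. The contradiction cannot come from $p$ itself landing in $\Omega$. It has to come from \emph{other} points of the facet $F^j(S)$ landing in $\Omega$.

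\textbf{The missing idea.} Use the vertex $v_i$ opposite to $F^i(S)$. Since $u_i\in\relint(F^i(S))$, the open segment $(u_i,v_i)$ lies in $\relint(S)\subset\Omega$. Suppose $p\in F^{\{i,j\}}(S)\cap F_\Omega(u_i)$. Apply \cref{fact: behaviour of faces of Omega}(3) with $z\in(u_i,v_i)$, so that $F_\Omega(z)=\Omega$, to the points $p\in F_\Omega(u_i)$ and $v_i\in F_\Omega(v_i)$. This gives $(p,v_i)\subset\Omega$. But $p$ and $v_i$ both lie in $F^j(S)$ (because $j\neq i$), so $[p,v_i]\subset F^j(S)\subset\partial\Omega$, a contradiction.

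This is exactly the paper's argument. With it in place, the rest of your deduction goes through: $\partial F^i(S)\subset\overline{F_\Omega(u_i)}\setminus F_\Omega(u_i)=\partial F_\Omega(u_i)$, and $F^i(S)$ is a PES in $F_\Omega(u_i)$.
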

\begin{proof} Let $\vrtx(S):=(v_0,\dots,v_k).$ Then (1) is immediate from the  definition of PES.

    \noindent (2) By definition of $F^i(S)$, it is a $(k-1)$ simplex in $\overline{\Omega}$. That it is non-degenerate is clear, since $S$ is a non-degenerate $k$-simplex and $S=\CH_{\Omega}(v_i,F^i(S)).$
    
    \noindent (3) First, we apply \cref{lem:top_char_bdry_points} to $\relint(F^i(S))$ which is a properly convex domain in $\Pb(\Span F^i(S))$. Then, if $x \in \relint(F^i(S))$, \cref{lem:top_char_bdry_points} implies that there is an open projective segment in $\relint(F^i(S))$ that contains both $x$ and $u_i$. Thus $\relint(F^i(S)) \subset F_{\Omega}(u_i)$ by definition of $F_{\Omega}(\cdot)$. 

    Since $S$ is non-degenerate, $\partial F^i(S)=\cup_{j\neq i} F^{\{i,j\}}(S)$ (\cref{obs:face_of_face_in_simplex}). So it suffices to show that $F^{\{i,j\}}(S) \subset \partial F_{\Omega}(u_i)$ for any $j\neq i$. 
    Let $x' \in F^{\{i,j\}}(S)$ for some $j\neq i$. Then $$[x',v_i] \subset F^j(S) \subset \partial S \subset \partial \Omega.$$ Moreover, $x' \in \overline{F_{\Omega}(u_i)}$ because of the previous paragraph. 
    Now suppose, if possible, that $x' \in F_{\Omega}(u_i).$ Note that $(u_i,v_i) \subset \relint(S) \subset \Omega$, since $u_i$ lies in the face opposite to $v_i$. Then, as $x'\in F_{\Omega}(u_i)$, \cref{fact: behaviour of faces of Omega}(3) implies that $(x',v_i) \subset \Omega$, a contradiction. Hence $x' \in \partial F_{\Omega}(u_i).$

    \noindent (4) This is immediate from (1) above.
\end{proof}

\subsection{Slimness of degenerate simplices}

     We say that \emph{a $k$-simplex in $\Omega$ is $0$-slim} provided $F^i(S) \subset \bigcup_{j=0, j\neq i}^k F^j(S)$ for all $i\in \{0,\dots,k\}$. This definition coincides with our definition of $\delta$-slim simplices with $\delta=0$ because of our \hyperlink{link:notn_conv}{convention} about $0$-neighborhoods. 

    We first prove that a $0$-slim simplex is necessarily degenerate.

\begin{lemma}
\label{lem:non_deg_simplices_not_0_slim}
    Suppose $S$ is a non-degenerate $k$-simplex in $\Omega$ with $k\geq 2$. Then $S$ cannot be $0$-slim.
\end{lemma}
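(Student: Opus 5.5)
To show that $S$ is not $0$-slim, I will exhibit a single index $i$ and a point of the facet $F^i(S)$ that lies in no other facet $F^j(S)$, $j\neq i$. Since $S$ is non-degenerate and $k\geq 2$, \cref{obs:about_non_deg_simplices} is available, and part (2) of it already does essentially all the work.

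Fix $i=0$ (any index works). The facet $F^0(S)=\CH_\Omega(v_1,\dots,v_k)$ is the convex hull of $k$ points spanning a projective subspace of dimension $k-1$. Indeed, non-degeneracy of $S$ means $v_0,\dots,v_k$ span a $k$-dimensional projective subspace, so any $k$ of them span a $(k-1)$-dimensional one. Hence $F^0(S)$ is a non-degenerate $(k-1)$-simplex, and in an affine chart containing $\overline{\Omega}$ it is a genuine Euclidean $(k-1)$-simplex. In particular $\relint(F^0(S))\neq\emptyset$; for instance, the barycenter $p=\frac1k\sum_{j\geq1}v_j$ in the chart lies in it.

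By \cref{obs:about_non_deg_simplices}(2), $\relint(F^0(S))\cap F^j(S)=\emptyset$ for every $j\neq 0$. Therefore $p\in F^0(S)$ but $p\notin\bigcup_{j\neq 0}F^j(S)$. With the convention $\Nc_0(F)=F$, this says $F^0(S)\not\subset\bigcup_{j\neq0}\Nc_0(F^j(S))$, so $S$ is not $0$-slim.

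The only point needing care is justifying \cref{obs:about_non_deg_simplices}(2) if one wants more than "draw it in an affine chart." In barycentric coordinates with respect to the affinely independent vertices (non-degeneracy makes these coordinates unique), a point of $F^j(S)$ has $j$-th coordinate equal to $0$. A point of $\relint(F^0(S))$, however, has all coordinates indexed by $1,\dots,k$ strictly positive, and its $0$-th coordinate is $0$. So such a point cannot lie in $F^j(S)$ for $j\neq 0$.

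Note that the hypothesis $k\geq 2$ guarantees at least two other facets exist, so the union over $j\neq i$ is nonempty. Nothing about $\Omega$ or the Hilbert metric is used, since $0$-neighborhoods are the sets themselves.
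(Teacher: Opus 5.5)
Your proof is correct and is essentially the paper's argument: both take the facet $F^0(S)$, note that $\relint(F^0(S))\neq\emptyset$, and use \cref{obs:about_non_deg_simplices}(2) to conclude that its relative-interior points lie in no other facet, so $0$-slimness fails. Your barycentric-coordinate verification of \cref{obs:about_non_deg_simplices}(2) is a useful addition, since the paper only justifies that observation by drawing a picture in an affine chart.
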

\begin{proof}
    Suppose, if possible, that $S$ is $0$-slim. Then $F^0(S) \subset \bigcup_{j=1}^kF^j(S)$. But by \cref{obs:about_non_deg_simplices}, $\relint(F^0(S)) \cap F^j(S) =\emptyset$ for any $j\in\{1,\dots,k\}$. As $k\geq 2$, $\relint(F^0(S))$ is non-empty. Hence we have a contradiction.
\end{proof}

Now we will show that all degenerate simplices are in fact $0$-slim.

\begin{lemma}\label{lem: facets of degenerate simplices}
    Suppose $S$ is a degenerate $k$-simplex in $\overline{\Omega}$ for some $k \geq 2$. Then, for any $i\in\{0,\dots,k\}$, 
    \begin{align*}
        F^i(S)\subseteq \bigcup_{\substack{j=0\\j\neq i}}^k F^j(S).
    \end{align*}
\end{lemma}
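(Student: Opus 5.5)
The plan is to reduce the statement to a linear-algebra fact about convex combinations, namely Radon/Carathéodory. Fix an affine chart containing $\overline{\Omega}$ as a bounded convex set, so that $S=\CH_\Omega(v_0,\dots,v_k)$ is the ordinary Euclidean convex hull of the points $v_0,\dots,v_k$ in that chart, and every face $F^J(S)$ is the Euclidean convex hull of the corresponding vertices. Degeneracy means $\dim \Pb(\Span\{v_0,\dots,v_k\})<k$, i.e.\ the $k+1$ points $v_0,\dots,v_k$ are affinely dependent in the chart. Fix $i$ and a point $p\in F^i(S)$. We need to show that $p$ lies in some $F^j(S)$ with $j\neq i$, i.e.\ that $p$ is a convex combination of the vertices $\{v_l : l\neq i\}$ with at least one further coefficient (at some index $j\neq i$) equal to zero.

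We split into two cases. In the first, the points $\{v_l: l\neq i\}$ are themselves affinely dependent; this is a set of $k$ points. Carathéodory's theorem, applied inside their affine span (of dimension at most $k-2$), writes $p$ as a convex combination of at most $k-1$ of them. Hence some coefficient at an index $j\neq i$ vanishes, and $p\in F^{\{i,j\}}(S)\subset F^j(S)$.

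In the second case, $\{v_l:l\neq i\}$ is affinely independent. Then, since the whole family is dependent, $v_i$ lies in the affine span of the others, and $\dim \Pb(\Span S)=k-1$. Here I would use the affine relation $\sum_l c_l v_l=0$ with $\sum_l c_l=0$ and $c_i\neq 0$. Write $p=\sum_{l\neq i}a_l v_l$ with $a_l\ge 0$ and $\sum a_l=1$, and consider the family $p=\sum_{l}(a_l+t c_l)v_l$, where $a_i:=0$. Choose the sign of $t$ so that $t c_i>0$, and increase $|t|$ from $0$ until the first moment some coefficient $a_l+tc_l$ with $l\neq i$ hits zero. Such a moment exists, because the coefficients sum to $1$ and $tc_i$ grows, so some $c_l$ with $l\neq i$ has the opposite sign to $c_i$. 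At that moment all coefficients are nonnegative, the $i$-th coefficient is positive, and some $j\neq i$ has coefficient zero. So $p\in F^j(S)$. The same argument in fact works in the first case too, using any nontrivial affine relation (if $c_i=0$, one picks a sign for $t$ making some $c_l$ negative and stops at the first zero), so a single uniform argument suffices.

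The main point to check, rather than a real obstacle, is that convex combinations in the affine chart agree with $\CH_\Omega$ and are independent of the chart. Both hold because $\overline{\Omega}$ lies in one affine chart and the segments $[x,y]$ are the ones meeting $\overline{\Omega}$. The statement is also purely a fact about simplices in $\overline{\Omega}$: no property of $\Omega$ beyond proper convexity is used, and $k\ge 2$ is needed only so that facets are meaningful.
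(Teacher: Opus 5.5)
Your argument is correct, and it takes a different route from the paper.

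\textbf{The paper's route.} It argues by induction on $k$. The base case $k=2$ is checked directly on a segment. In the inductive step it splits according to whether $v_{k+1}$ raises the dimension of $\Pb(\Span\{v_0,\dots,v_k\})$. When it does not and $F'=\CH_\Omega(v_0,\dots,v_k)$ is non-degenerate, it pushes each $p\in\relint(F')$ along the ray from $v_{k+1}$ to a point of $\partial F'$, and uses that $\partial F'$ is the union of the facets of $F'$.

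\textbf{Your route.} You take a single affine dependence relation among the vertices and shift the barycentric coordinates of $p$ along it until a coefficient at some $j\neq i$ vanishes. This is the standard Carath\'eodory/Radon reduction.

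\textbf{What each buys.} Your version needs no induction. It handles repeated vertices and both of your cases uniformly, as you observe. It also gives a slightly more explicit conclusion: every point of $F^i(S)$ has a barycentric representation in all $k+1$ vertices with a zero coefficient at some $j\neq i$. The paper's version stays in the synthetic language of faces and boundaries that it reuses elsewhere, for instance in the corollary that a degenerate simplex is the union of its facets.

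\textbf{Two harmless wording slips.}
\begin{enumerate}
\item The ``i.e.'' in your first paragraph overstates the goal. You only need $p$ to be a convex combination of $\{v_l : l\neq j\}$, and that combination may use $v_i$. In your second case the stronger condition actually fails for $p\in\relint(F^i(S))$, whose coordinates with respect to $\{v_l: l\neq i\}$ are unique and all positive. Your argument nonetheless proves the correct, weaker condition.
\item The $i$-th coefficient is positive at the stopping time only when that time is nonzero. This does not matter, since only nonnegativity of the coefficients is used.
\end{enumerate}
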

\begin{proof}
    Let $\vrtx(S):=(v_0,\dots,v_k)$. Since we are free to relabel the vertices of $S$, it suffices to show that 
    \begin{align*}
        F^k(S)=\CH_\Omega(v_0,\dots,v_{k-1}) \subseteq \bigcup_{i=0}^{k-1} F^i(S).
    \end{align*}
    We will prove this by induction on $k$. 
    
    First we verify the base case for $k=2$. A degenerate 2-simplex is a closed projective line segment $\ell$. Then, either $v_2$ is an endpoint of $\ell$ or $v_2$ lies in the relative interior of $\ell$. In the latter case, $\ell=[v_0,v_1]$ with $v_2$ in its interior so that $[v_0,v_1]\subseteq [v_0,v_2] \cup [v_2,v_1]$ is verified. In the former case, $\ell=[v_0,v_2]$ or $\ell=[v_1,v_2]$. Thus $[v_0,v_1]\subseteq \ell  \subseteq [v_0,v_2]\cup [v_1,v_2]$. This proves the base case.

    Now suppose our claim is true for some $k\ge2$. We now prove that the statement holds for $(k+1)$. 
    Consider a degenerate $(k+1)$-simplex $S'\coloneqq \CH_\Omega(v_0,\dots,v_{k+1})$ and $F'\coloneqq F^{k+1}(S')=\CH_\Omega(v_0,\dots,v_k)$. Let 
    \begin{align*}
      d   &\coloneqq \dim(\Pb(\Span\set{v_0,\dots,v_{k+1}}))-\dim(\Pb(\Span\set{v_0,\dots,v_k})) \\
          &<   (k+1) - \dim(\Pb(\Span\set{v_0,\dots,v_k})). 
    \end{align*} 
    Then $d$ is either 0 or 1.

    \noindent \underline{Case 1.} If $d=1$, then $F'$ is a degenerate $k$-simplex, as $\dim(\Pb(\Span\{v_0,\dots,v_k\}))< k+1-d< k.$ Then, by the induction hypothesis,
    $$F'':=\CH_{\Omega}(v_0,\dots,v_{k-1}) \subset \bigcup_{i=0}^{k} \CH_\Omega(v_0,\dots,\hat{v}_i,\dots,v_k).$$
    As $F'=\CH_{\Omega}(v_k,F'')$, we have 
    $F'\subseteq \bigcup_{i=0}^{k} \CH_\Omega(v_0,\dots,\hat{v}_i,\dots,v_k).$
    Finally note that $\CH_\Omega(v_0,\dots,\hat{v}_i,\dots,v_k) \subseteq F^i(S')$. Hence, $$F^{k+1}(S')=F'\subseteq \cup_{i=0}^{k}F^i(S').$$ 

    \noindent \underline{Case 2.} If $d=0$, then $v_{k+1} \in \Pb\Span\set{v_0,\dots,v_k}$. 
    Then, we claim that 
    \begin{equation}
    \label{eqn:degenerate_but_not_in_CH}
       \CH_\Omega(v_0,\dots,v_{k+1})
       \subset
     \bigcup_{i=0}^k\CH_\Omega(v_0,\dots,\hat{v}_i,\dots,v_{k+1}). 
    \end{equation} 
    Before proving the claim, we note that the claim finishes the proof since $F'$ is contained in $\CH_\Omega(v_0,\dots,v_{k+1})$, that is,  the left hand side of \cref{eqn:degenerate_but_not_in_CH}, while the right hand side is precisely $\bigcup_{i=0}^kF^i(S')$. 

    To prove the claimed \cref{eqn:degenerate_but_not_in_CH}, note that 
    \begin{align*}
    \CH_\Omega(v_0,\dots,v_{k+1}) &=\CH_\Omega\left(v_{k+1},\CH_\Omega(v_0,\dots,v_k)\right)=\CH_\Omega(v_{k+1},F').
    \end{align*}
    Since $d=0$, $\dim(\Span\set{v_0,\dots,v_k})\leq k$. If this dimension is strictly less than $k$, then $F'=\CH_\Omega(v_0,\dots,v_k)$ is a degenerate $k$-simplex and the same argument as in Case 1 can be repeated to prove that $F'\subseteq \bigcup_{i=0}^k \CH_\Omega(v_0,\dots,\hat{v}_i,\dots,v_k)$. Thus \cref{eqn:degenerate_but_not_in_CH} is immediate. 
    Otherwise, if $\dim(\Span\set{v_0,\dots,v_k})=k$, then $F'$ is a non-degenerate $k$-simplex. Thus, if we pick any $p\in \relint(F')$, then there exists a unique $p' \in \partial F'$ such that $p\in (v_{k+1},p')$. Consequently, $p\in \CH_\Omega(v_0,\dots,\hat{v}_i,\dots, v_{k+1})$ for some $i=0,\dots,k$. This finishes the proof of the claim.
\end{proof}

As an immediate consequence of \cref{lem: facets of degenerate simplices}, we have the following corollaries.

\begin{corollary}\label{cor: degenerate are 0-slim}
     Suppose $S$ is a degenerate $k$-simplex in $\Omega$ for some $k\geq2$. Then $S$ is $0$-slim.
\end{corollary}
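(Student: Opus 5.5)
This is immediate from \cref{lem: facets of degenerate simplices}. By the definition of $0$-slimness given just before \cref{lem:non_deg_simplices_not_0_slim}, and our \hyperlink{link:notn_conv}{convention} $\Nc_0(F)=F$, we must show that for every $i\in\{0,\dots,k\}$
\begin{equation*}
F^i(S)\subset \bigcup_{j\neq i} F^j(S).
\end{equation*}

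Since $S$ is a degenerate $k$-simplex in $\Omega$, it is in particular a degenerate $k$-simplex in $\overline{\Omega}$ with $k\geq 2$. \cref{lem: facets of degenerate simplices} then gives exactly this containment for every $i$, so $S$ is $0$-slim.

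One point needs care. \cref{defn:slim_simplices} was stated with tubular neighbourhoods $\Nc_\delta$ taken inside $\Omega$, whereas the facets $F^j(S)$ may meet $\partial\Omega$ when $S$ is not compact. The convention $\Nc_0(F)=F$ avoids this issue, because the $0$-neighbourhood is the facet itself. No step here is a real obstacle; all the work was already done in the inductive proof of \cref{lem: facets of degenerate simplices}, in particular its Case 2, where the extra vertex lies in the span of the others.
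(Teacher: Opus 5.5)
Your proof is correct and follows the paper's route exactly. The paper records this corollary as an immediate consequence of the lemma on facets of degenerate simplices: a degenerate $k$-simplex in $\Omega$ is in particular a degenerate $k$-simplex in $\overline{\Omega}$, and $0$-slimness is by definition precisely the facet containment that lemma provides.
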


\begin{corollary} \label{cor:degenerate_iff_0_slim}
  Let $S$ be a $k$-simplex in $\Omega$ with $k\geq 2$. Then $S$ is $0$-slim if and only if $S$ is a degenerate $k$-simplex.
\end{corollary}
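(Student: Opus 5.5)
The corollary combines the two preceding results, so the proof will be a two-line case split on whether $S$ is degenerate.

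For the direction ``degenerate $\Rightarrow$ $0$-slim'', assume $S$ is a degenerate $k$-simplex in $\Omega$ with $k\geq 2$. Then \cref{cor: degenerate are 0-slim}, which is \cref{lem: facets of degenerate simplices} applied to $S$, gives $F^i(S)\subseteq \bigcup_{j\neq i}F^j(S)$ for every $i$. By our convention $\Nc_0(F)=F$, this is exactly the definition of $0$-slimness.

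For the direction ``$0$-slim $\Rightarrow$ degenerate'', we argue by contrapositive. Suppose $S$ is not degenerate. By \cref{defn:k_simplices_again}, the dimension of $\Pb(\Span \vrtx(S))$ is at most $k$, so ``not degenerate'' means exactly ``non-degenerate''. Then \cref{lem:non_deg_simplices_not_0_slim} shows that $S$ is not $0$-slim.

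The only point requiring a moment's care is this dichotomy: every $k$-simplex is either degenerate or non-degenerate, because $k+1$ points span a projective subspace of dimension at most $k$. With that observation there is no real obstacle, and the two lemmas together give the equivalence.
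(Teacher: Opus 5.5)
Your proposal is correct and matches the paper's proof, which likewise obtains the equivalence directly from \cref{cor: degenerate are 0-slim} and \cref{lem:non_deg_simplices_not_0_slim}. Your explicit note that every $k$-simplex is either degenerate or non-degenerate is the implicit step that makes the two results combine.
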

\begin{proof}
    It is a consequence of \cref{cor: degenerate are 0-slim} and \cref{lem:non_deg_simplices_not_0_slim}.
\end{proof}

\begin{corollary}\label{cor: degenerate simplices are the union of all facets}
    Suppose $S$ is a degenerate $k$-simplex in $\overline{\Omega}$ for some $k\geq2$. Then $S=\bigcup_{i=0}^k F^i(S).$
\end{corollary}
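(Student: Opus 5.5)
The inclusion $\bigcup_{i=0}^k F^i(S) \subseteq S$ holds for any $k$-simplex, since each facet $F^i(S)=\CH_\Omega(v_0,\dots,\hat v_i,\dots,v_k)$ is the convex hull of a subset of $\vrtx(S)$. So the content of the statement is the reverse inclusion $S \subseteq \bigcup_{i=0}^k F^i(S)$. I will extract it from the argument already given for \cref{lem: facets of degenerate simplices}, organized as a short case analysis on whether the last vertex raises the dimension.

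Write $\vrtx(S)=(v_0,\dots,v_k)$ and let $F'=F^k(S)=\CH_\Omega(v_0,\dots,v_{k-1})$, so that $S=\CH_\Omega(v_k,F')$. Since $S$ is degenerate, the integer
\[
d:=\dim\Pb(\Span\{v_0,\dots,v_k\})-\dim\Pb(\Span\{v_0,\dots,v_{k-1}\})
\]
is either $0$ or $1$.

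\emph{Case $d=1$.} Then $\dim\Pb(\Span\{v_0,\dots,v_{k-1}\})<k-1$, so $F'$ is a degenerate $(k-1)$-simplex, provided $k-1\geq 2$.
\begin{itemize}
\item For $k-1\geq 2$, I will induct on $k$: assume the corollary for $F'$, so $F'=\bigcup_{i<k}F^i(F')$. Then
\[
S=\CH_\Omega(v_k,F')=\bigcup_{i<k}\CH_\Omega\bigl(v_k,F^i(F')\bigr).
\]
The first equality uses that the join of a point with a union of convex sets is the union of the joins, which holds because every point of $\CH_\Omega(v_k,F')$ lies on a segment $[v_k,p]$ with $p\in F'$. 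Each term satisfies $\CH_\Omega(v_k,F^i(F'))=F^i(S)$.
\item For $k=2$, $F'=[v_0,v_1]$ degenerates to a point, so $v_0=v_1$. Then $S=[v_0,v_2]=F^1(S)$ directly.
\end{itemize}

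\emph{Case $d=0$.} Then $v_k\in\Pb(\Span\{v_0,\dots,v_{k-1}\})$.
\begin{itemize}
\item If $F'$ is itself degenerate, I argue exactly as in the case $d=1$.
\item If $F'$ is a non-degenerate $(k-1)$-simplex, then $v_k$ lies in the same projective $(k-1)$-space as $F'$. Take $p\in S$ with $p\notin F'$, and write $p\in[v_k,q]$ with $q\in F'$.
\begin{itemize}
\item If $v_k\notin F'$, then $S$ is the convex hull of $F'$ and a coplanar point in an affine chart. Extending the ray from $v_k$ through $p$ until it leaves $F'$ gives a point $p'\in\partial F'=\bigcup_{i<k}F^{\{i,k\}}(S)$ with $p\in[v_k,p']$. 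Then $p\in\CH_\Omega(v_k,F^{\{i,k\}}(S))=F^i(S)$ for some $i<k$.
\item If $v_k\in F'$, then $S=F'=F^k(S)$ and there is nothing to prove.
\end{itemize}
\end{itemize}

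The main point needing care is this last subcase: the ray from $v_k$ through $p$ must exit $F'$ through its relative boundary on the far side. This follows from convexity of $S$ in an affine chart containing $\overline{\Omega}$, and it is exactly the claim \cref{eqn:degenerate_but_not_in_CH} already proved in \cref{lem: facets of degenerate simplices}. In fact I would simply observe that \cref{eqn:degenerate_but_not_in_CH}, together with the $d=1$ join computation, is verbatim the statement $S\subseteq\bigcup_i F^i(S)$, so the corollary follows directly from the proof of that lemma.
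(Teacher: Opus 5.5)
Your argument is correct, but it re-proves the result rather than deducing it. You rerun the induction and the case split on $d$ from the proof of \cref{lem: facets of degenerate simplices}, including the ray argument behind \cref{eqn:degenerate_but_not_in_CH}. The paper instead uses only the \emph{statement} of that lemma and cones from a single vertex. It writes $S=\CH_\Omega(v_0,F^0(S))=\bigcup_{x\in F^0(S)}[x,v_0]$, and the lemma gives $F^0(S)\subseteq\bigcup_{i\ge 1}F^i(S)$. Every $F^i(S)$ with $i\ge 1$ is convex and already contains $v_0$, so each segment $[x,v_0]$ lies in whichever of these facets contains $x$. This is essentially the join computation you do in your case $d=1$, applied directly to $S$ with the lemma in place of your inductive hypothesis. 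As a result the paper needs no split on $d$, no check of whether $F'$ is degenerate, no separate $k=2$ base case, and no affine-chart ray argument. Your version is self-contained and independent of the lemma, but it duplicates work the paper has already done. Your closing remark that the corollary follows from the \emph{proof} of the lemma is true; the sharper point is that it follows from the lemma's \emph{statement} by coning from $v_0$.
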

\begin{proof}
    Let $\vrtx(S):=(v_0,\dots,v_k)$. For any $i=0,\dots,k$, denote the facet of $S$ opposite to $v_i$ by $F^i$. As $S=\CH_\Omega(v_0,F^0)$, we have $S=\bigcup_{x\in F^0}[x,v_0].$ 
    Let $x\in F^0$. Lemma \ref{lem: facets of degenerate simplices} implies that $F^0\subseteq\bigcup_{i=1}^k F^i.$  Then there exists $i\in\{1,\dots,k\}$ such that $x\in F^i$. As $v_0\in F^i$, $[x,v_0]\in F^i$. Hence $S\subset \cup_{i=0}^k F^i.$
\end{proof}

Next, we will establish that in a degenerate $k$-simplex (which we have proven to be $0$-slim), the intersection of all the facets is non-trivial. To establish this result, we will rely on a classical result called the Helly's Theorem. See \cite{Helly} for the original proof, and \cite{BK1999, BF2009} for further references and generalizations.

\begin{theorem}[Helly's Theorem]\label{thm: Helly theorem}
    Let $n,m\in\mathbb{N}$ be positive integers such that $n\geq m+1$. Let $\mathcal{G}$ $= \{C_1,  \dots, C_n\}$ be a finite family of convex sets in $\mathbb{R}^m$.
    If the intersection of every sub-collection of $m+1$ sets from $\mathcal{G}$ is non-empty, then the intersection of all sets in $\mathcal{G}$ is non-empty.
\end{theorem}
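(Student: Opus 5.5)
We prove Helly's Theorem by induction on $n$, with Radon's lemma as the key tool. The base case $n=m+1$ is exactly the hypothesis: the only sub-collection of $m+1$ sets is $\mathcal{G}$ itself. For the inductive step, fix $n\geq m+2$ and assume the statement for families of $n-1$ convex sets in $\Rb^m$. For each $i\in\{1,\dots,n\}$, the family $\mathcal{G}\setminus\{C_i\}$ has $n-1\geq m+1$ members. Every sub-collection of $m+1$ of its sets is also a sub-collection of $\mathcal{G}$, so it has non-empty intersection. By the induction hypothesis we may therefore pick a point
\[
x_i\in\bigcap_{j\neq i}C_j .
\]

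The core step is Radon's lemma: any $N\geq m+2$ points $x_1,\dots,x_N\in\Rb^m$ admit a partition $\{1,\dots,N\}=I\sqcup J$ with $\operatorname{conv}\{x_i:i\in I\}\cap\operatorname{conv}\{x_j:j\in J\}\neq\emptyset$. To prove it, note that the $N$ vectors $(x_i,1)\in\Rb^{m+1}$ are linearly dependent because $N>m+1$. This gives a nontrivial $(\lambda_i)$ with $\sum_i\lambda_ix_i=0$ and $\sum_i\lambda_i=0$. Set $I=\{i:\lambda_i>0\}$ and $J=\{j:\lambda_j\leq 0\}$; both are non-empty, since the $\lambda_i$ sum to zero and are not all zero. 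Let $\Lambda=\sum_{i\in I}\lambda_i>0$. Then
\[
p:=\sum_{i\in I}\frac{\lambda_i}{\Lambda}x_i=\sum_{j\in J}\frac{-\lambda_j}{\Lambda}x_j
\]
is a common point of the two convex hulls.

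To finish, apply Radon's lemma to the points $x_1,\dots,x_n$ and take the resulting $I$, $J$ and $p$. Let $k\in I$. For every $j\in J$ we have $j\neq k$, so $x_j\in C_k$ by the choice of $x_j$. Since $C_k$ is convex, $\operatorname{conv}\{x_j:j\in J\}\subset C_k$, and hence $p\in C_k$. Symmetrically, for $k\in J$ we have $\operatorname{conv}\{x_i:i\in I\}\subset C_k$, so $p\in C_k$. Thus $p\in\bigcap_{k=1}^nC_k$, which completes the induction.

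There is no real obstacle here. The only points needing care are that the induction hypothesis genuinely applies to each $\mathcal{G}\setminus\{C_i\}$ (it has at least $m+1$ members and inherits the $(m+1)$-wise intersection property) and that both parts of the Radon partition are non-empty.
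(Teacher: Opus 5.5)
Your proof is correct. It is Radon's classical argument, and each step checks out:
\begin{itemize}
\item each $\mathcal{G}\setminus\{C_i\}$ has $n-1\geq m+1$ members and inherits the $(m+1)$-wise intersection property, so the induction hypothesis applies;
\item both parts of the Radon partition are non-empty, since the $\lambda_i$ are not all zero and sum to zero;
\item the disjoint cover $\{1,\dots,n\}=I\sqcup J$ places $p$ in every $C_k$.
\end{itemize}

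The paper gives no proof of this statement to compare with. It quotes Helly's Theorem as a classical black box, citing Helly's original paper and later surveys. It uses the theorem only once, in \cref{cor: degenerate simplices centroid}, on the facets of a degenerate simplex viewed in an affine chart.

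Your route is self-contained. It needs only the linear dependence of $m+2$ vectors $(x_i,1)\in\mathbb{R}^{m+1}$ and the convexity of the $C_k$. It uses no compactness, closedness or separation arguments, so it works for arbitrary convex sets, including the compact facets the paper feeds into the theorem. It could be inserted as is to make that corollary independent of the citation.
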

\begin{corollary}\label{cor: degenerate simplices centroid}
    Let $\Omega\subset \RP$ be a properly convex domain. Suppose $S$ is a degenerate $k$-simplex in $\overline{\Omega}$ for some $k\geq2$. Then 
    $\left(\bigcap_{i=0}^kF^i(S) \right)\neq \emptyset.$
\end{corollary}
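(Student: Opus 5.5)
We apply Helly's Theorem (\cref{thm: Helly theorem}) inside the projective subspace $V:=\Pb(\Span\{v_0,\dots,v_k\})$, where $\vrtx(S)=(v_0,\dots,v_k)$. Let $m:=\dim V$. Since $S$ is degenerate, $m\le k-1$, so the family $\{F^0(S),\dots,F^k(S)\}$ has $k+1\ge m+2$ members. Because $\overline{\Omega}$ is properly convex, we fix an affine chart of $\RP$ containing $\overline{\Omega}$. Its intersection with $V$ is an affine chart $\Rb^m$ of $V$ in which all of $S$ lies. Projective segments between points of $\overline{\Omega}$ are then ordinary affine segments, so each $F^i(S)$ is an ordinary convex subset of $\Rb^m$: namely the affine convex hull of $\{v_j: j\neq i\}$. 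By Helly's Theorem (with $n=k+1\ge m+1$), it suffices to show that any $m+1$ of the facets have a common point.

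\textbf{Sub-collections.} Take any subset $J\subset\{0,\dots,k\}$ with $|J|=m+1$. Since $m+1\le k$, the complement $\{0,\dots,k\}\setminus J$ is non-empty; pick some index $\ell$ in it. Then $v_\ell$ is a vertex of every $F^i(S)$ with $i\in J$, because $\ell\neq i$. Hence
$$v_\ell\in\bigcap_{i\in J}F^i(S)\neq\emptyset.$$
Helly's Theorem now gives $\bigcap_{i=0}^kF^i(S)\neq\emptyset$.

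\textbf{Remaining details.} The only point needing care is that the facets, viewed in $V$, really are convex subsets of $\Rb^m$. This holds because $\CH_\Omega$ of points of $\overline{\Omega}$ agrees with the affine convex hull in any affine chart containing $\overline{\Omega}$, and all the $v_i$ lie in $V$. The edge case $m=0$ (all vertices equal) is trivial, since every facet is then the single point $v_0$. No real obstacle is expected: the counting $k+1\ge m+2$ coming from degeneracy is exactly what makes the Helly sub-collections miss at least one vertex.
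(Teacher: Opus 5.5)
Your proof is correct and follows essentially the same route as the paper: apply Helly's Theorem inside the projective span of the vertices, working in an affine chart in which the facets are ordinary convex sets. You also spell out the Helly hypothesis that the paper leaves implicit (any $m+1\le k$ facets share a vertex $v_\ell$ with $\ell\notin J$), and by using the chart containing all of $\overline{\Omega}$ you avoid the paper's tacit assumption that $\Pb(\Span\{v_0,\dots,v_k\})\cap\Omega$ is non-empty, which can fail when $S\subset\partial\Omega$.
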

\begin{proof}
     Let $\vrtx(S):=(v_0,\dots,v_k)$. Since $S$ is degenerate, $\dim\left(\Pb\Span\{v_0,\dots,v_k\}\right)\le k-1$. Since $\Omega$ is properly convex, the domain $\Omega'\coloneqq\Pb\Span\{v_0,\dots,v_k\}\cap\Omega$ is also a \pcd. Consequently, we can work in an affine chart of $\Pb(\Rb^{d-1})$ where $\Omega'$ is bounded. Here, we can apply \cref{thm: Helly theorem} to the collection $\{F^i(S):i=0,\dots,k\}$. The conclusion then follows from \cref{thm: Helly theorem}.
\end{proof}

\subsection{Fat simplices in projective simplex domains}

Recall the definition of a projective simplex domain (\cref{defn:projective_simplex_domain}).

\begin{corollary}\label{cor: PES contain arbitrarily fat}
    Let $\Omega$ be a projective simplex domain in $\RP$ and $1\le k\le d-1$. Then, for any $n\in\mathbb{N}$, there exists a non-degenerate compact $k$-simplex in $\Omega$ that is \emph{not} $n$-slim. 
\end{corollary}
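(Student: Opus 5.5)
The idea is to build fat simplices inside a PES of $\Omega$, where the Hilbert metric is an explicit polyhedral norm, and to show that facets of large simplices there are far apart at their barycentres. Since $\Omega$ is a projective simplex domain, after applying $g^{-1}$ we may assume $\Omega=\{[x_1:\dots:x_d]: x_i>0\}$. Fix $1\le k\le d-1$ and let $L:=\Pb(\Span\{e_1,\dots,e_{k+1}\})$. Then $\Omega_k:=\overline{\Omega}\cap L$ is the closed $k$-simplex $\CH_\Omega([e_1],\dots,[e_{k+1}])$, and its interior (in $L$) lies in $\partial\Omega$. To land inside $\Omega$, I will instead use $S_0:=\CH_\Omega([e_1+w],\dots,[e_{k+1}+w])$ with $w=e_{k+2}+\dots+e_d$ (or simply $S_0=\Omega_k$ when $k=d-1$), as in Example \ref{sec:examples_simplices}(b). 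This is a $k$-dimensional PES, so $\relint(S_0)\subset\Omega$. By convexity, $\hil$ restricted to $\relint(S_0)$ equals the Hilbert metric of the $k$-dimensional simplex domain $\relint(S_0)$. In logarithmic coordinates that metric is the hexagonal-type polyhedral norm $\norm{u}=\tfrac12(\max_i u_i-\min_i u_i)$ on $\Rb^{k+1}/\Rb(1,\dots,1)$. The case $k=1$ is trivial, since a segment is never $n$-slim for $n$ smaller than half its length, so I assume $k\ge2$.

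The key observation is that the distance from a point to a facet of a compact simplex in $\Omega$ is bounded below by the distance within the ambient simplex domain to the corresponding face. I do not even need to compare with distances in $\Omega$ via the norm; monotonicity suffices. Consider points $x_t:=[e^{t}e_1+\dots]$. Concretely, for $t>0$ let $x_i(t)$ be the point of $\relint(S_0)$ whose affine coordinates (in the chart where $S_0$ is the standard simplex) are $(1-\epsilon_t)$ at vertex $i$ and $\epsilon_t/k$ elsewhere, with $\epsilon_t=e^{-t}$. Let $T_t:=\CH_\Omega(x_0(t),\dots,x_k(t))$. It is a non-degenerate compact $k$-simplex, homothetic to $S_0$ about the barycentre $b$. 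Take $p_t$ to be the barycentre of the facet $F^0(T_t)$. I claim $\hil(p_t,F^j(T_t))\to\infty$ for each $j\ne0$, which shows $T_t$ is not $n$-slim for $t$ large.

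To prove the claim, I will use that $F^j(T_t)$ lies in the closed half-space $\{y_j\le \epsilon_t/k\}$ of $S_0$, where $y$ denotes barycentric coordinates, while $p_t$ has $y_j(p_t)\approx 1/k$. I then lower-bound the Hilbert distance in $\Omega$ from $p_t$ to any point $q$ with $y_j(q)\le\epsilon_t/k$. For this I use that $\hil\ge d_H$ for a larger domain $H\supset\Omega$ (monotonicity of Hilbert metrics under inclusion), taking $H$ to be the half-space domain cut out by the single linear functional $\ell_j$ corresponding to the $j$-th barycentric coordinate, together with a complementary positive functional. Concretely, $\hil(p,q)\ge\tfrac12\log\frac{\ell_j(p)\,\ell'(q)}{\ell_j(q)\,\ell'(p)}$ for positive functionals $\ell_j,\ell'$ on $\overline\Omega$ (the standard "coordinate ratio" lower bound, obtained from the cross ratio along the line through $p,q$ and the two hyperplanes $\ell_j=0$ and $\ell'=0$ that support $\Omega$). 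Choosing $\ell'=\sum_i x_i$, so that $\ell'$ is comparable to a constant on $S_0$ in the fixed normalization, gives $\hil(p_t,F^j(T_t))\ge\tfrac12\log\frac{1/k}{\epsilon_t/k}-C=\tfrac t2-C\to\infty$.

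The main obstacle is justifying this functional lower bound cleanly. If it proves awkward, I will fall back on restricting to the PES, where the explicit polyhedral norm formula gives the same estimate directly, since all the points involved lie in $\relint(S_0)$ and $\hil=d_{\relint(S_0)}$ there.
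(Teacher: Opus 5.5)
Your proof is correct, but it takes a different route from the paper's in both of its steps.

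\textbf{Reduction to top dimension.} The paper inducts on the dimension. For $k<\dim\Omega$ it slices $\Omega$ with a $k$-plane parallel to a $k$-face, which yields a lower-dimensional projective simplex domain. You instead pass in one step to the $k$-dimensional PES $S_0$. This is the same kind of slice, and it works because $\relint(S_0)=\Omega\cap\Pb(\Span S_0)$.

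\textbf{Top-dimensional case.} The paper takes vertices $v^i_n\to x_i$ (the vertices of $\Omega$) and a point $p_n\in F^0(S_n)$ converging to a point $p$ in the open face opposite $x_0$. It then argues qualitatively, via the face structure (\cref{lem:relint_and_faces}) and a limit, that $\hil(p_n,F^i(S_n))\to\infty$. Subsequences are needed and no rate is obtained.

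You replace this with the explicit comparison $\hil\ge d_H$ for $H=\{\ell_j/\ell'>0\}$, where $\ell_j(x)=x_j$ and $\ell'=\sum_i x_i$. This is valid because all that is needed is $\Omega\subset H$; the hyperplane $\{\ell'=0\}$ does not actually touch $\overline\Omega$, so calling it a support is harmless.

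In fact you lose no constant. With the normalized lifts $\sum_i y_i(e_i+w)$, $\sum_i y_i=1$, one has $\ell'\equiv d-k$ exactly on $S_0$ and $\ell_j=y_j$. Hence
\[
\hil(p_t,F^j(T_t))\ \ge\ \tfrac12\log\frac{1-\epsilon_t/k}{\epsilon_t},
\]
so taking $t\ge 2n+1$ already gives a non-$n$-slim simplex.

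\textbf{What each approach buys.} Yours gives an explicit family with an explicit rate, needs no subsequences, and does not appeal to the face theory of $\partial\Omega$. The paper's is softer: it reuses the limit-of-simplices and face machinery it develops anyway, in the same pattern as the recentering argument of \cref{prop: duality between properly embedded simplices and slim simplices}.

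\textbf{Minor cleanup.}
\begin{itemize}
\item Delete the dangling sentence about $x_t:=[e^{t}e_1+\dots]$ and the remark that monotonicity alone suffices.
\item Unify the vertex indexing ($1,\dots,k+1$ versus $0,\dots,k$).
\item For $k=1$, a segment fails to be $n$-slim as soon as its length is at least $n$. In any case, your general construction already covers $k=1$.
\end{itemize}
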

\begin{proof}
By assumption, there is an affine chart where $\Omega$ is an Euclidean $d$-simplex. We prove the result by induction on $d\ge2$.
Assume that $d=2$, then $\Omega$ is a segment. In this case, it is clear that for any $n\in\mathbb{N}$ there exist $x_n,y_n\in\Omega$ so that $\hil(x_n,y_n)>n$. Hence, $S=\CH_\Omega(x_n,y_n)$ is a 1-simplex that is not $n$-slim.

Now assume that the thesis holds for a fixed $d\ge2$. Consider a projective simplex domain $\Omega$ in $\Pb(\Rb^{d+1})$. Fix an affine chart where $\Omega$ is an Euclidean $d$-simplex.

Fix $k=1,\dots,d$.
When $k\le d-1$, we can consider a $k$-plane $H_k$ that intersects $\Omega$ and is parallel to some $k$-dimensional face of $\Omega$ in the fixed affine chart. Then, we consider the \pcd
$$
\Omega'\coloneqq\Pb\Span\{H_k\}\cap\Omega.
$$
By construction $\Omega'$ is a projective simplex domain. Since $k\le d-1$,can apply the inductive hypothesis to $\Omega'$. Therefore, we conclude that $\Omega'$, and hence $\Omega$, has a simplex that is not $n$-slim for any $n\in\mathbb{N}$. 

When $k=d$, we denote by $x_0,\dots,x_d$ the vertices of $\Omega$. Then, we consider, for any $i=0,\dots,d$, a sequence $\seq{v_n^i}$ in $\Omega$ converging to $x_i$. We claim that, up to extracting a subsequence, the simplex $S_n\coloneqq\CH_\Omega(v_n^0,\dots,v_n^d)$ is not $n$-slim. This will conclude the proof. To prove the claim, fix a point $p\in \relint(\CH_{\Omega}\{x_1,\dots,x_d\})$, the open face of $\Omega$ opposite to $x_0$. Then, consider a sequence $p_n\in F^0(S_n) \subset \Omega$ that converges to $p$. Then for any $i\in \{1,\dots,d\}$, $\liminf_{n\to\infty}\hil(p_n,F^i(S_n))=\infty$. Otherwise \cref{lem:relint_and_faces} would imply that $p \in F_{\Omega}(\CH_{\Omega}(x_0,\dots,\wh{x}_i,\dots,x_d))=\CH_{\Omega}(x_0,\dots,\wh{x}_i,\dots,x_d)$. This contradicts that $p\in \relint(\CH_{\Omega}\{x_1,\dots,x_d\})$.  
\end{proof}

\subsection{Non-emptiness of coarse centroid implies slimness}

Recall the definition of coarse centroid for a simplex from \cref{defn:coarse_centroid}. 
\begin{observation}
\label{obs: convexity of centroids}
    For any compact $k$-simplex $S$ in $\Omega$, $C_{\delta}(S)$ is a convex set. 
\end{observation}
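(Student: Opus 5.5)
The statement is that $C_\delta(S)=\bigcap_{i=0}^k \Nc_\delta(F^i(S))$ is convex. I will show that each set in this intersection is convex and then use that an intersection of convex sets is convex.

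First, each facet $F^i(S)=\CH_\Omega(x_0,\dots,\wh{x}_i,\dots,x_k)$ is a non-empty convex subset of $\Omega$. It is convex by definition of the convex hull. It lies in $\Omega$ because $S$ is compact, so all vertices lie in $\Omega$. The convex hull in an affine chart of finitely many points of the convex set $\Omega$ then stays in $\Omega$.

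For $\delta>0$, \cref{cor: convexity of neighborhoods} applies to $F=F^i(S)$ and gives that $\Nc_\delta(F^i(S))$ is a convex subset of $\Omega$. For $\delta=0$ the convention $\Nc_0(F)=F$ makes the neighborhood equal to $F^i(S)$, which is convex by the previous paragraph.

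Convexity is meant in the sense of $\overline{\Omega}$: a set $C$ is convex if $[x,y]\subset C$ for all $x,y\in C$, where $[x,y]$ is the well-defined projective segment. Suppose $x,y\in C_\delta(S)$. Then $x,y\in\Nc_\delta(F^i(S))$ for every $i$, so $[x,y]\subset\Nc_\delta(F^i(S))$ for every $i$, and hence $[x,y]\subset C_\delta(S)$. If $C_\delta(S)$ is empty or a single point, it is trivially convex.

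There is no real obstacle. The only point needing care is that \cref{cor: convexity of neighborhoods} needs a non-empty convex set contained in $\Omega$. This is exactly where compactness of $S$ is used: it ensures $F^i(S)\subset\Omega$ rather than merely $F^i(S)\subset\overline{\Omega}$.
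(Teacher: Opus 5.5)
Your proposal is correct and follows the paper's proof: each $\Nc_\delta(F^i(S))$ is convex by \cref{cor: convexity of neighborhoods}, since compactness puts the facets in $\Omega$, and a finite intersection of convex sets is convex. Your separate treatment of $\delta=0$ is harmless but unnecessary, because that corollary already covers $\delta\geq 0$ under the convention $\Nc_0(F)=F$.
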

\begin{proof}Obvious from \cref{cor: convexity of neighborhoods} and the fact that finite intersection of convex sets is convex. \end{proof}

Now we will show that if the coarse centroid of $S$ is non-empty, then $S$ must be  a slim simplex.  

\begin{lemma}
\label{lem:non-empty centroid implies slimness}
    Suppose $\Omega \subset \RP$ is a properly convex domain and $S$ is a compact $k$-simplex in $\Omega$. Then for any $\delta \geq 0$, if $C_{\delta}(S)$ is non-empty, then $S$ is $2\delta$-slim. 
\end{lemma}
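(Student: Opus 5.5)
Fix a compact $k$-simplex $S$ with $\vrtx(S)=(x_0,\dots,x_k)$ and a point $c\in C_\delta(S)$. If $\delta=0$, then $c$ lies in every facet, and the convention $\Nc_0(F)=F$ has to be handled separately: I will argue via degeneracy/Helly-type reasoning, or else note that a point common to all facets forces $S$ to be degenerate and then invoke \cref{cor: degenerate are 0-slim}. For $\delta>0$, the aim is to show, for a fixed index $i$ and an arbitrary point $p\in F^i(S)$, that $p$ lies within $2\delta$ of some facet $F^j(S)$ with $j\neq i$.

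The key step is a ``coning'' argument. Since $c\in\Nc_\delta(F^i(S))$, pick $c_i\in F^i(S)$ with $\hil(c,c_i)<\delta$. The facet $F^i(S)=\CH_\Omega(x_j: j\neq i)$ is a simplex containing both $c_i$ and $p$. Coning it from $c_i$ decomposes it as a union of the sets $\CH_\Omega(c_i, F^{\{i,j\}}(S))$ over $j\neq i$. Concretely, if $p\neq c_i$, extend the segment from $c_i$ through $p$ inside $F^i(S)$ until it meets the relative boundary at a point $q$. Then $q$ lies in some face $F^{\{i,j\}}(S)$ with $j\neq i$; in the degenerate case I will use \cref{cor: degenerate simplices are the union of all facets} instead. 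Thus $p\in[c_i,q]$ with $q\in F^{\{i,j\}}(S)\subset F^j(S)$.

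Now $c\in\Nc_\delta(F^j(S))$, so $\hil(c_i,F^j(S))<\hil(c_i,c)+\delta<2\delta$. Also $\hil(q,F^j(S))=0$. Since $F^j(S)$ is convex and contained in $\Omega$ (as $S$ is compact), the maximum principle \cref{lem: maximum principle}, applied to the function $x\mapsto\hil(x,F^j(S))$ on the compact segment $[c_i,q]$, gives $\hil(p,F^j(S))\le\max\{\hil(c_i,F^j(S)),0\}<2\delta$. Hence $p\in\Nc_{2\delta}(F^j(S))$, which is exactly the slimness condition.

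The main obstacle is the combinatorial coning claim: every point of a (possibly degenerate) simplex lies on a segment from a given interior-or-boundary point $c_i$ to a point of some sub-facet opposite another vertex. For non-degenerate facets this is standard via a ray to the relative boundary together with \cref{obs:about_non_deg_simplices}(3). For degenerate facets, I would instead write $p$ and $c_i$ in barycentric coordinates and move along the line from $c_i$ through $p$ until some coordinate vanishes, which produces $q$ in some $F^{\{i,j\}}(S)$ directly. The $\delta=0$ boundary case and the case $k=1$ need only a trivial separate check.
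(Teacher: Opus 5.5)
Your proof is correct and follows essentially the same route as the paper. The paper also cones the facet $F^i(S)$ from a point $\delta$-close to the centroid point, writing $F^i(S)=\bigcup_{j\neq i}\CH_\Omega(c_i,F^{\{i,j\}}(S))$, and then uses the maximum principle (equivalently, convexity of $\Nc_{2\delta}(F^j(S))$) to place each piece inside $\Nc_{2\delta}(F^j(S))$. Your explicit handling of $\delta=0$ and of degenerate facets via barycentric coordinates is a genuine improvement, since the paper's choice of $p^j$ with $\hil(p,p^j)<\delta$ silently excludes the case $\delta=0$.
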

\begin{remark}
    This lemma is elementary and follows from the convexity of the Hilbert metric. However, it's converse is a delicate issue. 
    A primary case of the converse has already been proven in \cref{cor: degenerate simplices centroid}: for a $0$-slim simplex $S$, $C_0(S) \neq \emptyset$. The general case of the converse requires more sophisticated machinery and we will establish this in \cref{prop: center of a simplex}.
\end{remark}
\begin{proof}We have to show that for every $i\in\{0,\dots,k\}$, we have
    \begin{equation*}
        F^i(S)\subseteq\bigcup\limits_{j\neq i}\ngh{F^j(S)}{2\delta}.
    \end{equation*}

    Take a point $p\in C_\delta(S)$. By definition, $\hil(p,F^j(S))<\delta$ for any $j=0,\dots,k$. So for each $j$, we can pick $p^j\in\relint(F^j(S))$ such that $\hil(p,p^j)<\delta$. Then $\hil(p^i,p^j)<2\delta$ for all $j\neq i$.
    
    We decompose each $F^i(S)$ into sub-simplices having $p^i$ as a vertex. More precisely, for any $j\neq i$, define the $(k-1)$-simplex in $\Omega$ 
    $$
    F^i_j\coloneqq\CH_\Omega(p^i,F^{\{i,j\}}(S)).
    $$
    Then for every $i=0,\dots,k$, we have $F^i(S)=\bigcup\limits_{j\neq i}F^i_j.$   
    Since $\hil(p^i,p^j)<2\delta$, Lemma \ref{lem: maximum principle} implies that 
    $
    F^i_j=\CH_\Omega(p^i,F^{\{i,j\}}(S))\subseteq\ngh{\CH_\Omega(p^j,F^{\{i,j\}}(S))}{2\delta}=\ngh{F^j_i}{2\delta}.$
    Note that $\ngh{F^j_i}{2\delta}\subseteq\ngh{F^j(S)}{2\delta}$ for any $i\neq j$. Hence, for any $i=0,\dots,k$, 
   \begin{equation*}
    F^i(S)=\bigcup\limits_{j\neq i}F^i_j\subseteq\bigcup\limits_{j\neq i}\ngh{F^j(S)}{2\delta}. \qedhere
    \end{equation*}
\end{proof}

\section{Slim slimpices in properly convex domains}
\label{sec: slim_simplices_properly_convex}
In this section, we relate slimness of simplices to the projective simplex rank $\rF(\Omega)$ and prove a stronger version of \cref{thm:slim_simplex_above_pes_rank}. 

\subsection{KKM Lemma}

We begin by recalling a classical result in convex geometry that we will use crucially in this section. This result, often called the KKM lemma, was first proven by Knaster, Kuratowski, and Mazurkiewicz in 1929 \cite{KKM} (also see \cite{AliprantisBorder} for further details).  For the sake of completeness, we include an elementary proof using Brouwer's fixed point theorem. The proof is routine and we do not claim any originality.  We will use the following notation for Euclidean convex hulls: given any $(k+1)$ points $x_0,\dots,x_k \in \Rb^n$, let us denote by $$\Hull(\{x_0,\dots,x_k\})=\left\{\sum_{i=0}^kt_ix_i: \sum_{i=0}^kt_i=1, t_0\geq 0, \dots,t_k\geq 0 \right\}.$$

\begin{lemma}[Knaster-Kuratowski-Mazurkiewicz]
\label{lem:KKM}
    Let $\Delta \subset \mathbb{R}^n$ be an $n$-dimensional Euclidean simplex with vertices $\{v_0, \dots, v_n\}$, i.e. $$\dim\Span\{v_0,\dots,v_n\}=n \text{ and } \Delta=\Hull(\{v_0,\dots,v_n\}).$$ 
    Let $C_0, \dots, C_n$ be a collection of closed subsets of $\Delta$. Assume that for every non-empty subset of indices $I \subseteq \{0, \dots, n\}$, $$\Hull(\{v_i: i \in I\})\subset \bigcup_{i\in I}C_i.$$
    Then, $\bigcap_{i=0}^n C_i \neq \emptyset.$
\end{lemma}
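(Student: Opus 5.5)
We argue by contradiction, using Brouwer's fixed point theorem on $\Delta$. Write each point $x\in\Delta$ in barycentric coordinates $x=\sum_{i=0}^n t_i(x)v_i$, with $t_i(x)\geq 0$ and $\sum_i t_i(x)=1$. These coordinates depend continuously on $x$ because $\Delta$ is non-degenerate. Suppose $\bigcap_{i=0}^n C_i=\emptyset$. Then the functions $d_i(x):=\operatorname{dist}(x,C_i)$ are continuous, since each $C_i$ is closed (with the convention that $d_i\equiv 1$ if $C_i=\emptyset$). For every $x\in\Delta$ some $d_i(x)>0$, because $x$ fails to lie in at least one $C_i$. Hence $D(x):=\sum_i d_i(x)>0$ on $\Delta$, and
$$f(x):=\sum_{i=0}^n \frac{d_i(x)}{D(x)}\,v_i$$
is a continuous self-map of $\Delta$. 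By Brouwer, $f$ has a fixed point $x^*$.

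Next we compare supports at the fixed point. Let $I:=\{i: d_i(x^*)>0\}$, which is non-empty. Since $f(x^*)=x^*$ and barycentric coordinates are unique, $t_i(x^*)=d_i(x^*)/D(x^*)$ for every $i$. Therefore $t_i(x^*)>0$ exactly when $i\in I$, and so $x^*\in\Hull(\{v_i:i\in I\})$.

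Now apply the covering hypothesis to this $I$: it gives $x^*\in\bigcup_{i\in I}C_i$. So $x^*\in C_i$ for some $i\in I$, which means $d_i(x^*)=0$. This contradicts the definition of $I$, and therefore $\bigcap_i C_i\neq\emptyset$.

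The only delicate point is this support-matching step. It needs uniqueness of barycentric coordinates, which is exactly where the hypothesis $\dim\Span\{v_0,\dots,v_n\}=n$ enters. It also needs the hypothesis to be applied to the specific face $I$ determined by the fixed point, rather than to $\Delta$ as a whole. Everything else is routine continuity.
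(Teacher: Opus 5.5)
Your proof is correct and follows essentially the same route as the paper: the same normalized-distance map $f$, Brouwer's fixed point theorem, and the contradiction obtained by comparing the support of the fixed point with the indices where $d_i$ vanishes. The only differences are cosmetic. You define $I$ as the set where $d_i(x^*)>0$ rather than as the support of $x^*$. You also skip the paper's remark that $f(\Delta)\subset\partial\Delta$, and your argument shows that remark is not needed.
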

\begin{proof}
    Let $d_{\Rb^n}$ be the Euclidean distance. Assume by contradiction that $\bigcap_{i=0}^n C_i = \emptyset$. Then, for any $i=0,\dots,n$, we consider the continuous function $f_i:\Delta\to \mathbb{R}_{\ge0}$ defined by 
    $$
    f_i(x)\coloneqq \operatorname{d}_{\mathbb{R}^n}(x,C_i). 
    $$
    Since we assumed that $\bigcap_{i=0}^n C_i =\emptyset$, for any $x\in\Delta$, there exists some $i'\in\{0,\dots,n\}$ such that $f_{i'}(x)>0$. Thus, the  function $f:\Delta\to\Delta$ defined by
    $$
    x \mapsto f(x)\coloneqq\sum\limits_{i=0}^n\dfrac{f_i(x)}{\sum\limits_{j=0}^n f_j(x)}v_i,
    $$
    is a well-defined continuous function on $\Delta$. 
    By assumption, $\Delta \subset \cup_{i=0}^nC_i$. So, for any $x\in\Delta$, there exists some $i''\in\{0,\dots,n\}$ such that $f_{i''}(x)=0$. This implies that $$f(\Delta) \subset \partial \Delta=\bigcup_{i=0}^n \Hull(\{v_0,\dots,\wh{v}_i,\dots,v_n\}).$$
    Then, by Brouwer's fixed point theorem, we have that $f$ has a fixed point $x_0\in\partial \Delta$.

    Now write $x_0$ as the convex combination of the vertices of $\Delta$. Indeed, there is a unique $I \subset \{0,\dots,n\}$ and a unique such expression $x_0=\sum_{i\in I}a_iv_i$ where each $a_i$ in the sum is positive. As $x_0 \in \Hull(\{v_i:i\in I\}) \subset \bigcup_{i\in I}C_i$, there exists some $\ell \in I$ such that $f_{\ell}(x_0)=0$.  Then, if we write $f(x_0)=\sum_{i=0}^nc_iv_i$, the coefficient $c_{\ell}=0$. Since $f(x_0)=x_0$, this implies that $\ell \not\in I$ because of the way we defined $I$. This is a contradiction. 
\end{proof}

\subsubsection{A first application of KKM lemma: Quantitative fatness}

\begin{lemma}\label{lem: non-n-slim simplices}
    Let $\Omega\subset\rpd$ be a \pcd. Fix $n,k\in\mathbb{N}$ with $k\geq 2$. Then, for any compact $k$-simplex $S$ in $\Omega$ that is \emph{not} $n$-slim, there exists a point $c\in \relint (S)\subset\Omega$  such that
    \begin{equation*}
        \hil(c, F^0(S)\cup\dots\cup F^k(S))> \frac{n}{3}.
    \end{equation*}  
\end{lemma}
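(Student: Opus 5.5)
The plan is to argue by contradiction. If no point of $S$ is farther than $\frac{n}{3}$ from every facet, then $S$ is covered by closed "near-facet" sets, and a KKM-type argument produces a nonempty coarse centroid. \cref{lem:non-empty centroid implies slimness} then forces $S$ to be roughly $\frac{2n}{3}$-slim, contradicting the hypothesis. The main obstacle is that the labeling in \cref{lem:KKM} does not match our sets, so we need the dual form of KKM.

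\textbf{Step 1: reduce to a Euclidean simplex.} Write $\vrtx(S)=(v_0,\dots,v_k)$. Since $S$ is not $n$-slim, it is not $0$-slim. By \cref{cor:degenerate_iff_0_slim}, $S$ is therefore a non-degenerate $k$-simplex. Working in an affine chart of $\Pb(\Span S)$ in which $\overline{\Omega}\cap \Pb(\Span S)$ is bounded, $S$ becomes a Euclidean $k$-simplex $\Delta$ with vertices $v_0,\dots,v_k$. Its facets are the $F^j(S)$, and by \cref{obs:about_non_deg_simplices} we have $\partial S=\bigcup_j F^j(S)$. For $j=0,\dots,k$ set
\[
C_j:=\{x\in S : \hil(x,F^j(S))\le n/3\}.
\]
Each $C_j$ is closed, because $S\subset \Omega$ is compact and $x\mapsto \hil(x,F^j(S))$ is continuous. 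Moreover $F^j(S)\subset C_j$.

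\textbf{Step 2: apply a dual KKM lemma.} Suppose, for contradiction, that every $c\in \relint(S)$ satisfies $\hil(c,\bigcup_j F^j(S))\le n/3$. Since boundary points lie in some facet, this gives $S=\bigcup_j C_j$. The labeling in \cref{lem:KKM} requires the face spanned by $\{v_i:i\in I\}$ to be covered by $\bigcup_{i\in I}C_i$. Our sets have the opposite labeling: the face spanned by $I$ lies in $F^j(S)\subset C_j$ for every $j\notin I$. So we use the dual version: if closed sets cover $\Delta$ and each facet $F^j\subset C_j$, then $\bigcap_j C_j\ne\emptyset$. I would prove this exactly as in \cref{lem:KKM}, via Brouwer. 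Assuming $\bigcap_j C_j=\emptyset$, consider
\[
g(x)=\sum_i \frac{f_i(x)}{\sum_l f_l(x)}\,v_i ,\qquad f_i(x)=d_{\Rb^n}(x,C_i),
\]
and compose with the antipodal-type map on $\partial\Delta$, or run the standard degree argument, to produce a contradiction. Alternatively, one can apply \cref{lem:KKM} to the relabeled cover $C'_i:=\bigcup_{j\neq i}\dots$; but the dual statement via Brouwer is the cleanest route. This is the step I expect to need the most care.

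\textbf{Step 3: conclude.} From Step 2 we obtain $q\in\bigcap_j C_j$. Hence $\hil(q,F^j(S))\le n/3<n/3+\epsilon$ for all $j$, so the coarse centroid $C_{n/3+\epsilon}(S)$ is nonempty for any $\epsilon>0$. By \cref{lem:non-empty centroid implies slimness}, $S$ is $(2n/3+2\epsilon)$-slim. Choosing $\epsilon<n/6$ makes $S$ $n$-slim, which contradicts the hypothesis. Therefore some $c\in S$ lies outside $\bigcup_j C_j$, that is, $\hil(c,\bigcup_j F^j(S))>n/3$. Such a $c$ is not in any facet, so $c\notin\partial S$, and hence $c\in\relint(S)\subset\Omega$.
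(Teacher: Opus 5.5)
Your proposal is correct and follows the paper's proof: argue by contradiction, cover $S$ by the closed $\frac{n}{3}$-neighborhoods of the facets, extract a common point by a KKM argument, and conclude with \cref{lem:non-empty centroid implies slimness}. The only difference is the ``dual KKM'' step you flag, which needs no separate Brouwer or degree argument. The paper applies \cref{lem:KKM} directly to the cyclically shifted cover $V_i:=C_{i+1}$ (indices mod $k+1$). This cover satisfies the hypothesis of \cref{lem:KKM} because every proper nonempty $I\subset\{0,\dots,k\}$ contains some $i$ with $i+1\notin I$, and then $\Hull(\{v_l : l\in I\})\subset F^{i+1}(S)\subset V_i$. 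By contrast, the relabeling by unions $\bigcup_{j\neq i}C_j$ that you mention would only give a point lying in two of the $C_j$.
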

\begin{remark*}
    Note that this result is about non-degenerate  $k$-simplices in $\Omega$. Indeed, $S$ is non-degenerate, because if $S$ was degenerate, then it would be $0$-slim (\cref{lem: facets of degenerate simplices}) and hence, $n$-slim.
\end{remark*}
\begin{proof}
    We first choose an affine chart so that we can view $S$ as a compact Euclidean simplex in this affine chart. Moreover, since by assumption $S$ is not $n$-slim, by \cref{cor: degenerate are 0-slim}, we have that $S$ is a non-degenerate simplex in the affine chart. Hence we may now apply \cref{lem:KKM} to $S$. 
    To prove the result, suppose by contradiction that for any $c\in\relint(S)$ there exists a facet $F^j(S)$ for some $j=j(c)\in\{0,\dots,k\}$ such that $\hil(c,F^j(S))\le\dfrac{n}{3}$. This implies that $\relint(S)\subset\bigcup_{i=0}^k\overline{\ngh{F^i(S)}{\frac{n}{3}}}$. As $\partial S=\bigcup_{i=0}^k F^i(S)$, we have
    $
    S\subset\bigcup\limits_{i=0}^k\overline{\ngh{F^i(S)}{\frac{n}{3}}}.
    $
    Set $V_i\coloneqq \overline{\ngh{F^{i+1}(S)}{\frac{n}{3}}}\cap S$ for all $i=0,\dots,k-1$, and $V_k\coloneqq \overline{\ngh{F^{0}(S)}{\frac{n}{3}}}\cap S$.
    Then \cref{lem:KKM} implies that there exists $p\in\bigcap\limits_{i=0}^kV_i=\bigcap\limits_{i=0}^k\overline{\ngh{F^i(S)}{\frac{n}{3}}}\cap S$. 
    So $ p \in C_{\frac{n}{2}}(S)\cap S$. Then, by \cref{lem:non-empty centroid implies slimness}, $S$ is $n$-slim, which is a  contradiction.
\end{proof}

\subsection{Characterizing slimness using projective simplex rank}
\label{sec:slimness_above_PES_rank}

In this section, we prove a stronger version of \cref{thm:slim_simplex_above_pes_rank}. The main idea of the proof is to follow a re-centering procedure, inspired by Benoist's proof of \cite[Proposition 2.5]{B2004} Indeed, if there are simplices that get progressively `fatter', then we bring the `fat' part of each such simplex a fixed compact set, and then look at the limit of these `fat' simplices. It turns out that the limiting object is a PES of dimension greater than the projective simplex rank $\rF(\Omega)$, thus yielding a contradiction.

\begin{proposition}\label{prop: duality between properly embedded simplices and slim simplices}
    Let $\Omega\subseteq \RP$ be a quasi-homogeneous properly convex domain and $1 \leq m\le d-1$. 
    Then the following are equivalent:
    \begin{enumerate}
        \item $\rF(\Omega) \leq m$.
        \item There exists a positive constant $\delta=\delta(\Omega)$ such that every compact $k$-simplex  in  $\Omega$ with $k\ge m+1$  is $\delta$-slim. 
    \end{enumerate} 
\end{proposition}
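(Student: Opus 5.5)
We prove the two implications separately. The direction (2)$\Rightarrow$(1) is by contrapositive. Suppose $\rF(\Omega)\ge m+1$, so there is a PES $S'$ in $\Omega$ of dimension $k\ge m+1$. We first reduce to dimension exactly $m+1$. A face of $S'$ spanned by $m+2$ of its vertices, cut by a generic slice through $\relint(S')$, does not directly give a PES. Instead we use the affine structure: in an affine chart, $\relint(S')$ is an open Euclidean simplex and the Hilbert metric of $\Omega$ restricted to $S'$ equals that of $S'$ (by convexity, as in \cref{lem:asrk_flat_rank_easy_direction}). Choosing an $(m+1)$-plane through an interior point parallel to an $(m+1)$-face, as in \cref{cor: PES contain arbitrarily fat}, gives a projective simplex domain $\Omega'\subset S'$ of dimension $m+1$. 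Its Hilbert metric dominates $\hil$ on it, so we must be careful here. It is cleaner to apply \cref{cor: PES contain arbitrarily fat} inside $S'$ itself, taking $k=\dim S'$ or an $(m+1)$-dimensional simplex in a suitable face-parallel slice, and to run the argument of that proof directly with $\hil$. Take compact simplices $S_n$ with vertices converging to vertices of $S'$ (or of an $(m+1)$-face direction). A point $p_n\in F^0(S_n)$ converging to a point in the relative interior of a facet of $S'$ has $\hil(p_n,F^i(S_n))\to\infty$ for all $i\ne 0$. This uses \cref{fact: lower semicontinuity of extended distance} and \cref{lem:relint_and_faces}, together with the fact that the facets of $S'$ lie in distinct faces of $\partial\Omega$ (\cref{lemma:faces_of_PES}(3)). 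Hence no uniform $\delta$ works, which contradicts (2).

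For (1)$\Rightarrow$(2) we argue by contradiction using Benoist-style recentering. Suppose there are compact $k_n$-simplices $S_n$ with $k_n\ge m+1$ that are not $n$-slim. We reduce to $k_n=m+1$. A non-slim $k$-simplex with $k>m+1$ should contain a non-slim $(m+1)$-dimensional subsimplex. Alternatively, and more simply, we can keep $k_n$ bounded by $d-1$ on the nondegenerate ones, since degenerate simplices are $0$-slim by \cref{cor: degenerate are 0-slim}. So we may pass to a subsequence with $k_n=k$ constant, $m+1\le k\le d-1$. By \cref{lem: non-n-slim simplices} there is $c_n\in\relint(S_n)$ with $\hil(c_n,\partial S_n)>n/3$. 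By quasi-homogeneity, pick $g_n\in\Aut(\Omega)$ with $g_nc_n\in K$ compact; these $g_n$ are isometries and preserve convex hulls. Replacing $S_n$ by $g_nS_n$, we apply \cref{cor: compactness of simplices in loc hausdorff top} to pass to a limit $\wh S=\CH_\Omega(v^0,\dots,v^k)$ with $\wh S_n\cap\Omega\to\wh S\cap\Omega$ in the local Hausdorff topology, and $c_n\to c\in\Omega$.

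The main step is to show that $\wh S$ is a $k$-dimensional PES, which contradicts $\rF(\Omega)\le m<k$. The ball $B(c_n,n/3)\cap\Pb(\Span S_n)$ lies in $S_n$. Passing to the limit (after also taking $\Pb(\Span S_n)\to$ a $k$-plane $P$), every point of $P\cap\Omega$ lies in $\wh S$, so $\wh S\cap\Omega=P\cap\Omega$. This gives non-degeneracy, since $\wh S$ contains an open subset of the $k$-plane $P$ around $c$. It also gives $\partial\wh S\subset\partial\Omega$: a boundary point of $\wh S$ inside $\Omega$ would be a relative-interior point of $P\cap\Omega$, contradicting that it lies in a facet of the $k$-simplex $\wh S$. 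The delicate point is justifying the passage of the inclusion $B(c_n,n/3)\cap\Pb(\Span S_n)\subset S_n$ to the limit. For this we use \cref{lem: characterization lht convergence} on balls $B(c,R)$ for each fixed $R$, and the convergence of the planes $\Pb(\Span S_n)$ to $P$ in the Grassmannian. We expect this step to be the main obstacle. Once $\wh S$ is shown to be a PES of dimension $k\ge m+1$, it contradicts (1).
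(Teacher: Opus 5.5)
Your proposal is correct. Direction (2)$\Rightarrow$(1) is essentially the paper's argument, and direction (1)$\Rightarrow$(2) uses the same recentering scheme but a genuinely different way of showing the limit is a PES.

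\textbf{Direction (2)$\Rightarrow$(1).} The paper simply quotes \cref{cor: PES contain arbitrarily fat} for the PES, viewed as a projective simplex domain in its own span. Your worry that the intrinsic metric ``dominates'' $\hil$ is unfounded. Since $\partial S'\subset\partial\Omega$, every projective line through two points of $\relint(S')$ meets $\overline{S'}$ and $\overline{\Omega}$ in the same segment, so the Hilbert metric of $S'$ equals the restriction of $\hil$. The same holds for any projective slice of $S'$; the paper uses this in \cref{lem:asrk_flat_rank_easy_direction}. Hence neither the detour nor any reduction to dimension $m+1$ is needed.

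If you do rerun the corollary's argument inside $\Omega$, the fact you actually need is $F^i(S')\cap F_\Omega(p)=\emptyset$ for $p\in\relint F^0(S')$ and $i\neq 0$. This is slightly more than ``the facets lie in distinct faces.'' It follows from \cref{lemma:faces_of_PES}(3) together with one observation: if some point of $F^i(S')\setminus F^0(S')$ were in $F_\Omega(p)$, then $v_0\in\overline{F_\Omega(p)}$. That would force $S'\subset\overline{F_\Omega(p)}\subset\partial\Omega$, a contradiction.

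\textbf{Direction (1)$\Rightarrow$(2).} Your tentative claim that a non-slim $k$-simplex contains a non-slim $(m+1)$-subsimplex is unjustified. You rightly discard it and instead fix $k_n=k\le d-1$, exactly as the paper does.

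The difference lies in proving that the limit $\wh S$ is a PES. The paper argues facet by facet. If some limiting facet $F^i$ met $\Omega$, then $F^i_n\to F^i\cap\Omega$ in the local Hausdorff topology (\cref{lem: convergence of simplices}), giving $\hil(c,F^i)=\lim\hil(c_n,F^i_n)=\infty$. It then gets non-degeneracy from \cref{cor: degenerate simplices are the union of all facets}, since a degenerate limit would lie in $\partial\Omega$.

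You instead use the inclusion $B(c_n,n/3)\cap\Pb(\Span S_n)\subset S_n$. This holds because a segment from $c_n$ to a point of the span outside $S_n$ crosses $\partial S_n$ at a point closer to $c_n$, projective segments being geodesics. Passing to a Grassmannian limit $P$ gives $\wh S\cap\Omega=P\cap\Omega$, which yields non-degeneracy and $\partial\wh S\subset\partial\Omega$ at once.

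The step you flag as the main obstacle is routine. For $y\in P\cap\Omega$, choose $y_n\in\Pb(\Span S_n)$ with $y_n\to y$. Then $\hil(c_n,y_n)\to\hil(c,y)<\infty$, so $y_n\in S_n$ for large $n$. A limit of points of $S_n$ lies in $\wh S$ by the barycentric-coordinate argument from \cref{lem: convergence of simplices}. No local Hausdorff machinery is needed.

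Your route gives a stronger conclusion, namely that $\wh S\cap\Omega$ is an entire $k$-plane section of $\Omega$, and it bypasses the degenerate-simplex lemmas. The cost is introducing Grassmannian compactness. The paper's route stays within the local Hausdorff framework it has already built and uses only the distances from $c_n$ to each facet separately.
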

\begin{remark}
    Note that the second condition in the result above is really a condition on the non-degenerate compact simplices in $\Omega$. This is because any degenerate compact simplex of dimension at least 2 is $0$-slim -- see \cref{lem: facets of degenerate simplices}.
\end{remark}
\begin{proof} Suppose there exists a constant $\delta >0 $ such that any non-degenerate $k$-simplex $S$ with $k\ge m+1$ in $\Omega$ is $\delta$-slim. We need to show that any PES in $\Omega$ has dimension at most $m$.
    Assume by contradiction that for some $k\ge m+1$, there exists a $k$-dimensional PES in $\Omega$ that we will label $S$. Then $S$ is a projective simplex domain in $\Pb(\Span(S))$. Applying Corollary \ref{cor: PES contain arbitrarily fat} to $S$ with every $n\in \Nb$, we get a sequence $\seq{S_n}_{n\in \Nb}$ of  non-degenerate compact $k$-simplices in $S$ where $S_n$ is \emph{not} $n$-slim. Now note that $\vrtx(S_n) \in S^{k+1} \subset \Omega^{k+1}$.
    Thus each $S_n$ is a compact $k$-simplex in $\Omega$ that is \emph{not} $n$-slim.  Choosing $n>\delta$, we get compact $k$-simplices in $\Omega$ that are \emph{not} $\delta$-slim,  a contradiction.

    Conversely, let $1 \leq m\le d-1$ be such that $m \geq \rF(\Omega)$, i.e. any PES in $\Omega$ has dimension at most $m$. 
    Assume by contradiction that for any $n\in\mathbb{N}$ there exists a natural number $k_n\geq (m+1)\geq 2$ and a compact $k_n$-simplex $S_n$ in $\Omega$ such that $S_n$ is \emph{not} $n$-slim. Note that each $S_n$ is necessarily a non-degenerate simplex, since any degenerate $k'$-simplex with $k' \geq 2$ is $0$-slim (\cref{lem: facets of degenerate simplices}). As each $S_n$ is a non-degenerate simplex in $\Omega \subseteq \RP$, $k_n \leq (d-1)$. Then, up to passing to a subsequence, we may assume that there exists $k\in\mathbb{N}$ with $2\leq m+1 \leq k \leq d-1$ such that $k_n=k$ for all $n\in\mathbb{N}$.

    Let $v^0_n,\dots,v^k_n\in\Omega$ be the vertices of $S_n$. For $i=0,\dots,k$, let $F^i_n:=F^i(S_n)$ denote the facet of $S_n$ opposite to the vertex $v^i_n$.  
    Since $S_n$ is not $n$-slim, by \cref{lem: non-n-slim simplices}, there exists a point $c_n\in \relint(S_n)\subset\Omega$ such that 
    \begin{equation}\label{eqn: Sn is not n-slim}
        \hil(c_n, F^0_n\cup\dots\cup F^k_n)> \frac{n}{3}.
    \end{equation}

    Since $\Omega$ is quasi-homogeneous, there exists a compact set $K\subseteq\Omega$ such that $\Aut(\Omega) \cdot K=\Omega$. Then, up to translating by elements of $\Aut(\Omega)$, we may assume that  $c_n\in K$ for all $n\in\mathbb{N}$. Next, up to passing to a subsequence, we may assume that there exist $v^0,\dots,v^k,c \in \overline{\Omega}$ such that $v^{i}_n \to v^i$ for each $i$, and $c_n \to c$ as $n\to\infty$. Then $c \in K \subseteq \Omega$ as $K$ is compact. 

    Let $S\coloneqq\CH_\Omega\{v^0,\dots,v^k\}$. Indeed, $S\cap\Omega \neq \emptyset$, as $c \in S$. Then, applying \cref{lem:relint_dichotomy} to $S$, we get that  $\relint(S) \subset \Omega$. Thus, $S$ is a $k$-simplex in $\Omega$. Moreover,  by \cref{lem: convergence of simplices}, $S_n \to S\cap\Omega$ in the local Hausdorff topology. 
    
    For each $i=0,\dots,k$, denote by $F^i:=F^i(S)$ the facet of $S$ opposite to the vertex $v^i$. Each $F^i$ is a closed convex subset of $\overline{\Omega}$. We now claim that for each $F^i$, $F^i \subset \bdry$. To prove the claim, note that by \cref{lem:relint_dichotomy}, either $F^i\subset\partial\Omega$ or $\relint(F^i)\subset\Omega$ for each $i=0,\dots,k$. Suppose, if possible, the latter case holds for some $i$. Then, $F^i \cap \Omega \neq \emptyset$ and \cref{lem: convergence of simplices} implies that
    $$
    F^i_n\to F^i \cap \Omega,
    $$
    as $n\to\infty$, in the local Hausdorff topology induced by $\hil$. As $c \in \Omega$ and $F^i \cap \Omega \neq \emptyset$, $\hil(c,F^i)<\infty$. On the other hand, as $c_n\to c$ and $F^i_n \to F^i\cap\Omega$ (in the local Hausdorff topology) as $n\to \infty$, we can pass to a subsequence and assume that 
    $$\hil(c,F^i)=\lim_{n\to\infty}\hil(c,F^i_n)=\lim_{n\to\infty}\hil(c_n,F^i_n).$$
    However, by \cref{eqn: Sn is not n-slim}, $\hil(c_n,F^i_n) > \frac{n}{3}$ so that $\hil(c,F^i)>\lim_{n\to \infty} \frac{n}{3}=\infty$. This yields a contradiction. This contradiction proves the claim that $F^i\subset \bdry$ for $i=0,\dots,k$. 
     
    Thus $S$ is a $k$-simplex with each of its facets contained in $\bdry$ while $\relint(S) \subset \Omega$ and $2 \leq m+1 \leq  k \leq d-1$. 
    Then, to prove that $S$ is a PES, it suffices to show that $S$ is a non-degenerate $k$-simplex. Suppose, if possible, that $S$ is a degenerate $k$-simplex. Then \cref{cor: degenerate simplices are the union of all facets} implies that $S =\cup_{i=0}^kF^i$. Since each $F^i \subset \partial \Omega$, thus $S \subseteq \bdry$, which is a contradiction. Thus $S$ is $k$-dimensional PES where $k \geq (m+1)$. This is a contradiction.
\end{proof}

\begin{proof}[Proof of \cref{thm:slim_simplex_above_pes_rank}]
\label{proof:slim_simplex_above_pes_rank}
    \cref{prop: duality between properly embedded simplices and slim simplices} directly implies that there exists a constant $\delta_0>0$ such that if $m\ge\rF(\Omega)+1$ and $S$ is a compact $m$-simplex, then $S$ is $\delta_0$-slim.
\end{proof}

\begin{remark*}
       Suppose $\Omega$ is a projective simplex domain (\cref{defn:projective_simplex_domain}). Then $\rF(\Omega)=\dim\Omega$. Then, for any $m \geq  \rF(\Omega)+1$, a compact $m$-simplex $S$ is a degenerate simplex, and hence $S$ is $0$-slim (see \cref{lem: facets of degenerate simplices}). Thus \cref{thm:slim_simplex_above_pes_rank} holds with any $\delta_0>0$.
\end{remark*}

\begin{proof}[Proof of \cref{rmk:slim_simplices_symmetric}]
\label{proof:slim_simplices_symmetric}
    Suppose $\Omega$ is the projective model of an irreducible symmetric space $X=G/K$ and let $D$ be the associated Riemannian distance on $\Omega$. Then, \cref{lem: flat rank for symmetric spaces} implies that $\rF(\Omega)=\rEuc(X)=\rk(G)$, the real rank of $G$.
    Moreover, by \cref{lem:riem_QI_hilbert}, $(\Omega,\hil)$ and $(\Omega,D)$ are quasi-isometric via the identity map, i.e. there exist $L\ge0,K\ge1$ such that $id:(\Omega,\hil)\to(\Omega,D)$ is a $(K,L)$-quasi-isometry (\cref{defn:qiso}).
    Let $m\ge\rk(G)+1$. By \cref{thm:slim_simplex_above_pes_rank}, there exists $\delta_0>0$ such that any compact $m$-simplex $S\subset\Omega$ is $\delta_0$-slim with respect to $\hil$. So $S$ is $(K\delta_0+L)$-slim \wrt\ $D$.
\end{proof}

\subsection{Characterization of slimness using the boundary}

We now state a boundary characterization of slimness of simplices.  To state the results, we recall that a set $E \subset \partial\Omega$ is called an open face of  $\Omega$ if there exists $e \in \partial\Omega$ such that $E=F_{\Omega}(e)$. Any such open face $E$ is itself a properly convex domain in $\Pb(\Span E)$ and thus $\rF(E)$ is well-defined. We relate the slimness of simplices to the supremum of $\rF(E)$ over all open faces $E$ of $\Omega$.
\begin{proposition}
\label{prop:boundary_char_of_slimness}
Let $\Omega$ be a quasi-homogeneous \pcd\ and $1\le m\le d-1$. Then, the following are equivalent.
\begin{enumerate}
    \item $\rF(\Omega) \le m$.
    \item There exists a positive constant $\delta=\delta(\Omega)$ such that every compact $k$-simplex in  $\Omega$ with $k\ge m+1$  is $\delta$-slim.
    \item If $E\subset\partial\Omega$ is any open face of $\Omega$, then $\rF(E) \le m-1$.
\end{enumerate}    
\end{proposition}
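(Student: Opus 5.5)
(1)$\Leftrightarrow$(2) is exactly \cref{prop: duality between properly embedded simplices and slim simplices}, so the work is to show (1)$\Leftrightarrow$(3). I would prove (1)$\Rightarrow$(3) by a cone construction, and (3)$\Rightarrow$(1) by taking a facet of a PES, using \cref{lemma:faces_of_PES}.

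\textbf{(3)$\Rightarrow$(1).} Suppose $S$ is a $k$-dimensional PES in $\Omega$ with $k\ge 2$. Pick $u\in\relint(F^0(S))$ and set $E:=F_\Omega(u)$. By \cref{lemma:faces_of_PES}(3), $F^0(S)$ is a $(k-1)$-dimensional PES in $E$, so $\rF(E)\ge k-1$. Hypothesis (3) then gives $k-1\le m-1$, i.e. $k\le m$. If $k=1$, then $k\le m$ trivially. Hence $\rF(\Omega)\le m$. No quasi-homogeneity is needed in this direction.

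\textbf{(1)$\Rightarrow$(3).} Let $E=F_\Omega(e)$ be an open boundary face containing a $j$-dimensional PES $T=\CH_\Omega(w_1,\dots,w_{j+1})$ of $E$, where $w_i\in\partial E$. I want to show $j\le m-1$. Pick $p\in\Omega$ and consider a sequence $q_n\in\relint(T)$, and try to build a $(j+1)$-dimensional PES in $\Omega$. A direct cone $\CH_\Omega(p,T)$ is not a PES, since $p\in\Omega$. So I would instead use quasi-homogeneity, as in the proof of \cref{prop: duality between properly embedded simplices and slim simplices}:
\begin{enumerate}
\item Take points $x_n\in(p,q)$ for some $q\in\relint(T)$, with $x_n\to q$, and consider the simplices $S_n:=\CH_\Omega(p,T)$.
\item Translate by $g_n\in\Aut(\Omega)$ so that $g_nx_n\in K$, and pass to a limit in the local Hausdorff topology (\cref{cor: compactness of simplices in loc hausdorff top}, which applies since $S_n$ is a simplex with vertices in $\overline\Omega$).
\end{enumerate}
In the limit, the vertex $g_np$ should escape to $\partial\Omega$ because $\hil(x_n,p)\to\infty$. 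The facets $g_nT$ stay in $\partial\Omega$ since $T\subset E\subset\partial\Omega$, and the remaining facets $\CH_\Omega(p,F^i(T))$ lie at distance tending to infinity from $x_n$ because $x_n\to q\in\relint(T)$ while $F^i(T)\subset\partial E$. This last point uses \cref{fact: lower semicontinuity of extended distance}: a bounded distance would force $q$ into a face of $\partial E$.

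\textbf{Main obstacle.} The limit $S=\CH_\Omega(v^0,\dots,v^{j+1})$ then has all facets in $\partial\Omega$ and meets $\Omega$. Arguing exactly as at the end of \cref{prop: duality between properly embedded simplices and slim simplices} via \cref{cor: degenerate simplices are the union of all facets}, $S$ is non-degenerate, so $S$ is a $(j+1)$-PES and $j+1\le m$. The hard part is making the facet-escape estimate uniform, namely showing $\hil\big(x_n,\CH_\Omega(p,F^i(T))\big)\to\infty$ and $\hil(x_n,T)$-type facets behave. I would first try to prove the divergence by contradiction with \cref{cor:tracking_sets_using_points} applied before translating. If that fails, I would choose $x_n$ on $(p,q)$ more carefully, for instance with $q$ the center of mass of $T$, so that the distance to $\partial E$ controls everything.
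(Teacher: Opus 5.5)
Your proof is correct. Your (3)$\Rightarrow$(1) is exactly the paper's argument, but for the converse you take a different route.

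\textbf{The paper's route.} The paper proves (2)$\Rightarrow$(3), not (1)$\Rightarrow$(3). Over an $m$-dimensional PES $S$ of $E$ it forms the same cone $S'=\CH_\Omega(v_0,\dots,v_m,x)$ with $x\in\Omega$. It then pushes the boundary vertices into $\Omega$ to get compact $(m+1)$-simplices $S'_n\to S'$. Finally it shows these cannot be uniformly $\alpha$-slim: slimness together with \cref{fact: lower semicontinuity of extended distance} would put $S=F^{m+1}(S')$ inside $\bigcup_j F_\Omega(F^j(S))\subset\partial E$, contradicting $\relint(S)\subset E$. That step uses no automorphisms, so (2)$\Rightarrow$(3) holds for every properly convex domain. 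Quasi-homogeneity enters only via \cref{prop: duality between properly embedded simplices and slim simplices}, which the paper's route gets to reuse.

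\textbf{Your route.} You rerun the Benoist-type rescaling along $(p,q)$ and extract an actual $(j+1)$-dimensional PES in $\Omega$. This bypasses slimness entirely. It also gives the sharper statement that, in a quasi-homogeneous domain, a $j$-dimensional PES in a boundary face forces a $(j+1)$-dimensional PES in $\Omega$. The cost is a second compactness argument and a direct use of quasi-homogeneity at this step. That use is unavoidable for (1)$\Rightarrow$(3): a disc cut by a chord has $\rF=1$ but has a boundary face with $\rF=1$.

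\textbf{The step you flag is not an obstacle.} It needs no special choice of $q$ or $x_n$.
\begin{itemize}
\item Since $[p,z)\subset\Omega$ for $z\in\overline{\Omega}$, one has $\CH_\Omega(p,F^i(T))\cap\partial\Omega=F^i(T)\subset\partial E$.
\item Suppose $\hil\big(x_n,\CH_\Omega(p,F^i(T))\cap\Omega\big)$ stayed bounded along a subsequence. Then \cref{cor:tracking_sets_using_points}, applied to this fixed closed set, produces a point of it in $F_\Omega(q)=E\subset\partial\Omega$. That would be a point of $F^i(T)\cap E$, which is empty.
\item Since the $g_n$ are isometries, this divergence transfers to $g_nx_n$ and $g_n\CH_\Omega(p,F^i(T))$. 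That rules out $F^i(S_\infty)\cap\Omega\neq\emptyset$ exactly as in the proof of \cref{prop: duality between properly embedded simplices and slim simplices}.
\item The facet $g_nT$ needs no estimate, since it lies in the closed set $\partial\Omega$.
\end{itemize}
Also, your $S_n=\CH_\Omega(p,T)$ does not depend on $n$; the varying simplices are $g_n\CH_\Omega(p,T)$.
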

\begin{proof}
    The equivalence $(1) \iff (2)$ is \cref{prop: duality between properly embedded simplices and slim simplices}.

    \noindent $(3) \implies (1)$:
    Assume by contradiction that $(1)$ does not hold. Hence, there exists an $(m+1)$-dimensional PES in $\Omega$ that we label $S$. Pick $u_0 \in \relint(F^0(S))$. By \cref{lemma:faces_of_PES}(1), $F^0(S) \subset \partial \Omega$ and hence $u_0 \in \partial \Omega$. Thus $F_{\Omega}(u_0) \subset \bdry$ is a open face  of $\Omega$ in the boundary.  By \cref{lemma:faces_of_PES}(3), $F^0(S)$ is a $m$-dimensional PES in $F_{\Omega}(u_0)$. Thus $\rF(F_{\Omega}(u_0)) \geq m$, which contradicts (3).

    \noindent $(2)\implies(3):$ Assume that $E\subset\partial\Omega$ is an open face of $\Omega$ such that $\rF(E) > m-1$. Let $S \subset \overline{E}$ be an $m$-dimensional PES in $E$ with  $\vrtx(S)=(v_0,\dots,v_m)\subset \overline{E} \subset \partial\Omega$. Fix any point $x\in\Omega$. Then set $v_{m+1}:=x$ and $S'\coloneqq\CH_\Omega(v_0,\dots,v_m,v_{m+1})$.  Then $F^{m+1}(S')=S \subset \partial \Omega$ and for all $j=0,\dots,m$, 
    \begin{align}
    \label{eqn:facets_of_S'}
        F^j(S') \cap \partial \Omega=\CH_{\Omega}(\{v_0,\dots,\wh{v}_j,\dots,v_m\})=F^j(S).
    \end{align}
    
    As $x \in S' \cap \Omega$, \cref{lem:relint_dichotomy} implies that  $\relint(S')\subset \Omega$. 
    For $i\in\{0,\dots,m\}$, pick a sequence $(v_n^i)_{n\in\mathbb{N}}$ that converges to $v_i$. Then set $v_n^{m+1}:=x$ for all $n\in \Nb$ and consider the sequence of compact $(m+1)$-simplices in $\Omega$ given by $S_n'\coloneqq\CH_\Omega(\{v_n^0,\dots,v_n^m,v_n^{m+1}\})$. By \cref{lem: convergence of simplices}, $S_n' \to S'\cap \Omega$ as $n\to\infty$, in the \lht. 
    Moreover, for each $i =0,\dots,m+1$,
    \begin{align}
    \label{eqn:bdry_char_lim_of_faces_S'}
        \lim_{n\to\infty}F^i(S'_n) = F^i(S'), 
    \end{align}
    as closed subsets of $\Pb(\Rb^d)$, and hence of $\overline{\Omega}$.

    We claim that there exists a sequence $\seq{k_n}$ with $k_n \to \infty$ such that, for $n$ sufficiently large, $S_n'$ is not $k_n$-slim -- thus  contradicting the hypothesis in $(2)$. So, to finish the proof, it suffices to prove the above claim. To prove the claim, we assume by contradiction that there is $\alpha\geq 0$ such that $S_n'$ is $\alpha$-slim for any $n\in\mathbb{N}$. 
    We now show that this implies, 
    \begin{align*}
        S \subset \bigcup_{j=0}^m F_{\Omega}(F^j(S)).
    \end{align*}
     Indeed, let $s' \in S$. It is a limit point for a sequence of points $\seq{s_n'}$ in $F^{m+1}(S_n') \subset \Omega$. By $\alpha$-slimness of $\seq{S'_n}$, we can find a fixed sequence $t_n' \in \bigcup_{i=0}^mF^i(S'_n)$ such that $\hil(s_n',t_n') <\alpha$. Up to passing to a subsequence, we can assume that there is a fixed  $j \in\{0,\dots,m\}$ such that each $t_n' \in F^j(S_n')$, and that $t_n' \to t' \in \overline{\Omega}$. Then  \cref{eqn:bdry_char_lim_of_faces_S'} implies that $t' \in F^j(S')$. Thus,   by  \cref{fact: lower semicontinuity of extended distance}, $s' \in F_{\Omega}(t')$. As $s' \in \partial \Omega$, $t' \in F^j(S')\cap \partial \Omega=F^j(S)$ (see \cref{eqn:facets_of_S'}). As $s' \in S$ is arbitrary, we have that 
     $S \subset \bigcup_{j=0}^mF_{\Omega}(F^j(S)).$

    Now we arrive at a contradiction. As $S$ is a $m$-dimensional PES in the relatively open set $E$, $\relint(S) \subset E$. \cref{lemma:faces_of_PES}(4) implies that $ \cup_{j=0}^m F_{\Omega}(F^j(S)) \subset \partial E.$ Thus  $S \subset \partial E$. As $m \geq 1$, $E$ is not a point. Thus, $S \subset \partial E$ implies that $\relint(S) \cap E =\emptyset$, a contradiction.
\end{proof}

 \subsubsection{Proof of \cref{cor:benoist_hyperbolicity_result}}\label{sec:proof_of_benoist_corollary}
 Here we prove that our \cref{prop:boundary_char_of_slimness} recovers a classical result of Benoist on Gromov hyperbolicity of divisible properly convex domains -- \cref{cor:benoist_hyperbolicity_result}. 

But first we need a technical lemma.
\begin{lemma}\label{lem: slimness induced in higher dim}
    Let $\delta\geq 0$ and $m\geq 2$. Suppose $\Omega \subset \RP$ is a properly convex domain such that all compact  $m$-simplices in $\Omega$ are $\delta$-slim. Then any compact $k$-simplex in $\Omega$ with $k>m$ is also $\delta$-slim.
\end{lemma}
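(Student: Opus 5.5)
We argue by induction on $k$, so it suffices to treat $k=m+1$. Let $S$ be a compact $(m+1)$-simplex with $\vrtx(S)=(v_0,\dots,v_{m+1})$. Fix $i$ and a point $p\in F^i(S)$; we must find $j\neq i$ with $\hil(p,F^j(S))<\delta$. After relabelling we may assume $i=m+1$, so $p\in T:=\CH_\Omega(v_0,\dots,v_m)$. If $S$ is degenerate it is $0$-slim by \cref{lem: facets of degenerate simplices}, so we may assume $S$ is non-degenerate, although the argument below does not really need this. The strategy is to find a compact $m$-simplex $R\subset S$ with two properties: $v_{m+1}$ is a vertex of $R$, and $p$ lies in the facet of $R$ opposite $v_{m+1}$. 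We also want every other facet of $R$ to be contained in some facet $F^j(S)$ with $j\le m$. Then $\delta$-slimness of $R$ finishes the proof: $p$ is within $\delta$ of some facet of $R$ other than the one opposite $v_{m+1}$, and that facet lies inside $F^j(S)$ for some $j\neq m+1$.

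\emph{Construction of $R$.} We will produce points $w_1,\dots,w_m\in T$ satisfying two conditions:
\begin{enumerate}
\item[(i)] $p\in\CH_\Omega(w_1,\dots,w_m)$;
\item[(ii)] for every $l$, the set $\{w_1,\dots,w_m\}\setminus\{w_l\}$ lies in $\CH_\Omega(v_0,\dots,\wh{v}_j,\dots,v_m)$ for some $j\le m$.
\end{enumerate}
We then set $R:=\CH_\Omega(w_1,\dots,w_m,v_{m+1})$. This is a compact $m$-simplex, since all its vertices lie in $S\subset\Omega$. Its facet opposite $v_{m+1}$ contains $p$ by (i). By (ii), every other facet of $R$ is contained in $\CH_\Omega(v_{m+1},v_0,\dots,\wh{v}_j,\dots,v_m)=F^j(S)$ for some $j\le m$.

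The $w_l$ are built by recursion on the number of vertices, via the following claim. For a compact $q$-simplex $\CH_\Omega(u_0,\dots,u_q)$ with $q\ge1$ and any point $x$ in it, there are points $w_1,\dots,w_q$ in it with $x\in\CH_\Omega(w_1,\dots,w_q)$, such that each $(q-1)$-subset of the $w$'s lies in some facet $\CH_\Omega(u_0,\dots,\wh{u}_j,\dots,u_q)$.

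For $q=1$ we take $w_1:=x$; the subset condition is vacuous, since the empty set lies in any facet. For $q\ge2$ we first deal with the case $x=u_0$: then $x$ already lies in the facet $\CH_\Omega(u_0,\dots,u_{q-1})$, we apply the $(q-1)$ case there, and we add one more point equal to $u_0$. Otherwise the projective ray from $u_0$ through $x$ exits at a point $x'\in\CH_\Omega(u_1,\dots,u_q)$ with $x\in[u_0,x']$. We apply the recursion to $x'$ in the $(q-1)$-simplex $\CH_\Omega(u_1,\dots,u_q)$, obtaining $w_2,\dots,w_q$, and set $w_1:=u_0$. Then $x\in\CH_\Omega(w_1,\dots,w_q)$. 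The subset omitting $w_1$ lies in $\CH_\Omega(u_1,\dots,u_q)$. A subset omitting some $w_l$ with $l\ge2$ consists of $u_0$ together with a subset lying in some $\CH_\Omega(u_1,\dots,\wh{u}_j,\dots,u_q)$; hence it lies in $\CH_\Omega(u_0,\dots,\wh{u}_j,\dots,u_q)$. Applying the claim with $q=m$ to $x=p\in T$ gives (i) and (ii).

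The main obstacle is exactly this construction of $R$. The naive choices, such as a facet of $S$, or a repeated-vertex simplex that would be degenerate and hence only trivially $0$-slim, do not place $p$ on a facet while keeping all the other facets inside facets of $S$. The recursive ``peel off one vertex at a time'' choice is designed to resolve precisely this. One small point to check is that when some $w_l$ coincide or the $w_l$ are affinely dependent, $R$ is still a compact $m$-simplex in the paper's sense, since degenerate simplices are allowed, so the hypothesis still applies to it. Finally, the induction step from $k$ to $k+1$ is the same argument with $m$ replaced by $k$, using the inductive hypothesis that all compact $k$-simplices are $\delta$-slim.
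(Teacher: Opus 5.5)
Your proof is correct, and it takes a genuinely different route from the paper's.

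\textbf{The paper's route.} The paper also inducts, but it applies the inductive hypothesis to the facet $F^0(S)$ itself. Since $F^0(S)$ is $\delta$-slim, \cref{prop: center of a simplex} (the KKM-based upgrade from slimness to a non-empty centroid) gives one point $p\in\relint(F^0(S))$ that is $\delta$-close to every facet $F^{\{0,j\}}(S)$ of $F^0(S)$. Then $F^0(S)=\bigcup_j\CH_\Omega(p,F^{\{0,j\}}(S))$. Convexity of $\Nc_\delta(F^{\{0,j\}}(S))$ (\cref{cor: convexity of neighborhoods}, i.e.\ the maximum principle for $\hil$) places each cone inside $\Nc_\delta(F^{\{0,j\}}(S))\subset\Nc_\delta(F^j(S))$.

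\textbf{Your route.} You never produce a centroid. Instead you work point by point: your ``peeling'' recursion builds, for each $p$ in a facet, an auxiliary compact $m$-simplex $R$ through the opposite vertex whose other facets lie inside facets of $S$. You then apply the hypothesis to $R$ directly. The recursion is sound, including with repeated or affinely dependent points. The cleanest way to get $x'$ is the barycentric splitting $x=t_0u_0+(1-t_0)x'$ with $x'\in\CH_\Omega(u_1,\dots,u_q)$, valid whenever $x\neq u_0$.

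\textbf{What each buys.} Your argument is more elementary and more general. It uses only affine convex-hull combinatorics and the monotonicity $A\subset B\Rightarrow\Nc_\delta(A)\subset\Nc_\delta(B)$. It needs no KKM/Brouwer and no property of $\hil$, so it holds verbatim for any metric on $\Omega$. It also covers $\delta=0$ and degenerate simplices with no case distinction. The paper's argument is shorter given the machinery already in place, and it yields a stronger conclusion: a $\delta$-slim compact simplex lies in the $\delta$-neighbourhood of its own boundary. So $F^0(S)$ is already covered by neighbourhoods of the codimension-two faces $F^{\{0,j\}}(S)$, without using the vertex $v_0$. Your $R$-based argument only puts $p$ near faces containing the opposite vertex.
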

\begin{proof}
    Let $m\ge2$ be such that all compact $m$-simplices in $\Omega$ are $\delta$-slim. We will prove by induction on $k\in\mathbb{N}\cup\{0\}$ that, all compact $(m+k)$-simplices in $\Omega$ are $\delta$-slim.
    The base step of the induction with $k=0$ is immediate. For the induction hypothesis, let us assume that the statement holds for some $k\in\mathbb{N} \cup\{0\}$. We will prove that it holds for $k+1$. To do so, let $S$ be any compact $(k+m+1)$-simplex.  We have to show that for any $i\in\{0,\dots,k+m+1\}$, 
    \begin{equation*}
        F^i(S)\subset\bigcup_{j\neq i}\ngh{F^j(S)}{\delta}.
    \end{equation*}
    It suffices to do it for $i=0$ since the other cases are similar. Since $F^0(S)$ is a $(m+k)$-simplex, it is $\delta$-slim by the inductive hypothesis. By \cref{prop: center of a simplex} there exists $p\in\relint(F^0(S))$ such that 
    \begin{equation*}
        p\in\bigcap_{j=1}^{k+m+1}\ngh{F^j(F^0(S))}{\delta}=\bigcap_{j=1}^{k+m+1} \ngh{F^{\{j,0\}}(S)}{\delta}.
    \end{equation*}
    Now note that as $p\in \relint(F^0(S))$, $F^0(S)=\bigcup_{j=1}^{k+m+1}\CH_\Omega(p,F^{\{j,0\}}(S))$.
   Since $\ngh{F^{\{j,0\}}(S))}{\delta}$ is convex for each $j$, we have
    \begin{equation*}
    F^0(S)\subset\bigcup_{j=1}^{k+m+1}\ngh{F^{\{j,0\}}(S)}{\delta} \subset \bigcup_{j=1}^{k+m+1}\ngh{F^{j}(S)}{\delta}.
    \end{equation*}
    This finishes the proof. 
\end{proof}

Now we finish the proof of \cref{cor:benoist_hyperbolicity_result}.

    \noindent(1)$\implies$(3): Assume that $\rF(\Omega)=1$. Then, \cref{prop: duality between properly embedded simplices and slim simplices}((1)$\implies$(3)) implies that for any face $E\subset\partial\Omega$ of $\Omega$ it holds that $\rF(E)=0$. This implies that there are no non-trivial segments in $\partial \Omega$.
    
    \noindent(3)$\implies$(2): Assume that $\partial\Omega$ contains no non-trivial segments. In this case, the space $(\Omega,\hil)$ is uniquely geodesic (see \cref{prop: uniquely geodesic}). 
    In particular, all geodesics triangles in $(\Omega,\hil)$ are compact 2-simplices. Therefore, \cref{prop: duality between properly embedded simplices and slim simplices}((3)$\implies$(2)), implies that (2) holds.

    \noindent (2)$\implies$(1): Assume that $(\Omega,\hil)$ is Gromov hyperbolic. Then, by definition, all geodesic triangles are $\delta$-slim for some $\delta\ge0$. As a consequence, by \cref{lem: slimness induced in higher dim}, all $(k+1)$-simplices in $\Omega$ are $\delta$-slim, for any $k\in\mathbb{N}$.
    Then \cref{prop: duality between properly embedded simplices and slim simplices}((2)$\implies$(1)) implies that $\rF(\Omega)=1$. \qed

\subsection{Slim simplices have a non-empty centroid}

\label{sec:slim_simplices_non_empty_centroid}

Recall the definition of coarse centroid from \cref{defn:coarse_centroid}. In \cref{prop: center of a simplex} below, we will upgrade the $\delta$-slimness of a $k$-simplex to the non-emptiness of its coarse centroid. The proof that we present here relies crucially on an application of the KKM Lemma (\cref{lem:KKM}). One may also interpret \cref{prop: center of a simplex} as a converse to \cref{lem:non-empty centroid implies slimness}.

\begin{proposition}\label{prop: center of a simplex}
    Suppose $\Omega\subseteq\RP$ is a properly convex domain, $\delta \geq 0$, and $k\geq 2$. If $S$ is a compact $k$-simplex in $\Omega$ that is $\delta$-slim, then 
    \begin{equation*}
       \bigcap_{i=0}^k\ngh{F^i(S)}{\delta} \neq \emptyset.
    \end{equation*}
    \end{proposition}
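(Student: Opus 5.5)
We split according to whether $S$ is degenerate and, in the non-degenerate case, apply the KKM Lemma (\cref{lem:KKM}) after a cyclic relabelling of the facets, as in the proof of \cref{lem: non-n-slim simplices}.

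\emph{Degenerate case.} If $S$ is degenerate, \cref{cor: degenerate simplices centroid} gives $\bigcap_{i=0}^k F^i(S)\neq\emptyset$. Since $F^i(S)\subset\Nc_\delta(F^i(S))$ (for $\delta=0$ by our convention), the claim follows. So from now on $S$ is non-degenerate. Fix an affine chart in which $S$ is a Euclidean $k$-simplex $\Hull(\{v_0,\dots,v_k\})$ inside $\Pb(\Span S)$.

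\emph{Shrinking $\delta$.} If $\delta=0$, \cref{lem:non_deg_simplices_not_0_slim} says a non-degenerate simplex with $k\geq 2$ is not $0$-slim, so this case cannot occur. Assume $\delta>0$. The function $x\mapsto \min_{j\neq i}\hil(x,F^j(S))$ is continuous and, by slimness, strictly less than $\delta$ on the compact set $F^i(S)$. Hence its maximum over $F^i(S)$ is some $\delta_i<\delta$. Put $\delta':=\max_i\delta_i<\delta$, so $S$ is covered in the required way by the closed sets
\[
C_i:=\{x\in S:\hil(x,F^i(S))\le\delta'\},
\]
which are closed in the chart and satisfy $C_i\subset\Nc_\delta(F^i(S))$. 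It therefore suffices to show $\bigcap_i C_i\neq\emptyset$. Each $C_i$ is convex by \cref{cor: convexity of neighborhoods} and contains $F^i(S)$.

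\emph{Covering $S$.} Let $p\in S$. If $p=v_0$, then $p\in F^1(S)\subset C_1$. Otherwise let $q$ be the point where the ray from $v_0$ through $p$ meets $F^0(S)$. By the $\delta'$-slimness of $F^0(S)$, $q\in C_j$ for some $j\neq 0$. Also $v_0\in F^j(S)\subset C_j$. Convexity of $C_j$ then gives $[v_0,q]\subset C_j$, so $p\in C_j$. Thus $S=\bigcup_i C_i$.

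\emph{KKM.} Set $D_i:=C_{i+1 \bmod (k+1)}$. For a proper non-empty $I\subset\{0,\dots,k\}$ there is some $i\in I$ with $i+1\notin I$ (indices mod $k+1$). Then $\Hull(\{v_l:l\in I\})\subset F^{i+1}(S)\subset D_i$. For $I=\{0,\dots,k\}$ the covering step gives the condition. \cref{lem:KKM} yields a point in $\bigcap_i D_i=\bigcap_i C_i\subset\bigcap_i\Nc_\delta(F^i(S))$.

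I expect the main obstacle to be the covering of the interior of $S$. Slimness only controls the facets, and the cone argument from $v_0$ relies on the convexity of the neighbourhoods, i.e.\ on the maximum principle (\cref{lem: maximum principle}). The passage from open to closed neighbourhoods is handled by the compactness step that produces $\delta'<\delta$.
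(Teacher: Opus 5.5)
Your proof is correct. It shares the paper's ingredients: the degenerate and $\delta=0$ reduction, shrinking $\delta$ by compactness, and the KKM Lemma (\cref{lem:KKM}) with a cyclic shift of indices. The difference is that you apply KKM to the whole $k$-simplex $S$, while the paper applies it only to the facet $F^0(S)$.

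The paper treats $F^0(S)$ as a $(k-1)$-simplex with vertices $v_1,\dots,v_k$. It uses the $k$ closed sets $U_i=\{x\in F^0(S):\hil(x,F^i(S))\le\delta-\varepsilon\}$, $i=1,\dots,k$, shifted cyclically. The KKM condition for the full index set is then exactly the $i=0$ instance of $\delta$-slimness. The conditions for proper index sets hold because $\CH_\Omega(v_i: i\in I)\subset F^{j+1}(S)$ whenever $j+1\notin I$.

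So the step you flagged as the main obstacle is not needed in the paper's route. That step is covering $\relint(S)$ by coning from $v_0$, using convexity of the closed neighbourhoods, which comes from \cref{lem: maximum principle}. The paper's point lies in $F^0(S)$, so it is at distance $0$ from that facet and within $\delta-\varepsilon$ of the others, with no convexity of metric neighbourhoods used.

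Your version costs one extra, correct step. It is natural given that you modelled it on \cref{lem: non-n-slim simplices}. Like the paper's argument, it uses slimness only at the single facet $F^0(S)$, so you could take $\delta'=\delta_0$.

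One small justification point: \cref{cor: convexity of neighborhoods} is stated for $\overline{\Nc_{\delta'}(F)}\cap\Omega$. Convexity of your $C_i$ follows most directly from \cref{lem: maximum principle} applied on $[a,b]$. Alternatively, note that the two sets coincide because $\hil$ is geodesic.
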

\begin{proof}
First, we will dispense with two exceptional cases -- when either $\delta=0$ or $S$ is a degenerate $k$-simplex. In the former case, $S$ is $0$-slim and hence it is a degenerate $k$-simplex in $\Omega$ -- see  \cref{cor:degenerate_iff_0_slim}.  Then, Corollary \ref{cor: degenerate simplices centroid}  implies that in both of these exceptional cases, $\bigcap_{i=0}^k\ngh{F^i(S)}{0}=\bigcap_{i=0}^k F^i(S)\neq \emptyset$ and we are done. 

    So, from now on, we will assume that $S$ is a non-degenerate $k$-simplex and $\delta>0$. We will prove that  there is a point $p\in S$ such that 
    \begin{equation}
    \label{eqn: existence of centroid}
         \max\limits_{i=0,\dots,k}\hil(p,F^i(S)) < \delta.
     \end{equation}
    Since $S$ is compact, it is contained in $\Omega$.  So it suffices to find a point in $F^0(S)$ satisfying Equation \eqref{eqn: existence of centroid}. 
    Since $S$ is $\delta$-slim, the facet $F^0(S)$ is contained in the $\delta$-neighborhood of the union of the other facets. Hence there exists a sufficiently small $\varepsilon>0$ such that the family $\{U_1, \dots, U_k\}$ defined by 

    $$U_i \coloneqq \{x \in F^0(S) \mid \hil(x, F^i(S))\le\delta-\varepsilon\}$$
    is a closed cover of $F^0(S)$. Notice that to conclude the proof, it suffices to show that the intersection of all $U_i$ is non-empty.

    We will prove the non-triviality of this intersection by applying \cref{lem:KKM}. First, since $\Omega$ is properly convex, we can put it in an affine chart so that $F^0(S)$ becomes an Euclidean simplex in an affine space of dimension $(k-1)$. Moreover, the vertices of $F^0(S)$ span this $(k-1)$-dimensional affine space as $S$ is a non-degenerate simplex. Hence we can apply \cref{lem:KKM} to $F^0(S)$. 
    
    But the cover $U_i$ defined above is not well adapted for the application of the KKM Lemma as $v_i\not\in U_i$ in general. So we define a new cover of $F^0(S)$ by cyclically shifting the indices of $U_i$: consider the cover $V_1, \dots, V_k$ of $F^0(S)$ where  
    $
    V_k \coloneqq U_1\quad\text{and}\quad V_i \coloneqq U_{i+1}
    $
    for $i=1,\dots,k-1$.    
    Let us now verify that $\{V_1, \dots, V_k\}$ satisfies the condition in Lemma \ref{lem:KKM}. Let $I \subseteq \{1,\dots,k\}$ be a subset of indices.
    If $I=\{1,\dots,k\}$, then 
    $$
    \CH_\Omega(\{v_i\mid i\in I\})=F^0(S)=\bigcup\limits_{i\in I}V_i.
    $$
    If $I \subsetneq \{1,\dots,k\}$, there exists an index $j \in I$ such that $j+1 \not\in I$ (modulo $k$).
    Hence, the vertices $\{v_i \mid i \in I\}$ do not include $v_{j+1}$. This implies $\CH_\Omega(v_i\mid i\in I)\subseteq U_{j+1}$ and thus
    $$
    \CH_\Omega(\{v_i\mid i\in I\})\subseteq V_j\subseteq\bigcup\limits_{i\in I}V_i.
    $$
    Thus \cref{lem:KKM} implies that $\bigcap_{i=1}^kV_i=\bigcap_{i=1}^k U_i\neq \emptyset.$ 
\end{proof}

\subsubsection{Proof of \cref{thm:slimness_gives_centroid}} 
\label{sec:proof_of_slimness_gives_centroid}

We obtain \cref{thm:slimness_gives_centroid} as a corollary of \cref{thm:slim_simplex_above_pes_rank} and \cref{prop: center of a simplex}. 
Indeed, let $\delta_0>0$ be as in \cref{thm:slim_simplex_above_pes_rank} and set $\delta'_0:=2\delta_0$. Let $x_0,\dots,x_m \in \Omega$ and consider the compact $m$-simplex in $\Omega$ given by  $S:=\CH_{\Omega}(x_0,\dots,x_m).$ As $m\geq \rF(\Omega)+1$, \cref{thm:slim_simplex_above_pes_rank} implies that $S$ is $\delta_0$-slim. Then, by \cref{prop: center of a simplex}, $C_{\delta_0}(S)=\cap_{i=0}^m \ngh{F^i(S)}{\delta_0} \neq \emptyset$. Now $\delta\geq \delta'_0$ implies that $\frac{\delta}{2} \geq \delta_0$. Then $C_{\frac{\delta}{2}}(S) \supset C_{\delta_0}(S)$. Thus $C_{\frac{\delta}{2}}(x_0,\dots,x_m)=C_{\frac{\delta}{2}}(S) \neq \emptyset$.\qed

\section{Genericity of points and bounds for coarse centroids}
\label{sec: generic_pts_and_diam_bounds}

\subsection{A notion of generic points for the Hilbert metric}

Recall the notion of genericity from \cref{def: generic_points_in_Hilbert_new}.

\begin{remark}\label{rem:alg_vs_metric_genericity}
    In projective geometry, we often say that $(m+2)$-points $x_0,\dots,x_{m+1}$ are \emph{algebraically non-generic} if $\dim\Span\{x_0,\dots,x_{m+1}\}\le m$.
    In this paper, we say that a simplex with algebraically non-generic vertices is degenerate.
    Our notion of genericity is completely different from this algebraic notion of genericity. As we will see in the examples below, there are algebraically non-generic points that are generic in our sense and points that are non-generic in our sense, and that are algebraically generic. 
\end{remark}

\begin{observation}\label{rmk:nongen_implies_nonbound}
    Let $\Omega\subset\RP$ be a \pcd \ and $\delta,D>0$. Then, if $(x_0,\dots,x_{r+1})\in\Omega^{r+2}$ is $(\delta,D)$-non-generic, then $\diam(C_\delta(x_0,\dots,x_{r+1}))> D$.
    \end{observation}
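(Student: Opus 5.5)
The plan is a direct containment argument: unwind \cref{def: generic_points_in_Hilbert_new} and compare the set whose diameter exceeds $D$ with $C_\delta(x_0,\dots,x_{r+1})$ from \cref{defn:coarse_centroid}. The cardinality clause $|I_0|=\dots=|I_q|=r+1$ will not be needed; only the failure of the implication matters.

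First, non-genericity gives non-empty proper subsets $I_0,\dots,I_q\subset\{0,\dots,r+1\}$ with
\begin{equation*}
\bigcap_{i=0}^q I_i=\emptyset \quad\text{and}\quad \diam\Big(\bigcap_{i=0}^q \Nc_\delta\big(\CH_\Omega(x_l : l\in I_i)\big)\Big)>D,
\end{equation*}
and not all $|I_i|$ equal to $r+1$. Write $F^j:=\CH_\Omega(x_0,\dots,\wh{x}_j,\dots,x_{r+1})$, so that $C_\delta(x_0,\dots,x_{r+1})=\bigcap_{j=0}^{r+1}\Nc_\delta(F^j)$.

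Next, fix $j\in\{0,\dots,r+1\}$. Since $\bigcap_i I_i=\emptyset$, there is an index $i(j)$ with $j\notin I_{i(j)}$. Then $I_{i(j)}\subset\{0,\dots,r+1\}\setminus\{j\}$. Convex hulls are monotone in the generating set, so $\CH_\Omega(x_l : l\in I_{i(j)})\subset F^j$. Taking $\delta$-neighborhoods preserves inclusion, hence
\begin{equation*}
\Nc_\delta\big(\CH_\Omega(x_l : l\in I_{i(j)})\big)\subset\Nc_\delta(F^j).
\end{equation*}
This also holds for $\delta=0$ under the paper's convention $\Nc_0(A)=A$.

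Finally, intersect over all $i$ and use the previous step once for each $j$:
\begin{equation*}
\bigcap_{i=0}^q \Nc_\delta\big(\CH_\Omega(x_l : l\in I_i)\big)\subset\bigcap_{j=0}^{r+1}\Nc_\delta\big(\CH_\Omega(x_l : l\in I_{i(j)})\big)\subset\bigcap_{j=0}^{r+1}\Nc_\delta(F^j)=C_\delta(x_0,\dots,x_{r+1}).
\end{equation*}
Diameter is monotone under inclusion, so $\diam(C_\delta(x_0,\dots,x_{r+1}))>D$.

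There is no real obstacle here. The only point to be careful about is that the argument uses just two facts: each $I_i$ is proper and avoids some index, and $\bigcap_i I_i=\emptyset$, which ensures every facet index $j$ is avoided by some $I_i$.
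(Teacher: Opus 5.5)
Your proposal is correct and follows the paper's proof essentially verbatim: for each $j$ you use $\bigcap_i I_i=\emptyset$ to pick an $I_{i(j)}$ avoiding $j$, deduce $\CH_\Omega(x_l : l\in I_{i(j)})\subset F^j$, and conclude that the intersection of neighborhoods is contained in $C_\delta(x_0,\dots,x_{r+1})$, so its diameter exceeds $D$. You are also right that the cardinality clause plays no role.
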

\begin{proof}
    As $(x_0,\dots,x_{r+1})$ is not $(\delta,D)$-generic, there exist $I_0,\dots,I_{q} \subset \{0,\dots,r+1\}$ non-empty proper subsets such that $\bigcap_{i=0}^{q}I_i=\emptyset$ and
         $$\diam\left(\bigcap_{i=0}^{q}\Nc_{\delta} \bigg( \CH_\Omega\left( x_{j} : j\in I_i\right) \bigg) \right) >D.$$
    Let $j \in \{0,\dots,r+1\}$. Since $\bigcap_{i=0}^{q}I_i=\emptyset$, there exists $h_j\in\{0,\dots,q\}$ such that $j\not\in I_{h_j}$. Hence $\CH_\Omega(x_l : l\in I_{h_j})\subset F^j$. Hence, 
    $\bigcap_{i=0}^{q}\Nc_{\delta} \bigg( \CH_\Omega\left( x_{j} : j\in I_i\right) \bigg)\subset C_\delta(x_0,\dots,x_{r+1}).$
    This finishes the proof.
\end{proof}

\subsection{Examples}

Before doing anything else with this notion of genericity, we discuss some examples first. For the rest of this section, $\Omega$ is always a properly convex domain in $\Pb(\Rb^d)$. 

\begin{example}[The case $m=1$]
\label{example: 3 points are generic}
    Any triple in a properly convex domain is $(\delta, D)$-generic for any $\delta > 0$ and sufficiently large $D > 0$. In fact, they are $(\delta, 2\delta)$-generic for any $\delta \ge 0$. 
    
    Fix a triple $(a_0,a_1,a_2)\in\Omega^3$. Let $I_1,\dots,I_q\subset\{1,2,3\}$ be non-empty proper subsets such that $\bigcap_{i=1}^q I_i=\emptyset$. Necessarily, there exists some index $j \in \{1, \dots, q\}$ for which $|I_j| = 1$. Consequently, 
    $$
    \operatorname{diam}\left(\bigcap_{i=1}^q \mathcal{N}_\delta\Big(\operatorname{CH}_\Omega(\{a_s \mid s \in I_i\})\Big)\right) \le \operatorname{diam}(\mathcal{N}_\delta(a_j)) \le 2\delta. 
    $$
\end{example}

\begin{example}[Comparison with algebraic non-genericity]
\label{example: alg_non_generic}
     To emphasize that our metric notion of genericity is independent of the algebraic one, we will see examples of tuples of points that are:
    \begin{itemize}
        \item algebraically non-generic, but are generic in our sense.
        \item algebraically non-generic, and are also non-generic in our sense.
    \end{itemize}

   We consider the projective model of  $\Hb^2$ --  the unit disc centered at the origin $o$ in some affine chart in $\Rb\Pb^2$.  Consider four points $x_0,\dots,x_3$, lying on the circle of radius $s$ around $o$, and labeled in the anti-clockwise direction. Clearly, these points are algebraically non-generic.
    
    Set $I_0 = \{0, 2\}$ and $I_1 = \{1, 3\}$. Notice that $I_0 \cap I_1 = \emptyset$. The convex hulls $W = \operatorname{CH}_{\mathbb{B}^2}(x_0, x_2)$ and $W = \operatorname{CH}_{\mathbb{B}^2}(x_1, x_3)$ form two intersecting line segments that cross at $o$. For any $\delta \ge 0$, the quantity 
    \begin{equation*}
        N_{V,W}(\delta) \coloneqq \operatorname{diam}\Big(\mathcal{N}_\delta(V) \cap \mathcal{N}_\delta(W)\Big)\le 2(s+\delta)
    \end{equation*}
    measures how closely $S\coloneqq \CH_{\mathbb{B}^2}(x_0,\dots,x_3)_{\mathbb{B}^2}$ resembles the 1-dimensional simplex $\CH_{\mathbb{B}^2}(x_0,x_2)$. 
    Let $\alpha_{V,W}$ be the angle between the segments $\overline{ox_0}$ and $\overline{ox_1}$.

    Then, for any $\delta,D\ge0$, we can move the points $x_0,\dots,x_3$ by varying the radius $s$ and the angle $\alpha_{V,W}$
    to produce both $(\delta,D)$-generic and $(\delta,D)$-non-generic configurations of points; \cref{fig: GenericityExample1} shows a $(\delta,D)$-generic tuple $(x_0,\dots,x_3)$, on the left, and a tuple $(y_0,\dots,y_3)$ that is not $(\delta,D)$-generic, on the right.
\end{example}

\begin{example}[Algebraically generic and generic in our sense; see \cref{fig:Generic4points}]
\begin{figure}[h]
        \centering
        \includegraphics[width=0.3\linewidth]{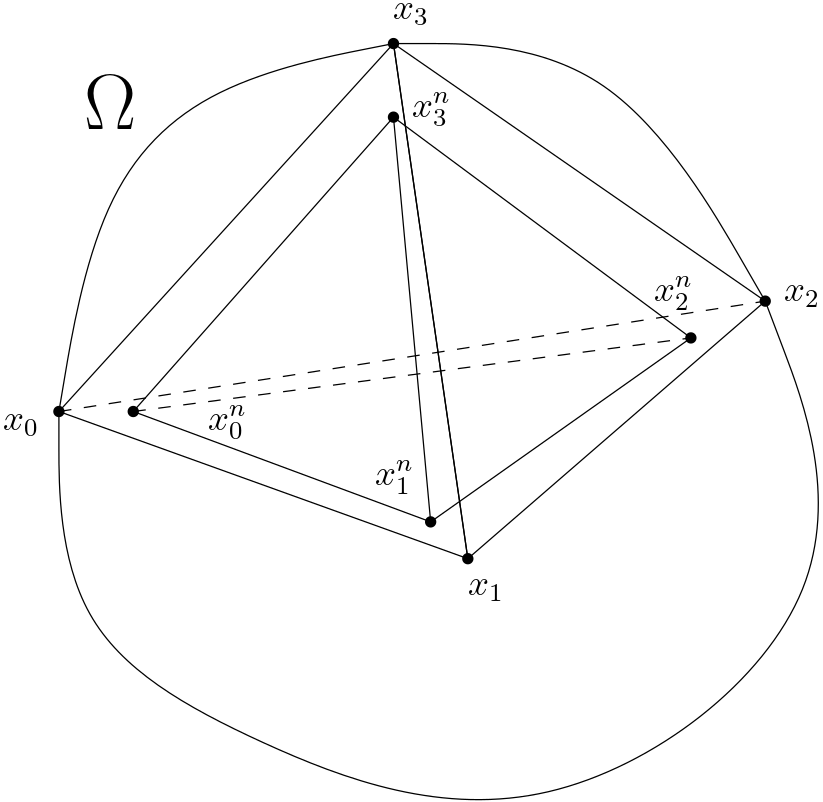}
        \caption{Non-degenerate tetrahedra with generic vertices.}
        \label{fig:Generic4points}
    \end{figure}
\label{ex:non_trivial_example_generic}
Let $\Omega\subset\RP$ be a \pcd and $r=\rF(\Omega)$. By definition, there exists an $r$-dimensional PES with vertices $x_0,\dots,x_r\in\partial\Omega$. Moreover, for any fixed $x_{r+1}\in\partial\Omega$ we have that 
\begin{equation}\label{eq:ex_4generic}
    \relint(\CH_\Omega(x_0,\dots,\wh{x_{i}},\dots,x_{r+1}))\subset\Omega.
\end{equation}
Let $\seq{x_0^n},\dots,\seq{x_{r+1}^n}$ be sequences in $\Omega$ such that $\lim_{n\to\infty}x_i^n=x_i$ for $i=0,\dots,r+1$. Set $F^i_n\coloneqq\CH_\Omega(x_0^n,\dots,\wh{x_i^n},\dots,x_{r+1}^n)$ and $F^i\coloneqq\CH_\Omega(x_0,\dots,\wh{x_i},\dots,x_{r+1})\cap\Omega$. Then, by \cref{lem: convergence of simplices}, $F^i_n\to F^i$.

We will show below that $(x_0^n,\dots,x^n_{r+1})$ is a generic $(r+2)$-tuple for all $n$ sufficiently large. 
Fix any $\delta>0$. Then, any sequence of points $\seq{p_n}$ such that $p_n\in C_\delta(x_0^n,\dots,x_{r+1}^n)$ converges to a point $p\in\bigcap_{i=0}^{r+1}\clngh{F^i}{\delta}$.

We claim that there exists $D\ge0$ such that $\diam(\bigcap_{i=0}^{r+1}\ngh{F^i}{\delta})\le D$. Indeed, if we project this intersection to $F^0$, we get that $\diam(\bigcap_{i=0}^{r+1}\clngh{F^i}{\delta})$ is at most 
$$\diam\{x\in F^0\mid \max_{i=1,\dots,r+1}\hil(x,F^i)\le2\delta\}.$$ Since $F^0$ is a PES, $\diam\{x\in F^0\mid \max_{i=1,\dots,r+1}\hil(x,F^i)\le2\delta\}$ is bounded by some $D$. Hence we have the claim.


Now choose any $D'>D$. We have that $\diam(C_\delta(x_0^n,\dots,x_{r+1}^n))\le D'$. By \cref{rmk:nongen_implies_nonbound}, we have that the tuple $(x_0^n,\dots,x_{r+1}^n)$ is $(\delta,D')$-generic for any sufficiently large $n$.
\end{example}

\begin{example}[Algebraically generic, but non-generic in our sense; see \cref{fig:NonGeneric4points}]
\label{example: alg_generic dim 4}
    On the other hand, there exist tuples of algebraically generic points that are non-generic in our sense. 
    Let $\Omega\subset\mathbb{P}(\mathbb{R}^4)$ be a 3-dimensional divisible properly convex domain. Fix $\delta>0$. If $\rF(\Omega)=2$, then the boundary $\partial\Omega$ contains non-trivial line segments. 
    In particular, we can consider two non-coplanar segments $s_1,s_2\subset\partial\Omega$ and two pairs of distinct points $p_1,q_1\in s_1$ and $p_2,q_2\in s_2$ such that $\operatorname{d}_{s_1}(p_1,q_1)<2\delta$ and $\operatorname{d}_{s_2}(p_2,q_2)<2\delta$. See \cref{fig:NonGeneric4points}.
    
    In this configuration, it is clear that $\diam(\ngh{(p_1,q_1)}{\delta}\cap\ngh{(p_2,q_2)}{\delta})=\infty$. Consequently, we can consider a sequence $(S_n)_n$ of tetrahedra whose vertices are algebraically generic, yet not $(\delta,n)$-generic.

    \begin{figure}[h]
        \centering
        \includegraphics[width=0.4\linewidth]{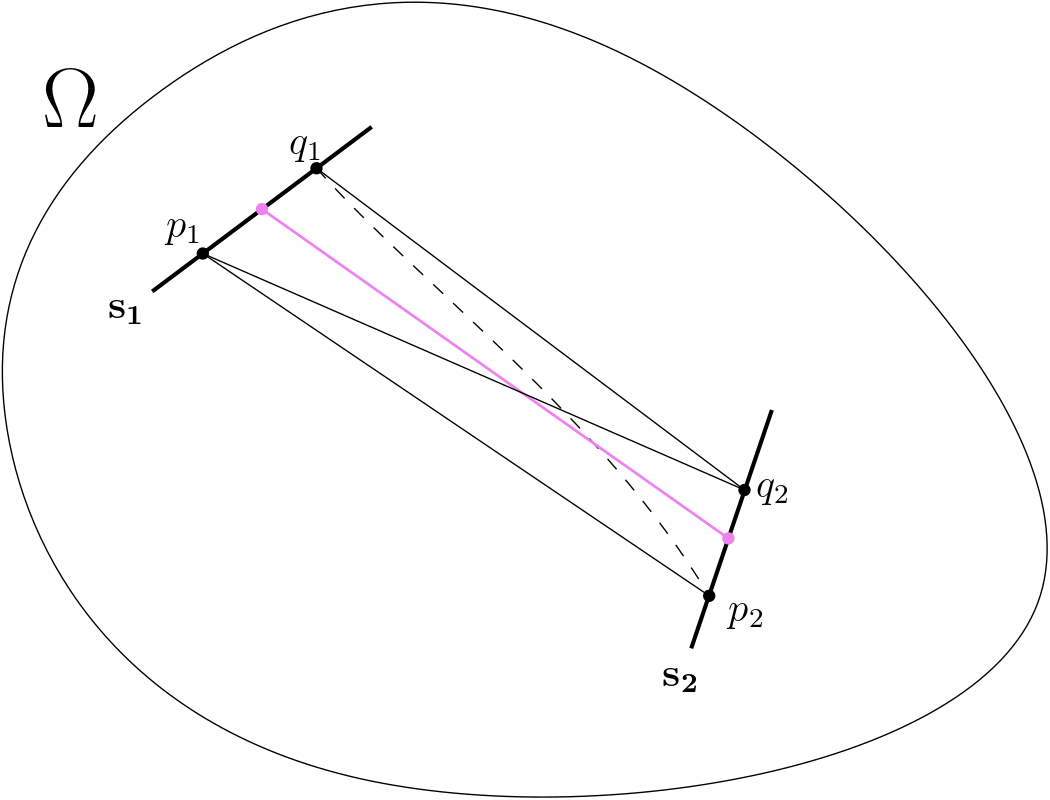}
        \caption{Non-degenerate tetrahedron with non-generic vertices.}
        \label{fig:NonGeneric4points}
    \end{figure}
\end{example}

    \begin{table}[h]
    \centering
    \renewcommand{\arraystretch}{1.5}
    \begin{tabular}{|c|c|c|}
        \hline

        \includegraphics[width=0.3\linewidth, valign=c, margin=0pt 10pt 0pt 10pt]{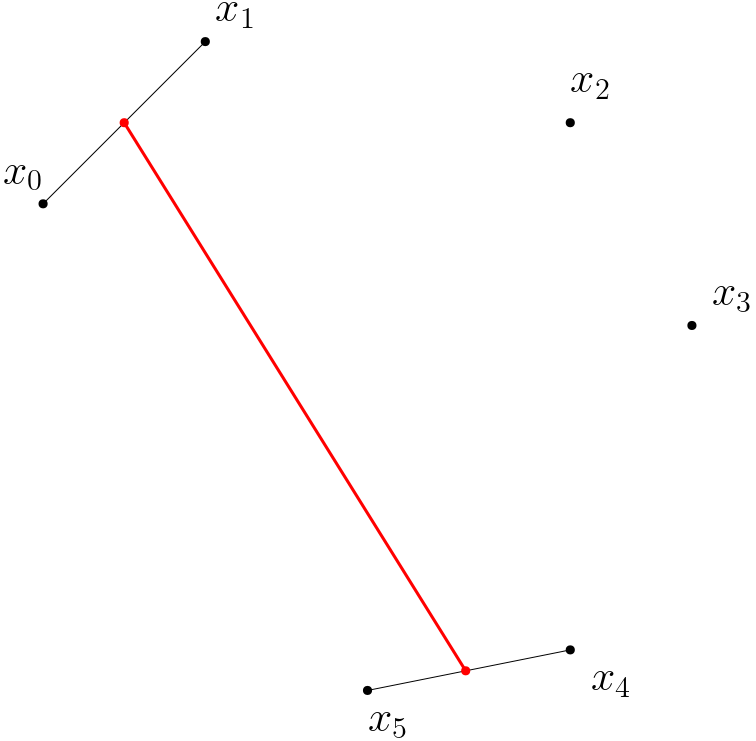} & 
        \includegraphics[width=0.3\linewidth, valign=c, margin=0pt 10pt 0pt 10pt]{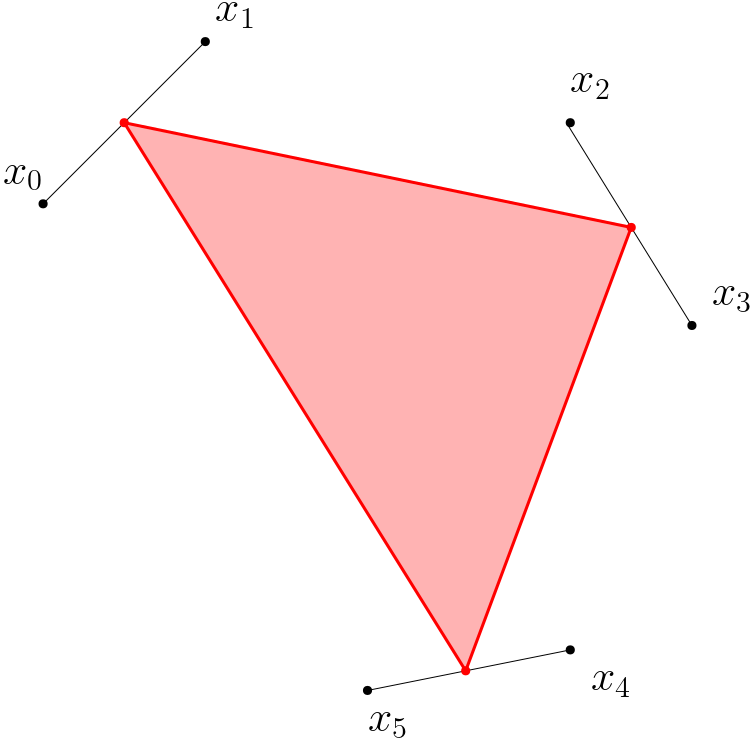} & 
        \includegraphics[width=0.3\linewidth, valign=c, margin=0pt 10pt 0pt 10pt]{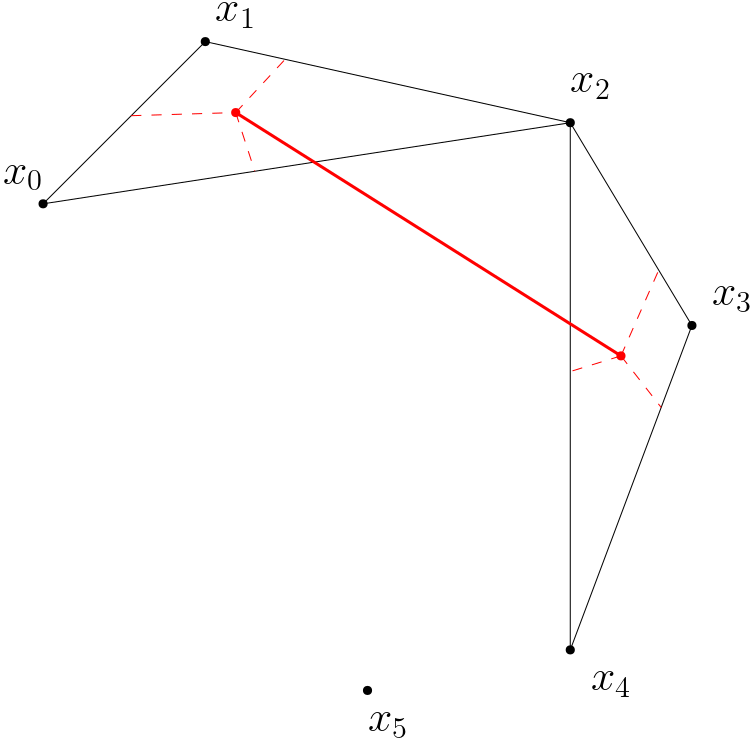} \\
       
        $(2|2|1|1)$ & $(2|2|2)$ & $(3|2|1)$ \\
        \hline
        
        \includegraphics[width=0.3\linewidth, valign=c, margin=0pt 10pt 0pt 10pt]{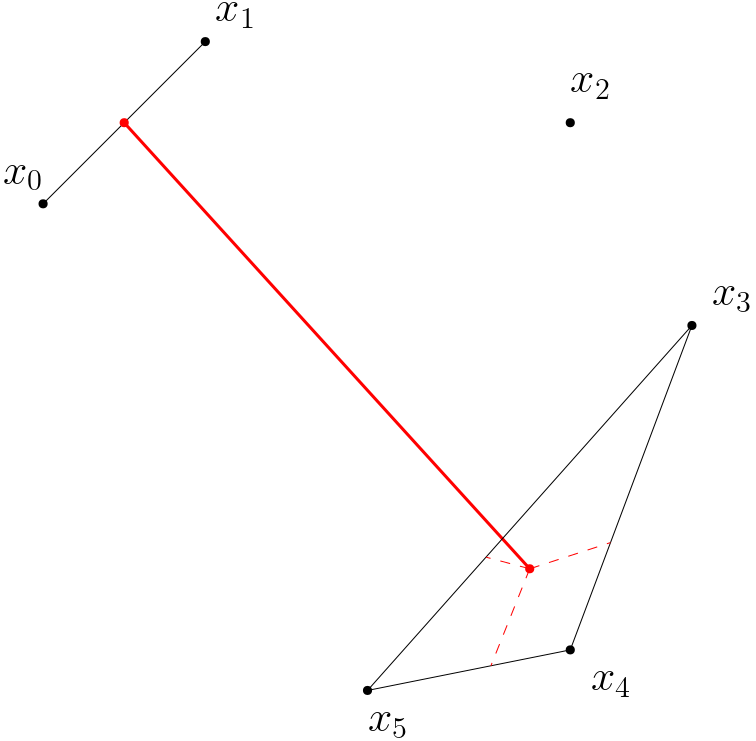} & 
        \includegraphics[width=0.3\linewidth, valign=c, margin=0pt 10pt 0pt 10pt]{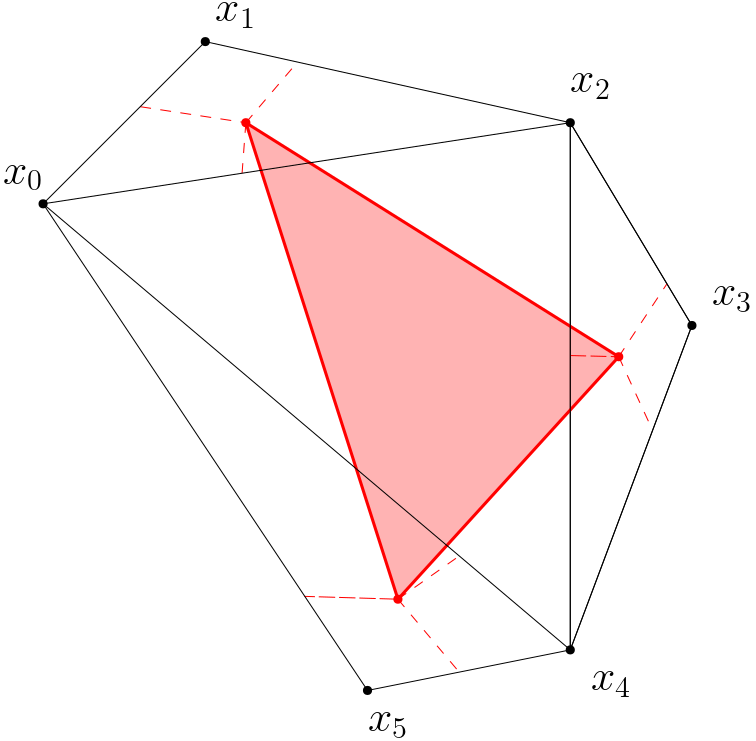} & 
        \includegraphics[width=0.3\linewidth, valign=c, margin=0pt 10pt 0pt 10pt]{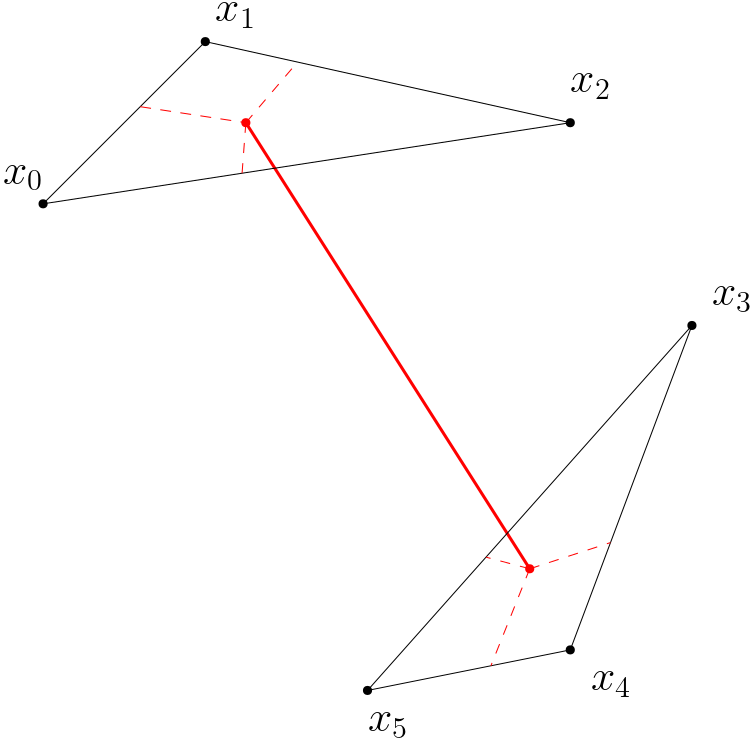} \\
        
        $(3|2|1)$ & $(3|2|1)$ & $(3|3)$ \\
        \hline
        
        \includegraphics[width=0.3\linewidth, valign=c, margin=0pt 10pt 0pt 10pt]{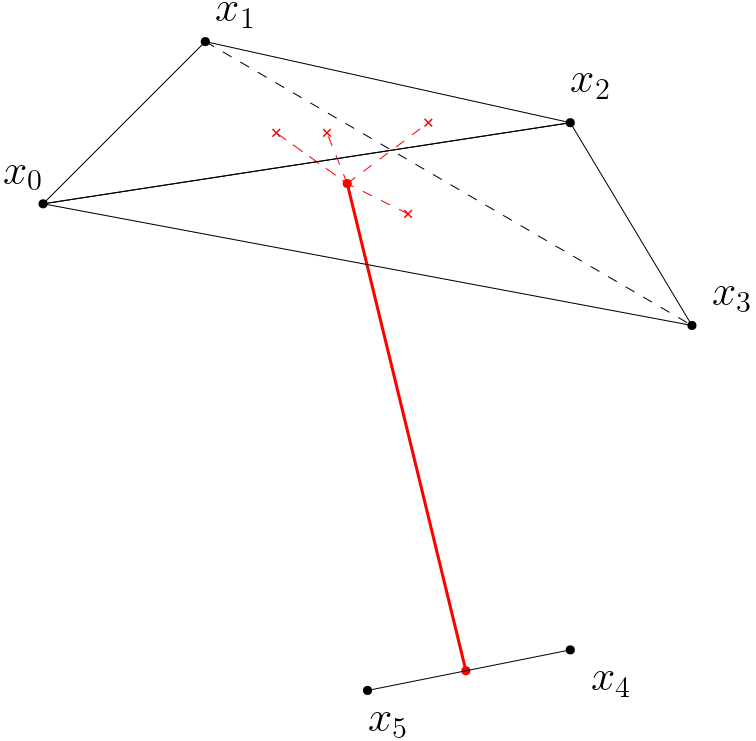} & 
        \includegraphics[width=0.3\linewidth, valign=c, margin=0pt 10pt 0pt 10pt]{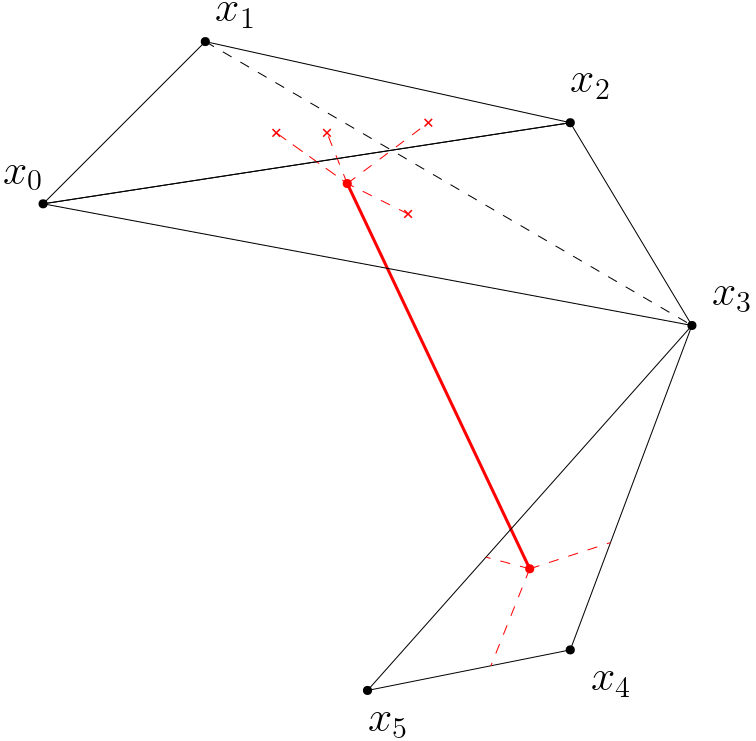} & 
        \includegraphics[width=0.3\linewidth, valign=c, margin=0pt 10pt 0pt 10pt]{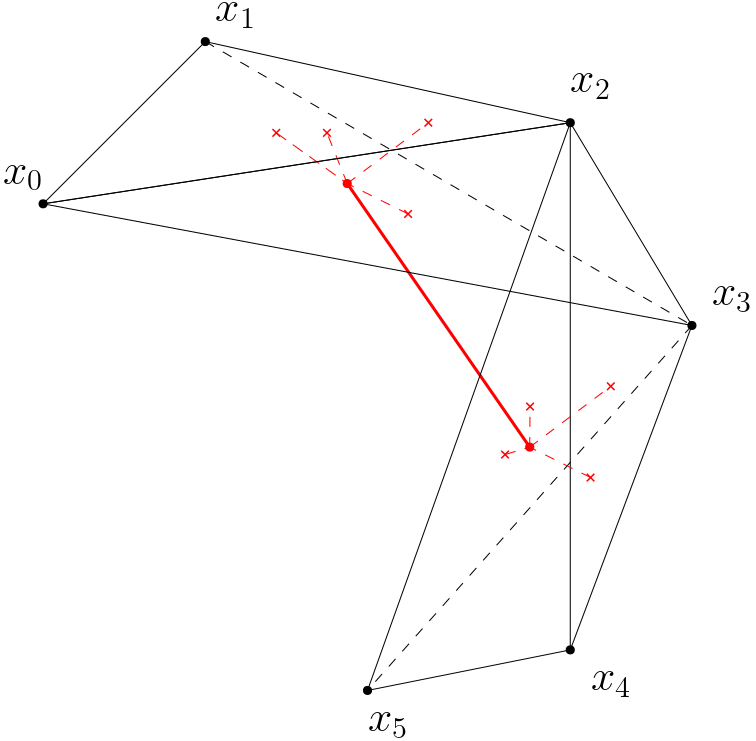} \\
        
        $(4|2)$ & $(4|2)$ & $(4|2)$ \\
        \hline
    \end{tabular}
    \caption{Possible configurations for the vertices of an ideal non-degenerate slim 5-simplex with non-generic vertices.}
    \label{tab:NonGenericIdeal6points}
\end{table}

\begin{example}[Refer to \cref{tab:NonGenericIdeal6points}]
\label{example: alg_generic dim 6}
    As the previous example shows, studying limiting ideal configurations provides good intuition for our genericity condition. We briefly outline the possible configurations for non-generic tuples of 6 ideal points that span a non-degenerate $\delta$-slim 5-simplex with infinite-diameter $\delta$-centroid, for some $\delta>0$.
    Note that the notions of a centroid and genericity naturally extend to simplices having facets whose relative interiors lie in $\Omega$.
    As we formalize in the proof of \cref{newlem: slim simplices have bounded centroid}, these configurations correspond to the integer partitions of 6 for which at least two parts are strictly greater than 1, i.e. the partitions $(2|2|1|1),(2|2|2),(3|2|1),(3|3)$, and $(4|2)$.
    See \cref{tab:NonGenericIdeal6points} for configurations associated to these partitions. Each picture represents the ideal faces of such a 5-simplex, highlighting a subset of the infinite-diameter centroid in red.

For a more detailed discussion, we take the case $(4|2)$. 
    This corresponds to a 5-simplex $S$ whose vertices are distributed between two faces in the boundary of $\Omega$. Because $\relint(S)  \subset \Omega$, these two boundary faces must be distinct. Furthermore, the $\delta$-slimness of $S$ forces the intersection of these boundary faces to have a codimension of at least 2 in each face. 
    These geometric constraints leave us with exactly three ways arrangements of the 6 vertices (see the last row of \cref{tab:NonGenericIdeal6points}): 
	\begin{itemize}
		\item 4 vertices on one face, and 2 vertices on the other -- hence the two faces are disjoint; 
    	\item 4 vertices on one face, and 3 vertices on the other -- hence the two faces intersect at one vertex; or 
    	\item the faces contain 4 vertices each -- hence the two faces intersect along an edge. 
	\end{itemize}
\end{example}

\subsection{Coarse centroid for slim triangles}

We first show that the coarse centroid of a slim triangle has uniformly bounded diameter. For $\delta$-hyperbolic spaces, it is a well-known result that `centers' of geodesic triangles have uniformly bounded diameter (see \cite[Part III.H, Theorem 1.17]{BridsonHaefliger}). We show that in the convex projective setting, the same is true for any $\delta$-slim triangle, although the metric $\hil$ may not be Gromov hyperbolic. The proof is analogous to the $\delta$-hyperbolic. But convexity plays a crucial role at the end of the proof, because of which we can get around the lack of global hyperbolicity.

\begin{lemma}\label{lem: slim triangles have bounded center}
    Let $\Omega\subseteq\RP$ be a properly convex domain and $\delta \geq0$. Suppose $T$ is a compact $2$-simplex in $\Omega$ that is $\delta$-slim. Then $\diam (C_\delta(T))\le6\delta.$
\end{lemma}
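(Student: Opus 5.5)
The plan is to mimic the classical argument that ``centers'' of triangles in hyperbolic spaces have bounded diameter, doing the bookkeeping with Gromov-product-type quantities along the three sides. Convexity of the Hilbert metric (\cref{lem: maximum principle} and \cref{cor: convexity of neighborhoods}) should then let us avoid any appeal to global hyperbolicity.

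\textbf{Setup.} Let $\vrtx(T)=(a,b,c)$, so the facets are the projective segments $[b,c]$, $[a,c]$ and $[a,b]$. If $C_\delta(T)=\emptyset$ there is nothing to prove. Otherwise fix $p,q\in C_\delta(T)$. For $p$ choose $p_c\in[a,b]$, $p_a\in[b,c]$ and $p_b\in[a,c]$, each at $\hil$-distance $<\delta$ from $p$; these three points are then pairwise within $2\delta$. Choose $q_c,q_a,q_b$ for $q$ in the same way. Since projective segments are $\hil$-geodesics, distances add along each side. For instance, $\hil(a,p_c)+\hil(p_c,b)=\hil(a,b)$.

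\textbf{Locating the foot points.} Set $x=\hil(a,p_c)$, $y=\hil(b,p_a)$ and $z=\hil(c,p_b)$. The triangle inequality gives
\[
\hil(a,b)=x+y+e_1,\qquad \hil(b,c)=y+z+e_2,\qquad \hil(c,a)=z+x+e_3,
\]
where, for example, $e_1=\hil(b,p_c)-\hil(b,p_a)$, so each $|e_i|\le 2\delta$. Solving this linear system expresses $x$ as the Gromov product $(b\mid c)_a$ up to an error $\tfrac12(e_2-e_1-e_3)$. The same holds for the analogous quantity $x'=\hil(a,q_c)$. Both $p_c$ and $q_c$ lie on the geodesic segment $[a,b]$, so $\hil(p_c,q_c)=|x-x'|$, and hence
\[
\hil(p,q)\le 2\delta+|x-x'|.
\]

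\textbf{Main obstacle: getting the constant $6\delta$.} The naive error estimate gives $|x-x'|\le 6\delta$, hence only $\hil(p,q)\le 8\delta$, so the bookkeeping has to be sharpened. My first attempt uses the freedom in choosing foot points. Because $C_\delta(T)$ is convex and $\hil(\cdot,[a,b])$ obeys the maximum principle, one may assume without loss of generality that $q_c$ lies between $p_c$ and $b$ on $[a,b]$. Then apply \cref{lem: maximum principle} to the segment $[p_c,b]$ and the convex set $[p_a,b]\subset[b,c]$. The endpoint distances are at most $2\delta$ (at $p_c$) and $0$ (at $b$), so $[p_c,b]\subset\overline{\Nc_{2\delta}([p_a,b])}$. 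Thus $q_c$ is within $2\delta$ of a point $w\in[p_a,b]$.

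This ordering information makes the signs of the error terms $e_i$ for $p$ and $q$ correlate, rather than adding in the worst case. Comparing $\hil(b,w)$ with $\hil(b,q_a)$, and $\hil(b,q_c)$ with $\hil(b,p_c)$, should bound $\hil(p_c,q_c)$ by $4\delta$. Together with $\hil(p,p_c),\hil(q,q_c)<\delta$, this gives $\diam C_\delta(T)\le 6\delta$.

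If the ordering trick does not yield exactly $4\delta$, I would next replace the nearest-point choices by projecting $p$ and $q$ onto the same side. I would then use convexity of $\Nc_\delta([b,c])\cap\Nc_\delta([a,c])$, via \cref{cor: convexity of neighborhoods}, along the segment $[p,q]$ to control the remaining term.
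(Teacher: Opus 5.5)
There is a genuine gap, at exactly the step you flag as the main obstacle. The bound $\hil(p_c,q_c)\le 4\delta$ is only asserted (``should bound''), and the mechanism you propose cannot produce it.

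Every inequality that step uses remains true if $q$ is at or near $b$. These are: $q_c$ lies between $p_c$ and $b$; $\hil(q_c,w)\le 2\delta$ for some $w\in[p_a,b]$; and the comparisons of $\hil(b,w)$, $\hil(b,q_a)$ and $\hil(b,q_c)$. For example, take $q=q_c=q_a=w=b$. Then $\hil(p_c,q_c)=\hil(p_c,b)$, which can be arbitrarily large. The only thing keeping $q$ away from $b$ is $\hil(q,[a,c])<\delta$, i.e.\ the foot $q_b$, and your step never uses it.

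Adding the full $e_i$-bookkeeping does not rescue a fixed side either. Write $X=x'-x$, $Y=y'-y$, $Z=z'-z$, with primes for $q$. The bounds $|e_i|,|e_i'|<2\delta$ give $|X+Y|,|Y+Z|,|Z+X|<4\delta$. These are all compatible with $X$ arbitrarily close to $6\delta$ and $Y,Z$ close to $-2\delta$. In that configuration $q_a\in[p_a,b]$, so your maximum-principle conclusion already holds with $w=q_a$ and adds nothing. Hence, on the side $[a,b]$ alone, triangle-inequality bookkeeping cannot beat $\hil(p,q)<8\delta$.

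The missing ingredient is that $T$ is two-dimensional, and this is what the paper's proof uses. Put $a,p_c,q_c,b$ in this order on $[a,b]$. The segment $[p_c,p_a]$ cuts $T$ into $\CH_\Omega(p_c,b,p_a)$, which contains $q_c$, and $\CH_\Omega(a,p_c,p_a,c)$, which contains $q_b$. So $[p_c,p_a]$ and $[q_c,q_b]$ meet at some $e\in T$. Since projective segments are geodesics,
$\hil(p_c,q_c)\le\hil(p_c,e)+\hil(e,q_c)\le\hil(p_c,p_a)+\hil(q_c,q_b)<4\delta$, and therefore $\hil(p,q)<6\delta$.

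Alternatively, your own bookkeeping finishes the proof if you do not commit to one side in advance. Suppose $|X|,|Y|,|Z|$ were all at least $2\delta$. Then two of $X,Y,Z$ share a sign, and their sum violates one of the three bounds above. Since $|X|=\hil(p_c,q_c)$, $|Y|=\hil(p_a,q_a)$ and $|Z|=\hil(p_b,q_b)$, routing through the best side gives $\hil(p,q)<4\delta$. This uses no convexity at all and even improves the constant.

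Your fallback, projecting onto one side and using convexity of $\Nc_\delta([b,c])\cap\Nc_\delta([a,c])$ along $[p,q]$, is too vague to assess as it stands.
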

\begin{proof}
    Let $\vrtx(T)=(v_0,v_1,v_2)$ and let  $p,q\in C_\delta(T)$. Fix $i\in\{0,1,2\}$. Denote by $F^i$ the facet of $T$ opposite to $v_i$. By definition of $\delta$-centroid,  $\hil(p,F^i)<\delta$ and $\hil(q,F^i)<\delta$,. Then pick $p_i,q_i\in F^i$ so that $\hil(p,p_i)<\delta$ and $\hil(q,q_i)<\delta$. 
    By triangle inequality
    $$
        \hil(p,q)\le\min_{i=0,1,2} \left( \hil(p,p_i)+\hil(p_i,q_i)+\hil(q_i,q) \right) \leq 2\delta+\min_{i=0,1,2} \hil(p_i,q_i).
    $$
    Since we are free to relabel the vertices $v_i$, it suffices to show that $\hil(p_0,q_0)\le 4\delta$. Up to switching the labels $v_1$ and $v_2$, we can assume that $\hil(p_0,v_1)\le\hil(q_0,v_1)$. Then, by convexity, the projective segments $[p_0,p_1]$ and $[q_0,q_2]$ must intersect at some point in $T$; see \cref{fig:FigureProofSlimTriangle}. Call this point $e$. 
    Since $\hil(p_0,p_1)\le2\delta$ and $\hil(q_0,q_2)\le2\delta$, we get $\hil(p_0,q_0)\le\hil(p_0,e)+\hil(e,q_0)\le4\delta$.
\end{proof}
\begin{figure}[h]
    \centering
    \includegraphics[scale=0.12]{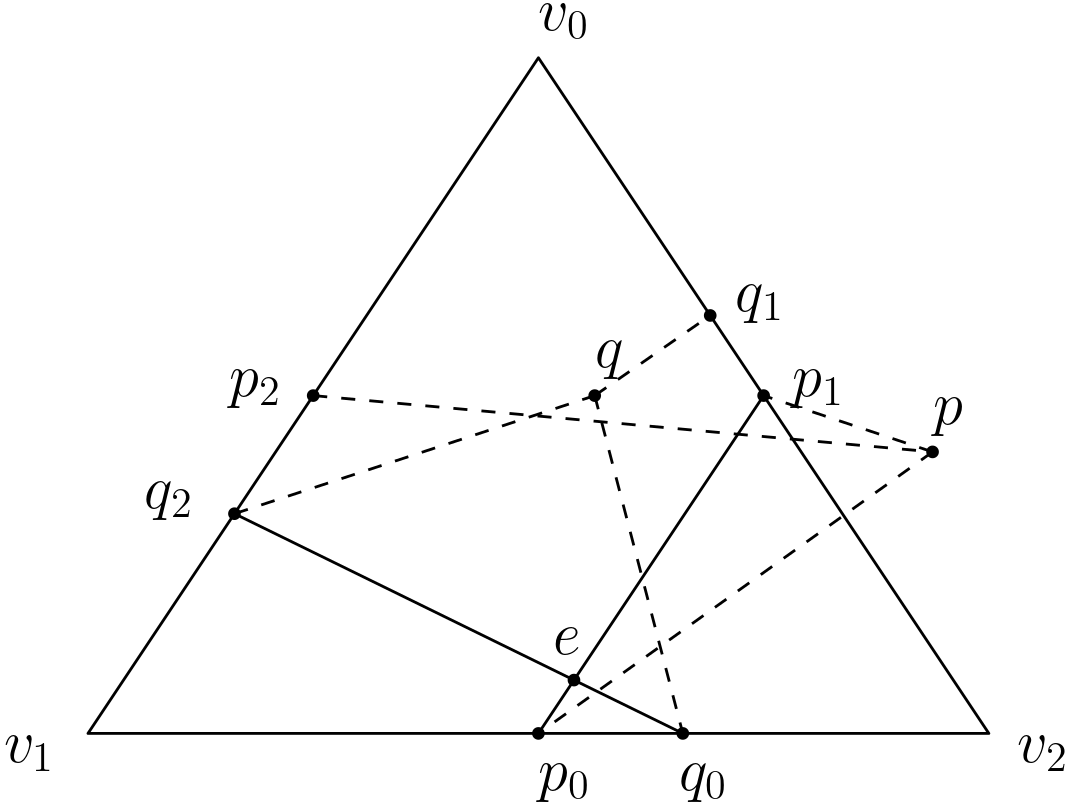}
    \caption{Centroid of a slim triangle.}
    \label{fig:FigureProofSlimTriangle}
\end{figure}
\begin{remark}
    We remark that in the proof above, it is the existence of the intersection point $e:=[p_0,p_1]\cap[q_0,q_2]$ (see \cref{fig:FigureProofSlimTriangle}) that relies on convexity and does not need to hold for a general Gromov hyperbolic metric space. It is this convexity that lets us get around the lack of global hyperbolicity. 
\end{remark}

\subsection{Centroids of generic points are coarsely unique}

We will now prove that any $\delta$-slim simplex, either its vertices are non-generic, or its $\delta$-centroid has uniformly bounded diameter. Fix $\delta,D>0$ and $k\in\mathbb{N}$. Then, we denote the set of $(\delta,D)$-generic $(r+2)$-tuples as 
\begin{equation*}
    \Omega^{(k+1)}_{(\delta,D)}\coloneqq\{(x_0,\dots,x_{r+1})\in\Omega^{r+2}\mid (x_0,\dots,x_{r+1}) \text{ is a $(\delta,D)$-generic tuple}\}.
\end{equation*}

\begin{remark}\label{rmk:existence_generic_tuples_pcd}
    The set $\Omega^{(k+1)}_{(\delta,D)}$ may be empty in general. However, if $D$ is chosen sufficiently large, this is not the case as we will now explain.  First let us discuss the ``boring" example: if $D\ge2\delta$, then for any $o\in\Omega$, the $(k+1)$-tuple $(o,\dots,o)\in\Omega^{(k+1)}_{(\delta,D)}$. 
    
    But the really interesting examples show up in  \cref{ex:non_trivial_example_generic}. There, we generate lots of $(\delta,D)$-generic $(\rF(\Omega)+2)$ tuples when $D$ is sufficiently large. 
\end{remark}

\begin{lemma}\label{newlem: slim simplices have bounded centroid}
    Suppose $\Omega \subset \RP$ is a quasi-homogeneous properly convex domain.  Fix $\delta, D >0$ and $k\geq 2$.  Then there exists $R=R(\delta,D,k)$ such that for any compact $k$-simplex $S$ in $\Omega$ with $\vrtx(S) \in \Omega^{(k+1)}_{(2\delta,D)}$, 
    $$\diam(C_{\delta}(S)) < R.$$

\end{lemma}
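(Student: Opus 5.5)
We argue by contradiction, using the same re-centering scheme as in the proof of \cref{prop: duality between properly embedded simplices and slim simplices}. Suppose no such $R$ exists. Then there are compact $k$-simplices $S_n$ with $\vrtx(S_n)=(v^0_n,\dots,v^k_n)\in\Omega^{(k+1)}_{(2\delta,D)}$ and points $p_n,q_n\in C_\delta(S_n)$ with $\hil(p_n,q_n)\to\infty$. Since $C_\delta(S_n)$ is convex (\cref{obs: convexity of centroids}), the whole segment $[p_n,q_n]$ lies in $C_\delta(S_n)$. We choose the midpoint $m_n$ of $[p_n,q_n]$ and translate by $\Aut(\Omega)$ so that $m_n\in K$, the compact fundamental set. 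After passing to a subsequence we may assume $v^i_n\to v^i\in\overline{\Omega}$, $m_n\to m\in\Omega$, and $p_n\to p$, $q_n\to q$ with $p,q\in\partial\Omega$. The limit line $\ell=(p,q)$ passes through $m$, so $\ell\subset\Omega$ is a bi-infinite projective geodesic.

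Next we record how $\ell$ sits relative to the limiting faces. For each $i$, every point of $[p_n,q_n]$ lies within $\delta$ of $F^i(S_n)$. Consider the limiting closed simplex $S=\CH_\Omega(v^0,\dots,v^k)$ and its faces $F^I:=\CH_\Omega(v^j:j\notin I)$. By \cref{cor:tracking_sets_using_points} and \cref{lem: convergence of simplices}, each point $x\in\ell$ satisfies $\hil(x,F^i\cap\Omega)\le\delta$. At the endpoints $p$ and $q$, \cref{fact: lower semicontinuity of extended distance} gives $F_\Omega(p)\cap F^i\neq\emptyset$ and $F_\Omega(q)\cap F^i\neq\emptyset$ for every $i$.

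The combinatorial step extracts index sets witnessing non-genericity. Let $J_p:=\{j: v^j\in \overline{F_\Omega(p)}\}$ and similarly $J_q$. The aim is to find non-empty proper subsets $I_0,\dots,I_s$ with empty common intersection and at least one $|I_a|\le k-1$, such that $\ell$ (or a long subsegment of it) stays within $\delta$ of each $\CH_\Omega(v^j:j\in I_a)\cap\Omega$. Pulling back to large $n$ and allowing the slack $2\delta$ then yields
\[
\diam\Big(\bigcap_a\Nc_{2\delta}\big(\CH_\Omega(v^j_n:j\in I_a)\big)\Big)>D,
\]
which contradicts $(2\delta,D)$-genericity. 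The natural candidate is to take the sets $I_a$ to be the minimal vertex sets $I$ with $\hil(x,\CH(v^j:j\in I))\le\delta$ along $\ell$. Since every facet works, these sets intersect trivially: each index is missing from some facet. To make them proper and of size at most $k-1$, we apply the slimness/degeneration analysis at the ends. Near $p$, the points of $\ell$ are close to facets only through vertices lying in the face of $p$, so the relevant convex hull can be shrunk to $\CH(v^j:j\in J_p\cap(\{0,\dots,k\}\setminus\{i\}))$, and similarly near $q$. This matches the partition picture of \cref{example: alg_generic dim 6}, where the vertices split among boundary faces.

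The main obstacle is this last step: producing a genuinely smaller index set (of size $<k$) along a subsegment of diameter $>D$ with neighborhood constant $2\delta$ rather than something larger. My first attempt would use \cref{fact: maximum principle for extended distance} to show that points of $\ell$ far toward $p$ are within $\delta+\epsilon$ of $\CH_\Omega(v^j: j\in J_p\setminus\{i\})$. Then the sets $I_a=J_p\setminus\{i\}$ for $i\in J_p$, together with $\{0,\dots,k\}\setminus\{i\}$ for $i\notin J_p$, would serve. This requires $|J_p|\le k$, that is, $p$'s face does not contain all vertices, which holds because $\relint(S)\subset\Omega$ meets $\ell$. If $J_p$ is empty or trivial, I would instead use $J_q$.
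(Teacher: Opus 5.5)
Your setup coincides with the paper's: contradiction, convexity of $C_\delta(S_n)$, re-centering at the midpoints, the limit line $\ell=(p,q)\subset\Omega$, and the vertex sets $J_p,J_q$. However, the decisive step fails as you propose it.

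Since $\overline{F_\Omega(p)}$ is convex, $\CH_\Omega(v^j: j\in J_p\setminus\{i\})\subset \overline{F_\Omega(p)}\subset\partial\Omega$. So by \cref{cor:tracking_sets_using_points}, the $\hil$-distance from any compact piece of $\ell$ to the approximating hulls $\CH_\Omega(v^j_n: j\in J_p\setminus\{i\})$ tends to $\infty$. No segment of length $>D$ in the region your limit controls is within $\delta+\epsilon$, or $2\delta$, of them.

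Appealing to points ``far toward $p$'' does not rescue this. Such points of $[p_n,q_n]$ leave every compact set and collapse onto $p$ in your limit, so the limit configuration says nothing about them; you would have to re-center there and would get a different limit. Moreover, $d_{F_\Omega(p)}$-closeness inside the face gives no upper bound on $\hil$, because \cref{fact: lower semicontinuity of extended distance} goes the other way.

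The underlying reason is that the tool that lets you track all of $\ell$ is \cref{fact: maximum principle for extended distance}. It needs a comparison segment $(a',b')$ with $a'\in F_\Omega(p)$ and $b'\in F_\Omega(q)$. A hull spanned by vertices indexed in $J_p$ alone never meets $F_\Omega(q)$, since $\overline{F_\Omega(p)}\cap F_\Omega(q)=\emptyset$ when $(p,q)\subset\Omega$. Switching to $J_q$ has the same defect.

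The missing idea is to use \emph{mixed} index sets. Write $I=J_p$ and $J=J_q$.

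First, one shows via \cref{lem:relint_and_faces} that $S\cap\overline{F_\Omega(p)}=S_I:=\CH_\Omega(v^j:j\in I)$. Hence for each $i\in I$, the set $F^i\cap F_\Omega(p)=S_{I\setminus\{i\}}\cap F_\Omega(p)$ contains a point $a'$ with $d_{F_\Omega(p)}(p,a')\le\delta$, and symmetrically at $q$.

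Next, using $F_\Omega(p)\cap\overline{F_\Omega(q)}=\emptyset$, one shows $|I\setminus J|\ge 2$ and $|J\setminus I|\ge 2$. Then for $i\in I\setminus J$ and $j\in J\setminus I$, the set $I_{ij}=(I\cup J)\setminus\{i,j\}$ is non-empty and has at most $k-1$ elements. Its hull contains $S_{I\setminus\{i\}}\cup S_{J\setminus\{j\}}$, hence a segment $[a',b']$ as above, so the maximum principle puts all of $\ell$ in the closed $\delta$-neighborhood of $S_{I_{ij}}\cap\Omega$.

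Adding the sets $(I\cup J)\setminus\{m\}$ for $m\in I\cap J$, treated the same way, gives a family with empty common intersection. Pulling back to $S_n$ with slack $\delta+\epsilon<2\delta$ then contradicts $(2\delta,D)$-genericity, as in the paper.
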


\noindent The rest of this section is devoted to the proof of \cref{newlem: slim simplices have bounded centroid}. 

\noindent Fix some $k\ge2$. Assume by contradiction that for any $n\in\mathbb{N}_*$, there exists a $\delta$-slim compact $k$-simplex $S_n$ in $\Omega$, with $(2\delta,D)$-generic vertices, such that
    \begin{equation*}
        \diam(C_\delta(S_n))\ge n.
    \end{equation*}

    By Lemma \ref{lem: slim triangles have bounded center}, we must have $k\ge3$.
    Moreover, we have that $C_\delta(S_n)\neq\emptyset$, for any $n>0$.
    Let $v_n^0,\dots,v_n^k$ be the vertices of $S_n$.
    For each $i=0,\dots,k$, denote by $F^i_n$ the face of $S_n$ opposite to $v_n^i$. As $\diam(C_{\delta}(S_n)) \geq n$,  there exist $a_n,b_n\in C_\delta(S_n)$ such that  
    \begin{equation*}
        \hil(a_n,b_n)\ge n.
    \end{equation*}
    Consider the midpoint $c_n$ of the projective segment $[a_n,b_n]$ (in the Hilbert metric). By \cref{obs: convexity of centroids}, $[a_n,b_n] \in C_{\delta}(S_n)$. Thus $$c_n \in C_{\delta}(S_n) \text{ and } \hil(c_n,S_n)<\delta.$$

    Since $\Omega$ is quasi-homogeneous, there exists a compact set $K\subset \Omega$ such that $\Aut(\Omega) \cdot K =\Omega$. Then, up to translating the sequence $\seq{c_n}$ by elements in $\Aut(\Omega)$, we can assume that  $c_n\in K$ for all $n\in\mathbb{N}$. Up to passing to subsequences, we can assume that there exist $v^0,\dots,v^k,a,b,c\in\overline{\Omega}$ such that $\lim_{n\to\infty}v_n^i=v^i$ for $i=0,\dots,k$  and $a,b,c$ are the limits of \Seq{a_n},\Seq{b_n}, and \Seq{c_n}, respectively. 
    
    Consider the $k$-simplex $S=\CH_\Omega(v^0,\dots,v^k)$ in $\overline{\Omega}$ spanned by $v^0,\dots,v^k$. Note that we allow for the possibility that some of these $v^i$-s are equal and that $S$ may be a degenerate $k$-simplex in $\overline{\Omega}$. Let $F^i\coloneqq\CH_\Omega(v^0,\dots,\wh{v^i},\dots,v^k)$ denote the facet of $S$ opposite to $v^i$.
 
    We now claim that $S$ is a $k$-simplex in $\Omega$, i.e. $S \cap \Omega \neq \emptyset$. 
    To prove this claim, we observe that $S_n\cap \overline{\ngh{K}{\delta}} \neq \emptyset$ for all $n$, as $c_n \in K$ and $\hil(c_n,S_n) < \delta$. Then \cref{cor: compactness of simplices in loc hausdorff top} implies that $S \cap \Omega\neq \emptyset$ and $S_n\cap \Omega \to S \cap \Omega$ in the local Hausdorff topology, possibly up to passing to a subsequence. This proves the claim.

     Moreover, note that $a,b \in \partial \Omega$ as $\hil(a_n,c_n)=\hil(b_n,c_n) \geq \frac{n}{2}$ and $c_n\to c \in \Omega$. Further, note that $c\in (a,b) =[a,b] \cap \Omega$.  Let $A:=F_{\Omega}(a) \subset \partial \Omega$ and $B:=F_{\Omega}(b) \subseteq\partial\Omega$ denote the open faces of $\Omega$ containing $a$ and $b$, respectively. Since $(a,b) \subset \Omega$, \cref{fact: behaviour of faces of Omega} implies the following:

    \begin{observation}
    \label{newobs:how_do_faces_A_B_intersect}
        $A\cap \overline{B}=B\cap \overline{A}=\emptyset$ and $\overline{A}\cap \overline{B} \subset \partial A \cap \partial B$.
    \end{observation}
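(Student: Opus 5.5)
We will derive both assertions from the fact that the closure of an open face is a union of open faces, together with \cref{fact: behaviour of faces of Omega}(3). The standing input is that $c\in(a,b)\subset\Omega$, so $F_\Omega(c)=\Omega$. Applying \cref{fact: behaviour of faces of Omega}(3) with $z=c$ gives
\[
(p,q)\subset F_\Omega(c)=\Omega \quad\text{for every } p\in A \text{ and } q\in B. \tag{$\ast$}
\]

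\emph{Step 1: $A\cap\overline{B}=\emptyset$.} Suppose $x\in A\cap\overline{B}$, so that $F_\Omega(x)=A$. We split into two cases.

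\begin{enumerate}
\item If $x\in B$, then $A=B$. Open faces are convex, so $[a,b]\subset A\subset\partial\Omega$. This contradicts $(a,b)\subset\Omega$.
\item If $x\in\partial B$, then \cref{fact: behaviour of faces of Omega}(2) gives $A=F_\Omega(x)\subset\partial B\subset\overline{B}$. In particular $a\in\overline{B}$. Now fix any $q\in B$. The closed set $\overline{B}$ is convex with relative interior $B$, since $B$ is a relatively open convex set. Hence, by the standard fact that a half-open segment from a point of a closed convex set to a point of its relative interior stays in the relative interior, we get $(a,q)\subset B\subset\partial\Omega$. But $(\ast)$ with $p=a$ gives $(a,q)\subset\Omega$, a contradiction.
\end{enumerate}

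The symmetric argument, with the roles of $a$ and $b$ exchanged, gives $B\cap\overline{A}=\emptyset$.

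\emph{Step 2: $\overline{A}\cap\overline{B}\subset\partial A\cap\partial B$.} Let $x\in\overline{A}\cap\overline{B}$. Step 1 shows $x\notin A$ and $x\notin B$. Since $A$ and $B$ are their own relative interiors, $x\in\overline{A}\setminus A=\partial A$ and likewise $x\in\partial B$.

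The only point needing care is the claim in Step 1(2) that $\relint(\overline{B})=B$ and that segments from $\overline{B}$ into $B$ stay in $B$. If the paper's conventions make this less than immediate, we would instead argue directly with faces. Since $a\in\overline{B}$ and $q\in B$, any point of $(a,q)$ lies on an open segment through $q$ that extends slightly past $q$ inside $\overline{B}$, because $B$ is relatively open. By the definition of $F_\Omega(\cdot)$, that point then lies in $F_\Omega(q)=B\subset\partial\Omega$, which again contradicts $(\ast)$.
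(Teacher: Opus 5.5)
Your argument is correct and is essentially the paper's route: the paper simply asserts that the observation follows from \cref{fact: behaviour of faces of Omega} once $(a,b)\subset\Omega$, and you have filled in the details (parts (1)--(3) of that fact, together with convexity and relative openness of open faces). A small simplification: in Case 2 you can take $q=b$, so that $a\in\overline{B}$ gives $(a,b)\subset B\subset\partial\Omega$ directly, contradicting $c\in(a,b)\cap\Omega$ without needing $(\ast)$.
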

     
    We now record the indices of the vertices of $S$ that are contained in $\overline{A}$ and $\overline{B}$ respectively via
    $$
    I\coloneqq\{i\in\{0,\dots,k\}\mid v^i\in\overline{A}\}
    \quad\text{and}\quad
    J\coloneqq\{i\in\{0,\dots,k\}\mid v^i\in\overline{B}\}.
    $$

    We will now prove a series of claims (Claim \ref{newclaim:about_I} through \ref{claim:ab_contained_in_nbd_k}) characterizing several properties of the simplex $S$ with the final goal being \cref{neweqn:ab_in_grand_intersection_for_S}. We will then finish the proof by a contradiction. This contradiction will arise from upgrading the containment in \cref{neweqn:ab_in_grand_intersection_for_S} to a containment for the $k$-simplices $S_n$ (for $n$ sufficiently large) and then playing it off against the $(\delta,D)$-genericity of the vertices of $S_n$.

    We will now begin establishing our claims about $S$. We will heavily use the following notation: for any $L \subset \{0,\dots,k\}$, define $$S_L:=\CH_{\Omega}(\{v^j:j\in L\}).$$ So, in particular, $S_I:=\CH_{\Omega}(\{v^i: i\in I \}) \subset S \cap \overline{A}$  and  $S_J:=\CH_{\Omega}(\{v^i : i\in J\}) \subset S \cap \overline{B}.$
    Recall that $d_A$ (resp. $d_B$) denotes the Hilbert metric on the properly convex domain $A$ (resp. $B$). 
    \begin{claim}
    \label{newclaim:about_I}
    $d_A(a,S\cap A)\le\delta$, $S \cap A\neq \emptyset$, $I$ is non-empty, and $S\cap\overline{A}=S_I$.
      Similarly, $d_B(b,S\cap B)\le\delta$, $S \cap B\neq \emptyset$, $J$  is non-empty, and $S\cap \overline{B} =S_J$. 
    \end{claim}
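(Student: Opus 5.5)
The plan is to prove the claims about $A$ and transfer the facts about $a$ to statements about $S$ via limits, using \cref{fact: lower semicontinuity of extended distance}; the claims for $B$ follow symmetrically by swapping $(a_n,A,I)$ with $(b_n,B,J)$.

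\emph{Step 1: $d_A(a,S\cap A)\le\delta$.} Since $a_n\in C_\delta(S_n)$, we have in particular $\hil(a_n,S_n)<\delta$ (indeed $a_n$ is $\delta$-close to every facet, and each facet lies in $S_n$). Pick $s_n\in S_n$ with $\hil(a_n,s_n)<\delta$ and pass to a subsequence so that $s_n\to s\in\overline{\Omega}$. The simplices converge, $S_n\to S$ as closed subsets of $\Pb(\Rb^d)$: writing $s_n$ as a convex combination of the $v^i_n$ and extracting convergent coefficients, as in the proof of \cref{lem: convergence of simplices}, shows $s\in S$. Applying \cref{fact: lower semicontinuity of extended distance} to $(a_n)$ and $(s_n)$ gives $d_{\overline{\Omega}}(a,s)\le\delta<\infty$. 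Hence $s\in F_\Omega(a)=A$ and $d_A(a,s)\le\delta$, so $s\in S\cap A$. This gives both $d_A(a,S\cap A)\le\delta$ and $S\cap A\neq\emptyset$.

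\emph{Step 2: $S\cap\overline{A}=S_I$.} The inclusion $S_I\subset S\cap\overline{A}$ holds because $\overline{A}$ is closed and convex. For the reverse inclusion, take $x\in S\cap\overline{A}$ and write $x=\sum_i t_iv^i$ in an affine chart, choosing representatives with all $t_i>0$ only on the support $L$ of the combination. Then $x\in\relint(S_L)$, and $S_L\subset\overline{\Omega}$ is a closed convex set. This is the key step, and I would handle it with face theory. If some $v^j$ with $j\in L$ differs from $x$, the point $x$ lies in an open segment $(v^j,y)$ with $y\in S_L\subset\overline{\Omega}$. Since $x\in\overline{A}\subset\partial\Omega$, we get $v^j\in F_\Omega(x)$ by the definition of open faces. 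Moreover $F_\Omega(x)\subset\overline{A}$: this holds if $x\in A$, and if $x\in\partial A$ it follows from \cref{fact: behaviour of faces of Omega}(2), together with the fact that closed faces are unions of open faces. Hence $v^j\in\overline{A}$, so $j\in I$. If instead $L=\{j\}$, then $x=v^j$ and again $j\in I$. In every case $L\subset I$, and therefore $x\in S_I$.

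\emph{Step 3: $I\neq\emptyset$.} By Steps 1 and 2, $S_I=S\cap\overline{A}\supset S\cap A\neq\emptyset$, so $I$ must be non-empty.

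The main obstacle is Step 2. The delicate point is justifying that $F_\Omega(x)\subset\overline{A}$ for $x\in\partial A$, i.e.\ that $\overline{A}$ is a union of open faces. I would derive this from \cref{fact: behaviour of faces of Omega}(2), applied iteratively, rather than attempting a direct convexity argument.
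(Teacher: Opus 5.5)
You have one false step in Step~2, though it is easy to repair. From $x\in(v^j,y)$ you conclude $v^j\in F_\Omega(x)$ ``by the definition of open faces''. But $v^j$ is an \emph{endpoint} of that segment. Membership in $F_\Omega(x)$ requires $v^j$ itself to lie, together with $x$, in some open segment of $\overline{\Omega}$, and this fails whenever $v^j$ is an extreme point of $\overline{\Omega}$. For example, let $\Omega$ be a $2$-dimensional projective simplex domain, $v^j$ one of its vertices, and $x$ the midpoint of an adjacent edge. Then $F_\Omega(x)$ is the open edge and $v^j\notin F_\Omega(x)$. Nothing at this point of the proof rules out such a limit vertex.

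What the definition does give is $(v^j,y)\subset F_\Omega(x)$, hence $v^j\in\overline{F_\Omega(x)}$. Since $F_\Omega(x)\subset\overline{A}$ and $\overline{A}$ is closed, you still get $v^j\in\overline{A}$. With this correction the argument goes through. Also, the inclusion $F_\Omega(x)\subset\overline{A}$ for $x\in\partial A$ needs no iteration and no extra fact about closed faces. Apply \cref{fact: behaviour of faces of Omega}(2) once to the pair $(a,x)$: since $x\in\partial F_\Omega(a)=\partial A$, you get $F_\Omega(x)\subset\partial A$.

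Otherwise your route is sound and partly different from the paper's. Step~1 is the paper's argument unpacked. The paper cites \cref{cor:tracking_sets_using_points}, which is exactly your extraction of $s_n\to s\in S$ followed by \cref{fact: lower semicontinuity of extended distance}. That Fact needs $s\in\partial\Omega$, which holds automatically because $a\in\partial\Omega$ and $\hil(a_n,s_n)<\delta$.

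For Step~2, the paper only treats points $s'\in S\cap A$. It takes the minimal $M'$ with $s'\in\relint(S_{M'})$ and applies \cref{lem:relint_and_faces} to get $\relint(S_{M'})\subset A$, hence $M'\subset I$. This gives $I\neq\emptyset$ and $S\cap A\subset S_I$ together. The paper then passes to $S\cap\overline{A}\subset S_I$, tacitly using closedness of $S_I$ and the convexity fact that $S\cap\overline{A}=\overline{S\cap A}$ once $S\cap A\neq\emptyset$.

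You instead treat every $x\in S\cap\overline{A}$ directly, using Fact~(2) to control points of $\partial A$. This removes the closure step at the cost of invoking the face-boundary fact. You then recover $I\neq\emptyset$ from $S_I\supset S\cap A\neq\emptyset$.
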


    \begin{proof}
    We will only prove the part for $a$ since the proofs for $b$ are similar. 
    As $S_n\to S$, $a_n\to a$, and $\sup_{n\in \Nb}\hil(a_n,S_n)\le\delta$, \cref{cor:tracking_sets_using_points} implies that $F_{\Omega}(a) \cap S=A \cap S \neq \emptyset$ and $d_{A}(a,A\cap S)\le\delta$. 

    Let $s'\in A\cap S$ be arbitrary. 
    As $v^0,\dots,v^k$ are the vertices of $S$, there is a (non-empty) minimal set $M'\subset \{0,\dots,k\}$ such that $s' \in \relint (S_{M'})$. As $s' \in F_{\Omega}(a)$, \cref{lem:relint_and_faces} implies that $\relint (S_{M'}) \subset F_{\Omega}(a)$. Then  $S_{M'} \subset \overline{F_{\Omega}(a)}=\overline{A}$ and, in particular, $v^i \in \overline{A}$ for all $i \in M'$. So $I$ is non-empty as $ \emptyset \neq M' \subset I$. Moreover, as $S_{M'}\subset S_I$ and $s'$ was arbitrary, $S\cap A \subset S_I$. Then $S \cap  \overline{A}\subset S_I$. Thus $S_I = S \cap \overline{A}$. 
    \end{proof}

    Recall that $F^i$ denotes the facet  opposite to vertex $v^i$ in the $k$-simplex $S$. We show that each facet $F^i$ intersects $A$ (resp. $B$) in a $\delta$-neighborhood of $a$ (resp. $b$).
     \begin{claim}
     \label{newclaim:Fi_intersects_A_B}
         For any $i\in \{0,\dots,k\}$, $d_A(a,F^i\cap A)\le\delta$ and  $d_B(b,F^i\cap B)\le\delta$.
     \end{claim}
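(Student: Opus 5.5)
The plan is to run the same tracking argument used in the proof of \cref{newclaim:about_I}, with the whole simplex $S_n$ replaced by a single facet $F^i_n$. Fix $i\in\{0,\dots,k\}$. I will argue only for $a$; the argument for $b$ is identical with $b_n$, $B$ and $J$ in place of $a_n$, $A$ and $I$.

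\textbf{Step 1 (distance bound).} Since $a_n\in C_\delta(S_n)$, the definition of the $\delta$-centroid gives $\hil(a_n,F^i_n)<\delta$ for every $n$. So $\sup_n \hil(a_n,F^i_n)\le \delta$.

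\textbf{Step 2 (convergence of the facets).} Write $F^i_n=\CH_\Omega(v^0_n,\dots,\wh{v^i_n},\dots,v^k_n)$. These are closed subsets of $\overline{\Omega}$. Convex hulls of convergent tuples of points converge in the Hausdorff topology on $\Cc(\RP)$. To see this, write points of $F^i_n$ as convex combinations of the vertices in an affine chart containing $\overline{\Omega}$ and pass to limits of the coefficients, exactly as in the proof of \cref{lem: convergence of simplices}. Hence, after passing to a subsequence if necessary, $F^i_n\to F^i$ as closed subsets of $\overline{\Omega}$. This step needs no assumption that $F^i\cap\Omega\neq\emptyset$, since we work in $\Cc(\RP)$ and not in the local Hausdorff topology.

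\textbf{Step 3 (apply the tracking corollary).} Since $a_n\to a$, \cref{cor:tracking_sets_using_points} applies with $C_n=F^i_n$, $C=F^i$, $x_n=a_n$ and $R=\delta$. It yields $d_{F_\Omega(a)}(a,F^i)\le\delta$ and in particular $F^i\cap A\neq\emptyset$. Because $d_A$ is infinite off $A$, this bound is exactly $d_A(a,F^i\cap A)\le\delta$.

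The only delicate point I expect is Step 2. There I must check that \cref{cor:tracking_sets_using_points} really only needs convergence as closed subsets of $\overline{\Omega}$, and this is how it is phrased. I must also check that the lower semicontinuity \cref{fact: lower semicontinuity of extended distance} is applied to points of $F^i_n\cap\Omega$. That holds because $F^i_n\subset S_n\subset\Omega$ is compact.

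One subtlety is that the corollary's proof picks $c_n$ with $\hil(a_n,c_n)<R$ and obtains a limit $c\in F^i\cap F_\Omega(a)$ with extended distance at most $\delta$. This gives the non-strict inequality $\le\delta$, which is exactly what the claim asserts.
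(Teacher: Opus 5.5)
Your proposal is correct and follows the paper's proof. The paper also combines $\sup_n \hil(a_n,F^i(S_n))\le\delta$ (from $a_n\in C_\delta(S_n)$), the convergence $F^i(S_n)\to F^i$, and \cref{cor:tracking_sets_using_points}, and treats $b$ in the same way. Your extra justification that the facets converge as closed subsets of $\overline{\Omega}$ is a detail the paper leaves implicit and is worth keeping. Convergence in the local Hausdorff topology would instead require $F^i\cap\Omega\neq\emptyset$, which is not known here.
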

     \begin{proof}Note that $F^i(S_n) \to F^i$, $a_n\to a$, and $\sup_{n\in \Nb}\hil(a_n,F^i(S_n))\le\delta$ (as $a_n \in C_{\delta}(S_n)$). Then \cref{cor:tracking_sets_using_points} implies that  $d_A(a,F^i\cap A)\le\delta$. The proof for $b$ is similar. 
     \end{proof}

We now show that each facet of $S_I$ (resp. $S_J$) intersects $A$ (resp. $B$). 
\begin{claim}
\label{newclaim:A_B_intersect_each_face_of_S}
    For any $i \in I$, $S_{I \setminus\{i\}} \cap A$ is non-empty and $d_A(a,S_{I\setminus\{i\}}\cap A) \le \delta$. For any $j \in J$, $S_{J\setminus\{j\}} \cap B \neq \emptyset$ and  $d_B(b,S_{J\setminus\{j\}}\cap B) \le \delta$.
\end{claim}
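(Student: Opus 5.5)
The plan is to combine Claim \ref{newclaim:Fi_intersects_A_B} with Claim \ref{newclaim:about_I}. We treat the statement for $a$ and $I$; the statement for $b$ and $J$ is symmetric. Fix $i\in I$. By Claim \ref{newclaim:Fi_intersects_A_B}, $d_A(a,F^i\cap A)\le\delta$, so there is a point $s\in F^i\cap A$ with $d_A(a,s)\le\delta$; in particular $F^i\cap A\neq\emptyset$. The goal is to show that such an $s$ automatically lies in $S_{I\setminus\{i\}}$. This would give $S_{I\setminus\{i\}}\cap A\neq\emptyset$ and, since $s$ was a near-closest point, $d_A(a,S_{I\setminus\{i\}}\cap A)\le\delta$.

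\textbf{Key step.} We show that $F^i\cap A\subset S_{I\setminus\{i\}}$. Take $s\in F^i\cap A$.

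\begin{enumerate}
\item Since $F^i=\CH_\Omega(v^j: j\neq i)$ is a simplex, there is a non-empty minimal $M\subset\{0,\dots,k\}\setminus\{i\}$ with $s\in\relint(S_M)$.
\item Because $s\in F_\Omega(a)=A$ and $S_M$ is a closed convex subset of $\overline{\Omega}$, \cref{lem:relint_and_faces} gives $\relint(S_M)\subset A$. Taking closures, $S_M\subset\overline{A}$.
\item Hence $v^j\in\overline{A}$ for every $j\in M$, i.e. $M\subset I$. Combined with $i\notin M$, this gives $M\subset I\setminus\{i\}$.
\item Therefore $s\in S_M\subset S_{I\setminus\{i\}}$.
\end{enumerate}

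This is the same argument used in the proof of Claim \ref{newclaim:about_I}, now applied to the facet $F^i$ instead of to $S$.

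\textbf{Degenerate cases.} Repeated vertices need a little care: several indices may share the same limit point. This causes no problem, because $I$ is defined index-wise and $M$ is chosen among indices different from $i$. The only genuine subtlety is when $I\setminus\{i\}=\emptyset$, i.e. $I=\{i\}$. In that case the argument above shows $F^i\cap A\subset S_\emptyset=\emptyset$, which contradicts $F^i\cap A\neq\emptyset$ from Claim \ref{newclaim:Fi_intersects_A_B}. So this case cannot occur, and automatically $|I|\ge 2$.

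\textbf{Distance bound.} The inequality $d_A(a,S_{I\setminus\{i\}}\cap A)\le\delta$ follows from the inclusion $F^i\cap A\subset S_{I\setminus\{i\}}\cap A$ and monotonicity of the distance to a set:
$$d_A(a,S_{I\setminus\{i\}}\cap A)\le d_A(a,F^i\cap A)\le\delta.$$

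\textbf{Main obstacle.} I expect the only delicate point to be justifying the use of \cref{lem:relint_and_faces} in step 2. That lemma requires $\relint(S_M)$ to meet the boundary face $A$ at a point of $\partial\Omega$, which holds because $s\in A\subset\partial\Omega$.
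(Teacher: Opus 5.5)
Your proof is correct and takes the same route as the paper: reduce to Claim~\ref{newclaim:Fi_intersects_A_B} by showing $F^i\cap A\subseteq S_{I\setminus\{i\}}\cap A$. The paper derives this from $S\cap\overline{A}=S_I$ (Claim~\ref{newclaim:about_I}) together with the equality $F^i\cap S_I\cap A=S_{I\setminus\{i\}}\cap A$, which it does not justify; your minimal-support argument applied directly to $F^i$ proves the needed inclusion, handles degenerate $S$ and repeated vertices, and shows as a by-product that $I\setminus\{i\}\neq\emptyset$.
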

\begin{proof}
    By \cref{newclaim:about_I} $S\cap\overline{A}=S_I$. So $S\cap A =S_I \cap A$. Then, for any $i\in I$ 
    $$
    F^i\cap A=F^i\cap S \cap A=F^i\cap S_I\cap A=S_{I\setminus\{i\}}\cap A.
    $$
    The result then follows from \cref{newclaim:Fi_intersects_A_B}. The proof for $B$ is similar.
\end{proof}

Next we prove that both $(I\setminus J)$ and $(J\setminus I)$ have sufficiently many elements.

\begin{claim}
        \label{newclaim:size_of_I_J}
         $|I \setminus J| \geq 2$ and $|J \setminus I|\geq 2$. 
    \end{claim}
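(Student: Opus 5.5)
The plan is to argue by contradiction using only the structure already established: the disjointness in \cref{newobs:how_do_faces_A_B_intersect}, the identification $S\cap\overline{A}=S_I$ from \cref{newclaim:about_I}, and the non-emptiness statements in \cref{newclaim:A_B_intersect_each_face_of_S}. By the symmetry between $(a,A,I)$ and $(b,B,J)$, it suffices to prove $|I\setminus J|\geq 2$.

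\emph{Step 1: $I\setminus J\neq\emptyset$.} Suppose $I\subset J$. Then every vertex $v^i$ with $i\in I$ lies in $\overline{B}$. Since $\overline{B}$ is a closed convex subset of $\overline{\Omega}$, this gives $S_I\subset\overline{B}$. By \cref{newclaim:about_I}, $S\cap A$ is non-empty and contained in $S\cap\overline{A}=S_I\subset\overline{B}$. Hence $A\cap\overline{B}\neq\emptyset$, which contradicts \cref{newobs:how_do_faces_A_B_intersect}.

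\emph{Step 2: $|I\setminus J|\neq 1$.} Suppose $I\setminus J=\{i_0\}$. Then $I\setminus\{i_0\}\subset I\cap J$, so every vertex of $S_{I\setminus\{i_0\}}$ lies in $\overline{B}$. By convexity and closedness of $\overline{B}$, we get $S_{I\setminus\{i_0\}}\subset\overline{B}$. On the other hand, \cref{newclaim:A_B_intersect_each_face_of_S} applied with $i=i_0\in I$ says $S_{I\setminus\{i_0\}}\cap A\neq\emptyset$. Therefore $A\cap\overline{B}\neq\emptyset$, again contradicting \cref{newobs:how_do_faces_A_B_intersect}.

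\emph{Degenerate case and the main obstacle.} One case needs care: $I=\{i_0\}$, where $S_{I\setminus\{i_0\}}$ is an empty convex hull. I expect this bookkeeping to be the only real obstacle, and I would handle it directly rather than through the convention for empty hulls. In this case $S\cap A\subset S_I=\{v^{i_0}\}$, so $v^{i_0}\in A$. By \cref{newclaim:Fi_intersects_A_B}, $F^{i_0}\cap A\neq\emptyset$, hence $F^{i_0}\cap A=\{v^{i_0}\}$ and $v^{i_0}\in F^{i_0}$. Since $F^{i_0}\cap A\subset S\cap\overline{A}\cap F^{i_0}$, which is the convex hull of those vertices $v^j$ with $j\neq i_0$ that lie in $\overline{A}$, some index $j\neq i_0$ must satisfy $v^j\in\overline{A}$. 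Then $j\in I$, contradicting $I=\{i_0\}$.

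So this case cannot occur, and Steps 1 and 2 together give $|I\setminus J|\geq 2$. The same argument with the roles of $A$ and $B$ exchanged gives $|J\setminus I|\geq 2$.
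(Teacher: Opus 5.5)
Your proof is correct and follows essentially the paper's argument. Step 1 is the paper's proof that $I\not\subset J$: you reach the contradiction $A\cap\overline{B}\neq\emptyset$ directly, where the paper goes through $S_I\subset\partial A$. Step 2 uses \cref{newclaim:A_B_intersect_each_face_of_S} exactly as the paper does. Your separate treatment of $I=\{i_0\}$ is valid but not needed: \cref{newclaim:A_B_intersect_each_face_of_S}, whose proof is essentially your face argument for $F^{i_0}\cap A$, already asserts $S_{I\setminus\{i\}}\cap A\neq\emptyset$ and so forces $|I|\geq 2$.
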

     \begin{proof} 
     
     We first show that $I \not\subset J$ and $J \not \subset I$. Suppose, if possible, that $I \subset J$. Then $S_I \subset S_J$. 
     Then Claim \ref{newclaim:about_I} implies 
     $S\cap \overline{A} =  S_I \subset S_J=S \cap \overline{B}.$ 
     So $S_I \subset S \cap \overline{A} \cap \overline{B} \subset \partial A \cap \partial B$ (see \cref{newobs:how_do_faces_A_B_intersect}). That is, $S_I \subset \partial A$. 
     But $S\cap \overline{A}=S_I\subset \partial A$ implies that $S\cap A=\emptyset$. This contradicts \cref{newclaim:about_I}. Hence $I\not \subset J$. By a similar reasoning, $J \not \subset I$. 

     Now we show that $|I\setminus J|>1$ and $|J\setminus I|>1$. Suppose, if possible, that $I\setminus J=\{i'_1\}$. Then $S_{I\setminus\{i'_1\}} \subset S_J=S\cap \overline{B}$. By \cref{newclaim:A_B_intersect_each_face_of_S}, we can find $s'' \in S_{I\setminus\{i'_1\}} \cap A$. Then $s'' \in A \cap \overline{B}=\emptyset$, a contradiction (see \cref{newobs:how_do_faces_A_B_intersect}). Thus $|I\setminus J|>1$. The reasoning for $|J\setminus I|>1$ is similar.  
     \end{proof}

    Set $\Oc:=\{(i,j): i \in I\setminus J, j \in J \setminus I\}.$ By \cref{newclaim:size_of_I_J} above, $\Oc$ is non-empty. For any $(i,j) \in \Oc$, we set $$I_{ij}:=(I \cup J)\setminus \{i,j\}.$$ Note that \cref{newclaim:size_of_I_J} implies that $I_{ij}\neq \emptyset$ so that $S_{I_{ij}}$ is well-defined.

    For any $\delta>0$ and $C\subset\Omega$, denote by $\clngh{C}{\delta}$ the closed $\delta$-neighborhood of $C$, i.e.
    $$\clngh{C}{\delta}=\{p\in\Omega\mid \hil(p,C)\le\delta\}.$$
    
        \begin{claim}\label{newclaim:ab_contained_in_nbd_12}For any $(i,j) \in \Oc$, $S_{I_{ij}}\cap \Omega \neq \emptyset$ and 
        $(a,b)\ \subseteq\clngh{S_{I_{ij}}\cap\Omega}{\delta}.$   
        \end{claim}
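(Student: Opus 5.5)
The plan is to realize $(a,b)$ as lying within Hausdorff distance $\delta$ of an open segment whose endpoints sit in the faces $A$ and $B$ and which lies inside $S_{I_{ij}}$. The tools are \cref{newclaim:A_B_intersect_each_face_of_S}, \cref{fact: behaviour of faces of Omega}(3), and \cref{fact: maximum principle for extended distance}.

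Fix $(i,j)\in\Oc$. First, by \cref{newclaim:A_B_intersect_each_face_of_S}, $d_A(a,S_{I\setminus\{i\}}\cap A)\le\delta$ and $d_B(b,S_{J\setminus\{j\}}\cap B)\le\delta$. I want these infima to be attained, i.e.\ points $a'\in S_{I\setminus\{i\}}\cap A$ with $d_A(a,a')\le\delta$ and $b'\in S_{J\setminus\{j\}}\cap B$ with $d_B(b,b')\le\delta$.
\begin{itemize}
\item The set $S_{I\setminus\{i\}}\cap\overline{A}$ is compact in $\Pb(\Rb^d)$.
\item The closed $d_A$-ball of radius $\delta+1$ around $a$ is compact inside $A$, because $(A,d_A)$ is proper.
\item A minimizing sequence therefore converges to a point of $S_{I\setminus\{i\}}\cap A$ realizing the distance.
\end{itemize}
If one prefers to avoid attainment, one can instead work with $\delta+\epsilon$ and let $\epsilon\to0$ at the end; this works because the target neighbourhood is a closed one.

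Next I identify the combinatorics. Since $i\in I\setminus J$ and $j\in J\setminus I$, we have $j\notin I$ and $i\notin J$. Hence $(I\setminus\{i\})\cup(J\setminus\{j\})=(I\cup J)\setminus\{i,j\}=I_{ij}$, so $a',b'\in S_{I_{ij}}$. By convexity this gives $[a',b']\subset S_{I_{ij}}$.

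Now I use that $c\in(a,b)$ with $c\in\Omega$, so $F_\Omega(c)=\Omega$. Since $a'\in A=F_\Omega(a)$ and $b'\in B=F_\Omega(b)$, \cref{fact: behaviour of faces of Omega}(3) gives $(a',b')\subset F_\Omega(c)=\Omega$. In particular $S_{I_{ij}}\cap\Omega\supset(a',b')\neq\emptyset$, which is the first assertion.

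For the second assertion, apply \cref{fact: maximum principle for extended distance} with $x_1=a$, $y_1=a'$, $x_2=b$, $y_2=b'$. This gives
\[
d^{\Haus}_{\overline{\Omega}}\bigl((a,b),(a',b')\bigr)\le\max\{d_A(a,a'),d_B(b,b')\}\le\delta .
\]
Both segments lie in $\Omega$, where $d_{\overline{\Omega}}$ restricts to $\hil$. So every $x\in(a,b)$ satisfies $\hil(x,(a',b'))\le\delta$, and hence $\hil(x,S_{I_{ij}}\cap\Omega)\le\delta$, i.e.\ $x\in\clngh{S_{I_{ij}}\cap\Omega}{\delta}$.

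The only delicate points are the two just used. One is the attainment of the infimum, or equivalently the $\epsilon$-argument, in the first step. The other is checking that the index bookkeeping really places $a'$ and $b'$ in a single face $S_{I_{ij}}$. This is where the hypothesis $(i,j)\in\Oc$, rather than arbitrary $i\in I$, $j\in J$, is essential.
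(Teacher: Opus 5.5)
Your proof is correct and follows the paper's argument: you pick $a'\in S_{I\setminus\{i\}}\cap A$ and $b'\in S_{J\setminus\{j\}}\cap B$ within $\delta$ of $a$ and $b$, observe that $[a',b']\subset S_{I_{ij}}$ because $j\notin I$ and $i\notin J$, and then apply \cref{fact: maximum principle for extended distance}. The differences are cosmetic: you justify that the distances in \cref{newclaim:A_B_intersect_each_face_of_S} are attained, which the paper takes for granted, and you obtain $(a',b')\subset\Omega$ directly from \cref{fact: behaviour of faces of Omega}(3), whereas the paper deduces it from finiteness of the extended Hausdorff distance to $(a,b)\subset\Omega$.
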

        \begin{proof} Let $(i_1,j_1)\in \Oc$. For the sake of brevity, we set $I_{\ell_1}:=I_{i_1j_1}$ in this proof.  
        By \cref{newclaim:A_B_intersect_each_face_of_S}, we can find $a' \in F^{i_1}\cap A$ such that $d_A(a,a') \le\delta$. Note that $a'\in S_{I\setminus\{i_1\}} \subset S_{I_{\ell_1}}$. Similarly, there exists $b' \in F^{j_1}\cap B \subset S_{I_{\ell_1}} \cap B$ such that $d_{B}(b,b')\le\delta$.  Thus, $[a',b'] \subset S_{I_{\ell_1}}.$ Then, by \cref{fact: maximum principle for extended distance}, $$d_{\overline{\Omega}}^H((a,b),(a',b')) =\max\{d_A(a,a'),d_{B}(b,b') \}\le \delta.$$ 

        As $(a,b)\subset \Omega$, this implies that $(a',b')\subset \Omega$. Thus, $(a', b')\subset S_{I_{\ell_1}} \cap \Omega$ and $S_{I_{\ell_1}} \cap \Omega$ is non-empty. Finally, 
        \begin{equation*}
            (a,b) \subset \clngh{(a',b')}{\delta} \subset \clngh{S_{I_{\ell_1}}}{\delta}\cap \Omega). \qedhere
        \end{equation*}
        \end{proof}

        For each $p\in I\cap J$, we set $$I_p:=(I\cup J)\setminus\{p\}$$ and show that $S_{I_p}$ satisfies a similar property as above. 
        \begin{claim}
        \label{newclaim:ab_contained_in_nbd_k}
         $(a,b)\ \subseteq\clngh{S_{I_p}\cap\Omega}{\delta}$ for each $p\in I\cap J$.
        \end{claim}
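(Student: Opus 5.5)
The plan is to mimic the proof of \cref{newclaim:ab_contained_in_nbd_12}, with the single index $p\in I\cap J$ playing the role of both $i$ and $j$. Fix $p\in I\cap J$; note $I_p=(I\cup J)\setminus\{p\}$ contains $I\setminus\{p\}$ and $J\setminus\{p\}$. Since $p\in I$, \cref{newclaim:A_B_intersect_each_face_of_S} gives a point $a'\in S_{I\setminus\{p\}}\cap A$ with $d_A(a,a')\le\delta$. Since $p\in J$, the same claim gives $b'\in S_{J\setminus\{p\}}\cap B$ with $d_B(b,b')\le\delta$. Both $S_{I\setminus\{p\}}$ and $S_{J\setminus\{p\}}$ are contained in $S_{I_p}$, so by convexity $[a',b']\subset S_{I_p}$.

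Next I would apply \cref{fact: maximum principle for extended distance} to the pairs $(a,b)$ and $(a',b')$, where $a'\in F_\Omega(a)=A$ and $b'\in F_\Omega(b)=B$. This yields
\[
d^{\Haus}_{\overline{\Omega}}\bigl((a,b),(a',b')\bigr)\le\max\{d_A(a,a'),d_B(b,b')\}\le\delta .
\]
The main point to check is that $(a',b')\subset\Omega$. This follows because $(a,b)\subset\Omega$ and the extended distance from points of $(a,b)$ to $(a',b')$ is finite. By \cref{fact: behaviour of faces of Omega}(3), applied with $c\in(a,b)$, $p=a'\in F_\Omega(a)$ and $q=b'\in F_\Omega(b)$, the open segment $(a',b')$ lies in $F_\Omega(c)=\Omega$.

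Hence $(a',b')\subset S_{I_p}\cap\Omega$, which is therefore non-empty. Every point of $(a,b)$ is then within Hilbert distance $\delta$ of $(a',b')$, because on $\Omega$ the extended distance coincides with $\hil$. This gives $(a,b)\subset\clngh{S_{I_p}\cap\Omega}{\delta}$.

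The only potential obstacle is that $I_p$ is non-empty, which is needed so that $S_{I_p}$ makes sense. This is immediate from \cref{newclaim:size_of_I_J}, since $|I\setminus J|\ge2$. Otherwise the argument is a verbatim adaptation of the previous claim.
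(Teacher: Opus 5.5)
Your proposal is correct and follows essentially the same route as the paper. You pick $a'\in S_{I\setminus\{p\}}\cap A$ and $b'\in S_{J\setminus\{p\}}\cap B$ from the earlier claim, bound $d^{\Haus}_{\overline{\Omega}}\bigl((a,b),(a',b')\bigr)\le\delta$ by the maximum principle for the extended distance, and conclude from $(a',b')\subset S_{I_p}\cap\Omega$; your appeal to the face property with $c\in(a,b)$ justifies $(a',b')\subset\Omega$, a step the paper only asserts.
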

        \begin{proof}
        The proof is similar as above. Fix $p\in I \cap J$. By \cref{newclaim:A_B_intersect_each_face_of_S}, we can find $a_p''\in S_{I\setminus\{p\}}\cap A \subset  S_{I_p} \cap A$ with $d_{A}(a,a_p'')\le\delta$ and $b_p''\in S_{J\setminus\{p\}}\cap A \subset S_{I_p} \cap B$ with $d_{B}(b,b_p'')\le\delta$. Then, $[a_p'',b_p''] \subset S_{I_p}$. By \cref{fact: maximum principle for extended distance},
        $d_{\overline{\Omega}}^H((a,b),(a_p'',b_p'')) =\max\{d_A(a,a'),d_{B}(b,b') \}\le \delta.$ As $(a,b) \subset \Omega$, this implies that $(a_p'',b_p'')\subset \Omega$. Hence, 
        \begin{equation*}
        (a,b) \subset \clngh{S_{I_p}\cap \Omega}{\delta}.\qedhere
        \end{equation*}
        \end{proof}

            Putting \cref{newclaim:ab_contained_in_nbd_12} and \cref{newclaim:ab_contained_in_nbd_k} together, 
            \begin{equation}
            \label{neweqn:ab_in_grand_intersection_for_S}
                (a,b) \subset  \left( \bigcap_{(i,j)\in \Oc} \clngh{S_{I_{ij}}\cap\Omega}{\delta} \right) \bigcap \left( \bigcap_{p\in I\cap J} \clngh{S_{I_p}\cap\Omega}{\delta}\right).
            \end{equation}

     Recall that as $n\to \infty$, $S_n \to S$. Now we will show that $(S_n)_{I_{ij}} \cap \Omega$ and $(S_n)_{I_p}\cap \Omega$ shows similar behavior as above, i.e. the grand intersection of all of their open $2\delta$-neighborhoods contain a large chunk of $(a,b)$. This would contradict that the vertices of each $S_n$ are $(2\delta,D)$-generic. We formalize this below.

Recall that $D>0$ is fixed. Pick $x,y\in (a,b)$ such that $\hil(x,y)>D$. Further recall that $v^0_n, \dots, v^k_n$ are the vertices of $S_n$. For each $n\in \Nb$, consider the following subsets of vertices of $S_n$ given by $$V^n_{I_{ij}}:=\{v^l_n: l \in I_{ij}\} \text{ and } V^n_{I_p}:=\{v^l_n: l \in I_p\}$$ for each $(i,j)\in \Oc$ and $p\in I\cap J$. As $n \to \infty$, $\CH_{\Omega}(V_*^n)\to S_{I_*}$ and $\CH_{\Omega}(V_*^n)\cap ~\Omega \to S_{I_*} \cap ~\Omega$ for $* \in \{ ij: (i,j)\in \Oc\} \cup \{ p: p \in I \cap J\}$ (see \cref{lem: convergence of simplices}). Then, for any $\varepsilon>0$, \cref{neweqn:ab_in_grand_intersection_for_S} implies that for $n$ sufficiently large, the following holds;

    \begin{equation}
    \label{neweqn:change_to_xy_and_Sn}
    (x,y)\ \subseteq \left( \bigcap_{(i,j)\in \Oc} \clngh{\CH_\Omega(V^n_{I_{ij}})\cap \Omega}{\delta+\varepsilon} \right)  \bigcap \left( \bigcap_{p\in I\cap J}\clngh{\CH_\Omega(V^n_{p})\cap \Omega}{\delta+\varepsilon}  \right).
    \end{equation}

    Moreover note that 
    \begin{equation}
    \label{neweqn:I_ij_and_I_p}
        \left(\bigcap_{(i,j)\in \Oc} I_{ij} \right)  \bigcap \left( \bigcap_{p\in I\cap J} I_p \right) =\emptyset.
    \end{equation} 
    Indeed, $\left( \bigcap_{p\in I\cap J} I_p \right)=(I\setminus J) \sqcup (J\setminus I)$. But $(I\setminus J) \bigcap \left(\bigcap_{(i,j)\in \Oc} I_{ij} \right)=\emptyset$ because, given any $i'\in I \setminus J$, there exists $j'\in J\setminus I$ such that $(i',j')\in \Oc$ (see \cref{newclaim:size_of_I_J}) and hence $i'\not\in I_{i'j'}$. By a similar reasoning, $(J \setminus I) \bigcap \left(\bigcap_{(i,j)\in \Oc} I_{ij} \right)=\emptyset$. 

    Now, if we pick $\varepsilon>0$ such that $\delta+\varepsilon<2\delta$, we get from \cref{neweqn:change_to_xy_and_Sn} that 

    \begin{equation}
    \label{neweqn:change_to_xy_and_Sn open}
    (x,y)\ \subseteq \left( \bigcap_{(i,j)\in \Oc} \ngh{\CH_\Omega(V^n_{I_{ij}})\cap \Omega}{2\delta} \right)  \bigcap \left( \bigcap_{p\in I\cap J}\ngh{\CH_\Omega(V^n_{p})\cap \Omega}{2\delta}  \right).
    \end{equation}
    
    Finally, since we assumed that the vertices of each $S_n$ forms a $(2\delta,D)$-generic $(k+1)$-tuple, \cref{neweqn:change_to_xy_and_Sn open} and \cref{neweqn:I_ij_and_I_p} imply that $|I_{ij}|=|I_p|=k$ for each $(i,j)\in \Oc$ and $p\in (I \cap J)$ (cf. \cref{def: generic_points_in_Hilbert_new}). But $|I_{ij}| \leq k-1$ for each $(i,j)$ in the non-empty set $\Oc$. This is a contradiction and it finishes the proof of \cref{newlem: slim simplices have bounded centroid}. \qed


\subsection{A uniform version of \cref{newlem: slim simplices have bounded centroid} for $\delta$ varying in a compact interval}

Recall that $\omegaGen{k+1}$ is the set of $(\delta,D)$-generic tuples. We now strengthen our previous \cref{newlem: slim simplices have bounded centroid}.

\begin{lemma}\label{lem: slim simplices have bounded centroid_uniform_version}
Suppose $\Omega \subset \RP$ is a quasi-homogeneous properly convex domain and $k\geq 2$. 
Fix $D>0$ and $0<\delta_1 \leq \delta'_1<\infty$.  Then there exists $R=R(\delta_1,\delta'_1,D,k)$ such that: for any $\delta\in[\delta_1,\delta'_1]$ and any compact $k$-simplex $S$ in $\Omega$ with $\vrtx(S) \in \Omega^{(k+1)}_{(2\delta,D)}$,
$$\diam (C_\delta(S))<R.$$ 
\end{lemma}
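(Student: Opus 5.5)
We will rerun the contradiction-and-compactness argument from the proof of \cref{newlem: slim simplices have bounded centroid}, now letting the parameter $\delta$ vary along the sequence. Suppose the statement fails. Then for each $n$ there are $\delta_n\in[\delta_1,\delta'_1]$ and a compact $k$-simplex $S_n$ with $\vrtx(S_n)\in\Omega^{(k+1)}_{(2\delta_n,D)}$ and $\diam(C_{\delta_n}(S_n))\ge n$. Since $[\delta_1,\delta'_1]$ is compact, we pass to a subsequence with $\delta_n\to\delta_\infty\in[\delta_1,\delta'_1]$. As before, we pick $a_n,b_n\in C_{\delta_n}(S_n)$ with $\hil(a_n,b_n)\ge n$ and let $c_n$ be the Hilbert midpoint of $[a_n,b_n]$. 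By \cref{obs: convexity of centroids}, $c_n\in C_{\delta_n}(S_n)$. We translate by $\Aut(\Omega)$ so that $c_n\in K$, and pass to limits $v^i_n\to v^i$, $a_n\to a$, $b_n\to b$, $c_n\to c$. Since $\hil(c_n,S_n)<\delta_n\le\delta'_1$, \cref{cor: compactness of simplices in loc hausdorff top} applies and gives $S_n\cap\Omega\to S\cap\Omega$.

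Every claim in the proof of \cref{newlem: slim simplices have bounded centroid} (Claims \ref{newclaim:about_I} through \ref{newclaim:ab_contained_in_nbd_k}) rests on \cref{cor:tracking_sets_using_points} and \cref{fact: maximum principle for extended distance}. Each of these needs only a uniform bound of the form $\sup_n\hil(a_n,F^i(S_n))\le R$. Here we have $\hil(a_n,F^i(S_n))<\delta_n$, and $\delta_n\to\delta_\infty$, so for large $n$ this quantity is at most $\delta_\infty+\eta$ for any fixed $\eta>0$. Therefore all the claims hold with $\delta_\infty$ in place of $\delta$. To see this, we apply \cref{cor:tracking_sets_using_points} with $R=\delta_\infty+\eta$ and then let $\eta\to0$; this is legitimate because the conclusion $d_A(a,\cdot)\le R$ holds for every $\eta$. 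We thus obtain \cref{neweqn:ab_in_grand_intersection_for_S} with closed $\delta_\infty$-neighbourhoods. The same index sets $\Oc$, $I_{ij}$, $I_p$ appear, and the combinatorial identity \cref{neweqn:I_ij_and_I_p} is unchanged.

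The step that needs care is the final upgrade to the open $2\delta_n$-neighbourhoods. We choose $x,y\in(a,b)$ with $\hil(x,y)>D$ and $\varepsilon>0$ with $\delta_\infty+\varepsilon<2\delta_1$. This is possible because $\delta_\infty\le\delta'_1$ only... In fact we need $\delta_\infty+\varepsilon<2\delta_n$, which follows from $\delta_n\to\delta_\infty>0$: for large $n$ we have $2\delta_n>\delta_\infty+\delta_\infty/2$, so $\varepsilon=\delta_\infty/4$ works. By local Hausdorff convergence of $\CH_\Omega(V^n_{*})\cap\Omega$, the compact segment $[x,y]$ lies, for $n$ large, in every $\ngh{\CH_\Omega(V^n_*)\cap\Omega}{2\delta_n}$. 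The $(2\delta_n,D)$-genericity of $\vrtx(S_n)$ then forces $|I_{ij}|=k$, which contradicts $|I_{ij}|\le k-1$. We expect the main obstacle to be exactly this bookkeeping: making sure that the strict margin between $\delta_\infty$ and $2\delta_n$ survives in the limit. It does, because $\delta_1>0$ bounds $\delta_n$ away from zero.
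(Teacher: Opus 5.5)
Your proposal is correct and is essentially the paper's own argument: you rerun the compactness-and-contradiction proof of \cref{newlem: slim simplices have bounded centroid} along $\delta_n\to\delta_\infty$, carry the claims at level $\delta_\infty$, and close with the strict margin $\delta_\infty+\varepsilon<2\delta_n$ for large $n$. Delete your abandoned first attempt $\delta_\infty+\varepsilon<2\delta_1$, which need not be possible.

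Your $\eta\to0$ treatment of $\hil(a_n,F^i(S_n))<\delta_n$ is cleaner than the paper's route. The paper first reduces to $\delta_n<\delta_\infty$ by invoking \cref{newlem: slim simplices have bounded centroid} in the wrong direction: for $\delta_n\ge\delta_\infty$ one has $C_{\delta_n}(S_n)\supseteq C_{\delta_\infty}(S_n)$, so that lemma gives no bound on $\diam(C_{\delta_n}(S_n))$. Your version shows that this reduction is unnecessary.
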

\begin{proof}
    In this proof, we will closely follow the proof of \cref{newlem: slim simplices have bounded centroid}. Hence, in some parts, we will hide the details are ask the reader to look at the relevant details in the proof of \cref{newlem: slim simplices have bounded centroid}.
    
    Assume by contradiction that for any $n\in\mathbb{N}$, there exist $\delta_n\in[\delta_1,\delta'_1]$ and a  compact simplex $S_n$ in $\Omega$, with $(2\delta_n,D)$-generic vertices, such that
    \begin{equation*}
        \diam(C_{\delta_n}(S_n))\ge n.
    \end{equation*}
    Up to passing to a subsequence, we may assume that $\delta_n \to \delta$ for some $\delta \in [\delta_1,\delta_1']$. Without loss of generality, we may assume that $\delta_n < \delta$ up to truncating finitely many elements from the beginning of the sequence $\seq{\delta_n}$. Indeed, suppose on the contrary that $\delta_n \geq \delta$ infinitely often. Since  $\Omega^{k+1}_{(2\delta_n,D)}\subset \Omega^{k+1}_{(2\delta,D)}$, \cref{newlem: slim simplices have bounded centroid} implies that $n \leq \diam(C_{\delta_n}(S_n)) \leq R(\delta,D,k)$. This is a contradiction since only finitely many such $n$ exist. 

    By Lemma \ref{lem: slim triangles have bounded center}, we must have $k\ge3$.
    Moreover, we have that $C_{\delta_n}(S_n)\neq\emptyset$, for any $n>0$.
    Let $v_n^0,\dots,v_n^k$ be the vertices of $S_n$.
    For each $i=0,\dots,k$, denote by $F^i_n$ the face of $S_n$ opposite to $v_n^i$. As $\diam(C_{\delta_n}(S_n)) \geq n$,  there exist $a_n,b_n\in C_{\delta_n}(S_n)$ such that  
    \begin{equation*}
        \hil(a_n,b_n)\ge n.
    \end{equation*}
    Therefore, if we consider the midpoint $c_n$ of the projective segment $[a_n,b_n]$, then, by \cref{obs: convexity of centroids}, $[a_n,b_n] \in C_{\delta_n}(S_n)$, $c_n \in C_{\delta_n}(S_n)$, and $\hil(c_n,S_n)<\delta_n<\delta.$

    Since $\Omega$ is quasi-homogeneous, we may assume that there exists a compact set $K\subset \Omega$ such that $c_n\in K$ for all $n\in\mathbb{N}$. Up to passing to subsequences, we can assume that there exist $v^0,\dots,v^k,a,b,c\in\overline{\Omega}$ such that $\lim_{n\to\infty}v_n^i=v^i$ for $i=0,\dots,k$  and $a,b,c$ are the limits of \Seq{a_n},\Seq{b_n}, and \Seq{c_n}, respectively. 
    
    Consider the $k$-simplex $S=\CH_\Omega(v^0,\dots,v^k)$ in $\overline{\Omega}$ spanned by $v^0,\dots,v^k$. Note that $S$ may be a degenerate $k$-simplex in $\overline{\Omega}$. Let $F^i\coloneqq\CH_\Omega(v^0,\dots,\wh{v^i},\dots,v^k)$ denote the facet of $S$ opposite to $v^i$.
 
    We now claim that $S$ is a $k$-simplex in $\Omega$, i.e. $S \cap \Omega \neq \emptyset$. 
    To prove this claim, we observe that $S_n\cap \overline{\ngh{K}{\delta}} \neq \emptyset$ for all $n$, as $c_n \in K$ and $\hil(c_n,S_n) < \delta$. Then \cref{cor: compactness of simplices in loc hausdorff top} implies that $S \cap \Omega\neq \emptyset$ and $S_n\cap \Omega \to S \cap \Omega$ in the local Hausdorff topology, possibly up to passing to a subsequence. This proves the claim. 
        
    Moreover, note that $a,b \in \partial \Omega$ as $\hil(a_n,c_n)=\hil(b_n,c_n) \geq \frac{n}{2}$ and $c_n\to c \in \Omega$. Further, note that $c\in (a,b) =[a,b] \cap \Omega$.  Let $A:=F_{\Omega}(a) \subset \partial \Omega$ and $B:=F_{\Omega}(b) \subseteq\partial\Omega$ denote the open faces of $\Omega$ containing $a$ and $b$, respectively. Since $(a,b) \subset \Omega$, \cref{fact: behaviour of faces of Omega} implies the following:

    \begin{observation}
    \label{obs:how_do_faces_A_B_intersect}
        $A\cap \overline{B}=B\cap \overline{A}=\emptyset$ and $\overline{A}\cap \overline{B} \subset \partial A \cap \partial B$.
    \end{observation}
     
    We now record the indices of the vertices of $S$ that are contained in $\overline{A}$ and $\overline{B}$ respectively via
    $$
    I\coloneqq\{i\in\{0,\dots,k\}\mid v^i\in\overline{A}\}
    \quad\text{and}\quad
    J\coloneqq\{i\in\{0,\dots,k\}\mid v^i\in\overline{B}\}.
    $$

    We will use the same notation as in the proof of \cref{newlem: slim simplices have bounded centroid}: for any $L \subset \{0,\dots,k\}$, define $$S_L:=\CH_{\Omega}(\{v^j:j\in L\}).$$ So, in particular, $S_I:=\CH_{\Omega}(\{v^i: i\in I \}) \subset S \cap \overline{A}$  and  $S_J:=\CH_{\Omega}(\{v^i : i\in J\}) \subset S \cap \overline{B}.$

    Recall that $d_A$ (resp. $d_B$) denotes the Hilbert metric on the properly convex domain $A$ (resp. $B$). 

    We state six claims (Claim \ref{claim:about_I} through \ref{claim:ab_contained_in_nbd_k}) which are exactly the same as the five claims in the proof of \cref{newlem: slim simplices have bounded centroid}. Once we observe that $\sup_{n\in \Nb}\delta_n=\delta$, we can repeat the proofs of Claims \ref{newclaim:about_I}  through \ref{newclaim:ab_contained_in_nbd_k} verbatim and get the proofs of Claims \ref{claim:about_I} through \ref{claim:ab_contained_in_nbd_k}. Hence, we hide the proofs of these claims here. 
    \begin{claim}
    \label{claim:about_I}
    $d_A(a,S\cap A)\le\delta$, $S \cap A\neq \emptyset$, $I$ is non-empty, and $S\cap\overline{A}=S_I$. Similarly, $d_B(b,S\cap B)\le\delta$, $S \cap B\neq \emptyset$, $J$  is non-empty, and $S\cap \overline{B} =S_J$. 
    \end{claim}
    
     \begin{claim}
     \label{claim:Fi_intersects_A_B}
         For any $i\in \{0,\dots,k\}$, $d_A(a,F^i\cap A)\le\delta$ and  $d_B(b,F^i\cap B)\le\delta$.
     \end{claim}

\begin{claim}
\label{claim:A_B_intersect_each_face_of_S}
    For any $i \in I$, $S_{I \setminus\{i\}} \cap A$ is non-empty and $d_A(a,S_{I\setminus\{i\}}\cap A) \le \delta$. For any $j \in J$, $S_{J\setminus\{j\}} \cap B \neq \emptyset$ and  $d_B(b,S_{J\setminus\{j\}}\cap B) \le \delta$.
\end{claim}

    \begin{claim}
        \label{claim:size_of_I_J}
         $|I \setminus J| \geq 2$ and $|J \setminus I|\geq 2$. 
    \end{claim}

    Set $\Oc:=\{(i,j): i \in I\setminus J, j \in J \setminus I\}.$ By \cref{claim:size_of_I_J} above, $\Oc$ is non-empty. For any $(i,j) \in \Oc$ and $p\in I\cap J$, we set $$I_{ij}:=(I \cup J)\setminus \{i,j\}\quad\text{and}\quad I_p:=(I\cup J)\setminus\{p\}.$$
    Note that \cref{claim:size_of_I_J} implies that $I_{ij}\neq \emptyset$ so that $S_{I_{ij}}$ is well-defined.
    As above, for any $\delta>0$ and $C\subset\Omega$, denote by $\clngh{C}{\delta}$ the closed $\delta$-neighborhood of $C$, i.e. $\clngh{C}{\delta}=\{p\in\Omega\mid \hil(p,C)\le\delta\}$.

        \begin{claim}\label{claim:ab_contained_in_nbd_12} 
        For any $(i,j) \in \Oc$, $S_{I_{ij}}\cap \Omega \neq \emptyset$ and 
        $(a,b)\ \subseteq\clngh{S_{I_{ij}}\cap\Omega}{\delta}.$   
        \end{claim}

        \begin{claim}
        \label{claim:ab_contained_in_nbd_k}
         $(a,b)\ \subseteq\clngh{S_{I_p}\cap\Omega}{\delta}$ for each $p\in I\cap J$.
        \end{claim}

            Putting \cref{claim:ab_contained_in_nbd_12} and \cref{claim:ab_contained_in_nbd_k} together, 
            \begin{equation}
            \label{eqn:ab_in_grand_intersection_for_S}
                (a,b) \subset  \left( \bigcap_{(i,j)\in \Oc} \clngh{S_{I_{ij}}\cap\Omega}{\delta} \right) \bigcap \left( \bigcap_{p\in I\cap J} \clngh{S_{I_p}\cap\Omega}{\delta}\right).
            \end{equation}

    Recall that as $n\to \infty$, $S_n \to S$. Now we will show that $(S_n)_{I_{ij}} \cap \Omega$ and $(S_n)_{I_p}\cap \Omega$ shows similar behavior as above for $n$ sufficiently large, contradicting the $(2\delta_n,D)$-genericity of the vertices of each $S_n$. We formalize this below.

    Recall that $D>0$ is fixed. Pick $x,y\in (a,b)$ such that $\hil(x,y)>D$.  
    For each $n\in \Nb$, consider the following subsets of vertices of $S_n$ given by $$V^n_{I_{ij}}:=\{v^l_n: l \in I_{ij}\} \text{ and } V^n_{I_p}:=\{v^l_n: l \in I_p\}$$ for each $(i,j)\in \Oc$ and $p\in I\cap J$. As $n \to \infty$, $\CH_{\Omega}(V_*^n)\to S_{I_*}$ and $\CH_{\Omega}(V_*^n)\cap ~\Omega \to S_{I_*} \cap ~\Omega$ for $* \in \{ ij: (i,j)\in \Oc\} \cup \{ p: p \in I \cap J\}$ (see \cref{lem: convergence of simplices}). 
    
    Hence, for any $\varepsilon>0$, \cref{eqn:ab_in_grand_intersection_for_S} implies that for $n$ sufficiently large, we have:
    \begin{equation}
    \label{eqn:change_to_xy_and_Sn}
    (x,y)\ \subseteq \left( \bigcap_{(i,j)\in \Oc} \clngh{\CH_\Omega(V^n_{I_{ij}})\cap \Omega}{\delta+\varepsilon} \right)  \bigcap \left( \bigcap_{p\in I\cap J}\clngh{\CH_\Omega(V^n_{p})\cap \Omega}{\delta+\varepsilon}  \right).
    \end{equation}

    Moreover, by the same reasoning as in the proof of \cref{neweqn:I_ij_and_I_p} inside the proof of \cref{newlem: slim simplices have bounded centroid}, we have
    \begin{equation}
    \label{eqn:I_ij_and_I_p}
        \left(\bigcap_{(i,j)\in \Oc} I_{ij} \right)  \bigcap \left( \bigcap_{p\in I\cap J} I_p \right) =\emptyset.
    \end{equation}

    Now pick any $0<\varepsilon<\frac{\delta}{3}$. Since $\delta_n\to\delta$ and $\delta_n<\delta$ for any $n\in\mathbb{N}$, we have $\delta-\delta_n<\varepsilon$, for any $n$ sufficiently large. For such $\varepsilon$, we have $2\delta_n>2\delta-2\varepsilon>\delta+\varepsilon$.
    Therefore, from \cref{eqn:change_to_xy_and_Sn}, we get

    \begin{equation}
    \label{eqn:change_to_xy_and_Sn open}
    (x,y)\ \subseteq \left( \bigcap_{(i,j)\in \Oc} \ngh{\CH_\Omega(V^n_{I_{ij}})\cap \Omega}{2\delta_n} \right)  \bigcap \left( \bigcap_{p\in I\cap J}\ngh{\CH_\Omega(V^n_{p})\cap \Omega}{2\delta_n}  \right),
    \end{equation}
    for any sufficiently large $n\in\mathbb{N}$.

    
    Finally, since we assumed that the vertices of each $S_n$ forms a $(2\delta_n,D)$-generic $(k+1)$-tuple, \cref{eqn:I_ij_and_I_p} and \cref{eqn:change_to_xy_and_Sn open} imply that $|I_{ij}|=|I_p|=k$ for each $(i,j)\in \Oc$ and $p\in (I \cap J)$. But $|I_{ij}| \leq k-1$ for each $(i,j)$ in the non-empty set $\Oc$ (cf. \cref{def: generic_points_in_Hilbert_new}). This is a contradiction and it finishes the proof of \cref{lem: slim simplices have bounded centroid_uniform_version}.
\end{proof}

\subsection{Proof of \cref{prop:CM_2_for_pcd}}
\label{sec:proof_of_CM_2_for_pcd}
    Let $r\ge\rF(\Omega)$.
     Let $\delta_0'>0$ as in \cref{thm:slimness_gives_centroid}.
    The non-emptiness of $C_{\frac{\delta'}{2}}(V)$ for any $V\in\Omega^{(r+1)}_{(\delta',D)}$ follows directly from \cref{thm:slimness_gives_centroid}.
    Since we will only be working with $(r+1)$ tuples, we use the following notation for this proof: we write $\Omega_{(\delta,D)}$ to denote the set of all $(\delta,D)$-generic $(r+1)$-tuples in $\Omega$.

    For any $\delta, D>\delta_0'$, we define
    $$
        \Psi(\delta,D)\coloneqq\inf\left\{M\ge0\mid\ \diam(C_{\frac{\delta'}{2}}(V))\le M\ \text{ for all} \ V\in \bigcup_{\delta' \in [\delta'_0,\delta]}\Omega_{(\delta',D)}\right\}.
    $$

    Indeed, each $C_{\delta'/2}(V)$ is non-empty because $\delta' \geq \delta'_0$ (\cref{thm:slimness_gives_centroid}). Moreover, since $[\delta_0',\delta]$ is compact,  \cref{lem: slim simplices have bounded centroid_uniform_version} implies that there is an $M \in [0,\infty)$ such that  $\Psi(\delta,D) \leq M$. For instance, $M=R(\delta'_0,\delta,D,r+1)$, where $R(\cdot)$ is as in \cref{lem: slim simplices have bounded centroid_uniform_version}, suffices.

    \begin{remark*}
        \emph{By \cref{obs: generic_increase_parameters}, $\Omega_{(\delta',D)}\subseteq\Omega_{(\delta'_0,D)}$, for any $\delta' \geq \delta_0'$. Thus, for any $\delta \geq \delta'_0$, we have the equality of sets: $\bigcup_{\delta' \in [\delta'_0,\delta]}\Omega_{(\delta',D)}=\Omega_{(\delta'_0,D)}.$ \\
        But in what follows, we will always write it as the union on the left  because, for any $(r+1)$-tuple $V$, we would like to remember all the $\Omega_{(\delta',D)}$-s that the $V$ lies in. 
        This information is used in defining $\Psi(\delta,D)$ above. Indeed, for $V\in \Omega_{(\delta',D)}$ for $\delta' \geq \delta'_0$, we will record $\diam(C_{\delta'/2}(V))$, which is at least as large as $\diam(C_{\delta'_0/2}(V))$. That is, the extra information about the $(\delta',D)$-genericity of $V$ lets us find a coarse centroid $C_{\delta'/2}(V)$ with larger diameter than $C_{\delta'_0/2}(V)$, although the underlying tuple $V$ is the same.}
    \end{remark*}

    We will show that $\Psi:[\delta_0',\infty)\times[\delta_0',\infty)\to[0,\infty)$ is non-decreasing in both coordinates.

    Fix some $\delta\ge\delta_0'$. For any $D\ge D'\ge \delta_0'$, \cref{obs: generic_increase_parameters} implies that $\Omega_{(\delta',D')} \subset \Omega_{(\delta',D)}.$
    This implies that
     \begin{align*}
        \{M\ge0\mid \diam(C_{\frac{\delta'}{2}}(V))\le M \ \text{ for all}\   V\in \bigcup_{\delta'\in[\delta'_o,\delta]}\Omega_{(\delta',D)} \}\\
        \subseteq\{M\ge0\mid \diam(C_{\frac{\delta'}{2}}(V))\le M\ \text{ for all}\   V\in \bigcup_{\delta'\in[\delta'_o,\delta]} \Omega_{(\delta',D')} \}
    \end{align*}
    Consequently, $\Psi(\delta,D')\le\Psi(\delta,D)$.

    On the other hand, fix some $D\ge \delta_0'$, and $\delta\ge \wh{\delta}\ge\delta_0'$. By \cref{obs: generic_increase_parameters}, $\Omega_{(\delta',D)} \subset \Omega_{(\wh{\delta},D)}$ for any $\delta' \geq \wh{\delta}.$  
    Then, we have,
    \begin{align*}
        \{M\ge0\mid \diam(C_{\frac{\delta'}{2}}(V))\le M \ \text{ for all}\   V\in  \bigcup_{\delta'\in[\delta'_o,\delta]}\Omega_{(\delta',D)}  \}\\
        \subseteq\{M\ge0\mid \diam(C_{\frac{\delta'}{2}}(V))\le M\ \text{ for all}\  V\in  \bigcup_{\delta'\in[\delta'_o,\wh{\delta}]}\Omega_{(\delta',D)}  \}
    \end{align*} 
    
    To prove this, let $M \geq 0$ be such that $\diam(C_{\frac{\delta'}{2}}(V))\le M$ for all $V \in \bigcup_{\delta' \in [\delta'_0,\delta]}\Omega_{(\delta',D)}.$ Now as  $ \bigcup_{\delta' \in [\delta'_0,\wh{\delta}]}\Omega_{(\delta',D)} \subset \bigcup_{\delta' \in [\delta'_0,\delta]}\Omega_{(\delta',D)}$, it follows that  $\diam(C_{\frac{\delta'}{2}}(V))\le M$ for any $\delta'\in [\delta'_0,\wh{\delta}]$. Hence we have this containment. 

    Consequently, we have $\Psi(\wh{\delta},D)\le\Psi(\delta,D)$. \qed

\section{Coarse $r$-median metric spaces: General definition}

\subsection{Coarse fillings}

\begin{definition} 
Let $(X,d)$ be a metric space. Fix $r\ge1$.
A \textbf{coarse $r$-filling} on $X$ is a pair $(F,\ctrlf)$, where $F$ is a family of $r+1$ maps $F=\{ F_i:X^{i+1} \to \Pc(X)\}_{i=0,\dots,r}$ and $\ctrlf:\mathbb{R}_{\ge0} \to \mathbb{R}_{\ge0}$ is a non-decreasing function such that for any $k \in \{0,\dots,r\}$ and any $a_0,\dots,a_k \in X$, the following properties hold,
\begin{enumerate}
    \item [(F0)] (point filling) $d^{\Haus}(a_0, F_0(a_0))\leq \ctrlf(0)$;
    \item [(F1)] (face containment) for any $i=0,\dots,k$ 
    $$
    F_{k-1}(a_0,\dots,\wh{a_i},\dots,a_k) \subseteq\ngh{F_k(a_0,\dots,a_k)}{\ctrlf(0)};
    $$
    \item [(F2)] (symmetry) for any permutation $\sigma$ of $(k+1)$-symbols, $$d^{\Haus}(F_k(a_0,\dots,a_k),F_k(a_{\sigma(0)},\dots, a_{\sigma(k)}))\le \ctrlf(0);$$  
    \item [(F3)] (localization) for any $k=0,\dots,r-1$,
    $$
    F_{k+1}(a_0,\dots,a_k,a_k) \subseteq\ngh{F_k(a_0,\dots,a_k)}{\ctrlf(0)};
    $$
    \item [(F4)] (weak convexity)
    if $x\in \ngh{F_k(a_0,\dots,a_k)}{R}$, then $$F_k(a_0,\dots,a_{j-1},x,a_{j+1},\dots,a_k) \subseteq \ngh{F_k(a_0,\dots,a_k)}{\ctrlf(R)}$$ for any $j=0,\dots,k$.
    \label{defn:coarse_filling_weak}
\end{enumerate}
We will call $\ctrlf$ the \textbf{control function} of the coarse $r$-filling on $(X,d)$.
Moreover, we say that $(F,\ctrlf)$ is an \textbf{$r$-filling} if the control function $\ctrlf$ satisfies $\ctrlf(0)=0.$
\end{definition}

This seemingly elementary definition implies several interesting properties for the filling maps that we will prove in \cref{sec:prop_of_coarse_filling} below. Among them, we state a crucial right now, while deferring its proof until the very end of  \cref{sec:prop_of_coarse_filling}. This result says that for a coarse filling, we can often choose the control function to be affine. Recall that a function $f:[0,\infty) \to [0,\infty)$ is said to be affine if $f(x)=ax+b$. Note that $f$ is necessarily non-decreasing since $f$ maps $\Rb_{\geq 0}$ to itself and hence, $a \geq 0$.

\begin{restatable}{proposition}{AffineCtrlf}
\label{prop: affine_control_function weak}
    Suppose $(X,d)$ is a geodesic metric space and $(F,\ctrlf)$ is a coarse $r$-filling on it. Then the function $\ctrlf$ can be replaced by a non-decreasing affine function.
\end{restatable}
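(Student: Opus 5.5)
The plan is to show that, on a geodesic space, the weak convexity axiom (F4) propagates linearly. Only (F4) involves $\ctrlf$ at a positive argument; (F0)--(F3) use only $\ctrlf(0)$. So the goal is a non-decreasing affine $\ctrlf'(R)=aR+b$ with $\ctrlf'\ge \ctrlf(0)$ for which (F4) still holds. I would take $b\ge \max\{\ctrlf(0),\ctrlf(1)\}$ and $a=\ctrlf(1)$ (or a small multiple of it).

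\textbf{Step 1: reduce to a chain of short moves.} Fix $a_0,\dots,a_k$, write $F:=F_k(a_0,\dots,a_k)$, and take $x$ with $d(x,F)<R$. Pick $y\in F$ with $d(x,y)<R$. Use geodesicity to choose points $y=z_0,z_1,\dots,z_N=x$ along a geodesic from $y$ to $x$, with consecutive distances less than $1$ and $N\le \lceil R\rceil+1$.

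\textbf{Step 2: the first replacement.} Let $F^{(m)}:=F_k(a_0,\dots,a_{j-1},z_m,a_{j+1},\dots,a_k)$. Since $z_0=y\in F$, (F4) with radius $0$ gives $F^{(0)}\subseteq \ngh{F}{\ctrlf(0)}$.

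\textbf{Step 3: the inductive step.} The natural move is to apply (F4) to the tuple $(a_0,\dots,z_m,\dots,a_k)$ with the new point $z_{m+1}$. This needs $z_{m+1}$ to lie within a bounded distance of $F^{(m)}$.

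\emph{This is the main obstacle.} There is no reason for $z_m$ itself to lie in or near $F^{(m)}$, because no axiom says a vertex lies near its filling. I would get this from (F0), (F1) and (F3). Repeatedly applying (F1) drops all other vertices and shows $F_0(z_m)\subseteq \ngh{F^{(m)}}{c}$ for a constant $c$ depending on $r$ and $\ctrlf(0)$. Then (F0) puts $z_m$ within $\ctrlf(0)+c$ of $F^{(m)}$. One technical point is that (F1) with open neighbourhoods composes additively, so the constants stay controlled. Granting this, $d(z_{m+1},F^{(m)})<1+c'$, and (F4) gives
\[
F^{(m+1)}\subseteq \ngh{F^{(m)}}{\ctrlf(1+c')}.
\]

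\textbf{Step 4: chain the inclusions.} The triangle inequality for neighbourhoods then yields
\[
F^{(N)}\subseteq \ngh{F}{\ctrlf(0)+N\,\ctrlf(1+c')}\subseteq \ngh{F}{\ctrlf(1+c')R+2\ctrlf(1+c')+\ctrlf(0)},
\]
which is affine in $R$. Set $\ctrlf'(R):=\ctrlf(1+c')R+2\ctrlf(1+c')+\ctrlf(0)$. It is non-decreasing and satisfies $\ctrlf'(0)\ge \ctrlf(0)$, so (F0)--(F3) remain valid with $\ctrlf'$ in place of $\ctrlf$, and (F4) holds by Step 4.

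If Step 3's vertex-proximity argument fails, the fallback is to estimate $d(z_{m+1},F^{(m)})$ differently. Since $d(z_m,F)$ is controlled, one could apply (F4) directly relative to $F$ rather than $F^{(m)}$, using the already-established bound on $F^{(m)}$. That route risks non-linear compounding, which is exactly why securing the vertex-proximity bound is the crux.
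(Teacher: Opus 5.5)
Your proposal is correct and is essentially the paper's argument. The paper gets a one-step estimate from vertex proximity plus (F4) in \cref{lem: lipschitz property}, makes it affine by subdividing a geodesic in \cref{lem:lipschitz_property_of_filling weak}, and then combines this with (F4) at radius $0$ for the point $y\in F_k(a_0,\dots,a_k)$, which matches your Steps 1--4. Your constant $c$, obtained by iterating (F1) $k$ times, is the careful version of \cref{rem: vertex containment}(1), which the paper states with the bound $\ctrlf(0)$; this changes only constants, and (F3) is not needed.
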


Since the filling $(F,\ctrlf)$ is a coarse object, we need a notion for the equivalence between such objects. 

\begin{definition}
\label{defn:coarse_equiv_of_fillings}
    We say that two coarse $r$-fillings $(F,\ctrlf)$ and $(F',\ctrlf')$ on $(X,d)$ are \emph{ coarsely equivalent} 
    if there exists $C'\geq 0$ such that, for all $i=0,\dots,r$, we have 
    \begin{align*}
     \sup_{a_0,\dots,a_i \in X} d^{\Haus}\left( F_i(a_0,\dots,a_i),F'_i(a_0,\dots,a_i) \right) \leq C'.
    \end{align*}
\end{definition}
\begin{remark*}
    Let $(F,\ctrlf)$ and $(F',\ctrlf')$ be two equivalent coarse $r$-fillings on a metric space $(X,d)$. Let $C'\ge0$ as in \cref{defn:coarse_equiv_of_fillings}. Define the function $\ctrlf'':\mathbb{R}_{\ge0}\to\mathbb{R}_{\ge0}$ as $\ctrlf''(R+C')\coloneqq\ctrlf(R)+2C'$ for any $R\ge0$. Then, by the triangle inequality, the pair $(F',\ctrlf'')$ is a coarse $r$-filling.
\end{remark*}

We will use the following notation notion in \cref{defn:generic_point_for_coarse_filling_weak} below: if $I\subset \{0,\dots,k\}$, we define $$F_{|I|-1}(a_i: i\in I):=F_{p-1}(a_{j_1},\dots,a_{j_p}),$$ 
where $I=\{j_1,\dots,j_p\}$ and $j_1 < \dots < j_p$.

\subsection{Generic tuples}
The following definition is motivated by \cref{def: generic_points_in_Hilbert_new} -- the notion of generic points for properly convex domains. Intuitively, we want to capture configurations of points where combinatorially distant subsets of points have geometrically distant fillings.

\begin{definition}\label{def: generic_tuples_general}
    Let $(X,d)$ be a metric space with a coarse $r$-filling $(F,\ctrlf)$. Fix $\delta,D\geq 0$. 
    We say that an $(r+2)$-tuple $(a_0,\dots,a_{r+1})\in X^{r+1}$ is $\boldsymbol{(\delta,D)}$\textbf{-generic} (with respect to $(F,\ctrlf)$)
    if: for any family of non-empty proper subsets $I_1,\dots,I_q \subseteq \{0,\dots,r+1\}$ such that 
    \begin{equation*}
    \bigcap_{j=1}^q I_j=\emptyset\quad\text{and}\quad\diam\left(\bigcap_{j=1}^q\ngh{F_{\abs{I_j}-1}(a_i\mid i\in I_j)}{\delta}\right)> D
    \end{equation*}
    we must have $|I_1|=\dots=|I_q|=r+1$. 
    
    Otherwise, we say that the points $a_0,\dots,a_{r+1}$ are $\boldsymbol{(\delta,D)}$\textbf{-non-generic} (with respect to $(F,\ctrlf)$). 
    \label{defn:generic_point_for_coarse_filling_weak}
\end{definition}
When the filling is clear from context, we will drop the trailing reference to $(F,\ctrlf)$ in the definitions above.

\begin{notation} \label{notn:set_of_generic_pts}
    Suppose $(X,d)$ is a metric space, $r \in \Nb$, and $\delta,D \geq 0$. We define $$X^{(r+2)}_{(\delta,D)}:=\{(x_0,\dots,x_{r+1})\in X^{r+2} \mid (x_0,\dots,x_{r+1}) \text{ is } (\delta,D)\text{-generic}\}.$$ Then, obviously, $X^{(r+2)}\setminus X^{(r+2)}_{(\delta,D)}$ is the set of $(\delta,D)$-non-generic tuples.
\end{notation}

\begin{observation}
\label{obs: generic_increase_parameters}
    Fix $\delta,D \geq 0$. If $D'>D$, then $X^{(r+2)}_{(\delta,D)}\subset X^{(r+2)}_{(\delta,D')}$. If $\delta' > \delta$, then $X^{(r+2)}_{(\delta',D)}\subset X^{(r+2)}_{(\delta,D)}$.
\end{observation}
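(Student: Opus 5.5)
The plan is to prove each containment directly from \cref{def: generic_tuples_general} by contraposition. The key point is that the non-genericity condition is monotone in both parameters.

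For the first claim, fix $D'>D$ and a $(\delta,D)$-generic tuple $(a_0,\dots,a_{r+1})$. Take any family of non-empty proper subsets $I_1,\dots,I_q\subseteq\{0,\dots,r+1\}$ with $\bigcap_j I_j=\emptyset$ and
\[
\diam\Bigl(\bigcap_{j=1}^q\ngh{F_{|I_j|-1}(a_i\mid i\in I_j)}{\delta}\Bigr)>D'.
\]
Since $D'>D$, the same diameter is also $>D$. The $(\delta,D)$-genericity hypothesis then forces $|I_1|=\dots=|I_q|=r+1$. This is exactly the condition required for $(\delta,D')$-genericity, so $X^{(r+2)}_{(\delta,D)}\subset X^{(r+2)}_{(\delta,D')}$.

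For the second claim, fix $\delta'>\delta$ and a $(\delta',D)$-generic tuple. Take a family $I_1,\dots,I_q$ as above with $\bigcap_j I_j=\emptyset$ and diameter of the $\delta$-neighborhood intersection $>D$. For every set $C$ we have $\ngh{C}{\delta}\subseteq\ngh{C}{\delta'}$. This holds also when $\delta=0$, using the convention $\ngh{C}{0}=C$ together with $C\subseteq \ngh{C}{\delta'}$ for $\delta'>0$. Hence the intersection of the $\delta$-neighborhoods is contained in the intersection of the $\delta'$-neighborhoods. The diameter is monotone under inclusion, so the $\delta'$-intersection also has diameter $>D$. By $(\delta',D)$-genericity, all $|I_j|=r+1$. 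Therefore the tuple is $(\delta,D)$-generic, giving $X^{(r+2)}_{(\delta',D)}\subset X^{(r+2)}_{(\delta,D)}$.

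Nothing here is a real obstacle. The only point to check carefully is the $\delta=0$ edge case in the neighborhood inclusion, which the paper's convention handles, and the fact that $\diam$ is monotone even with the convention $\diam(\emptyset)=0$.
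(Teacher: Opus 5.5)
Your proof is correct. Both containments follow directly from the definition of $(\delta,D)$-genericity: the triggering condition ``diameter $>D'$'' implies ``diameter $>D$'', and $\mathcal{N}_\delta(C)\subseteq\mathcal{N}_{\delta'}(C)$ for $\delta<\delta'$ (including $\delta=0$) together with monotonicity of $\diam$. This is exactly the routine verification the paper leaves implicit, since it states the observation without proof.
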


\begin{remark}\label{rmk:existence_generic_tuples}
    The set $X^{(r+2)}_{(\delta,D)}$ may be empty in general. However, if $D\ge2((r+2)\ctrlf(0)+\delta)$, this set is always non-empty. Indeed, for any fixed $o\in X$, we can consider the $(r+2)$-tuple $(o,\dots,o)\in X^{r+2}$. In this case, for any $h=0,\dots,r+1$, by applying (F1) multiple times, we get
    $$
    F_h(o,\dots,o)\subseteq\ngh{o}{(h+1)\ctrlf(0)}.
    $$
    Consequently,  $(o,\dots,o)\in X^{(r+2)}_{(\delta,2((r+2)\ctrlf(0)+\delta))}\subset X^{(r+2)}_{(\delta,D)}$, by \cref{obs: generic_increase_parameters}.
\end{remark}

\subsection{Definition of a coarse $r$-median structure on a metric space}

\begin{definition}\label{defn:coarse_r_median}
    Fix $r\ge1$. We say that a metric space $(X,d)$ has a \emph{\textbf{coarse $r$-median structure}} if there exists 
    \begin{itemize}
        \item a coarse $r$-filling $(F,\ctrlf)$ on $(X,d)$,
        \item a constant $\delta_0\ge0$, 
        \item a non-constant affine function\footnote{That is, there exist $a>0$ and $b\ge0$ such that $\lambda(\delta)=a(\delta-\delta_0)+b$ for any $\delta \in [\delta_0,\infty)$} $\lambda:[\delta_0,\infty) \to [0,\infty)$, and
        \item a function $\Psi:[\delta_0,\infty) \times [\delta_0,\infty) \to [0,\infty)$ that is non-decreasing in both entries,
        
    \end{itemize} such that the following properties hold:

    \begin{enumerate}
        \item[(CM1)] for any $a_0,\dots,a_{r+1} \in X$,  
        $$ \bigcap_{j=0}^{r+1} \ngh{F_r(a_0,\dots,\wh{a}_j,\dots, a_{r+1})}{\lambda(\delta_0)} \neq \emptyset, 
        $$ 
        \item[(CM2)] given any $\delta,D \geq \delta_0$, for any $(a_0,\dots, a_{r+1}) \in X^{(r+2)}_{(\delta,D)}$,
        \begin{align*}
           \diam\left( \bigcap_{j=0}^{r+1} \ngh{F_{r}(a_0,\dots,\wh{a}_j,\dots, a_{r+1})}{\lambda(\delta)}\right)\leq \Psi(\delta,D).
        \end{align*}
        
    \end{enumerate}

    We say that $(F,\ctrlf,\delta_0,\lambda, \Psi)$ are the \textbf{parameters} of the coarse $r$-median structure on $(X,d)$ associated to $(F,\ctrlf)$.

    Moreover, this coarse $r$-median structure will be called an \textbf{$r$-median structure} when $(F,\ctrlf)$ is an $r$-filling, $\delta_0=\Psi(0,0)=0$, and $\lambda$ is linear.
\end{definition}

If the functions $\ctrlf$ or $\Psi$ are affine, then the coarse $r$-median structure often has better properties; see for example \cref{prop:equiv_coarse_1_median} or \cref{cor:coarse_median_on_asymp_cone}. So we define:

\begin{definition}
\label{defn:coarse_r_median_with_affine_params}
    Let $(X,d)$ be a metric space that admits a coarse $r$-median with parameters $(F,\ctrlf,\delta_0,\lambda,\Psi)$. We say that:
    \begin{itemize}
        \item the \emph{coarse $r$-median is $\Psi$-affine} if $(R,D) \mapsto \Psi(R,D)$ is affine in both coordinates.
        \item the \emph{coarse $r$-median is $(\ctrlf,\Psi)$-affine} if $x \mapsto \ctrlf(x)$ is affine and $(R,D) \mapsto \Psi(R,D)$ is affine in both coordinates. 
    \end{itemize}
\end{definition}

\subsection{Coarse $r$-median map on a metric space}
We have a coarse median map, similarly as in \cref{sec: Coarse rF-median map}.
\begin{definition}\label{defn:coarse_r_median_map_general}
    Fix $r\in\mathbb{N}$. Let $(X,d)$ be a metric space endowed with a coarse $r$-median structure with parameters $\mathcal{CM}=(F,\ctrlf,\delta_0,\lambda,\Psi)$. Then, the \emph{\textbf{coarse $r$-median map} associated to $\mathcal{CM}$} is the function $\mu_{\mathcal{CM}}:X^{r+2}\to\Pc(X)\setminus\{X,\emptyset\}$ given by 
    $$
    \mu_{\mathcal{CM}}(x_0,\dots,x_{r+1})\coloneqq \bigcap_{j=0}^{r+1} \ngh{F_r(a_0,\dots,\wh{a}_j,\dots, a_{r+1})}{\lambda(\delta_0)}.
    $$
\end{definition}
\begin{remark} 
\label{rmk:relation_coarse_median_map_on_generic}
Recall the notation $X^{(r+2)}_{(\delta,D)}$ from \cref{notn:set_of_generic_pts}. 
\begin{enumerate}
    \item Condition (CM1) in \cref{defn:coarse_r_median} implies that $\mu_{\mathcal{CM}}$ is well-defined.
    \item Fix $D_0\coloneqq 2((r+2)\ctrlf(0)+\delta_0)$. By \cref{rmk:existence_generic_tuples}, the set $X^{(r+2)}_{(\delta_0,D_0)}$ is non-empty. Condition (CM2) and \cref{defn:coarse_centroid} imply that the restriction of $\mu_{\mathcal{CM}}$ on $X^{(r+2)}_{(\delta_0,D_0)}$ yields a map that takes values (coarsely) in $X$. 
    More precisely, for any $(x_0,\dots,x_{r+1})\in X^{(r+2)}_{(\delta_0,D_0)}$, choosing some $\mu_{(\delta_0,D_0)}(x_0,\dots,x_{r+1})\in\mu_{\mathcal{CM}}(x_0,\dots,x_{r+1})$, gives a coarsely well defined map 
    $$
    \mu_{(\delta_0,D_0)}:X^{(r+2)}_{(\delta_0,D_0)}\to X.
    $$
    This is because $\diam(\mu_{\mathcal{CM}}(x_0,\dots,x_{r+1}))$ is uniformly bounded on $X^{(r+2)}_{(\delta_0,D_0)}$. 
    \item In a similar way, for any fixed $\delta\ge\delta_0$ and $D\ge 2((r+2)\ctrlf(0)+\delta)$, we have a well-defined map
    $\mu_{\delta}:X^{r+2}\to\Pc(X)\setminus\{X,\emptyset\}$ given by 
    $$
    \mu_{\delta}(x_0,\dots,x_{r+1})\coloneqq \bigcap_{j=0}^{r+1} \ngh{F_r(a_0,\dots,\wh{a}_j,\dots, a_{r+1})}{\lambda(\delta)},
    $$
    and a coarsely well defined map $\mu_{(\delta,D)}:X^{(r+2)}_{(\delta,D)}\to X$ defined by 
    \begin{align*}
        (x_0,\dots,x_{r+1}) \mapsto \mu_{(\delta,D)}(x_0,\dots,x_{r+1}) \in\bigcap_{j=0}^{r+1} \ngh{F_r(x_0,\dots,\wh{x}_j,\dots, x_{r+1})}{\lambda(\delta)}.
    \end{align*}
    In particular, (CM2) implies that for any $(x_0,\dots,x_{r+1})\in X^{(r+2)}_{(\delta,D)}$, we have 
    $$
    d(\mu_{(\delta_0,D_0)}(x_0,\dots,x_{r+1}),\mu_{\delta}(x_0,\dots,x_{r+1}))\le\Psi(\delta,D).
    $$
\end{enumerate}
\end{remark}

\subsection{Some properties of coarse fillings}
\label{sec:prop_of_coarse_filling}
We start by recording some immediate consequences of the definition of a coarse filling (\cref{defn:coarse_filling_weak}). 

\begin{observation} \label{rem: vertex containment}
Suppose $(F,\ctrlf)$ is a coarse $r$-filling on $(X,d)$, $k \in \{0,\dots, r\}$, and $a_0,\dots,a_k,a_{k+1} \in X$.
    Then
    \begin{enumerate}
        \item $a_i\in\ngh{F_k(a_0,\dots,a_k)}{\ctrlf(0)}$ for any $i\in \{0,\dots,k\}$.
        \item $d^{\Haus}(F_{k+1}(a_0,\dots,a_k,a_{k+1}),F_k(a_0,\dots,a_k))\le \ctrlf(d(a_k,a_{k+1})+\ctrlf(0))+\ctrlf(0)$.
    \end{enumerate}
\end{observation}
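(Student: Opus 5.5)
Both parts come straight from the axioms (F1)--(F4) of \cref{defn:coarse_filling_weak}, by induction and the triangle inequality.

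For (1), I would induct on $k$. When $k=0$, (F0) gives $d^{\Haus}(a_0,F_0(a_0))\le \ctrlf(0)$, so $a_0\in \ngh{F_0(a_0)}{\ctrlf(0)}$. Strictly, the open neighborhood needs either a strict inequality or the boundary case absorbed; I would note that the paper's neighborhoods are open and handle equality by the convention, or just accept the closed-neighborhood reading. For the inductive step with $k\ge 1$, fix $i$ and choose $j\neq i$. Then $a_i$ is one of the inputs of $F_{k-1}(a_0,\dots,\wh{a_j},\dots,a_k)$. A naive induction would give $a_i\in \ngh{F_{k-1}(\cdots)}{\ctrlf(0)}$, and (F1) would then put it in $\ngh{F_k}{2\ctrlf(0)}$. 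That bound is too weak, so I would not iterate this way.

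The better route is to avoid accumulation and go through $F_0$ directly. By (F0), $F_0(a_i)$ lies within Hausdorff distance $\ctrlf(0)$ of $a_i$. The trouble is that chaining (F1) from $F_0$ up to $F_k$ adds a further $\ctrlf(0)$ at each level. Since the statement claims exactly $\ctrlf(0)$, the intended argument is probably different: apply (F4) with $R=0$, or use (F3) and (F2) to view $F_k(a_0,\dots,a_k)$ as containing a coarse copy of a degenerate filling. I would try the cleanest reading first: interpret $F_0(a_i)=\{a_i\}$ as essentially forced, so that the points $a_i$ are literal members of the lower fillings, and then a single application of (F1), after reordering with (F2), gives the bound. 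This constant bookkeeping is the main obstacle. If exact $\ctrlf(0)$ cannot be obtained, I would settle for a bound $c_k\ctrlf(0)$ and note that the paper later only uses coarse constants.

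For (2), write $F=F_k(a_0,\dots,a_k)$ and $F'=F_{k+1}(a_0,\dots,a_k,a_{k+1})$. The Hausdorff distance has two directions.
\begin{itemize}
\item $F\subset \ngh{F'}{\ctrlf(0)}$ holds directly by (F1), deleting index $k+1$.
\item For $F'\subset \ngh{F}{\cdot}$, first use (F3) to get $F_{k+1}(a_0,\dots,a_k,a_k)\subset \ngh{F}{\ctrlf(0)}$. Next, $a_{k+1}$ lies within $d(a_k,a_{k+1})+\ctrlf(0)$ of $F_{k+1}(a_0,\dots,a_k,a_k)$, using part (1) applied to the vertex $a_k$. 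Then (F4), replacing the last entry $a_k$ by $x=a_{k+1}$ with $R=d(a_k,a_{k+1})+\ctrlf(0)$, gives $F'\subset \ngh{F_{k+1}(a_0,\dots,a_k,a_k)}{\ctrlf(R)}$.
\end{itemize}
Combining the last two inclusions by the triangle inequality for neighborhoods yields $F'\subset\ngh{F}{\ctrlf(R)+\ctrlf(0)}$, which is the stated bound.
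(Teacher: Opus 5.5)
Your argument for (2) is the paper's argument. One inclusion is (F1), giving $F_k(a_0,\dots,a_k)\subseteq\ngh{F_{k+1}(a_0,\dots,a_{k+1})}{\ctrlf(0)}$. For the reverse inclusion you apply part (1) to the vertex $a_k$ of $F_{k+1}(a_0,\dots,a_k,a_k)$, then (F4) with $R=d(a_k,a_{k+1})+\ctrlf(0)$, then (F3).

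The gap is part (1), which you do not prove with the constant $\ctrlf(0)$, and your proposed rescue does not work. Nothing forces $F_0(a)=\{a\}$: (F0) only bounds a Hausdorff distance, and push-forward fillings, for instance, have $F_{0,*}(y)=f(F_0(g(y)))$, which is generally not $\{y\}$. Even when $F_0(a)=\{a\}$, one application of (F1) only reaches level $1$. For $k\ge 2$ the vertex $a_i$ need not lie in any lower filling, so each level costs another $\ctrlf(0)$, and (F2) does not help. What (F0) and (F1) honestly give is $a_i\in\ngh{F_k(a_0,\dots,a_k)}{(k+1)\ctrlf(0)}$.

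No cleverer use of (F2)--(F4) can close this, because (1) as stated is false. The paper's justification ``immediate from (F0) and (F1)'' overlooks exactly the accumulation you noticed. Here is a counterexample:
\begin{itemize}
\item take $X=[0,\infty)$, $r=1$, and $\ctrlf(R)=R+1$;
\item set $\phi(a)=\max\{a,\tfrac34\}$ and $\psi(a)=\max\{a,\tfrac32\}$;
\item let $F_0(a)=\{\phi(a)\}$, and let $F_1(a_0,a_1)$ be the closed segment with endpoints $\psi(a_0),\psi(a_1)$.
\end{itemize}
Both $\phi$ and $\psi$ are $1$-Lipschitz retractions, with $|\phi(a)-a|\le\tfrac34$ and $|\phi(a)-\psi(a)|\le\tfrac34$. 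Moreover $\psi$ fixes $F_1(a_0,a_1)\subseteq[\tfrac32,\infty)$ pointwise. So (F0)--(F4) are routine. For (F4) with $k=1$, note that $\psi(x)$ lies within $d(x,F_1(a_0,a_1))$ of $F_1(a_0,a_1)$, and neighborhoods of segments are convex. This is a coarse $1$-filling on a geodesic space with affine control function. Yet $F_1(0,0)=\{\tfrac32\}$ is at distance $\tfrac32>\ctrlf(0)$ from $0$. The same thing happens at $k=2$ with $F_0(a)=\{a\}$ exact, if you build $F_1$ from $\phi$ and $F_2$ from $\psi$.

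So the correct version is your fallback, made precise. Prove $a_i\in\ngh{F_k(a_0,\dots,a_k)}{(k+1)\ctrlf(0)}$ by chaining; your argument for (2) then gives the bound $\ctrlf\bigl(d(a_k,a_{k+1})+(k+2)\ctrlf(0)\bigr)+\ctrlf(0)$. Alternatively, since (F0)--(F4) survive enlarging the control function, you can replace $\ctrlf$ by $\ctrlf+(r+1)\ctrlf(0)$. Then (1) and (2) hold verbatim, which is all that later uses such as the Lipschitz property require.
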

\begin{proof}
    (1) is immediate from Conditions (F0) and (F1) in \cref{defn:coarse_filling_weak}. 
    To prove (2), first note that  $a_k\in\ngh{F_{k+1}(a_0,\dots,a_k,a_k)}{\ctrlf(0)}$ by part (1), and hence, $$a_{k+1}\in\ngh{F_{k+1}(a_0,\dots,a_k,a_k)}{\ctrlf(0)+d(a_k,a_{k+1})}.$$
    Then (F4) implies
    $$F_{k+1}(a_0,\dots,a_k,a_{k+1})\subset\ngh{F_{k+1}(a_0,\dots,a_k,a_k)}{\ctrlf(\ctrlf(0)+d(a_k,a_{k+1}))}.$$ Finally (F3) implies
    $$F_{k+1}(a_0,\dots,a_k,a_{k+1})\subset\ngh{F_{k}(a_0,\dots,a_k)}{\ctrlf(\ctrlf(0)+d(a_k,a_{k+1}))+\ctrlf(0)}.$$
    On the other hand, (F1) implies $F_{k}(a_0,\dots,a_k)\subset\ngh{F_{k+1}(a_0,\dots,a_{k+1})}{\ctrlf(0)}.$
 \end{proof}

\begin{lemma}[Lipschitz property]\label{lem: lipschitz property}
    Suppose $(F,\ctrlf)$ is a coarse $r$-filling on $(X,d)$, $k \in \{0,\dots, r\}$, and $a_0,a_1,\dots,a_k\in X$. If $i\in \{0,\dots,k\}$ and $a'_i \in X$, then 
        \begin{equation*}
        d^{\Haus}\big(F_k(a_0,\dots,a'_i,\dots,a_k), F_k(a_0,\dots,a_i,\dots,a_k)\big) \leq \ctrlf(d(a_i,a'_i)+\ctrlf(0)).
        \end{equation*}
\end{lemma}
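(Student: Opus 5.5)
The plan is to use (F4) once, with the right radius, and then symmetrize. Set $R:=d(a_i,a'_i)+\ctrlf(0)$. First I show that $a'_i$ lies within $R$ of $F_k(a_0,\dots,a_i,\dots,a_k)$. By \cref{rem: vertex containment}(1), $a_i\in\ngh{F_k(a_0,\dots,a_k)}{\ctrlf(0)}$. The triangle inequality then gives
\[
a'_i\in\ngh{F_k(a_0,\dots,a_k)}{\ctrlf(0)+d(a_i,a'_i)}=\ngh{F_k(a_0,\dots,a_k)}{R}.
\]

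Next I apply (F4) with $x=a'_i$ and $j=i$. This gives
\[
F_k(a_0,\dots,a'_i,\dots,a_k)\subseteq \ngh{F_k(a_0,\dots,a_i,\dots,a_k)}{\ctrlf(R)}.
\]
This is one half of the Hausdorff bound.

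For the other half, I swap the roles of $a_i$ and $a'_i$. By \cref{rem: vertex containment}(1), applied now to the tuple $(a_0,\dots,a'_i,\dots,a_k)$, we get $a'_i\in\ngh{F_k(a_0,\dots,a'_i,\dots,a_k)}{\ctrlf(0)}$. Hence $a_i\in\ngh{F_k(a_0,\dots,a'_i,\dots,a_k)}{R}$, since $d$ is symmetric. Applying (F4) to this tuple with $x=a_i$ replacing $a'_i$ gives
\[
F_k(a_0,\dots,a_i,\dots,a_k)\subseteq\ngh{F_k(a_0,\dots,a'_i,\dots,a_k)}{\ctrlf(R)}.
\]
Together the two containments bound the Hausdorff distance by $\ctrlf(R)$, as claimed. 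The bound uses only that every point of one set lies within distance less than $\ctrlf(R)$ of the other set. No monotonicity of $\ctrlf$ is needed, because the radius $R$ is the same in both directions.

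The only subtle point is the case $k=0$. There (F1) is vacuous, so \cref{rem: vertex containment}(1) must come from (F0) alone. (F0) gives $d^{\Haus}(a_0,F_0(a_0))\le\ctrlf(0)$, which places $a_0$ in the closed $\ctrlf(0)$-neighborhood of $F_0(a_0)$ rather than the open one. If strictness matters, I would replace $R$ by $R+\varepsilon$ and let $\varepsilon\to0$. That limit requires right-continuity of $\ctrlf$, or else I would accept the bound with a non-strict neighborhood. Apart from this, the argument is two applications of (F4) and involves no real difficulty.
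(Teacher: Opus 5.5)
Your argument is the paper's proof. The paper also uses \cref{rem: vertex containment}(1) and the triangle inequality to put $a'_i$ within $d(a_i,a'_i)+\ctrlf(0)$ of $F_k(a_0,\dots,a_k)$, applies (F4) once, and then swaps $a_i$ and $a'_i$. The open-versus-closed neighborhood subtlety you flag is not addressed there: the paper simply asserts $d(a_i,F_k(a_0,\dots,a_k))<\ctrlf(0)$. The same subtlety also arises when $\ctrlf(0)=0$, since the convention $\Nc_0(F)=F$ then only gives $d(a'_i,F_k(a_0,\dots,a_k))\le d(a_i,a'_i)$, not strict inequality.
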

\begin{proof}By \cref{rem: vertex containment}(1), $d(a_i,F_k(a_0,\dots,a_k))<\ctrlf(0)$. Then $$d(a_i',F_k(a_0,\dots,a_k))<\ctrlf(0)+d(a_i,a'_i).$$ Then, property (F4) implies that 
$$
F_k(a_0,\dots,a'_i,\dots,a_k) \subseteq \ngh{F_k(a_0,\dots,a_i,\dots,a_k)}{\ctrlf(d(a_i,a_i')+\ctrlf(0))}.
$$
By switching the role of $a_i$ and $a_i'$, we obtain the result. 
\end{proof}

\begin{lemma}\label{lem:lipschitz_property_of_filling weak}
    Suppose $(X,d)$ is a geodesic metric space and $(F,\ctrlf)$ is a coarse $r$-filling on it. Then there exist constants $A ,B \geq 0$, depending on $\ctrlf$, such that $$d^{\Haus}(F_k(a_0,a_1,\dots,a_k), F_k(a_0',a_1',\dots,a_k')) \leq A \left(\sum_{j=0}^kd(a_j,a_j') \right) +B.$$
\end{lemma}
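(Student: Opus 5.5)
The plan is to reduce everything to the single-coordinate estimate of \cref{lem: lipschitz property}, combined with a chaining argument along geodesics that turns the control function's behavior at a large distance into behavior at unit scale.

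\emph{Step 1: unit-scale Lipschitz bound.} Set $c:=\ctrlf(1+\ctrlf(0))$. By \cref{lem: lipschitz property}, if $a_i'$ satisfies $d(a_i,a_i')\le 1$, then replacing $a_i$ by $a_i'$ moves $F_k$ by at most $c$ in Hausdorff distance. This uses only that $\ctrlf$ is non-decreasing.

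\emph{Step 2: chaining along a geodesic.} Since $X$ is geodesic, for each $j$ choose a geodesic from $a_j$ to $a_j'$. Subdivide it into $N_j:=\lceil d(a_j,a_j')\rceil$ pieces, each of length at most $1$, with successive points $a_j=p_0,p_1,\dots,p_{N_j}=a_j'$. Move the $j$-th coordinate through these points one step at a time, keeping the other coordinates fixed (the earlier ones already replaced by their primed versions). Step 1 applies at each move, and the triangle inequality for $d^{\Haus}$ gives
\[
d^{\Haus}\bigl(F_k(\dots,a_j,\dots),F_k(\dots,a_j',\dots)\bigr)\le c\,N_j\le c\,d(a_j,a_j')+c .
\]

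\emph{Step 3: summing over coordinates.} Do this successively for $j=0,\dots,k$ and sum the bounds. This yields the claim with $A=c$ and $B=(k+1)c\le (r+1)c$, which depends only on $\ctrlf$ and $r$.

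The only subtle point is that $d^{\Haus}$ may be infinite or that the sets $F_k$ need not be closed. However, the triangle inequality for $d^{\Haus}$ on arbitrary nonempty subsets holds regardless, so this causes no problem. Nonemptiness of each $F_k$ follows from \cref{rem: vertex containment}(1).

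The main potential obstacle is that $\ctrlf$ could grow superlinearly, so a direct application at distance $d(a_i,a_i')$ would fail. Working at unit scale is exactly what circumvents this, and it is why the geodesic hypothesis is needed. The same chaining idea is presumably what underlies \cref{prop: affine_control_function weak}.
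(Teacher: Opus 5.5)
Your proof is correct and is essentially the paper's argument. The paper also applies \cref{lem: lipschitz property} at a fixed scale $R_0$ and chains along a geodesic from $a_0$ to $a_0'$, obtaining $A=B/R_0$ with $B=\ctrlf(R_0+\ctrlf(0))$; it chooses $R_0$ so that $\ctrlf(R_0)>0$, which is not actually needed, and you simply take $R_0=1$. The other difference is that the paper only treats the first coordinate and says the rest follows, whereas you sum over all coordinates explicitly, which makes visible that the additive constant becomes $(k+1)c$.
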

\begin{proof} Let $R:=d(a_0,a_0')$. \cref{lem: lipschitz property} implies that
    \begin{equation}\label{eqn: lipschitz}
        d^{\Haus}(F_k(a'_0,a_1,\dots,a_k), F_k(a_0,a_1,\dots,a_k)) \leq \ctrlf(R+\ctrlf(0)).
    \end{equation}

    If $\ctrlf$ is the zero function, there is nothing more to prove. So without loss of generality, we can pick $R_0 >0 $ so that $\ctrlf(R_0)>0$. Set $B\coloneqq \ctrlf(R_0+\ctrlf(0))$. As $\ctrlf$ is non-decreasing, $$B=\max_{R \in [0,R_0]}\ctrlf(R+\ctrlf(0)).$$ 
    
    Now note that it suffices to prove that
    \begin{align}\label{eqn:lip_in_first_coord weak}
       d^{\Haus}(F_k(a_0,a_1,\dots,a_k), F_k(a_0',a_1,\dots,a_k)) \leq A d(a_0,a_0')+B,
    \end{align}
    for some $A >0$. To prove this inequality, let $p\coloneqq \lfloor \frac{R}{R_0}\rfloor$ where $R = d(a_0,a_0')$. Now join $a_0$ and $a_0'$ by a geodesic, and pick points $u_0,\dots,u_{p+1}$ on this geodesic such that $u_0=a_0, u_{p+1}=a_{0}'$, and $d(u_0,u_1)=\dots=d(u_{p-1},u_{p})=R_0$. By triangle inequality, we get from \eqref{eqn: lipschitz} that
    \begin{align*}
        d^{\Haus}&(F_{k}(a_0,a_1,\dots,a_{k}),F_{k}(a_0',\dots,a_{k})) \\
        &\leq \sum_{j=0}^{p}d^{\Haus}(F_{k}(u_j,a_1,\dots,a_{k}),F_{k}(u_{j+1},a_1,\dots,a_{k})) \\
        & \leq B (p+1) = Bp+ B  \leq \frac{B}{R_0} R + B= A \cdot d(a_0,a_0')+B,
    \end{align*}
    where $A\coloneqq \frac{B}{R_0}$. 
\end{proof}

Now we recall \cref{prop: affine_control_function weak} and provide its proof.
\AffineCtrlf*
\begin{proof}
    We want to show that we can replace $\ctrlf$ in Definition \ref{defn:coarse_filling_weak} with an affine function. First, we notice that it suffices to check property (F4). Let $a_0,\dots,a_k\in X$ and fix $x\in\ngh{F_k(a_0,\dots,a_k)}{R}$. Then, there exists a point $y\in F_k(a_0,\dots,a_k)$ such that $d(x,y)<R$. By Lemma \ref{lem:lipschitz_property_of_filling weak}, for any $i=0,\dots,k$ we have
    \begin{equation}\label{eqn: linearity of control function 1}
        d^{\Haus}(F_k(a_0,\dots,a_{i-1},x,a_{i+1},\dots,a_k), F_k(a_0,\dots,a_{i-1},y,a_{i+1},\dots,a_k) \leq A\cdot R +B.
    \end{equation}
    Since $y\in\ngh{F_k(a_0,\dots,a_k)}{0}$, property (F4) implies that 
    \begin{equation}\label{eqn: linearity of control function 2}
        F_k(a_0,\dots,a_{i-1},y,a_{i+1},\dots,a_k)\subset\ngh{F_k(a_0,\dots,a_k)}{\ctrlf(0)}.
    \end{equation}
    Combining \eqref{eqn: linearity of control function 1} and \eqref{eqn: linearity of control function 2} with triangle inequality we get
    $$F_k(a_0,\dots,a_{i-1},x,a_{i+1},\dots,a_k)\subset\ngh{F_k(a_0,\dots,a_k)}{A\cdot R+B+\ctrlf(0)}.$$
    Therefore, we can replace $\ctrlf$ with the affine map $R\mapsto A\cdot R+(B+\ctrlf(0))$. It must be non-decreasing since $A$ can be chosen to be non-negative, see the proof of \cref{lem:lipschitz_property_of_filling weak}.
\end{proof}

\subsection{Example of coarse $r$-medians from convex projective geometry}
\label{sec: proof_of_pcd_are_coarse_median}
For the rest of this section, let $\Omega\subset\rpd$ be a quasi-homogeneous properly convex domain. For any $k\in\mathbb{N}$, we define $F_k:\Omega^{k+1}\to\Pc(\Omega) \setminus\{\emptyset,\Omega\}$ by 
\begin{equation*}
    (a_0,\dots,a_k) \mapsto F_k(a_0,\dots,a_k)\coloneqq\CH_\Omega(a_0,\dots,a_k).
\end{equation*}

\begin{proposition}\label{prop: filling_pcd}
    Let $\Omega\subset\rpd$ be a quasi-homogeneous properly convex domain.
    Fix $r\ge\rF(\Omega)$. Let $\ctrlf:\mathbb{R}_{\ge0}\to \mathbb{R}_{\ge0}$ denote the identity map.
    Then, the pair $(\Fc_r\coloneqq\{F_k\}_{k=0,\dots,r},\ctrlf)$, with $F_k$ defined as above, is an $r$-filling on $(\Omega,\hil)$.
\end{proposition}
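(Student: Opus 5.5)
We verify conditions (F0)--(F4) of \cref{defn:coarse_filling_weak} directly for $F_k=\CH_\Omega(\cdot)$ with $\ctrlf=\mathrm{id}$, so that $\ctrlf(0)=0$. With the convention $\ngh{C}{0}=C$, each of (F0)--(F3) then has to hold \emph{exactly}. None of this uses quasi-homogeneity or the hypothesis $r\ge\rF(\Omega)$; those only matter later, for (CM1) and (CM2). Here they just fix the range of indices $k=0,\dots,r$.

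First the exact properties. For (F0), note $\CH_\Omega(a_0)=\{a_0\}$, so the Hausdorff distance is $0$. For (F1), the set $\{a_0,\dots,\wh{a_i},\dots,a_k\}$ is contained in $\{a_0,\dots,a_k\}$, and the convex hull is monotone in its input, so the facet lies in the full hull. For (F2), the convex hull depends only on the underlying set of points, so it is invariant under permuting the inputs. For (F3), the sets $\{a_0,\dots,a_k,a_k\}$ and $\{a_0,\dots,a_k\}$ coincide, so the two hulls are equal. We also note that all values are compact, hence closed in $\Omega$, non-empty and not equal to $\Omega$, since they are convex hulls of finitely many points of $\Omega$. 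Thus $F_k$ lands in $\Pc(\Omega)\setminus\{\emptyset,\Omega\}$.

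The only substantive step is (F4) with $\ctrlf(R)=R$. Let $x\in\ngh{C}{R}$, where $C:=\CH_\Omega(a_0,\dots,a_k)$, and fix $j$. The neighborhood $\ngh{C}{R}$ is convex by \cref{cor: convexity of neighborhoods}. It contains every $a_i$ (since $a_i\in C$) and it contains $x$. Hence it contains the convex hull
\[
\CH_\Omega(a_0,\dots,a_{j-1},x,a_{j+1},\dots,a_k)=F_k(a_0,\dots,x,\dots,a_k),
\]
which is exactly (F4). Equivalently, one can invoke the maximum principle \cref{lem: maximum principle} on the compact set $F_k(a_0,\dots,x,\dots,a_k)$: the function $\hil(\cdot,C)$ attains its maximum there at an extreme point, and the extreme points lie among $a_0,\dots,x,\dots,a_k$, where the value is $<R$. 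The case $R=0$ is consistent, since then $x\in C$ and convexity of $C$ gives the containment.

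I expect the main point to check is that (F4) holds with a strict neighborhood. The extreme points give $\hil(\cdot,C)<R$ and the maximum is attained, so the bound stays strict. This yields an $r$-filling, because $\ctrlf(0)=0$.
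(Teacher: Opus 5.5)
Your proposal is correct and follows the paper's proof. (F0)--(F3) hold exactly by elementary properties of convex hulls. (F4) with $\ctrlf(R)=R$ follows because $\ngh{F_k(a_0,\dots,a_k)}{R}$ is convex (\cref{cor: convexity of neighborhoods}) and contains $x$ together with all the $a_i$.

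Your alternative via the maximum principle (\cref{lem: maximum principle}) is a fine way to see the strict bound directly. Your separate handling of $R=0$ through the convention $\ngh{C}{0}=C$ is a small detail the paper leaves implicit.
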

\begin{proof}
    By definition of convex hull (\cref{def: convex_hull_pcd}), it is clear that $(\Fc_r,\ctrlf)$ satisfies properties (F0)--(F3). 
    To prove that (F4) holds, fix some $k=0,\dots,r$, $a_0,\dots,a_k\in\Omega$ and $R\ge0$. Let $x\in\ngh{F_k(a_0,\dots,a_k)}{R}$. 
    For any $i=0,\dots,k$, $x,a_0,\dots,\wh{a_i},\dots,a_k\in\ngh{F_k(a_0,\dots,a_k)}{R}$. Then, \cref{cor: convexity of neighborhoods}
    implies that 
    $$
    \CH_\Omega(x,a_0,\dots,\wh{a_i},\dots,a_k)\subset\ngh{F_k(a_0,\dots,a_k)}{R}.
    $$
    Hence, property (F4) holds.
\end{proof}
\begin{remark}\label{rmk: two_def_genericity}
    The two notions of genericity for tuples, \cref{def: generic_points_in_Hilbert_new} and \cref{def: generic_tuples_general}, coincide. Then, following \cref{notn:set_of_generic_pts}, we denote by $\Omega^{(r+2)}_{(\delta,D)}$ the set of all $(\delta,D)$-generic $(r+2)$-tuples in $\Omega$.
\end{remark}

\subsubsection{Proof of \cref{thm:main_coarse_r_median_on_pcd}}
\label{sec:proof_of_main_coarse_r_median_on_pcd}
    The proof will follow from \cref{prop:CM_2_for_pcd}, and \cref{rmk: two_def_genericity}. 
    
    Let $r\ge\rF(\Omega)$. Consider the $r$-filling $(\Fc_r,\ctrlf)$ from \cref{prop: filling_pcd}. 
    Let $\delta'_0>0$ be the constant in \cref{prop:CM_2_for_pcd}. 

    Let $\Psi:[\delta'_0,\infty)\times[\delta'_0,\infty)\to[0,\infty)$ the function in \cref{prop:CM_2_for_pcd}, restricted to this product of sub-intervals.

    Let $\lambda:[\delta_0',\infty)\to[0,\infty)$ be given by $\lambda(\delta)=\frac{\delta}{2}$.
    It follows from \cref{prop:CM_2_for_pcd} that for any $\delta \geq \delta_0'$ and any $a_0,\dots,a_{r+1}\in\Omega$, we have
    \begin{equation*}
    \bigcap_{i=0}^{r+1}\ngh{F_r(a_0,\dots,\wh{a_i},\dots,a_{r+1})}{\lambda(\delta)}\neq\emptyset.
    \end{equation*}
    Therefore (CM1) holds.
    \cref{prop:CM_2_for_pcd} and \cref{rmk: two_def_genericity} imply that (CM2) holds with this choice of $\lambda,\Psi$.
    This proves that $(\Fc_r,\ctrlf,\delta_0',\lambda,\Psi)$ is a coarse $r$-median.

\subsubsection{Coarse $r$-median map and $\Aut(\Omega)$ invariance}\label{sec: Coarse rF-median map}
Let $\Omega\subset\rpd$ be as above and fix $r\ge\rF(\Omega)$. Then the \emph{coarse $r$-median map} $\mu_{\delta_0}:\Omega^{r+2} \to \Pc(\Omega)-\{\emptyset,\Omega\}$ of \cref{defn:coarse_r_median_map_general} is given by the following.

\begin{definition}
\label{defn:coarse_median_map_conv_proj_case}
    Fix some $\delta_0>0$ as in \cref{thm:main_coarse_r_median_on_pcd}. Define $\mu_{\delta_0}:\Omega^{r+2} \to \Pc(\Omega)\setminus\{\emptyset,\Omega\}$ by 
\begin{align*}
    \mu_{\delta_0}(x_0,\dots,x_{r+1})\coloneqq C_{\frac{\delta_0}{2}}(x_0,\dots,x_{r+1}),
\end{align*}
for all $(x_0,\dots,x_{r+1})\in\Omega^{r+2}$. This map $\mu_{\delta_0}$ is called the \emph{coarse $r$-median map}. 
\end{definition}

We conclude this section by observing that for any fixed $D\ge2\delta>2\delta_0$, given a $(\delta,D)$-generic $(r+2)$-tuple $V$ in a quasi-homogeneous domain $\Omega\subset\rpd$, there is a coarsely well-defined center of the simplex whose vertex set is $V$; moreover, the notion of center is equivariant \wrt\ the action of $\Aut(\Omega)$. 
More precisely, the coarse $r$-median map induces a map from the set of generic tuples in $\Omega$ to $\Omega$, as follows. 
Recall that $\Omega^{(r+2)}_{(\delta,D)}$ is the set of  $(\delta,D)$-generic tuples.
Then, we define the map
$\mu_{(\delta,D)}:\Omega^{(r+2)}_{(\delta,D)}\to\Omega$
given by
$$
\mu_{(\delta,D)}(x_0,\dots,x_{r+1})\coloneqq \com(C_{\frac{\delta}{2}}(x_0,\dots,x_{r+1})).
$$
We remark that when $r=1$, since $D\ge2\delta$, all triples of points are $(\delta,D)$-generic. In particular, $\Omega^{(3)}_{(\delta,D)}=\Omega^3$, and $\mu_{(\delta,D)}$ is a map that coarsely defines the center of any straight triangle. Note also that in this case, $\mu_{(\delta,D)}$ does not depend on $D$.

\section{Equivalence of coarse 1-median and Bowditch's coarse median}
\label{sec:equiv_with_bowditch_median}
In this section, we recall Bowditch's classical notion of coarse median and  prove that it is equivalent to our notion of coarse 1-median.

\subsection{Coarse median spaces}
Coarse medians were introduced by Bowditch \cite{BowditchCoarse} as a generalization of median algebras. They provide a unified framework for median metric spaces like metric trees and $(\Rb^p,\ell^1)$, finite-dimensional CAT(0) cube complexes, Gromov hyperbolic spaces like $\Hb^2$, products of Gromov hyperbolic spaces like $\Hb^2 \times \Hb^2$, and mapping class groups of most orientable compact surfaces. Let us recall the classical definition due to Bowditch. Informally, Bowditch asks for the existence of a coarsely Lipschitz maps on 3-tuples (see M1), and the ability to approximate finite sets using CAT(0) cube complexes in a quantitative controlled way (see M2). 
\begin{definition}[\cite{BowditchCoarse}]\label{defn:bowditch_defn-coarse_median}
  Suppose $(X,d)$ is a metric space. A \emph{coarse median on $X$} is a triple $(X,d,\mu)$ with $\mu:X^3 \to X$ provided there exist $k \geq 0$ and $h:\Nb \cup \{0\} \to [0,\infty)$ such that:
  \begin{enumerate}
      \item[(M1)] If $a_i,a'_i \in X$ for $i=1,2,3$, then $$d(\mu(a_1,a_2,a_3),\mu(a'_1,a'_2,a'_3)) \leq k(\sum_{i=1}^3d(a_i,a'_i))+h(0).$$
      \item[(M2)] Suppose $A \subset X$ with $1 \leq |A| \leq p <\infty$ for some $p\in \Nb$. Then there is a finite median algebra $(M,\mu_M)$ and maps $i:A \to M$ and $\lambda:M \to X$ such that:
      \begin{enumerate}
           \item for all $a \in A$, $d(a,\lambda \circ i(a)) \leq h(p)$, and 
          \item for all $x,y,z \in A$, $d(\lambda \circ \mu_M(x,y,z), \mu(\lambda \circ i (x), \lambda \circ i (y), \lambda \circ i (z))) \leq h(p)$.
      \end{enumerate}
  \end{enumerate}
\end{definition}

Bowditch's notion of a coarse median is  quasi-isometry invariant ({\cite[Lemma 8.1]{BowditchCoarse}}) and passes to $\ell_1$-products (\cite{ZeidlerThesis}). Bowditch proved that relatively hyperbolic groups, with `good' peripheral subgroups, also admit coarse medians.
\begin{proposition}[{\cite{BowditchRelHyp}}]\label{prop: Bowditch relative hyperbolicity}
    Let $G$ be a group. Assume that $G$ is relatively hyperbolic with respect to a family $\mathcal{H}$ of proper subgroups. Then, if all the elements of $\mathcal{H}$ admit a coarse median structure, then $G$ also admits a coarse median structure.
\end{proposition}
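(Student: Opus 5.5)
This statement is Bowditch's theorem \cite{BowditchRelHyp}, and it is cited rather than proved in the paper. A proof of it would follow Bowditch's strategy. Fix a finite relative generating set and consider a fine hyperbolic graph $K$ for the pair $(G,\mathcal{H})$; for instance, the coned-off Cayley graph, or Groves--Manning's cusped space. Each peripheral coset $gH$ comes with its coarse median $\mu_H$, transported by left translation, so it is equivariant. Given a triple $a_1,a_2,a_3\in G$, I would define $\mu(a_1,a_2,a_3)$ by cases, according to the coarse centre of the geodesic triangle in the coned-off graph.

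In the first case, the centre is a vertex of $G$, in the sense that the triangle is thin there and does not pass through a cone point. Then $\mu$ is that point, exactly as in hyperbolic spaces. In the second case, the centre is the cone point of a peripheral coset $gH$. Let $\pi_{gH}(a_i)$ be the entrance points of the three sides into $gH$; these are coarsely well defined by bounded coset penetration or fineness. Then set
\[
\mu(a_1,a_2,a_3):=\mu_H\bigl(\pi_{gH}(a_1),\pi_{gH}(a_2),\pi_{gH}(a_3)\bigr).
\]

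Axiom (M1) would then be checked in two steps. First, the projections $\pi_{gH}$ are coarsely Lipschitz; this uses the relative Morse/BCP property. Second, $\mu_H$ satisfies (M1) by hypothesis. Near the transition between the two cases, a uniform bound would show that the two definitions agree up to a bounded error.

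For (M2), given a finite set $A$, I would build its relative hull. This is a tree-graded approximation: a finite tree $T$ whose vertices are blown up into the peripheral pieces that $A$ meets deeply. Each piece $gH\cap\mathrm{hull}(A)$ receives a finite median approximation $M_H$ from the hypothesis, applied to the projections of $A$, of which there are at most $|A|$. These are glued along the tree to get a finite median algebra $M$. A tree of median algebras glued at points is again median, so $M$ is a median algebra.

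The main obstacle is the quantitative bookkeeping. One must show that only boundedly many (in terms of $p$) peripheral cosets are relevant, and that the errors from the hyperbolic approximation and from the peripheral approximations combine into a single function $h(p)$. I would first try to control the number of relevant cosets using fineness of $K$ together with the standard bounded-coset-penetration estimates.
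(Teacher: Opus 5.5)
The paper gives no proof of this statement. It is quoted from \cite{BowditchRelHyp} and used only as a black box in the proof of \cref{cor: coarse median on divisible properly convex domains with isolated flats}, so your opening remark matches the paper's treatment exactly. Implicitly, $G$ must be finitely generated and $\mathcal{H}$ finite; otherwise $G$ has no word metric for a coarse median to refer to.

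Your sketch of Bowditch's argument has a reasonable overall shape: a thin centre versus the $\mu_H$-median of projections to one peripheral coset, and tree-graded approximations for (M2). However, the step you single out as the main obstacle is false as stated, and it is not what the argument needs. The number of peripheral cosets that the hull of $A$ meets deeply is \emph{not} bounded in terms of $p$, already for $p=2$. Take $G=H*\Zb$ with $H=\Zb^2$ and $t$ a generator of $\Zb$. A geodesic from $1$ to $h_1th_2t\cdots h_nt$, with all $|h_i|$ large, penetrates $n$ distinct cosets deeply. Neither fineness nor bounded coset penetration can prevent this, so your plan for the bookkeeping would fail.

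What you actually need is weaker. If the projections of $A$ to a coset take only two far-apart values, the coset is merely crossed; it contributes an interval and needs no input from $\mu_H$. The hypothesis on $H$ matters only for branching cosets, where the projections of $A$ contain three pairwise far-apart points. There are boundedly many such cosets in terms of $p$, as with branch vertices of a tree with $p$ leaves. Moreover, in a tree of median algebras glued at points, each median is computed inside a single piece, so errors do not accumulate across pieces in any case. Passing to the median subalgebra generated by $i(A)$, whose size is bounded in terms of $p$, discards the crossed cosets altogether. This is convenient because Bowditch's original (M2) asks for the estimate on all triples of $M$.

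Two further points need care for (M1). First, make the case split robust. Ask whether some coset $gH$ has the three projections $\pi_{gH}(a_i)$ pairwise at distance at least a fixed constant; such a coset is then unique. Do not locate the ``centre'' in the coned-off graph: it is defined only up to bounded coned-off distance, which can be unbounded word distance near a cone point. Second, you need the peripheral subgroups to be undistorted in $G$. Only then is $\mu_H$, a coarse median for the word metric of $H$, also one for $d_G$ restricted to $gH$, and only then do your Lipschitz estimates go through.
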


\subsection{Coarse interval structure} In \cite{NWZ2021}, Niblo, Wright, and Zhang introduced the notion of a coarse interval structure and proved that it is equivalent to Bowditch's notion of coarse median. 
We now state the definition of a coarse interval structure. An astute reader may already observe the similarity between this definition and that of the notion of a coarse 1-filling (\cref{defn:coarse_filling_weak}) and coarse 1-median in our sense (\cref{defn:coarse_r_median}).

\begin{definition}[{\cite[Definition 3.11]{NWZ2021}}]
\label{defn:nwz_coarse_interval}
\label{def: coarse interval structure}
     Let $(X,d)$ be a metric space. A \emph{coarse interval structure} on $X$ is a triple $(X,d,[\cdot,\cdot])$, where $[\cdot,\cdot]:X^2\to\mathcal{P}(X)$ satisfies the following:
    \begin{itemize}
        \item [(I1)] There exists $k_0\ge0$ such that for all $a,b\in X$
        $$
        d^{\Haus}(a,[a,a])\le k_0\quad\text{and}\quad d^{\Haus}([a,b],[b,a])\le k_0.
        $$
        \item [(I2)] There exists a non-decreasing function $\Phi:\mathbb{R}_{\ge0}\to\mathbb{R}_{\ge0}$ such that for all $a,b\in X$ and $R\ge0$, if $c\in\ngh{[a,b]}{R}$, then
        $$
        [a,c]\subset\ngh{[a,b]}{\Phi(R)}.
        $$
        \item [(I3)] There exists a constant $k_1\ge0$ such that for all $a,b,c\in X$
        $$
        \ngh{[a,b]}{k_1}\cap \ngh{[c,b]}{k_1} \cap \ngh{[a,c]}{k_1}\neq \emptyset.
        $$
        \item [(I4)] There exists a non-decreasing function $\Psi:[k_1,\infty)\to[0,\infty)$ such that for all $a,b,c\in X$ and $R\ge k_1$
        $$
            \diam(\ngh{[a,b]}{R}\cap \ngh{[c,b]}{R} \cap \ngh{[a,c]}{R})\le\Psi(R).
        $$
    \end{itemize}
    In this case, $(k_0,k_1,\Phi,\Psi)$ are called \emph{parameters} of the coarse interval structure. We say that the parameters are \emph{affine} if both $\Phi$ and $\Psi$ are affine functions. 
\end{definition}

\subsection{Equivalence of coarse 1-median and Bowditch's coarse median}

We will say that the coarse 1-median structure in our sense has affine parameters if the parameter functions $\ctrlf$ and $\Psi$ are affine (\cref{defn:coarse_r_median_with_affine_params}).

\begin{proposition}
\label{prop:equiv_coarse_1_median}
   For a metric space $(X,d)$, the following are equivalent:
    \begin{enumerate}
        \item $(X,d)$ has a coarse median in the sense of Bowditch (\cref{defn:bowditch_defn-coarse_median}).
        \item $(X,d)$ has a coarse interval structure with affine parameters (\cref{defn:nwz_coarse_interval}).
        \item $(X,d)$ has a coarse 1-median structure in our sense that is $(\ctrlf,\Psi)$-affine (\cref{defn:coarse_r_median}). 
    \end{enumerate}
\end{proposition}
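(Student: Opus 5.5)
The plan is to take (1)$\iff$(2) from Niblo--Wright--Zhang \cite{NWZ2021}, who prove that a coarse median in Bowditch's sense is equivalent to a coarse interval structure. The one thing to check is that their construction produces affine parameters. Bowditch medians are coarsely Lipschitz by (M1), so the intervals $[a,b]:=\{\mu(a,x,b):x\in X\}$ have $\Phi$ and $\Psi$ controlled linearly in $R$, and conversely. This reduces everything to showing (2)$\iff$(3), the comparison of a $(\ctrlf,\Psi)$-affine coarse $1$-median with a coarse interval structure with affine parameters.

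For (3)$\implies$(2), set $[a,b]:=F_1(a,b)$.
\begin{itemize}
\item (I1): by (F3), $F_1(a,a)\subseteq\ngh{F_0(a)}{\ctrlf(0)}$, and (F0) gives $d^{\Haus}(a,F_0(a))\le\ctrlf(0)$. By (F1), $F_0(a)\subseteq\ngh{F_1(a,a)}{\ctrlf(0)}$. Together these give $d^{\Haus}(a,[a,a])\le 2\ctrlf(0)$, and symmetry comes from (F2).
\item (I2): this is (F4) with $j=1$, so $\Phi:=\ctrlf$, which is affine by hypothesis.
\item (I3): this is (CM1), with $k_1:=\lambda(\delta_0)$. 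The three facets $F_1(a_1,a_2)$, $F_1(a_0,a_2)$, $F_1(a_0,a_1)$ agree with the intervals up to the reordering allowed by (F2).
\item (I4): when $r=1$, the defining condition for a $(\delta,D)$-non-generic triple involves proper subsets $I_j\subset\{0,1,2\}$ with empty common intersection, so some $I_j$ is a singleton $\{i\}$. The intersection in question then lies in $\ngh{F_0(a_i)}{\delta}\subseteq B(a_i,\delta+\ctrlf(0))$, whose diameter is at most $2(\delta+\ctrlf(0))$. Hence every triple is $(\delta,D)$-generic once $D\ge D(\delta):=2\delta+2\ctrlf(0)$, as in \cref{example: 3 points are generic}. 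For $R\ge k_1$, put $\delta:=\lambda^{-1}(R)$, which is affine because $\lambda$ is non-constant affine, and define $\Psi_{NWZ}(R):=\Psi(\delta,\max\{\delta_0,D(\delta)\})$. This is affine in $R$ because $\Psi$ is affine in each coordinate; I would enlarge by a constant to cover the region where the maximum equals $\delta_0$. Permutation errors from (F2) add a constant, so the neighbourhood radius becomes $R+\ctrlf(0)$, which is harmless.
\end{itemize}

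For (2)$\implies$(3), set $F_0(a):=\{a\}$ and $F_1(a,b):=[a,b]$, and take $\ctrlf(R):=\max\{k_0,\Phi(R)\}$ plus suitable constants. The axioms are checked as follows.
\begin{itemize}
\item (F0) is trivial.
\item (F3) is the first half of (I1).
\item (F2) is the second half of (I1).
\item (F4) for $j=1$ is (I2); for $j=0$, apply symmetry and then (I2), at the cost of $k_0$.
\item (F1) asks that $a,b\in\ngh{[a,b]}{\ctrlf(0)}$. Apply (I2) with $c=b$, which lies in $\ngh{[a,b]}{R}$ for every $R>0$, and combine it with (I3) applied to the triple $(a,a,b)$. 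This gives a point near $[a,a]$, hence near $a$, that is within $k_1$ of $[a,b]$. So $a\in\ngh{[a,b]}{k_0+2k_1}$, and $b$ is handled symmetrically. I expect this to be the fiddliest point: everything needed is a constant, but it has to be assembled carefully.
\end{itemize}
Then choose $\delta_0:=k_1$, $\lambda(\delta):=\delta$, and $\Psi(\delta,D):=\Psi_{NWZ}(\delta)$, which is affine and independent of $D$. With these, (CM1) is (I3) and (CM2) is (I4).

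The main obstacle I expect is the (1)$\iff$(2) step, specifically tracing through Niblo--Wright--Zhang to confirm that affine parameters correspond exactly to Bowditch's conditions. The difficulty is the direction (2)$\implies$(1): there the median is $\mu(a,b,c):=$ a point of the coarse centroid, and one needs the Lipschitz condition (M1), which requires $\Phi$ and $\Psi$ to be affine. I would cite their equivalence theorem together with their remark on the linear dependence of the parameters, and otherwise reprove (M1) directly from (I2) and (I4).
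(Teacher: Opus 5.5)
Your proposal is correct and follows the paper's route. You quote (1)$\iff$(2) from \cite[Theorem 3.15]{NWZ2021}, then prove (2)$\iff$(3) by setting $F_0(a)=\{a\}$ and $F_1=[\cdot,\cdot]$, and use that every triple is generic so that (CM1) and (CM2) become (I3) and (I4). Your bookkeeping is in fact more careful than the paper's, e.g.\ $d^{\Haus}(a,[a,a])\le 2\ctrlf(0)$, $D(\delta)=2\delta+2\ctrlf(0)$, and the (F2) permutation costs.

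Two cosmetic fixes are needed:
\begin{itemize}
\item $\max\{k_0,\Phi(R)\}$ is not affine, so use a sum such as $\Phi(R+k_0)+2k_0+c$ instead.
\item For (F1), drop the appeal to (I2) with $c=b$, since it presupposes $b\in\ngh{[a,b]}{R}$, which is what you are proving. (I3) applied to $(a,a,b)$ and $(b,b,a)$ already suffices.
\end{itemize}
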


We will spend the rest of this section proving this result. 

\noindent $(1) \iff (2)$ is the content of \cite[Theorem 3.15]{NWZ2021}. Indeed, they show that if $(X,d,\mu)$ is a coarse median space in the sense of Bowditch, then $[\cdot,\cdot]:X^2\to \Pc(X)$ defined by $[a,b]=\{\mu(a,x,b)\mid x\in X\}$ induces a coarse interval structure on $X$. Conversely, a coarse interval structure $(X,d,[\cdot,\cdot])$ on $X$, with affine parameters $(k_0,\ k_1, \Phi, \Psi)$, induces a coarse median structure via the map $\mu:X^3\to X$ that takes $(a,b,c) \in X^3$ to some point  in $\ngh{[a,b]}{k_1}\cap \ngh{[c,b]}{k_1} \cap \ngh{[a,c]}{k_1}$. This map is coarsely well-defined due to (I3) and (I4) in \cref{defn:nwz_coarse_interval}.

Now we will prove that $(2) \iff (3)$. This follows from the two lemmas below.
\begin{lemma}\label{lem: coarse medians are coarse 1-medians}
Let $(X,d)$ be a metric space. Assume that $[\cdot,\cdot]:X\times X\to \Pc(X)$ is a coarse interval structure with parameters $k_0,k_1,\Phi,\Psi$.
\begin{enumerate}
    \item Define the maps $F_0:X\to\Pc(X)$ and $F_1:X\times X\to \Pc(X)$ by $$F_0(a_0)=\{a_0\}\quad \text{ and }\quad F_1(a_0,a_1)=[a_0,a_1]$$ for all $a_0,a_1 \in X$. Define $\ctrlf:\mathbb{R}_{\ge0}\to\mathbb{R}_{\ge0}$ by $\ctrlf(R)\coloneqq\Phi(R)+3k_1$. Then  $(F\coloneqq\{F_0,F_1\},\ctrlf)$ is a coarse 1-filling on $X$.
    \item Set $\delta_0\coloneqq k_1$. Define $\lambda:[\delta_0,\infty)\to[0,\infty)$ by $\lambda(\delta)\coloneqq\delta$, and $\Psi':[\delta_0,\infty)\times[\delta_0,\infty)\to[0,\infty)$ by $\Psi'(\delta,D)\coloneqq\Psi(\delta)$. Then $(X,d)$ admits a coarse 1-median with parameters $(\delta_0,\lambda,\Psi')$ with respect to the coarse 1-filling $(F,\ctrlf)$ above. 
\end{enumerate}
Moreover, if the coarse interval structure has affine parameters, then the resulting coarse 1-median structure is $(\ctrlf,\Psi)$-affine.
\end{lemma}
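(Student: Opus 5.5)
The plan is to verify the axioms one at a time, using only (I1)--(I4) and the triangle inequality. Throughout, we take $r=1$, so the filling consists of $F_0$ and $F_1$, and the relevant tuples are triples.

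\emph{Part (1), the filling axioms.}
\begin{itemize}
\item (F0) holds trivially, with $d^{\Haus}(a_0,\{a_0\})=0$.
\item (F2) for $F_1$ follows from (I1), since $k_0\le \ctrlf(0)$ after possibly enlarging $\ctrlf$. I would define $\ctrlf(R)=\Phi(R)+3k_1+k_0$ if needed. The statement omits $k_0$, so I expect to absorb it by also noting that $k_0$ can be assumed $\le k_1$, or simply to add it; this is harmless for affinity.
\item (F3) asks that $[a_0,a_0]\subseteq \Nc_{\ctrlf(0)}(\{a_0\})$, which is (I1).
\item (F1) asks that $a_0,a_1\in \Nc_{\ctrlf(0)}([a_0,a_1])$. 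I would get this from (I3) applied to the triple $(a_0,a_0,a_1)$. That gives a point $p$ within $k_1$ of $[a_0,a_0]$, hence within $k_1+k_0$ of $a_0$, and within $k_1$ of $[a_0,a_1]$. So $d(a_0,[a_0,a_1])\le 2k_1+k_0$. Symmetry then handles $a_1$.
\item (F4) is where $\Phi$ enters. For $k=1$ and $x\in\Nc_R([a_0,a_1])$, (I2) gives $[a_0,x]\subset \Nc_{\Phi(R)}([a_0,a_1])$. For $[x,a_1]$, I would use (I1) symmetry: $[x,a_1]$ lies near $[a_1,x]$, and $x\in \Nc_{R+k_0}([a_1,a_0])$. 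Then $[a_1,x]\subset\Nc_{\Phi(R+k_0)}([a_1,a_0])$, and I return to $[a_0,a_1]$ by adding $k_0$ twice.
\item (F4) for $k=0$ uses $x\in \Nc_R(\{a_0\})$, so $F_0(x)=\{x\}\subset \Nc_R(\{a_0\})$. For this we need $\ctrlf(R)\ge R$, which can be arranged.
\end{itemize}
These constant shifts show the control function need only be $\Phi(\cdot+k_0)+O(k_0+k_1)$. I expect the paper's $\Phi+3k_1$ works once one observes that (I2) can be assumed with $\Phi$ absorbing such shifts, since $\Phi$ may be enlarged. In any case the resulting function is affine whenever $\Phi$ is.

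\emph{Part (2), the median axioms.} (CM1) with $\lambda(\delta_0)=k_1$ is literally (I3), up to the harmless reordering $[c,b]$ versus $[b,c]$ handled by (I1), enlarging $\delta_0$ by $k_0$ if necessary. For (CM2), the intersection $\bigcap_j\Nc_{\delta}(F_1(\cdot))$ over the three facets of a triple is exactly the set bounded in (I4), again up to reordering. So its diameter is at most $\Psi(\delta)=\Psi'(\delta,D)$, regardless of genericity. Monotonicity of $\Psi'$ in both entries is inherited from $\Psi$, and $\lambda$ is non-constant and affine.

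\emph{Affine case.} If $\Phi$ and $\Psi$ are affine, then $\ctrlf$ is affine, being $\Phi$ plus a constant, and $\Psi'$ is affine in $\delta$ and constant in $D$. Hence the structure is $(\ctrlf,\Psi)$-affine. The only real obstacle is the bookkeeping of the $k_0$ symmetrization constants in (F1) and (F4). I would handle this by first replacing $[\cdot,\cdot]$ with a symmetrized version $[a,b]\cup[b,a]$, which is coarsely equivalent by (I1), so that all ordering issues disappear.
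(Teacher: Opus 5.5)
Your proposal follows the same route as the paper: (F0)--(F4) are verified directly from (I1)--(I3), and (CM1), (CM2) are read off from (I3), (I4). Genericity plays no role, since (I4) already bounds the centroid of every triple. In part (2) you need neither a reordering nor a larger $\delta_0$. Applying (I3) and (I4) to $(a,b,c)=(a_0,a_2,a_1)$ produces exactly $[a_0,a_2]$, $[a_1,a_2]$, $[a_0,a_1]$, so $\delta_0=k_1$ and $\lambda(\delta)=\delta$ stand as stated.

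In part (1) your extra bookkeeping is needed, not excessive, and the paper's proof glosses over it. The paper writes $\Nc_{k_1}([a,a])\subset B_d(a,2k_1)$, which tacitly assumes $k_0\le k_1$. It also asserts (F4) from (I1), (I2) without two things:
\begin{enumerate}
\item tracking the shift to $\Phi(R+k_0)+2k_0$ for the reversed interval $[x,a_1]$;
\item treating the case $k=0$, which you correctly handle by forcing $\ctrlf(R)\ge R$.
\end{enumerate}

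Your expectation that the literal $\ctrlf=\Phi+3k_1$ works for the given parameters is false in general. On $\Rb$, put $[a,b]:=\mathrm{conv}\{a-M,a,b\}$ with $M>0$. This satisfies (I1)--(I4) with $k_0=M$, $k_1=0$, $\Phi(R)=R$, $\Psi(R)=2R+M$: the median $m$ of $a,b,c$ lies in all three intervals, and their intersection lies in $[m-M,m]$. Yet $\ctrlf(0)=0$ while $F_1(a,a)=[a-M,a]\not\subseteq\{a\}$, so (F3) fails. (F2) fails too, since $d^{\Haus}([a,b],[b,a])=\min\{|b-a|,M\}$.

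So state what you actually prove: $(F,\ctrlf)$ is a coarse 1-filling for $\ctrlf(R):=\Phi(R+k_0)+2k_0+3k_1+R$. This is the paper's formula after replacing $\Phi$ by the larger admissible function $\tilde\Phi(R):=\Phi(R+k_0)+2k_0+R$. It is affine whenever $\Phi$ is, so the ``moreover'' part survives, and your computations for (F1)--(F4) go through with it unchanged.

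Drop the symmetrization $[a,b]\cup[b,a]$. It is unnecessary, and it would replace the $F_1$ fixed in the statement.
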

\begin{proof}
(1) We have to show that properties (F0)--(F4) hold.
Property (F0) is immediate. Property (F1) follows by (I1) and (I3). Indeed, given $a,b\in X$ we have $F_0(a)=\{a\}$ by definition. Property (I3) implies that $\ngh{[a,a]}{k_1}\cap \ngh{[a,b]}{k_1} \cap \ngh{[a,b]}{k_1}\neq \emptyset$. By (I1) we have 
$$
\ngh{[a,a]}{k_1}\subset B_d(a,2k_1).
$$
Hence, $a\in\ngh{[a,b]}{3k_1}$ and, in particular, $a\in\ngh{F_1(a,b)}{\ctrlf(0)}$. Properties (F2) and (F3) follow directly from (I1), while (F4) follows from (I2) and (I1).
Therefore, the pair $(F,\ctrlf)$ is a coarse 1-filling.

\noindent (2) Since $\Psi$ is non-decreasing, we get that $\Psi'$ is non-decreasing in both entries. Moreover, $\lambda$ is clearly linear and non-constant. 

We claim that all triples of points in $X$ are $(\delta,2\delta)$-generic for any $\delta\ge0$. 
Indeed, given a set of three points $\{a_0,a_1,a_2\}\subset X$, let $I_1,\dots,I_k \subseteq \{0,1,2\}$ be a family of subsets such that $\bigcap_{i=1}^kI_i=\emptyset$ and there exists some $j\in\{1,\dots,k\}$ such that $\abs{I_j}<2$. Then, $I_j$ is a singleton, and hence the intersection $\bigcap_{i=1}^k\ngh{F_{\abs{I_i}-1}(a_\ell\mid \ell\in I_i)}{\delta}$ is contained in some ball of radius $\delta$. Consequently, we have
$$\diam\left(\bigcap_{i=1}^k\ngh{F_{\abs{I_i}-1}(a_\ell\mid \ell\in I_i)}{\delta}\right)<2\delta.$$
Hence, properties (CM1) and (CM2) follow directly from (I4) and the definition of $\lambda$ and $\Psi'$.

To prove the ``moreover'' part, assume $\Phi$ and $\Psi$ to be affine. Then, by definition, $\ctrlf$ is affine and $\Psi'$ is affine \wrt\ both entries, hence the coarse 1-median is $(\ctrlf,\Psi)$-affine.
\end{proof}

\begin{lemma}\label{lem: coarse 1-medians are coarse medians}
Let $(X,d)$ be a metric space. Assume that $(F=\{F_0,F_1\},\ctrlf)$ is a coarse 1-filling on $X$ for which $X$ has a coarse 1-median with parameters $\delta_0$, $\lambda$ and $\Psi$. Define the map $[\cdot,\cdot]:X\times X\to \Pc(X)$ by $[a_0,a_1]:=F_1(a_0,a_1)$. Then $[\cdot,\cdot]:X\times X\to \Pc(X)$ is a coarse interval structure on $X$.

Moreover, if the coarse 1-median structure is $(\ctrlf,\Psi)$-affine, then the resulting coarse interval structure has affine parameters.
\end{lemma}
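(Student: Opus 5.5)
We have to verify (I1)–(I4) for $[a,b]:=F_1(a,b)$, setting the constants from the coarse 1-median parameters $(F,\ctrlf,\delta_0,\lambda,\Psi)$. The plan is to read each interval axiom off the corresponding filling or median axiom.

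For (I1), take $k_0:=\ctrlf(0)+\ctrlf(\ctrlf(0))+\ctrlf(0)$. The bound $d^{\Haus}([a,b],[b,a])\le\ctrlf(0)$ is exactly (F2) with $k=1$. For $d^{\Haus}(a,[a,a])$, note that (F3) gives $F_1(a,a)\subseteq\ngh{F_0(a)}{\ctrlf(0)}$, and (F0) puts $F_0(a)$ within $\ctrlf(0)$ of $a$. Hence every point of $[a,a]$ is within $2\ctrlf(0)$ of $a$. In the other direction, \cref{rem: vertex containment}(1) gives $a\in\ngh{F_1(a,a)}{\ctrlf(0)}$.

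For (I2), suppose $c\in\ngh{[a,b]}{R}$. Applying (F4) with $j=1$ gives $[a,c]=F_1(a,c)\subseteq\ngh{F_1(a,b)}{\ctrlf(R)}$, so we set $\Phi:=\ctrlf$. If we only know $c$ is within $R$ of $[b,a]$ rather than $[a,b]$, we first use (F2) and replace $R$ by $R+\ctrlf(0)$.

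For (I3) and (I4), we write out (CM1) and (CM2) for $r=1$. The facets of the triple $(a,b,c)$ are $F_1(b,c)$, $F_1(a,c)$ and $F_1(a,b)$, so (CM1) gives non-emptiness of the triple intersection of $\lambda(\delta_0)$-neighbourhoods. We therefore set $k_1:=\lambda(\delta_0)$; the order of the arguments is harmless up to adding $\ctrlf(0)$ via (F2), so we may want $k_1:=\lambda(\delta_0)+\ctrlf(0)$. For (I4), the key point is that every triple is $(\delta,D_\delta)$-generic for a suitable $D_\delta$, exactly as in \cref{example: 3 points are generic}. Any family $I_1,\dots,I_q$ of proper subsets of $\{0,1,2\}$ with empty intersection must contain some $I_j$ of size $1$, i.e. the filling of $\{a_\ell\}$. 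By (F0) that filling lies in a ball of radius $\ctrlf(0)$, so the whole intersection has diameter at most $2(\ctrlf(0)+\delta)$. Hence with $D_\delta:=\max\{\delta_0,2(\ctrlf(0)+\delta)\}$, every triple lies in $X^{(3)}_{(\delta,D_\delta)}$ for all $\delta\ge\delta_0$.

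(CM2) then bounds the diameter of the $\lambda(\delta)$-intersection by $\Psi(\delta,D_\delta)$. Since $\lambda$ is non-constant affine with positive slope, it is invertible onto $[\lambda(\delta_0),\infty)$. So for $R\ge k_1$ we put $\Psi_I(R):=\Psi\bigl(\lambda^{-1}(R'),D_{\lambda^{-1}(R')}\bigr)$, where $R'$ is $R$ enlarged by the $\ctrlf(0)$ symmetry correction. This $\Psi_I$ is non-decreasing, since $\Psi$ is monotone in both entries and $\delta\mapsto D_\delta$ is monotone.

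The one step I expect to need care is this reparametrisation in (I4): the radius $R$ must actually fall in the range of $\lambda$. That is why $k_1$ is taken to be at least $\lambda(\delta_0)$.

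For the \emph{moreover} part, assume $\ctrlf$ and $\Psi$ are affine. Then $\Phi=\ctrlf$ is affine. In $\Psi_I$ we are composing $\Psi$, which is affine in each coordinate, with the affine maps $\lambda^{-1}$ and $\delta\mapsto D_\delta$, the latter affine for $\delta\ge\delta_0$ up to a constant adjustment. This composition need not be affine if $\Psi$ has a cross term in $(\delta,D)$, since both arguments grow with $R$. I would handle this in one of two ways: read "affine in both coordinates" as including joint affinity, or bound the composition above by an affine function of $R$, which suffices because (I4) only asks for an upper bound.
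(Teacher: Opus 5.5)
Your proposal is correct and follows the paper's proof: each of (I1)--(I4) is read off from (F0)--(F4), (CM1) and (CM2), and (I4) comes from observing that every triple is generic and then inverting $\lambda$. Your constants $k_1\ge\lambda(\delta_0)$ and $D_\delta=2(\ctrlf(0)+\delta)$ are in fact more careful than the paper's $k_1=\max\{\ctrlf(0),\delta_0,\lambda(0)\}$ and $D=2\lambda^{-1}(R)$: the paper's $k_1$ need not reach the radius $\lambda(\delta_0)$ given by (CM1), and its $D$ is borrowed from the case $F_0(a)=\{a\}$.

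For the \emph{moreover} part, use the joint-affine reading $\Psi(\delta,D)=\alpha+\beta\delta+\gamma D$, which is how the paper treats $\Psi$ (for instance in the asymptotic-cone corollary). Your fallback of dominating $\Psi_I$ by an affine function would fail if a genuine cross term $\epsilon\,\delta D$ with $\epsilon>0$ were allowed: $D_\delta$ must grow with $\delta$, so $\Psi_I$ would be quadratic in $R$.
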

\begin{proof}
    Define $k_0\coloneqq k_1\coloneqq \max\{\ctrlf(0),\delta_0,\lambda(0)\}$, $\Phi:\mathbb{R}_{\ge0}\to\mathbb{R}_{\ge0}$ as $\Phi(R)=\ctrlf(R)$ and $\Psi':[k_1,\infty)\to[0,\infty)$ as $\Psi'(R)=\Psi(\lambda^{-1}(R),2\lambda^{-1}(R))$.
    We show that properties (I1)--(I4) of Definition \ref{def: coarse interval structure} hold with parameters $k_0,k_1,\Phi,$ and $\Psi'$. Property (I1) follows from (F0)--(F3). Property (I2) follows from (F4) and property (I3) follows from (CM1). We showed in the proof of Lemma \ref{lem: coarse medians are coarse 1-medians} that all triples of points in $X$ are $(\lambda^{-1}(R),2\lambda^{-1}(R))$-generic, for any $R\ge k_1\ge\lambda(0)$. Therefore, property (I4) follows from (CM2) and the fact that $\Psi$ is non-decreasing.

    Moreover, if we assume that the coarse 1-median is $(\ctrlf,\Psi)$-affine, then $\Phi\equiv\ctrlf$ and $\Psi'$ are affine, since $\lambda$ is affine.
\end{proof}

\subsection{Coarse $1$-median spaces from convex projective geometry} \label{sec:coarse_1_median_in_conv_proj_geom}
In this section, we will show that any  divisible and relatively hyperbolic (with respect to abelian peripherals) properly convex domain $\Omega \subset \RP$ admits a coarse 1-median. It is a rich family of properly convex domains, consisting of many examples coming from a variety of sources, like Coxeter reflection groups \cite{YB2006, DGKLM2024}, deformation of cusped hyperbolic 3-manifolds, etc.

Recall our definition of  $r_0$ from the Introduction: $r_0(\Omega,\hil)$ is the minimum $r$ such that a $(\Omega,\hil)$ admits a coarse $r$-median. By \cref{thm:main_coarse_r_median_on_pcd}, $r_0(\Omega,\hil) \leq \rF(\Omega)$. Our examples here will be divisible properly convex domainss that admit a coarse $1$-median, so that $r_0(\Omega,\hil)=1$, although $\rF(\Omega) \geq 2$. This shows that our question about symmetric spaces (\cref{ques:r0_vs_rPES}) will definitely fail for generic divisible domains.

These examples will come from properly convex domains with isolated simplices -- the convex projective analogues of CAT(0) spaces with isolated flats. Fix a properly convex domain $\Omega \subset \RP$ and a discrete subgroup $\Gamma <\Aut(\Omega)$ such that $\Gamma$ divides $\Omega$, i.e. $\Omega/\Gamma$ is compact. We will say that a PES in $\Omega$ is maximal if it is not properly contained in another PES. Let $\Sc_{\max}(\Omega)$ denote the set of all maximal PES of dimension at least 2 in $\Omega$. We say that $\Omega/\Gamma$ has \emph{isolated simplices} if $\Sc_{\max}(\Omega)$ is closed, and discrete in the local Hausdorff topology. 

\begin{remark}
\label{rem:isolated_simplices_rank_one}
If $\Omega/\Gamma$ has isolated simplices with $\Sc_{\max}(\Omega)\neq \emptyset$, then:
\begin{enumerate}
    \item $\Omega/\Gamma$ is \emph{rank one} (in the sense of \cite{I2024}): $\Gamma$ contains a biproximal matrix $\gamma$ and $\gamma$ acts by a translation along a projective line in $\Omega$ \cite[Prop. 6.8]{I2024}. 
    \item $\rF(\Omega)\geq 2$ (because $\Sc_{\max}(\Omega) \neq \emptyset$).
\end{enumerate}
\end{remark}
\begin{theorem}[{\cite[Theorem 1.13 and 1.18]{IZRelaHyp}}]\label{thm: Islam-Zimmer relative hyperbolicity}
    Let $\Omega\subseteq\RP$ be a properly convex domain and $\Gamma <\Aut(\Omega)$ is a discrete subgroup such that $\Omega/\Gamma$ is compact. Then, the following are equivalent: 
    \begin{enumerate}
        \item $\Omega/\Gamma$ has isolated simplices.
        \item There exist $S_1,\dots,S_m \in \Sc_{\max}(\Omega)$ such that $\Sc_{\max}(\Omega)=\sqcup_{i=1}^m \Gamma \cdot S_i$ and $\Gamma$ is relatively hyperbolic with respect to $$\{{\Stab}_\Gamma(S_1),\dots,{\Stab}_\Gamma(S_m)\},$$ where each $\Stab_{\Gamma}(S_j)$ is virtually isomorphic to $\Zb^{\dim(S_j)}$ for $j\in\{1,\dots,m\}$.
    \end{enumerate}
\end{theorem}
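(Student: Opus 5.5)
This result is quoted from \cite{IZRelaHyp}. If I were to reprove it, I would follow the template of the CAT(0) theory of isolated flats (Hruska--Kleiner), replacing flats by PES and using the re-centering and limiting arguments of \cref{sec: slim_simplices_properly_convex}.

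\textbf{(1) $\Rightarrow$ (2).}
\begin{enumerate}
\item \emph{Finitely many orbits.} Fix a compact $K\subset\Omega$ with $\Gamma\cdot K=\Omega$. Every $S\in\Sc_{\max}(\Omega)$ can be translated by $\Gamma$ so that it meets $K$. The set of PES meeting $K$ is relatively compact in the local Hausdorff topology (\cref{cor: compactness of simplices in loc hausdorff top}). Since $\Sc_{\max}(\Omega)$ is closed and discrete, only finitely many of its members meet $K$. This gives $S_1,\dots,S_m$ with $\Sc_{\max}(\Omega)=\bigcup_i\Gamma\cdot S_i$.
\item \emph{Cocompact stabilizers.} I would show that $\Stab_\Gamma(S_i)$ acts cocompactly on $S_i$. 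Take a sequence of points of $S_i$ leaving every compact set and re-center each by an element $\gamma_n\in\Gamma$ into $K$. The translates $\gamma_nS_i$ meet $K$, so by discreteness they take finitely many values. Hence $\gamma_n^{-1}\gamma_{n'}$ stabilizes $S_i$ for infinitely many pairs $n,n'$, which yields cocompactness.
\item \emph{Structure of the stabilizers.} A group acting properly and cocompactly on the relative interior of a projective simplex, preserving its vertices up to a finite permutation group, is virtually the diagonal group. Hence it is virtually $\Zb^{\dim S_i}$.
\item \emph{Relative hyperbolicity.} Following Dru\c{t}u--Sapir, I would show that every asymptotic cone of $(\Omega,\hil)$ is tree-graded with respect to the ultralimits of the $\Gamma\cdot S_i$. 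The key input is a \emph{thin simplices away from $\Sc_{\max}$} estimate: any compact triangle (or fat region) that stays far from every maximal PES is uniformly slim. I would prove it by contradiction, exactly as in \cref{prop: duality between properly embedded simplices and slim simplices}: a sequence of increasingly fat regions, re-centered into $K$, converges to a PES $S$ of dimension at least $2$. By isolation, $S$ lies in a member of $\Sc_{\max}(\Omega)$ passing near the re-centered point, contradicting the assumption that the regions stay far from $\Sc_{\max}$. I would also need that distinct maximal PES have uniformly bounded coarse intersection. This follows from the same limiting argument combined with discreteness.
\end{enumerate}

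\textbf{(2) $\Rightarrow$ (1).} The Milnor--\v{S}varc lemma identifies $(\Omega,\hil)$ quasi-isometrically with $\Gamma$. Each PES is isometric to a polyhedral normed space, so it is a quasi-flat of dimension at least $2$. In a relatively hyperbolic group, such a quasi-flat lies in a bounded neighborhood of a single peripheral coset $g\,\Stab_\Gamma(S_i)$, hence within bounded distance of $gS_i$.
\begin{itemize}
\item \emph{Classification of PES.} Two PES at finite Hausdorff distance have the same boundary faces. Using \cref{fact: lower semicontinuity of extended distance} and maximality, every maximal PES is therefore some $gS_i$, and every PES of dimension at least $2$ is contained in one.
\item \emph{Discreteness and closedness.} A local Hausdorff limit of PES $g_nS_i$ meeting a compact set is again a PES. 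By the uniform bound above, it lies near some $gS_j$. The cosets $g_n\Stab(S_i)$ all pass through a bounded region, so by local finiteness of peripheral cosets there are only finitely many of them. Hence the sequence $g_nS_i$ is eventually constant.
\end{itemize}

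\textbf{Main obstacle.} I expect the hardest step to be the tree-graded asymptotic cone (equivalently, the uniform slimness away from $\Sc_{\max}$) in (1) $\Rightarrow$ (2). Hilbert geometry is neither CAT(0) nor uniquely geodesic, so I would work throughout with projective segments and the convexity results \cref{cor: convexity of neighborhoods} and \cref{lem: maximum principle} in place of CAT(0) comparison.
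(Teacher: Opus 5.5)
The paper gives no proof of this statement; it imports it from Islam--Zimmer, whose argument is a real projective analogue of Hruska--Kleiner's isolated flats theorem. Your Steps 1--3 of (1)$\Rightarrow$(2) are sound. Step 2 is cleanest done directly: if $g_1S_i,\dots,g_NS_i$ are the finitely many translates meeting $K$, then $S_i\cap\Omega\subset\Stab_\Gamma(S_i)\cdot\bigcup_j(g_j^{-1}K\cap S_i)$. The genuine gap is Step 4, which carries essentially all of the content of the theorem.

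\textbf{The key estimate does not use isolation.} The recentered limit of fat triangles is a PES of dimension $\geq 2$ in \emph{every} quasi-homogeneous domain; this is the argument of \cref{prop: duality between properly embedded simplices and slim simplices}. Every such PES lies in a maximal one for dimension reasons alone: a PES $S$ satisfies $S\cap\Omega=\Pb(\Span S)\cap\Omega$, so PES of equal dimension cannot be properly nested. Hence ``deep points of fat triangles are close to some member of $\Sc_{\max}(\Omega)$'' holds verbatim for $\Pb(\Pi_3(\Rb))$, where (2) fails.

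\textbf{The estimate is only pointwise.} Dru\c{t}u--Sapir's criterion needs control by a \emph{single} piece. Condition $(\alpha_2)$ asks that a geodesic whose endpoints are $\theta\ell$-close to a piece come uniformly close to it. Condition $(\alpha_3)$ asks that a fat polygon, sides and vertex regions included, lie in a bounded neighbourhood of one piece. You must show that the nearby maximal PES cannot change as one moves through the fat region, and that the sides do not wander away from it. That is exactly where isolation has to be used, and you have not supplied the argument.

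\textbf{Non-projective geodesics are not addressed.} Tree-gradedness of the cones concerns \emph{all} geodesics. Once $\Sc_{\max}(\Omega)\neq\emptyset$, there are $\hil$-geodesics at linearly growing distance from the projective segment with the same endpoints. This already happens inside a $2$-dimensional PES, whose metric is a non-strictly convex polyhedral norm. ``Working with projective segments'' does not handle these.

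\textbf{The bounded coarse intersection $(\alpha_1)$ is not justified.} It does not follow from ``the same limiting argument combined with discreteness.'' Recentering at midpoints of long segments in $\ngh{T_n}{r}\cap\ngh{T'_n}{r}$ and using Step 1 only reduces you to a \emph{fixed} pair $T\neq T'$ in $\Sc_{\max}(\Omega)$ and a projective line in $\clngh{T}{r}\cap\clngh{T'}{r}$. Discreteness is a statement about sequences of simplices and says nothing about this fixed pair. Translating along the line only shows that $\Stab_\Gamma(T)\cap\Stab_\Gamma(T')$ is infinite. To reach a contradiction you must build, from this configuration, a non-discrete family of maximal PES or a PES of larger dimension. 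That requires analysing the faces of $\partial\Omega$ at the endpoints of the line. In Hruska--Kleiner's CAT(0) theory this bound is part of the definition of isolated flats, so here it is genuinely new work.

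\textbf{(2)$\Rightarrow$(1) is correct in outline but overbuilt.} Hypothesis (2) already asserts $\Sc_{\max}(\Omega)=\bigsqcup_i\Gamma\cdot S_i$, so neither the quasi-flat argument nor relative hyperbolicity is needed. $\Stab_\Gamma(S_i)$ is virtually $\Zb^{\dim S_i}$ and acts properly on $S_i\cap\Omega$. Its image in $\Aut(\relint S_i)$ is therefore a discrete subgroup of Hirsch rank $\dim S_i$, hence cocompact. So only finitely many translates $gS_i$ meet any compact set, which immediately makes $\Sc_{\max}(\Omega)$ closed and discrete.
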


As a consequence of this theorem, we can put a coarse 1-median on domains with isolated simplices.
\begin{corollary}\label{cor: coarse median on divisible properly convex domains with isolated flats}
    Let $\Omega\subseteq\RP$ and $\Gamma <\Aut(\Omega)$ be a discrete subgroup such that $\Omega/\Gamma$ is compact. Assume that $\Sc_{\max}(\Omega)\neq \emptyset$ and $\Omega/\Gamma$ has isolated simplices. Then:
    \begin{enumerate}
        \item $(\Omega,\hil)$ admits a coarse 1-median structure, which implies $r_0(\Omega,\hil)=1$.
        \item $\Omega/\Gamma$ is rank one and $\rF(\Omega) \geq 2$ (see \cref{rem:isolated_simplices_rank_one}).
    \end{enumerate}
\end{corollary}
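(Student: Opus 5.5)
For part (1), the plan is to go through relative hyperbolicity, using the Islam--Zimmer theorem (\cref{thm: Islam-Zimmer relative hyperbolicity}) together with Bowditch's result (\cref{prop: Bowditch relative hyperbolicity}). Since $\Omega/\Gamma$ has isolated simplices, \cref{thm: Islam-Zimmer relative hyperbolicity} gives finitely many $S_1,\dots,S_m\in\Sc_{\max}(\Omega)$ with $\Gamma$ hyperbolic relative to $\{\Stab_\Gamma(S_1),\dots,\Stab_\Gamma(S_m)\}$. Each peripheral is virtually $\Zb^{\dim S_j}$.

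The next step is to give each peripheral a coarse median. The group $\Zb^{n}$ with a word metric is quasi-isometric to $(\Rb^n,\ell_1)$, which is a median metric space and so carries a coarse median in Bowditch's sense. A virtually $\Zb^n$ group is quasi-isometric to $\Zb^n$. Bowditch's coarse medians are quasi-isometry invariant (\cite[Lemma 8.1]{BowditchCoarse}), so every $\Stab_\Gamma(S_j)$ admits a coarse median. By \cref{prop: Bowditch relative hyperbolicity}, $\Gamma$ then admits a Bowditch coarse median.

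Now I transfer this to $\Omega$. Since $\Gamma$ is discrete and $\Omega/\Gamma$ is compact, $\Gamma$ acts properly and cocompactly by isometries on the proper geodesic space $(\Omega,\hil)$. By the \v{S}varc--Milnor lemma, $(\Omega,\hil)$ is quasi-isometric to $\Gamma$, so by quasi-isometry invariance again $(\Omega,\hil)$ carries a Bowditch coarse median. Applying \cref{prop:equiv_coarse_1_median} ((1)$\Rightarrow$(3)) converts this into a coarse $1$-median in our sense. Since $r_0\geq 1$ by definition, $r_0(\Omega,\hil)=1$.

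Part (2) is exactly \cref{rem:isolated_simplices_rank_one}: rank one comes from \cite[Prop. 6.8]{I2024}, and $\rF(\Omega)\ge 2$ holds because $\Sc_{\max}(\Omega)\neq\emptyset$ contains a PES of dimension at least $2$.

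The only delicate point I expect is the bookkeeping of hypotheses in \cref{prop: Bowditch relative hyperbolicity}. The peripherals there must be proper subgroups, and some formulations require finite-rank coarse medians. If $\Gamma$ equaled a single peripheral, it would be virtually abelian. In that case $\Omega$ would be a projective simplex domain, and one can argue directly via $(\Rb^n,\ell_1)$, since Hilbert metrics on PES are polyhedral-norm metrics. Otherwise properness is automatic.
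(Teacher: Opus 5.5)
Your proof is correct and follows the paper's argument. You use Islam--Zimmer relative hyperbolicity, put coarse medians on the virtually abelian peripherals via $(\Rb^n,\ell_1)$ and quasi-isometry invariance, then apply Bowditch's relative hyperbolicity theorem, \v{S}varc--Milnor, and \cref{prop:equiv_coarse_1_median}; part (2) is read off from \cref{rem:isolated_simplices_rank_one}. The only cosmetic difference is that you keep the full stabilizers as peripherals and stay in Bowditch's framework until the final conversion, whereas the paper passes to $\Zb^{\dim S_j}$ subgroups and uses its own quasi-isometry invariance before invoking Bowditch's theorem.
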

\begin{proof}
    By \cref{thm: Islam-Zimmer relative hyperbolicity}, $\Sc_{\max}(\Omega)=\sqcup_{j=1}^m \Gamma \cdot S_j$ and the group $\Gamma$ is relatively hyperbolic \wrt\ $\{H_1,\dots,H_m\}$  where each $H_j$ is a subgroup of $\Stab_{\Gamma}(S_j)$ with $H_j \cong \Zb^{\dim(S_j)}$  for $j=1,\dots,m.$ Now note that each $H_j$ admits a coarse 1-median for $j=1,\dots,m$. Indeed, $H_j$ is quasi-isometric to $(\Rb^{\dim(S_j)},\ell_1)$ and $(\Rb^{p},\ell_1)$ admits a coarse 1-median for any $p\geq 1$. Since the coarse 1-median structure is a quasi-isometry invariant, $H_j$ also admits a coarse 1-median structure. Then, by \cref{prop: Bowditch relative hyperbolicity} and \cref{prop:equiv_coarse_1_median}, we conclude that $\Gamma$ admits a coarse 1-median. As $\Gamma$ acts properly, isometrically, co-compactly on $(\Omega,\hil)$, the \v{S}varc-Milnor Lemma implies that $\Gamma$ is quasi-isometric to $(\Omega,\hil)$. Again, because of quasi-isometry invariance, we can put a coarse 1-median on $(\Omega,\hil)$. Thus $r_0(\Omega,\hil)=1$. 
    The rest is \cref{rem:isolated_simplices_rank_one}.
\end{proof}

\section{Quasi-isometry invariance}\label{sec: quasi-isom}

A principle of coarse geometry is that large-scale geometric properties should be invariant under quasi-isometries. Therefore, the goal of this section is to prove that admitting a coarse $r$-median structure is indeed a quasi-isometry invariant.

We will do this in different steps. First, we explicitly construct a push-forward of a coarse $r$-filling along a quasi-isometry; then, we study how generic points behave when we apply a quasi-isometry. Finally, we show that the constructed coarse $r$-filling is indeed associated to a coarse $r$-median structure. 

\subsection{Quasi-isometry and basic lemmas}
We begin by recalling some basics of coarse geometry.
\begin{definition}
\label{defn:qiso}
    Let $(X,d_X)$ and $(Y,d_Y)$ be metric spaces. Fix $K\ge1$ and $L\ge0$. Then, a map $f:X\to Y$ is 
    \begin{enumerate}
        \item a  $(K,L)$-\emph{quasi-isometric embedding} if for all $x_1,x_2\in X$, 
        \begin{equation*}
        \frac{1}{K}d_X(x_1,x_2)-L \le d_Y(f(x_1),f(x_2)) \le Kd_X(x_1,x_2)+L.
        \end{equation*}
        \item a $(K,L)$-\emph{quasi-isometry} if  $f$ is a $(K,L)$-quasi-isometric embedding and $Y \subset \Nc_{L}(f(X))$.
        \item a \emph{quasi-inverse} of a $(K,L)$-quasi-isometry $g:Y\to X$ if there exists $C' \geq 0$ such that
        $$\sup_{x \in X}d_X(x,g\circ f(x))\le C'\quad\text{and}\quad \sup_{y\in Y}d_Y(y,f\circ g(y))\le C'.$$
    \end{enumerate}
\end{definition}

    Note that a quasi-inverse is only coarsely well-defined. Below we show that we can pick a quasi-invserse where $C'=2KL$.

\begin{proposition}
    \label{prop:quasi-inverse}
    Let $f:X\to Y$ be a $(K,L)$-quasi-isometry. Then:
   \begin{enumerate}
       \item if $g'$ is a quasi-inverse of $f$, then $g'$ is a $(K,2KC_Y'+KL)$-quasi-isometry, where  $C_Y':=\sup_{y\in Y}d_Y(y,f \circ g'(y))$.
       \item there exists a quasi-inverse $g:Y\to X$ such that $$\sup_{x \in X}d_X(x,g\circ f(x))\le 2KL \quad\text{and}\quad  \sup_{y\in Y}d_Y(y,f\circ g(y))\le L.$$ In particular, $g$ is a $(K,3KL)$-quasi-isometry.
   \end{enumerate} 
\end{proposition}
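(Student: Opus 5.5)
For part (1), I will just unwind the definitions with the triangle inequality. Fix $y_1,y_2\in Y$ and set $C:=C'_Y$. The points $f(g'(y_i))$ are within $C$ of $y_i$, so $d_Y(f(g'(y_1)),f(g'(y_2)))$ is within $2C$ of $d_Y(y_1,y_2)$. The lower bound for $f$ gives
\[
\tfrac1K d_X(g'(y_1),g'(y_2)) - L \le d_Y(f g'(y_1), f g'(y_2)) \le d_Y(y_1,y_2)+2C .
\]
Hence $d_X(g'(y_1),g'(y_2))\le K d_Y(y_1,y_2)+2KC+KL$. The upper bound for $f$ gives
\[
d_Y(y_1,y_2)-2C \le K d_X(g'(y_1),g'(y_2))+L ,
\]
so $d_X(g'(y_1),g'(y_2))\ge \tfrac1K d_Y(y_1,y_2) - \tfrac{2C+L}{K}$, and $\tfrac{2C+L}{K}\le 2KC+KL$ because $K\ge1$. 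For coarse surjectivity of $g'$, note that for any $x\in X$ the point $g'(f(x))$ lies within $C'_X:=\sup_x d_X(x,g'f(x))$ of $x$. I will have to check that this constant fits inside $2KC+KL$. One route is to compare $f(x)$ with $f(g'(f(x)))$, which are within $C$, and then use the lower quasi-isometry bound for $f$; this gives $d_X(x,g'f(x))\le K(C+L)\le 2KC+KL$. So the additive constant works in every inequality.

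For part (2), I will construct $g$ explicitly using the axiom of choice. For each $y\in Y$, since $Y\subset\Nc_L(f(X))$, I choose $g(y)\in X$ with $d_Y(y,f(g(y)))<L$; this gives $\sup_y d_Y(y,fg(y))\le L$. For the other bound, take $x\in X$ and $y=f(x)$. Then $d_Y(f(x),f(g(f(x))))\le L$, and the lower bound for $f$ yields
\[
\tfrac1K d_X(x,gf(x)) - L \le L ,
\]
so $d_X(x,gf(x))\le 2KL$. Applying part (1) with $C'_Y=L$ shows $g$ is a $(K,2KL+KL)=(K,3KL)$-quasi-isometry.

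The only mildly delicate step is the coarse-density constant in part (1). I will handle it with the computation $K(C+L)$ above rather than relying on an unspecified $C'_X$. Everything else is routine triangle-inequality bookkeeping.
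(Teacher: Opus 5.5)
Your argument is essentially the paper's proof: the same two triangle-inequality estimates give the embedding bounds in (1), and the same choice-function construction together with the estimate $\tfrac1K d_X(x,gf(x))-L\le L$ gives (2). The one addition is your coarse-density check $d_X(x,g'f(x))\le K(C'_Y+L)$, which the paper's proof of (1) omits entirely. Note that under the paper's open-neighbourhood convention for $\Nc_r$, this check yields $X\subset\Nc_{2KC'_Y+KL}(g'(Y))$ only when $C'_Y>0$. When $C'_Y=0$ and $L>0$, the stated constant can genuinely fail (e.g.\ $X=\{0,1\}$, $Y$ a point, $K=L=1$, $g'\equiv 0$), but that is a defect of the statement rather than of your argument.
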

\begin{proof}
(1) Let $g'$ and $C_Y'$ be as in the statement of the proposition. Let $y_1, y_2 \in Y$. By definition, $d_Y(y_1, f(g'(y_1))) \le C_Y'$ and $d_Y(y_2, f(g'(y_2))) \le C_Y'$.
   As $f$ is a $(K,L)$-quasi-isometry, 
	\begin{align*}
	\frac{1}{K} d_X(g'(y_1), g'(y_2)) - L &\le d_Y(f(g'(y_1)), f(g'(y_2))) \\
	&\le d_Y(f(g'(y_1)), y_1) + d_Y(y_1, y_2) + d_Y(y_2, f(g'(y_2)))\\
	& \le 2C_Y' + d_Y(y_1, y_2).
	\end{align*}
	Hence, we get
    \begin{equation}\label{eqn: quasi-inverse 1}
    d_X(g'(y_1), g'(y_2)) \le K d_Y(y_1, y_2) + 2KC_Y'+KL.
    \end{equation}
    On the other hand,
	\begin{align*}
	d_Y(y_1, y_2) &\le d_Y(y_1, f(g'(y_1))) + d_Y(f(g'(y_1)), f(g'(y_2))) + d_Y(f(g'(y_2)), y_2)\\
	&\le 2C_Y' + K d_X(g'(y_1), g'(y_2)) + L.
	\end{align*}
	Hence, we get
    \begin{equation}
        \label{eqn: quasi-inverse 2}
        d_X(g'(y_1), g'(y_2)) \ge \frac{1}{K} d_Y(y_1, y_2) - \frac{2C_Y'+L}{K}.
    \end{equation}  
    But $\frac{2C_Y'+L}{K} \le K(2C_Y'+L)$ as $K\geq1$. Then \eqref{eqn: quasi-inverse 1} and \eqref{eqn: quasi-inverse 2} finish the proof.

(2)    Since $f: X \to Y$ is a $(K,L)$-quasi-isometry,  for every $y \in Y$ there exists at least one $x \in X$ such that $d_Y(y, f(x)) \le L$. Using the axiom of choice, for each $y \in Y$, we pick one such $x$ and define our map $g: Y \to X$ as $g(y) = x$. Then
    \begin{equation*}
        d_Y(y,f(g(y)))\le L \quad\text{for all } y \in Y.
    \end{equation*}
  For any $x \in X$, by definition of $g$, $d_Y(f(x), f(g(f(x)))) \le L.$ Then, as $f$ is a $(K,L)$-quasi-isometry, we have 
  \begin{equation*}
  \sup_{x \in X} d_X(x, g(f(x))) \le 2KL. \qedhere
  \end{equation*}
\end{proof}

We quickly recall some facts about quasi-isometries that we will need.

\begin{proposition}\label{prop: quasi-isometry quasi-commutes with intersection}
    Let $(X, d_X)$ and $(Y, d_Y)$ be metric spaces, and let $f: X \to Y$ be a $(K, L)$-quasi-isometric embedding. Let $A_1, \dots, A_k \subset X$ be a finite collection of subsets. Then,
    $$f\left(\bigcap_{i=1}^k A_i\right)\subset\bigcap_{i=1}^k f(A_i)\subset f\left(\bigcap_{i=1}^k \ngh{A_i}{KL}\right).$$
\end{proposition}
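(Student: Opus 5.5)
The statement has two inclusions, and I will prove each directly from the definitions, using only the lower bound in the quasi-isometric embedding inequality for the second one.

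\emph{First inclusion.} Let $y\in f\left(\bigcap_{i=1}^k A_i\right)$, so $y=f(x)$ with $x\in A_i$ for every $i$. Then $y=f(x)\in f(A_i)$ for each $i$, and hence $y\in\bigcap_{i=1}^k f(A_i)$. Nothing about $f$ is used here.

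\emph{Second inclusion.} Let $y\in\bigcap_{i=1}^k f(A_i)$. For each $i$ choose $x_i\in A_i$ with $f(x_i)=y$. I take $x:=x_1$ as the candidate preimage and must show $x\in\Nc_{KL}(A_i)$ for every $i$.

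1. Since $f(x_1)=f(x_i)=y$, the lower bound $\frac1K d_X(x_1,x_i)-L\le d_Y(f(x_1),f(x_i))=0$ gives $d_X(x_1,x_i)\le KL$.
2. Because $x_i\in A_i$, this yields $d_X(x_1,A_i)\le KL$ for each $i$; for $i=1$ the distance is $0$.
3. Hence $y=f(x_1)$ with $x_1$ lying in the $KL$-neighbourhood of every $A_i$, which is the desired inclusion.

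The only delicate point, and the main obstacle, is the convention that $\Nc_r(F)$ denotes the \emph{open} neighbourhood $\{w: d(w,F)<r\}$. Step 1 gives only the non-strict bound $d_X(x_1,x_i)\le KL$, so when $d_X(x_1,x_i)=KL$ exactly, $x_1$ need not lie in the open $KL$-neighbourhood of $A_i$. Two degenerate cases also arise: if $L=0$, the open $0$-neighbourhood is by convention $A_i$ itself, and then $x_i=x_1$ holds, so no issue occurs; and if $KL=0$ in general, the same convention applies.

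For the boundary case with $KL>0$, I would state the result with closed neighbourhoods $\clngh{A_i}{KL}$, or equivalently with any radius strictly larger than $KL$. This is harmless in the paper's later uses, which absorb constants anyway. With that adjustment the argument is complete.
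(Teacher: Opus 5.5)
Your argument is the same as the paper's: take $x_1$ as the common preimage and use the lower quasi-isometry bound to get $d_X(x_1,x_i)\le KL$ for every $i$. Your caveat about the strict-inequality convention for $\mathcal{N}_r$ is also correct, and the paper's proof passes over it. The statement as written fails for $X=\{0,1\}$, $Y$ a point, $f$ constant, $K=L=1$, $A_1=\{0\}$, $A_2=\{1\}$, so your closed-neighbourhood (or larger-radius) version is the right one; this is harmless, since the quasi-isometry invariance proof only uses the first inclusion.
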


\begin{proof}
    Only the second inclusion needs a proof. Let $y \in \bigcap_{i=1}^k f(A_i)$. Then for each $i=1,\dots,k$, there exists $x_i\in A_i$ such that $y=f(x_i)$. 
	Since $f(x_1)=f(x_2)=\dots=f(x_k)= y$ and $f$ is a $(K,L)$-quasi-isometric embedding, we get that for all $i=2,\dots,k$
    $$d_X(x_1, x_i)\le K(d_Y(f(x_1), f(x_i))+L)\le KL.$$
	Thus, $x_1\in\bigcap_{i=1}^k \ngh{A_i}{KL}$ and hence $y\in f\left(\bigcap_{i=1}^k \ngh{A_i}{KL}\right)$.
\end{proof}

The next two propositions are routine and hence we skip the proofs. 

\begin{proposition}\label{prop: quasi-isometry quasi-commutes with taking ngh}
Let $f: X \to Y$ be a $(K, L)$-quasi-isometry. Given $A \subset X$,
$$\ngh{f(A)}{\frac{\delta}{K}-2L} \subset \ngh{f(\ngh{A}{\delta})}{L}\quad\text{and}\quad f(\ngh{A}{\delta}) \subset \ngh{f(A)}{K\delta+L}.$$
\end{proposition}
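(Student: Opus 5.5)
Both inclusions can be checked pointwise, using only the two inequalities of a $(K,L)$-quasi-isometry and the $L$-density of $f(X)$ in $Y$. I would prove the second inclusion first, since it is immediate. Let $y\in f(\ngh{A}{\delta})$, so $y=f(x)$ with $d_X(x,a)<\delta$ for some $a\in A$. The upper quasi-isometric bound gives $d_Y(f(x),f(a))\le K d_X(x,a)+L<K\delta+L$. Since $f(a)\in f(A)$, we get $y\in\ngh{f(A)}{K\delta+L}$.

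For the first inclusion, let $y\in\ngh{f(A)}{\frac{\delta}{K}-2L}$. If $\frac{\delta}{K}-2L\le 0$ there is nothing to prove (the neighborhood is empty, or is $f(A)\subset f(\ngh{A}{\delta})$ under the zero convention). Otherwise pick $a\in A$ with $d_Y(y,f(a))<\frac{\delta}{K}-2L$. By coboundedness there is $x\in X$ with $d_Y(y,f(x))\le L$. Then
\[
d_Y(f(x),f(a)) < \tfrac{\delta}{K}-L .
\]
The lower quasi-isometric bound gives $d_X(x,a)\le K\bigl(d_Y(f(x),f(a))+L\bigr)<\delta$. Hence $x\in\ngh{A}{\delta}$ and $f(x)\in f(\ngh{A}{\delta})$. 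Since $d_Y(y,f(x))\le L$, the point $y$ lies in the $L$-neighborhood of $f(\ngh{A}{\delta})$.

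The one delicate point is strict versus non-strict inequality in that last step, because $\ngh{\cdot}{L}$ is an open neighborhood. To obtain $d_Y(y,f(x))<L$ strictly, I would choose $x$ with $d_Y(y,f(x))<L$; this uses $Y\subset\Nc_L(f(X))$ from the definition, which is an open neighborhood. The bound on $d_Y(f(x),f(a))$ remains strict, so the argument goes through unchanged.

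The degenerate case $L=0$ is harmless: then $f$ is surjective and we may take $f(x)=y$. With the convention $\Nc_0=\mathrm{id}$, the conclusion is simply $y\in f(\ngh{A}{\delta})$. No step is a real obstacle; the only care needed is this bookkeeping of open neighborhoods and the zero-radius conventions.
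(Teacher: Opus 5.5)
Your proof is correct. The second inclusion follows from the upper quasi-isometry bound, and the first follows from coboundedness, used in its open form $Y\subset\Nc_L(f(X))$, together with the lower bound; your handling of non-positive radii and of $L=0$ is also right.

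The paper does not prove this statement; it calls it routine and skips it. Your pointwise argument is exactly that routine verification. The one trivial case you leave unstated is $\delta=0$ in the second inclusion, where $f(\Nc_0(A))=f(A)\subset\Nc_L(f(A))$ holds directly.
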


\begin{proposition}\label{prop:coarse containment under QI}
Let $f: X \to Y$ be a $(K, L)$-quasi-isometry, and $A,B \subset X$. 
\begin{enumerate}
    \item If $r>0$ and $A \subset \Nc_r(B)$, then $f(A)\subset \Nc_{Kr+L}(f(B)).$ 
    \item $d^{\Haus}_Y(f(A),f(B)) \leq K \cdot d^{\Haus}_X(A,B)+L$.
    \item $\diam(A)\leq K \left( \diam(f(A))+L \right)$.
\end{enumerate}
\end{proposition}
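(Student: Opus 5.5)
The statement is \cref{prop:coarse containment under QI}. All three parts follow directly from the two inequalities defining a $(K,L)$-quasi-isometric embedding, applied pointwise. The only care needed is with the open-neighborhood conventions and the $\sup/\inf$ in the Hausdorff distance.

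For (1), take $a \in A$. Since $A \subset \Nc_r(B)$, there is $b \in B$ with $d_X(a,b) < r$. The upper bound gives
\[
d_Y(f(a),f(b)) \le K d_X(a,b) + L < Kr + L,
\]
so $f(a) \in \Nc_{Kr+L}(f(B))$. The strict inequality is what makes the open neighborhood convention work.

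For (2), set $h := d^{\Haus}_X(A,B)$. If $h = \infty$ there is nothing to prove. Otherwise, for every $\varepsilon > 0$ we have $A \subset \Nc_{h+\varepsilon}(B)$ and $B \subset \Nc_{h+\varepsilon}(A)$. Part (1) then gives both $f(A) \subset \Nc_{K(h+\varepsilon)+L}(f(B))$ and $f(B) \subset \Nc_{K(h+\varepsilon)+L}(f(A))$. Hence $d^{\Haus}_Y(f(A),f(B)) \le K(h+\varepsilon)+L$, and letting $\varepsilon \to 0$ yields the claim. One could worry that the Hausdorff distance was only defined for closed sets in \cref{defn:haus_dist}. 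The same $\sup$-formula still makes sense for arbitrary subsets, so I would read (2) with that formula.

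For (3), let $a_1, a_2 \in A$. The lower bound gives
\[
d_X(a_1,a_2) \le K\bigl(d_Y(f(a_1),f(a_2)) + L\bigr) \le K\bigl(\diam(f(A)) + L\bigr).
\]
Taking the supremum over $a_1, a_2 \in A$ finishes. The case $A = \emptyset$ is trivial by the convention $\diam(\emptyset) = 0$.

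None of these steps is a real obstacle; the only thing to watch is the $\varepsilon$-slack in (2) coming from the open neighborhoods.
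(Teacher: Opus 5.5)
Your proof is correct: all three parts follow from the two quasi-isometric embedding inequalities applied pointwise, and you handle the open-neighborhood conventions properly (the strict inequality in (1), the $\varepsilon$-slack in (2)). The paper skips this proof as routine, and your argument is exactly that routine verification.
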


\subsection{Push-forward of a coarse filling}

We now define the push-forward of a coarse $r$-filling and show that the notion is coarsely well-defined. The geometric intuition is the following: given a quasi-isometry $f:X\to Y$, to fill a tuple of points in Y, we use a quasi-inverse to map them back to $X$, fill them in $X$, and then push the resulting set to $Y$ via the quasi-isometry $f$.

\begin{definition}\label{def: push-forward of a coarse r-filling}Suppose that:
\begin{enumerate}
    \item $f:(X,d_X)\to (Y,d_Y)$ is a $(K,L)$-quasi-isometry, 
    \item $(F,\ctrlf)$ with $F=\{F_i:X^{i+1}\to\Pc(X)\}_{i=0,\cdots,r}$ is a coarse $r$-filling on $X$, and
    \item $g:Y\to X$ be a quasi-inverse of $f$ such that $\sup_{y\in Y}d_Y(y,f\circ g(y))\le C'$.
\end{enumerate} 
Then, we define the \emph{push-forward of $(F,\ctrlf)$ via $g$} as the pair $(F_*,\ctrlf_*)$ with $F_*=\{F_{i,*}:Y^{i+1}\to\Pc(Y)\}_{i=0,\dots,r}$ where 
\begin{align*}
    & F_{i,*}(y_0, \dots, y_i) := f\left( F_i \big( g(y_0), \dots, g(y_i) \big) \right), ~~\text{and} \\
    & \ctrlf_*(R) := K \cdot \ctrlf \big(K\cdot(R + L+C') \big) + L+C' \nonumber.
\end{align*}
\end{definition}

\begin{remark} \
\begin{enumerate}
    \item The push-forward $(F_*,\Lambda_*)$ is a coarse $r$-filling on $Y$ (\cref{prop: filling quasi-isometry}). 
    \item By \cref{lem: push-forward coarsely independent 1}, the push-forward $(F_*,\Lambda_*)$  is coarsely independent of the choice of the quasi-inverse $g$ in the following sense:  changing the quasi-inverse produces coarse fillings that are coarsely equivalent (Definition \ref{def: push-forward of a coarse r-filling}).
\end{enumerate}
\end{remark}

\begin{proposition}\label{prop: filling quasi-isometry}
    Suppose $f:X \to Y$ is a $(K,L)$-quasi-isometry and $g$ is its quasi-inverse such that $\sup_{y\in Y}d_Y(y,f\circ g(y))\le C'$. If $(F,\ctrlf)$ is a coarse $r$-filling on $X$, then the push-forward $(F_*,\ctrlf_*)$ of $(F,\Lambda)$ via $g$ is a coarse $r$-filling on $Y$.
\end{proposition}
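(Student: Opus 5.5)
We verify (F0)--(F4) for $(F_*,\ctrlf_*)$ one at a time. Each check follows the same pattern: pull the configuration in $Y$ back to $X$ with $g$, apply the corresponding axiom for $(F,\ctrlf)$ in $X$, and push forward with $f$ using \cref{prop:coarse containment under QI}. Two basic estimates get used repeatedly. First, $d_Y(y,f(g(y)))\le C'$. Second, images of $R$-neighbourhoods satisfy $f(\ngh{A}{R})\subset\ngh{f(A)}{KR+L}$ (\cref{prop: quasi-isometry quasi-commutes with taking ngh}). Throughout, $\ctrlf_*(R)\ge K\ctrlf(0)+L+C'$, so the constant term $\ctrlf_*(0)=K\ctrlf(K(L+C'))+L+C'$ absorbs every additive error we meet.

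\textbf{(F0)--(F3).} For (F0), (F0) in $X$ gives $d^{\Haus}(g(y_0),F_0(g(y_0)))\le\ctrlf(0)$. Pushing forward, $d^{\Haus}(f(g(y_0)),F_{0,*}(y_0))\le K\ctrlf(0)+L$, and since $d(y_0,fg(y_0))\le C'$ we obtain the bound $K\ctrlf(0)+L+C'\le\ctrlf_*(0)$, using monotonicity of $\ctrlf$.

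For (F1), (F3) and the Hausdorff bound in (F2), we apply the corresponding axiom in $X$ to the tuple $(g(y_0),\dots,g(y_k))$. This gives a containment in an $\ctrlf(0)$-neighbourhood, or a Hausdorff bound of $\ctrlf(0)$. \cref{prop:coarse containment under QI}(1),(2) then turns it into $K\ctrlf(0)+L\le\ctrlf_*(0)$. For (F3), note that $F_{k+1,*}(y_0,\dots,y_k,y_k)=f(F_{k+1}(g(y_0),\dots,g(y_k),g(y_k)))$, so the repeated vertex stays repeated after applying $g$ and (F3) in $X$ applies directly.

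\textbf{(F4): the main step.} Suppose $x\in\ngh{F_{k,*}(y_0,\dots,y_k)}{R}$. Choose $z\in F_k(g(y_0),\dots,g(y_k))$ with $d_Y(x,f(z))<R$. Then
\[d_Y(f(g(x)),f(z))<R+C',\]
and the lower quasi-isometry bound gives
\[d_X(g(x),z)<K(R+C'+L).\]
Hence $g(x)\in\ngh{F_k(g(y_0),\dots,g(y_k))}{K(R+L+C')}$. Now (F4) in $X$ yields that $F_k(g(y_0),\dots,g(x),\dots,g(y_k))$, with $g(x)$ in the $j$-th slot, lies in the $\ctrlf(K(R+L+C'))$-neighbourhood of $F_k(g(y_0),\dots,g(y_k))$. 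Applying $f$ then gives
\[F_{k,*}(y_0,\dots,x,\dots,y_k)\subset\ngh{F_{k,*}(y_0,\dots,y_k)}{K\ctrlf(K(R+L+C'))+L}\subset\ngh{F_{k,*}(y_0,\dots,y_k)}{\ctrlf_*(R)}.\]
The one place needing care is converting a closeness statement in $Y$ back to $X$. The upper bound $Kd+L$ is the wrong direction here; we must use the lower bound of the quasi-isometric embedding, which is exactly why $\ctrlf_*$ has the inner argument $K(R+L+C')$.

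Finally, $\ctrlf_*$ is non-decreasing because $\ctrlf$ is, so $(F_*,\ctrlf_*)$ is a coarse $r$-filling on $Y$.
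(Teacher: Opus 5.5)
Your proposal is correct and follows the paper's proof essentially step for step. (F0)--(F3) are transported from $X$ via \cref{prop:coarse containment under QI}, with the additive errors absorbed into $\ctrlf_*(0)$. (F4) is handled exactly as in the paper: pull the new vertex back with $g$, use the lower quasi-isometry bound to get $g(x)\in\ngh{F_k(g(y_0),\dots,g(y_k))}{K(R+L+C')}$, apply (F4) in $X$, and push forward with $f$.
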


\begin{proof}    
     For the rest of the proof, fix $i\in\{0,\dots,r\}$ and $y_0,\dots,y_i\in Y$. Set  
    $x_j\coloneqq g(y_j)$ for each $j=0,\dots,i$. Then, by definition, $d_Y(y_j,f(x_j) \leq C'$ for each $j=0,\dots,i$. Observe that by definition
    $$F_{i,*}(y_0, \dots, y_i)=f( F_i(x_0,\dots,x_i)).$$ 

We now verify (F0)-(F4) in Definition \ref{defn:coarse_filling_weak} for $F_*$. To verify each (Fq) where $q \in \{0,\dots,4\}$, we use that $(F,\ctrlf)$ satisfies (Fq) and then apply \cref{prop:coarse containment under QI}. 

    \noindent (F0) Applying the triangle inequality, \cref{prop:coarse containment under QI}(2), and (F0) to $(F,\ctrlf)$, we get
     \begin{align*}
            d^{\Haus}_Y(y_0, F_{0,*}(y_0)) &\le d_Y(y_0, f(x_0)) + d^{\Haus}_Y(f(x_0), f(F_0(x_0)) \\
            &\le C' + K \cdot d^{\Haus}_X(x_0, F_0(x_0)) + L  
            \\
            &\le C'+L + K \cdot \ctrlf(0) \le \ctrlf_*(0). 
        \end{align*}

      \noindent (F1)  Fix $j=0,\dots,i$.
        Since $(F,\ctrlf)$ satisfies (F1), we have
        $$F_{i-1,*}(y_0,\dots,\wh{y_j},\dots,y_i)=F_{i-1}(x_0,\dots,\wh{x_j},\dots,x_i)\subset \ngh{F_{i}(x_0,\dots,x_i)}{\ctrlf(0)}.$$
        Then, by \cref{prop:coarse containment under QI}(1),
        \begin{align*}
        f(F_{i-1}(x_0,\dots,\wh{x_j},\dots,x_i)) &\subset \ngh{f(F_{i}(x_0,\dots,x_i))}{K\ctrlf(0)+L}\\
        \implies F_{i-1,*}(y_0,\dots,\wh{y_j},\dots,y_i) & \subset \ngh{F_{i,*}(y_0,\dots,y_i)}{\ctrlf_*(0)}.
        \end{align*}

      \noindent (F3) Since $(F,\ctrlf)$ satisfies (F3), \cref{prop:coarse containment under QI}(1) implies that
        \begin{align*}
        f(F_{i+1}(x_0,\dots,\wh{x_j},\dots,x_i,x_i)) &\subset \ngh{f(F_{i}(x_0,\dots,x_i))}{K\ctrlf(0)+L}\\
        \implies F_{i+1,*}(y_0,\dots,\wh{y_j},\dots,y_i,y_i) & \subset \ngh{F_{i,*}(y_0,\dots,y_i)}{\ctrlf_*(0)}.
        \end{align*}
    \begin{notation*}
     For the rest of the proof, set $\mathbf{x}\coloneqq(x_0,\dots,x_i)$ and $\mathbf{y}\coloneqq(y_0,\dots,y_i)$.
    \end{notation*}

        \noindent (F2) Let $\sigma$ be a permutation of $\{0, \dots, i\}$. Let $\mathbf{x}_\sigma\coloneqq(x_{\sigma(0)}, \dots, x_{\sigma(i)})$ and $\mathbf{y}_\sigma\coloneqq(y_{\sigma(0)}, \dots, y_{\sigma(i)})$.   Since $(F,\ctrlf)$ satisfies (F2),  \cref{prop:coarse containment under QI}(2) implies that 
         \begin{align*}
             d^{\Haus}_Y &(F_{i,*}(\mathbf{y}), F_{i,*}(\mathbf{y}_\sigma)) = d^{\Haus}_Y(f(F_i(\mathbf{x})), f(F_i(\mathbf{x}_\sigma)))\\
            &\le K \cdot d^{\Haus}_X(F_i(\mathbf{x}), F_i(\mathbf{x}_\sigma)) + L \le K \cdot \ctrlf(0) + L\le \ctrlf_*(0).
        \end{align*}

         \noindent (F4) Fix $j=0,\dots,i$ and $z \in \ngh{F_{i,*}(\mathbf{y})}{R}=\ngh{f(F_{i}(\mathbf{x}))}{R}$. Then there exists $w \in F_i(\mathbf{x})$ such that
        $d_Y(z, f(w)) \le R.$
        As $f$ is a $(K,L)$-quasi-isometry, 
        \begin{align*}
            \frac{1}{K} d_X(g(z), w) - L &\le d_Y(f(g(z)), f(w)) \\
            &\le d_Y(f(g(z)), z) + d_Y(z, f(w)) \leq C'+R
        \end{align*}
        Thus, $d_X(g(z), w)  \le K\cdot (C'+L + R).$
        This implies that $g(z) \in \ngh{F_i(\mathbf{x})}{K\cdot(R + L+C')}$. Since $(F,\ctrlf)$ satisfies (F4),
        $$
        F_i(x_0,\dots,x_{j-1},g(z),x_{j+1},\dots,x_i)\subseteq \ngh{F_i(\mathbf{x})}{\ctrlf(K\cdot(R + C'+L))}.
        $$
       Then, by \cref{prop:coarse containment under QI}(1),  
       \begin{align*}
        F_{i,*}(y_0,\dots,y_{j-1},z,y_{j+1},\dots,y_i)&\subseteq\ngh{F_{i,*}(\mathbf{y})}{K \cdot \ctrlf(K\cdot(R + L+C')) + L}\\
        &\subset\ngh{F_{i,*}(\mathbf{y})}{\ctrlf_*(R)}. \qedhere
        \end{align*}
\end{proof}

Recall the definition of coarse equivalence between two coarse fillings from \cref{defn:coarse_equiv_of_fillings}. We now show that push-forwards defined in \cref{def: push-forward of a coarse r-filling} are coarsely independent of the choice of the quasi-inverse.
\begin{lemma}\label{lem: push-forward coarsely independent 1}
    Let $(X,d_X)$ be endowed with a coarse $r$-filling $(F,\ctrlf)$ and $f:(X,d_X)\to (Y,d_Y)$ be a $(K,L)$-quasi-isometry. If $g,g':Y\to X$ are two quasi-inverses of $f$, then the push-forwards of $(F,\ctrlf)$ via $g$ and $g'$ respectively are coarsely equivalent. 
\end{lemma}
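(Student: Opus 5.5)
The plan is to compare the two push-forwards term by term and show that their filling maps are uniformly Hausdorff close. Write $(F_*,\ctrlf_*)$ for the push-forward via $g$ and $(F'_*,\ctrlf'_*)$ for the push-forward via $g'$. Let $C'$ and $C''$ be the constants with $\sup_{y}d_Y(y,f\circ g(y))\le C'$ and $\sup_{y}d_Y(y,f\circ g'(y))\le C''$.

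The first step is to bound the pointwise distance between $g$ and $g'$. For any $y\in Y$, the triangle inequality gives $d_Y(f(g(y)),f(g'(y)))\le C'+C''$. The lower quasi-isometry bound for $f$ then yields
$$d_X(g(y),g'(y))\le K(C'+C''+L)=:M,$$
uniformly in $y$.

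The second step is to compare the fillings in $X$. Fix $i\in\{0,\dots,r\}$ and $y_0,\dots,y_i\in Y$. We replace the entries $g(y_j)$ by $g'(y_j)$ one at a time, for $j=0,\dots,i$. At each replacement we apply the Lipschitz property, \cref{lem: lipschitz property}, which says that changing one entry by distance at most $M$ moves $F_i$ by Hausdorff distance at most $\ctrlf(M+\ctrlf(0))$. Since $\ctrlf$ is non-decreasing and there are $i+1\le r+1$ replacements, the triangle inequality for $d^{\Haus}_X$ gives
$$d^{\Haus}_X\bigl(F_i(g(y_0),\dots,g(y_i)),F_i(g'(y_0),\dots,g'(y_i))\bigr)\le (r+1)\,\ctrlf(M+\ctrlf(0)).$$
This step needs no geodesic hypothesis, because \cref{lem: lipschitz property} holds for an arbitrary metric space.

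The third step is to push the estimate forward to $Y$. Applying \cref{prop:coarse containment under QI}(2) to $f$ gives
$$d^{\Haus}_Y\bigl(F_{i,*}(y_0,\dots,y_i),F'_{i,*}(y_0,\dots,y_i)\bigr)\le K(r+1)\,\ctrlf\bigl(K(C'+C''+L)+\ctrlf(0)\bigr)+L.$$
This bound is independent of $i$ and of the $y_j$, so taking it as the constant in \cref{defn:coarse_equiv_of_fillings} proves coarse equivalence.

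None of these steps is a genuine obstacle. The only point needing care is the iterated substitution in the second step: each intermediate tuple must still be a valid input for $F_i$, which it is, and the errors must be summed over the $i+1$ replacements rather than taken once.
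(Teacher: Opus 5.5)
Your proof is correct and follows the paper's argument essentially step for step. You bound $d_X(g(y),g'(y))$ using the lower quasi-isometry bound for $f$, apply \cref{lem: lipschitz property} once per entry and sum over the $i+1\le r+1$ substitutions, then transfer the estimate to $Y$ with \cref{prop:coarse containment under QI}; the only cosmetic differences are that you keep separate constants $C'$, $C''$ (the paper takes a common $C'$) and that you cite part (2) of that proposition, which is indeed the part giving the Hausdorff bound.
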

\begin{proof}
    Since $g,g'$ are quasi-inverses, there exists $C'$ such that 
    $$\sup_{y\in Y}d_Y(y, f\circ g(y)) \le C'\quad\text{and}\quad \sup_{y\in Y}d_Y(y, f\circ g'(y))) \le C'.$$
    Set $C:=K\cdot(r+1)\cdot \ctrlf(2KC'+KL+\ctrlf(0))+ L$. \\
    Fix $i\in\{0,\dots,r\}$ and $y_0, \dots, y_i\in Y$. We will finish the proof by showing that 
    $$
    d_Y^{\Haus}(f(F_i(g(y_0),\dots,g(y_i))),f(F_i(g'(y_0),\dots,g'(y_i))))\le C. 
    $$
    To do this, we let $x_j\coloneqq g(y_j)$ and $x_j'=g'(y_j)$ for any $j=0,\dots,i$. Then
    $$
    d_Y(y_j, f(x_j)) \le C'\quad\text{and}\quad d_Y(y_j, f(x'_j)) \le C'.$$
    Then for each $j$,
    $$
    d_Y(f(x_j), f(x'_j)) \le d_Y(f(x_j), y_j) + d_Y(y_j, f(x'_j)) \le 2C'.
    $$
    Since $f$ is a $(K,L)$-quasi-isometry,  $d_X(x_j, x'_j) \le K(2C'+ L).$
    
    It follows from \cref{lem: lipschitz property}  that
    \begin{equation*}
        d^{\Haus}(F_i(x_0,\dots,x_{j-1},x_j',x_{j+1},\dots,x_i), F_i(x_0,\dots,x_i)) \leq \ctrlf(2KC'+KL+\ctrlf(0)).
    \end{equation*}
    Therefore, by successive applications of the triangle inequality,
    \begin{align*}
    d_X^{\Haus}(F_i(x_0, \dots, x_i ), F_i(x_0', \dots, x_i')) &\leq (i+1)\ctrlf(2KC'+KL+\ctrlf(0))
    \end{align*}
    Then, by \cref{prop:coarse containment under QI}(1), 
    \begin{equation*}\label{eqn: invariance quasi-isometry 1}
        d_Y^{\Haus}(f(F_i(x_0, \dots, x_i )), f(F_i(x_0', \dots, x_i'))) \le C. \qedhere
    \end{equation*}
\end{proof}

We finally show that the push-forward operation  is coarsely involutive.
\begin{lemma}\label{lem: double push-forward}
    Let $f: (X, d_X) \to (Y, d_Y)$ be a $(K,L)$-quasi-isometry and let $g: Y \to X$ be a quasi-inverse of $f$. Let $(F, \ctrlf)$ be a coarse $r$-filling on $X$. If $(F_*, \ctrlf_*)$ is the push-forward of $(F, \ctrlf)$ via $g$, and  $(F_{**}, \ctrlf_{**})$ is the push-forward of $(F_*, \ctrlf_*)$ via $f$, then the coarse fillings $(F_{**}, \ctrlf_{**})$ and $(F, \ctrlf)$ are coarsely equivalent. 
    
    Moreover, if $g$ is as in \cref{prop:quasi-inverse}(2), then the Hausdorff distance between the fillings $F_{**}$ and $F$ is uniformly bounded above by $C=2KL + (r+1)\ctrlf(2KL + \ctrlf(0))$.
\end{lemma}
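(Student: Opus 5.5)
The plan is to unwind the definitions and compare $F_{**,i}$ with $F_i$ directly. By \cref{def: push-forward of a coarse r-filling}, the push-forward of $(F_*,\ctrlf_*)$ along the quasi-isometry $g:Y\to X$ uses $f$ as its quasi-inverse. Hence, for $x_0,\dots,x_i\in X$,
\[
F_{**,i}(x_0,\dots,x_i)=g\Big(F_{i,*}\big(f(x_0),\dots,f(x_i)\big)\Big)=g\circ f\Big(F_i\big(g f(x_0),\dots,g f(x_i)\big)\Big).
\]
Set $x_j':=g f(x_j)$ and $C'':=\sup_{x\in X}d_X(x,g\circ f(x))$. Then $d_X(x_j,x_j')\le C''$ for each $j$. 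When $g$ is chosen as in \cref{prop:quasi-inverse}(2), we have $C''\le 2KL$.

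Step 1 handles the outer $g\circ f$. For any set $A\subset X$, every point $a\in A$ satisfies $d_X(a,gf(a))\le C''$, so $d^{\Haus}_X(g f(A),A)\le C''$. Applying this with $A=F_i(x_0',\dots,x_i')$ gives
\[
d^{\Haus}_X\big(F_{**,i}(x_0,\dots,x_i),\,F_i(x_0',\dots,x_i')\big)\le C''.
\]

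Step 2 replaces the vertices $x_j'$ by $x_j$ one at a time using the Lipschitz property, \cref{lem: lipschitz property}. Each single swap costs at most $\ctrlf(d(x_j,x_j')+\ctrlf(0))\le \ctrlf(C''+\ctrlf(0))$, since $\ctrlf$ is non-decreasing. After the $i+1\le r+1$ swaps, the triangle inequality for Hausdorff distance gives
\[
d^{\Haus}\big(F_i(x_0',\dots,x_i'),\,F_i(x_0,\dots,x_i)\big)\le (r+1)\,\ctrlf\big(C''+\ctrlf(0)\big).
\]
This is the same argument as in \cref{lem: push-forward coarsely independent 1}.

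Combining Steps 1 and 2 gives the uniform bound $C''+(r+1)\ctrlf(C''+\ctrlf(0))$. This establishes coarse equivalence in the sense of \cref{defn:coarse_equiv_of_fillings}. With $C''=2KL$, it gives exactly $C=2KL+(r+1)\ctrlf(2KL+\ctrlf(0))$.

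The only delicate point is bookkeeping rather than any real obstacle. One must check that $f$ genuinely qualifies as a quasi-inverse of $g$, so that the second push-forward is defined; this is symmetric in \cref{defn:qiso}. One must also make sure the relevant constant is $\sup_x d_X(x,gf(x))$, and not the $Y$-side constant. Note that in Step 1 the Hausdorff bound must be taken pointwise via $a\mapsto gf(a)$. Applying \cref{prop:coarse containment under QI} there instead would introduce unnecessary multiplicative factors.
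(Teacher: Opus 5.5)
Your proposal is correct and follows the paper's proof. You unwind to $F_{i,**}(x_0,\dots,x_i)=g\circ f\big(F_i(x_0',\dots,x_i')\big)$ with $x_j'=gf(x_j)$, bound the outer $g\circ f$ by the $X$-side quasi-inverse constant, and swap vertices one at a time using \cref{lem: lipschitz property}. The differences are minor and in your favour: you treat an arbitrary quasi-inverse directly through $C''$, rather than first reducing to the special $g$ of \cref{prop:quasi-inverse}(2) via \cref{lem: push-forward coarsely independent 1}, and your pointwise justification of Step 1 is cleaner than the paper's appeal to \cref{prop:coarse containment under QI}(2).
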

\begin{proof}
    Because of \cref{lem: push-forward coarsely independent 1}, it suffices to prove this result for the particular quasi-inverse $g:Y\to X$ as in \cref{prop:quasi-inverse}, i.e. we may assume that $\sup_{x\in X}d_X(x, g\circ f(x))\leq 2KL$.  
    
    Fix any $i \in \{0, \dots, r\}$ and any $x_0, \dots, x_i \in X$. The double push-forward filling $F_{**} = \{F_{i,**}\}_{i=0,\dots,r}$ is defined by pushing forward $F_{i,*}$ via $f$, i.e. 
    $$
    F_{i,**}(x_0, \dots, x_i) = g(F_{i,*}(f(x_0), \dots, f(x_i)))=g\left( f \left( F_{i}(g(f(x_0)), \dots, g(f(x_i))\right)\right).
    $$
    Setting $x'_j \coloneqq g(f(x_j))$ for each $j=0,\dots,i$, we have 
    $$F_{i,**}(x_0, \dots, x_i)=g\circ f (F_i(x_0',\dots,x_i')). $$
    Set $C \coloneqq 2KL + (r+1)\ctrlf(2KL + \ctrlf(0))$. We will finish the proof by showing that
    $$
    d_X^{\Haus}(F_{i,**}(x_0, \dots, x_i), F_i(x_0, \dots, x_i)) \le C.
    $$
    To prove this,  note that
    $$
    d_X(x_j, x'_j) = d_X(x_j, g(f(x_j))) \le 2KL.
    $$
    \cref{lem: lipschitz property} implies that 
    \begin{align*}
        d_X^{\Haus}(F_i(x'_0, \dots, x'_i), F_i(x_0, \dots, x_i)) &\le (i+1)\ctrlf(2KL + \ctrlf(0)).
    \end{align*}

    On the other hand, since $g\circ f$ is a $(1,2KL)$-quasi-isometry (by the definition of the quasi-inverse $g$), \cref{prop:coarse containment under QI}(2) implies that 
    \begin{equation*}
    d_X^{\Haus}(g\circ f(F_i(x'_0, \dots, x'_i)), F_i(x'_0, \dots, x'_i)) \le 2KL.
    \end{equation*}
    Then, 
    \begin{align*}
        d_X^{\Haus}&(F_{i,**}(x_0, \dots, x_i), F_i(x_0, \dots, x_i)) \\
        &\le d_X^{\Haus}(g \circ f(F_i(x'_0, \dots, x'_i)), F_i(x'_0, \dots, x'_i)) + d_X^{\Haus}(F_i(x'_0, \dots, x'_i), F_i(x_0, \dots, x_i))\nonumber\\
        &\le 2KL + (r+1)\ctrlf(2KL + \ctrlf(0))=C.\nonumber \qedhere
    \end{align*}
\end{proof}

\subsection{Generic tuples and quasi-isometries}

To verify that this coarse $r$-filling is associated to a coarse $r$-median structure, we have to understand how generic tuples behave under quasi-isometries. The next lemma guarantees that the preimages of generic tuples remain generic, with respect to an affine modification of the parameters.

\begin{lemma}\label{lem: generic points via quasi-isometry}
    Let $f:X\to Y$ be a $(K,L)$-quasi-isometry and $g:Y\to X$ be a quasi-inverse. Assume that $X$ is endowed with a coarse $r$-filling $(F,\ctrlf)$ and consider the push-forward filling $(F_*,\ctrlf_*)$ on $Y$. Let $\delta_* \geq L,D_*\ge0$, and let  $(y_0, \dots, y_{r+1})$ be a $(\delta_*,D_*)$-generic tuple in $Y$. Then $(g(y_0), \dots, g(y_{r+1}))$ is a $(\delta,D)$-generic tuple in $X$, where 
    $\delta:=\frac{1}{K}(\delta_*-L)$ and $D:= K(D_*+L).$
\end{lemma}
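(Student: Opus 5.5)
We argue by contraposition: if $(x_0,\dots,x_{r+1}):=(g(y_0),\dots,g(y_{r+1}))$ is $(\delta,D)$-non-generic in $X$, then $(y_0,\dots,y_{r+1})$ is $(\delta_*,D_*)$-non-generic in $Y$. Non-genericity in $X$ gives non-empty proper subsets $I_1,\dots,I_q\subseteq\{0,\dots,r+1\}$ with $\bigcap_j I_j=\emptyset$, not all of size $r+1$, such that
\[
A:=\bigcap_{j=1}^q \ngh{F_{|I_j|-1}(x_i\mid i\in I_j)}{\delta}
\]
has $\diam(A)>D$. We use the \emph{same} index family $I_1,\dots,I_q$ in $Y$. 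Since the combinatorial conditions ($\bigcap I_j=\emptyset$ and some $|I_j|\neq r+1$) are unchanged, it only remains to show that
\[
A_*:=\bigcap_{j=1}^q \ngh{F_{|I_j|-1,*}(y_i\mid i\in I_j)}{\delta_*}
\]
has diameter $>D_*$.

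By definition of the push-forward, $F_{|I_j|-1,*}(y_i\mid i\in I_j)=f\bigl(F_{|I_j|-1}(x_i\mid i\in I_j)\bigr)=:f(B_j)$. The key containment is $f(A)\subseteq A_*$. For this, take $a\in A$. For each $j$ there is $b_j\in B_j$ with $d_X(a,b_j)<\delta$, hence
\[
d_Y(f(a),f(b_j))\le K\delta+L=\delta_*,
\]
using $\delta=\frac1K(\delta_*-L)$, which is nonnegative because $\delta_*\ge L$. This gives $f(a)$ within $\delta_*$ of $f(B_j)$, so $f(A)\subseteq A_*$ (this is also the second inclusion of \cref{prop: quasi-isometry quasi-commutes with taking ngh}).

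The main obstacle is the boundary case, since $\ngh{\cdot}{\delta_*}$ is an open neighborhood and the estimate above only gives $\le\delta_*$. We handle it by strictness: choose $b_j$ with $d_X(a,b_j)<\delta$, which yields the strict inequality $d_Y(f(a),f(b_j))<K\delta+L$. Here we interpret the quasi-isometric upper bound as $d_Y(f(x_1),f(x_2))\le K d_X(x_1,x_2)+L$, so a strict input gives a strict output whenever $K\delta$ absorbs the slack. If the strict inequality fails, we instead use $D$ with slight room, or replace $\delta$ by any $\delta'<\delta$ via monotonicity (\cref{obs: generic_increase_parameters}) and a limiting argument on diameters. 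The degenerate case $\delta=0$, where $\ngh{\cdot}{0}$ means the set itself, is checked directly: $f(B_j)\subseteq\ngh{f(B_j)}{L}$ holds when $L>0$, and when $L=0$ both sides coincide.

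Finally, we compare diameters. By \cref{prop:coarse containment under QI}(3), $\diam(A)\le K(\diam(f(A))+L)$. Combined with $\diam(A)>D=K(D_*+L)$, this gives $\diam(f(A))>D_*$, and therefore $\diam(A_*)\ge\diam(f(A))>D_*$. So $(y_0,\dots,y_{r+1})$ is $(\delta_*,D_*)$-non-generic, which contradicts the hypothesis.
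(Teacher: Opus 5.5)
Your proof is correct and follows the paper's argument: keep the same index family $I_1,\dots,I_q$, push the intersection forward via $f$ using $F_{k,*}(y_i\mid i\in I)=f(F_k(g(y_i)\mid i\in I))$, and compare diameters with the lower quasi-isometry bound. The paper does the same thing with two points $p,q$ at distance $>D$ instead of the whole set. Your fallback hedging is unnecessary, because $d_X(a,b_j)<\delta$ together with $K\ge1$ already forces $d_Y(f(a),f(b_j))<K\delta+L=\delta_*$. Your separate treatment of the case $\delta=0$ is exactly what is needed there.
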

\begin{proof}
    Let $x_i\coloneqq g(y_i)$ for $i=0,\dots,r+1$. Suppose by contradiction that the tuple $(x_0, \dots, x_{r+1})$ is not $(\delta, D)$-generic in $X$. Then, by definition of genericity, there exist $I_1,\dots,I_l\subseteq\{0,\dots,r+1\}$ such that
    \begin{itemize}
        \item $\bigcap_{i=1}^l I_i=\emptyset$,
        \item $\diam\left( \bigcap_{i=1}^l\ngh{F_{|I_i|-1}(V_i)}{\delta} \right)>D$, where $V_i:=\{x_s:s \in I_i\}$, and
        \item there exists $j\in\{1,\dots,l\}$ such that $\abs{I_j}\le r$.
    \end{itemize} 
    Let $p,q\in\bigcap_{i=1}^l\ngh{F_{\abs{I_i}-1}(V_i)}{\delta}$ 
    such that $d_X(p,q)>D.$
    Therefore, for any $i=0,\dots,l$ there exist
    $p_i,q_i\in F_{\abs{I_i}-1}(V_i)$ such that 
    $$d_X(p,p_i)\le \delta\quad\text{and}\quad d_X(q,q_i)\le \delta.$$
   Then, as $f$ is a $(K,L)$-quasi-isometry, we have 
    \begin{align*}
        d_Y(f(p),f(p_i))&\le K\delta+L ~\text{ and }~ d_Y(f(q),f(q_i))\le K\delta+L
    \end{align*}
    for any $i=0,\dots,l$, 
    This implies that
    \begin{align*}
        f(p),f(q)&\in \bigcap_{i=1}^l\ngh{f\left(F_{\abs{I_i}-1}(V_i)\right)}{K\delta+L}.
    \end{align*}
    On the other hand, 
    \begin{align*}
        d_Y(f(p),f(q))&\ge \dfrac{1}{K}d_X(p,q)-L> \dfrac{D}{K}-L.
    \end{align*}

    We now claim that this implies: the tuple $(y_0,\dots,y_{r+1})$ is not $(\delta_*, D_*)$-generic, where $\delta_*,D_*$ is as in the statement of this result. Recall that $\delta_*=K\delta+L$ and $D_*=\frac{D}{K}-L$. Indeed, to prove this claim, define, for any $i=1,\dots,l$, the set $V_i^*:=\{y_s: s \in I_i\}$. Then,  $g(V_i^*)=V_i$,  and $F_{|I_i|-1, *}(V^*_i)=f(F_{|I_i|-1}(V_i))$.   Thus $$\diam \left( \bigcap_{i=1}^l\ngh{F_{|I_i|-1,*}(V_i^*)}{\delta_*} \right)>D_*.$$ The last inequality follows because $f(p),f(q) \in \bigcap_{i=1}^l\ngh{F_{|I_i|-1,*}(V_i^*)}{\delta_*}$ and $d_Y(f(p),\allowbreak f(q))>D_*.$ Hence, the subsets $I_1,\dots, I_l \subset  \{0,\dots,r+1\}$ contradict the genericity of $(y_0,\dots,y_{r+1}).$
   
   By the claim above, the tuple $(y_0,\dots,y_{r+1})$ is not $(\delta_*, D_*)$-generic. But this is a contradiction.
\end{proof}

By combining this with the double push-forward operation from Lemma \ref{lem: double push-forward}, we immediately obtain that quasi-isometries map generic tuples to generic tuples.

\begin{corollary}\label{cor: image of generic points is generic}
    Let $f:X\to Y$ be a $(K,L)$-quasi-isometry and $g:Y\to X$ be a quasi-inverse as in \cref{prop:quasi-inverse} (2). Assume that $X$ is endowed with a coarse $r$-filling $(F,\ctrlf)$ and let  $(F_*,\ctrlf_*)$ denote its push-forward by $g$. Fix 
    $$C\coloneqq 2KL+(r+1)\ctrlf(2KL+\ctrlf(0)), ~\delta\ge C+L-\frac{L}{K}, ~\text{ and } D\ge0.$$  If $(x_0, \dots, x_{r+1})$ is a $(\delta,D)$-generic tuple in $X$, then $(f(x_0),\dots, f(x_{r+1}))$ is a $(\delta_*,D_*)$-generic tuple in $Y$, with 
    $\delta_*= \frac{1}{K}(\delta-C)-3L ~\text{and}~ D_*= K(D+3KL).$
\end{corollary}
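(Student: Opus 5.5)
The plan is to reduce to \cref{lem: generic points via quasi-isometry}, applied to the quasi-isometry $g:Y\to X$ and its quasi-inverse $f$, and then transport the conclusion back through \cref{lem: double push-forward}. By \cref{prop:quasi-inverse}(2), $g$ is a $(K,3KL)$-quasi-isometry, and $f$ is a quasi-inverse of $g$ with $\sup_{x}d_X(x,g\circ f(x))\le 2KL$. Pushing $(F_*,\ctrlf_*)$ forward along $g$ via $f$ gives the double push-forward $(F_{**},\ctrlf_{**})$ on $X$. Applying \cref{lem: generic points via quasi-isometry} with $g$ in the role of the quasi-isometry (constants $(K,3KL)$) and $f$ in the role of the quasi-inverse, we get: if $(x_0,\dots,x_{r+1})$ is $(\delta_{**},D_{**})$-generic in $X$ with respect to $F_{**}$, then $(f(x_0),\dots,f(x_{r+1}))$ is $\bigl(\tfrac1K(\delta_{**}-3KL),\,K(D_{**}+3KL)\bigr)$-generic in $Y$ with respect to $F_*$.

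It therefore suffices to compare genericity with respect to $F$ and with respect to $F_{**}$. By \cref{lem: double push-forward}, since $g$ is as in \cref{prop:quasi-inverse}(2), every $F_{i,**}(\cdot)$ lies within Hausdorff distance $C$ of $F_i(\cdot)$. Hence $\ngh{F_{i,**}(V)}{\delta-C}\subset \ngh{F_i(V)}{\delta}$ for every index subset. Suppose the tuple were $(\delta-C,D)$-non-generic for $F_{**}$, witnessed by subsets $I_1,\dots,I_q$. The same subsets then give an intersection of $\delta$-neighbourhoods for $F$ that contains the $F_{**}$-one, so it still has diameter $>D$; this contradicts $(\delta,D)$-genericity for $F$. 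Thus a $(\delta,D)$-generic tuple for $F$ is $(\delta-C,D)$-generic for $F_{**}$. Here $\delta\ge C$ is needed so that the radius is nonnegative; the radius-zero convention $\Nc_0(A)=A$ also works.

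Combining the two steps with $\delta_{**}=\delta-C$ and $D_{**}=D$ gives $\delta_*=\tfrac1K(\delta-C-3KL)=\tfrac1K(\delta-C)-3L$ and $D_*=K(D+3KL)$, exactly the constants in the statement. The only delicate point, and the step I expect to be the main obstacle, is the hypothesis $\delta_*\ge L$ in \cref{lem: generic points via quasi-isometry}, which here becomes $\delta_{**}\ge 3KL$. That lemma actually uses this only to make $\delta$ nonnegative, so I will check what the stated threshold $\delta\ge C+L-\tfrac LK$ guarantees and, if needed, rerun the proof of \cref{lem: generic points via quasi-isometry} directly. That proof goes through verbatim for any nonnegative radius, since it only uses that $p,q$ lie within the given radius of the fillings and the quasi-isometry inequalities.
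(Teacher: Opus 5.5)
Your reduction is the paper's proof written out. The paper says only that the corollary follows from \cref{lem: generic points via quasi-isometry} applied to $F_{**}$ together with \cref{lem: double push-forward}. Your two steps do exactly this: first pass from $F$ to $F_{**}$ at a cost of $C$ in the radius, then apply the lemma to the $(K,3KL)$-quasi-isometry $g$ with quasi-inverse $f$. They correctly give $\delta_*=\frac1K(\delta-C)-3L$ and $D_*=K(D+3KL)$.

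The step you deferred does not close the way you hope. Applied to $g$, \cref{lem: generic points via quasi-isometry} requires $\delta-C\ge 3KL$, and this is \emph{equivalent} to $\delta_*\ge 0$. So re-running its proof ``for any nonnegative radius'' gives nothing beyond the lemma itself. The stated threshold $\delta\ge C+L-\frac{L}{K}$ is strictly weaker than $\delta\ge C+3KL$ whenever $L>0$, because $L-\frac{L}{K}<L\le 3KL$. For example, with $K=1$ the threshold allows $\delta=C$, and then $\delta_*=-3L<0$.

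For $\delta\in[C+L-\frac{L}{K},\,C+3KL)$ the conclusion therefore asserts genericity with a negative radius. The paper's conventions define $\Nc_s$ only for $s\ge 0$, so this is undefined. The paper's one-line proof glosses over the same mismatch, so the defect lies in the statement rather than in your reduction.

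To finish, do one of two things:
\begin{itemize}
\item state the result under $\delta\ge C+3KL$, which is what both your argument and the paper's actually prove; or
\item add the explicit remark that for $\delta_*<0$ you read $\Nc_{\delta_*}(A)=\{w: d(w,A)<\delta_*\}=\emptyset$. Then every such intersection is empty, has diameter $0\le D_*$, and the conclusion holds vacuously.
\end{itemize}

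A minor point: your claim that the radius-zero convention ``also works'' is not justified by the Hausdorff bound, which only gives $d(p,F(V))\le C$, not $<C$. This does no harm, because radius $\delta-C=0$ together with $\delta-C\ge 3KL$ forces $L=0$. In that case $g=f^{-1}$ and $F_{**}=F$.
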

\begin{proof}
    It follows from Lemma \ref{lem: generic points via quasi-isometry} applied to $F_{**}$ and from Lemma \ref{lem: double push-forward}.
\end{proof}

Finally, we have all the ingredients necessary to prove the main theorem of this section. In the following proposition, we establish the quasi-isometry invariance of coarse $r$-medians.

\subsection{Quasi-isometry invariance of coarse medians}

\begin{proposition}\label{prop: median quasi-isometry}
    Suppose $f:(X,d_X) \to (Y,d_Y)$ is a $(K,L)$-quasi-isometry. If $(X,d_X)$ admits a coarse $r$-median structure, then $(Y,d_Y)$ also admits a coarse $r$-median structure.
\end{proposition}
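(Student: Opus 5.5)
We take a quasi-inverse $g:Y\to X$ as in \cref{prop:quasi-inverse}(2), so $\sup_y d_Y(y,f\circ g(y))\le L$ and $\sup_x d_X(x,g\circ f(x))\le 2KL$. The coarse $r$-filling on $Y$ will be the push-forward $(F_*,\ctrlf_*)$ of $(F,\ctrlf)$ via $g$, which is a coarse $r$-filling by \cref{prop: filling quasi-isometry}. It then remains to produce $\delta_0^*$, a non-constant affine $\lambda_*$ and a non-decreasing $\Psi_*$ for which (CM1) and (CM2) hold. Throughout, write $x_j:=g(y_j)$, so that $F_{r,*}(y_0,\dots,\wh{y}_j,\dots,y_{r+1})=f(F_r(x_0,\dots,\wh{x}_j,\dots,x_{r+1}))$.

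\emph{(CM1).} By (CM1) for $X$, choose $p\in\bigcap_j \ngh{F_r(x_0,\dots,\wh{x}_j,\dots,x_{r+1})}{\lambda(\delta_0)}$. By \cref{prop: quasi-isometry quasi-commutes with taking ngh}, $f(p)$ lies in $\bigcap_j\ngh{F_{r,*}(\cdots\wh{y}_j\cdots)}{K\lambda(\delta_0)+L}$. Any affine $\lambda_*$ with $\lambda_*(\delta_0^*)\ge K\lambda(\delta_0)+L$ therefore gives (CM1).

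\emph{(CM2).} Let $(y_0,\dots,y_{r+1})$ be $(\delta_*,D_*)$-generic with $\delta_*\ge L$. By \cref{lem: generic points via quasi-isometry}, the tuple $(x_0,\dots,x_{r+1})$ is $(\delta,D)$-generic with $\delta=(\delta_*-L)/K$ and $D=K(D_*+L)$. We would like $\delta\ge\delta_0$ and $D\ge\delta_0$, so we set $\delta_0^*:=\max\{K\delta_0+L,\,\delta_0/K\}$. Now take $q\in\bigcap_j\ngh{F_{r,*}(\cdots)}{\lambda_*(\delta_*)}$. Each such $q$ is within $\lambda_*(\delta_*)$ of some $f(w_j)$ with $w_j\in F_r(\cdots\wh{x}_j\cdots)$, and all the $w_j$ have $d_X(w_0,w_j)\le K(2\lambda_*(\delta_*)+L)$. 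Hence $w_0$ lies in $\bigcap_j\ngh{F_r(\cdots\wh{x}_j\cdots)}{K(2\lambda_*(\delta_*)+L)}$, in the spirit of \cref{prop: quasi-isometry quasi-commutes with intersection}.

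To apply (CM2) in $X$ we need this radius to be at most $\lambda(\delta)=\lambda((\delta_*-L)/K)$. This is the main obstacle. We will choose $\lambda_*$ with slope small compared to the slope of $\lambda$, for instance $\lambda_*(t):=\frac{1}{2K}\lambda\big((t-L)/K\big)-\frac{L}{2}$. Then $K(2\lambda_*(\delta_*)+L)=\lambda(\delta)$ exactly. This $\lambda_*$ is affine and non-constant, but it may be negative near $\delta_0^*$. We fix this by enlarging $\delta_0^*$ until $\lambda_*(\delta_0^*)\ge K\lambda(\delta_0)+L$, so that (CM1) also holds with this $\lambda_*$. Since $\lambda$ has positive slope, $\lambda_*$ grows, so such a $\delta_0^*$ exists.

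Suppose now $\delta_*\ge\delta_0^*$. Then $\delta\ge\delta_0$, and $\lambda(\delta)\ge\lambda(\delta_0)$ ensures the intersection is nonempty. By (CM2) in $X$, the preimage points $w_0$ range over a set of diameter at most $\Psi(\delta,D)$. The images $q$ lie within $\lambda_*(\delta_*)$ of $f$ of that set, so the diameter of the $Y$-intersection is at most
\[K\Psi\big((\delta_*-L)/K,\,K(D_*+L)\big)+L+2\lambda_*(\delta_*).\]
We define $\Psi_*(\delta_*,D_*)$ to be this expression. It is non-decreasing in both variables because $\Psi$ and $\lambda_*$ are. This completes (CM2), and hence the coarse $r$-median structure on $Y$ with parameters $(F_*,\ctrlf_*,\delta_0^*,\lambda_*,\Psi_*)$.
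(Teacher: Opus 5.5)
Your proof is correct and follows essentially the paper's route. Like the paper, you push the filling forward via the quasi-inverse $g$ of \cref{prop:quasi-inverse}(2), pull genericity back to $X$ with the genericity lemma, obtain (CM1) by pushing forward a centroid point, and take $\lambda_*$ to be a rescaled $\lambda\bigl((t-L)/K\bigr)$ minus a constant, with $\delta_0^*$ enlarged so that $\lambda_*(\delta_0^*)\ge K\lambda(\delta_0)+L$. The only difference is in (CM2): you pull back pointwise, choosing preimages $w_j$ and using only the lower quasi-isometry bound of $f$, whereas the paper maps the whole $Y$-centroid back through $g$; this only changes constants, giving $\Psi_*=K\Psi+L+2\lambda_*(\delta_*)$ in place of the paper's $K\Psi+3K^2L$.
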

\begin{proof}
    Assume that $(F, \ctrlf)$ is a coarse $r$-filling on $(X,d_X)$ associated with a coarse $r$-median structure with parameters $\delta_0, \lambda$ and $\Psi$. Fix a quasi-inverse $g$ for $f$ as in \cref{prop:quasi-inverse}(2). Consider the coarse $r$-filling $(F_*,\ctrlf_*)$ obtained by pushing forward $(F,\ctrlf)$ via $g$ (see \cref{def: push-forward of a coarse r-filling}).
    Now, we associate to $(F_*,\Lambda_*)$ a coarse $r$-median structure.
    Define $C\coloneqq 2KL+(r+1)\ctrlf(2KL+\ctrlf(0))$ and the parameters: 
    \begin{align*}
    \delta_{0,*} &\coloneqq  \max\{K\delta_0 + 2L, K\lambda^{-1}(K(K\lambda(\delta_0)+9L))+L\},&\\
    \Psi_*(\delta_*,D_*)&\coloneqq K \left(\Psi\left(\dfrac{\delta_*-L}{K},K(D_*+L)\right)+3KL\right),\quad &\forall\  \delta_*\ge\delta_{0,*}, \text{ and }\\
    \lambda_*(\delta_*)&\coloneqq\frac{1}{K}\lambda\left(\frac{\delta_*-L}{K}\right)-8L,\quad &\forall\  \delta_*\ge\delta_{0,*}.
    \end{align*}
    Note that $\lambda_*$ is affine and non-constant since $\lambda$ is affine and non-constant; and $\Psi_*$ is non-decreasing in both entries since $\Psi$ is non-decreasing in both entries.

    For the rest of this proof, fix $y_0, \dots, y_{r+1} \in Y$. Define $x_i=g(y_i)$ for any $i=0,\dots,r+1$. Then recall that $F_{i,*}(y_0,\dots,y_i)=f(F_i(x_0,\dots,x_i))$ and $d_Y(y_i,f(x_i))\le L$.
        For any $j=0,\dots, r+1$, set $$\Vc_j:=F_r(x_0, \dots, \wh{x}_j, \dots, x_{r+1})$$ and $$\Vc_j^*:=F_{r,*}(y_0,\dots,\wh{y}_j,\dots,y_{r+1})=f(\Vc_j).$$

   We will now verify the conditions (CM1) and (CM2). 
        
        \noindent  (CM1) Let $p \in \bigcap_{j=0}^{r+1} \ngh{\Vc_j}{\lambda(\delta_0)}$. Then, by \cref{prop: quasi-isometry quasi-commutes with taking ngh}, 
        \begin{align*}
            f(p) \in  \bigcap_{j=0}^{r+1} \ngh{f(\Vc_j)}{K\lambda(\delta_0)+L}=\bigcap_{j=0}^{r+1} \ngh{\Vc_j^*}{K\lambda(\delta_0)+L}.
        \end{align*}
        By the choice of $\delta_{0,*}$, we have $\lambda_*(\delta_{0,*}) \geq K \lambda(\delta_0)+L$. Thus 
        $
        \bigcap_{j=0}^{r+1} \ngh{\Vc_j^*}{\lambda_*(\delta_{0,*})}
        $
        is non-empty.
        
       \noindent  (CM2) Let $\delta_*,D_*\ge\delta_{0,*}$.
   
       Consider a set $\{y_0,\dots,y_{r+1}\}$ of $(\delta_*,D_*)$-generic points. For any $i=0,\dots,r+1$ define $x_i\coloneqq g(y_i)$.
       Then, by \cref{lem: generic points via quasi-isometry}, $\{x_0,\dots,x_{r+1}\}$ is a $(\delta,D)$-generic tuple where  
       $\delta=\frac{\delta_*-L}{K}$ and $D= K(D_*+L).$
       Hence, the coarse $r$-median structure on $X$ implies that
       \begin{align}
       \label{eqn:diam_bound_on_Vc_j}
       \diam\left(\bigcap_{j=0}^{r+1} \ngh{\Vc_j}{\lambda(\delta)}\right)\le\Psi(\delta,D).
       \end{align}
       
       \begin{claim}
       \label{claim:main_ineq_in_QI_invariance}
            $\diam \left(\bigcap_{j=0}^{r+1}\ngh{\Vc_j^*}{\frac{\lambda(\delta)-6KL}{K}-2L}\right) \leq K (\Psi(\delta,D)+3KL).$
        \end{claim}
      
        To prove this claim, recall that $f(\Vc_j)=\Vc_j^*$ and  then, \cref{prop: quasi-isometry quasi-commutes with taking ngh} implies that  
        \begin{align*}
         \diam \left(\bigcap_{j=0}^{r+1}\ngh{\Vc_j^*}{\frac{\lambda(\delta)-6KL}{K}-2L}\right)
        \le\diam\left(\bigcap_{j=0}^{r+1}\ngh{f\left(\ngh{\Vc_j}{\lambda (\delta)-6KL}\right)}{L}\right).
        \end{align*}
        Set $\Wc_j:=\left(\ngh{\Vc_j}{\lambda (\delta)-6KL}\right)$ for each $j=0,\dots,r+1$. 
        As $g$ is a $(K,3KL)$-quasi-isometry, \cref{prop:coarse containment under QI}(3) implies that 
        \begin{align*}
            \diam\left(\bigcap_{j=0}^{r+1}\ngh{f(\Wc_j)}{L}\right) 
        \le K \cdot  \diam\left(g\left(\bigcap_{j=0}^{r+1}\ngh{f(\Wc_j)}{L}\right)\right) +3K^2L.\\
        \end{align*}
        Now, in order to prove \cref{claim:main_ineq_in_QI_invariance}, it suffices to show that $$\diam\left(g\left(\bigcap_{j=0}^{r+1}\ngh{f(\Wc_j)}{L}\right) \right) \leq \Psi(\delta,D).$$
        To prove this diameter bound, note that \cref{prop: quasi-isometry quasi-commutes with intersection} and \cref{prop: quasi-isometry quasi-commutes with taking ngh} imply that 
        \begin{align*}
            g\left(\bigcap_{j=0}^{r+1}\ngh{f(\Wc_j)}{L}\right) \subset \bigcap_{j=0}^{r+1}\ngh{g \circ f(\Wc_j)}{KL+3KL}. 
        \end{align*}
        As $g\circ f$ is coarsely $2KL$ close to the identity map on $X$, 
        \begin{align*}
            g\left(\bigcap_{j=0}^{r+1}\ngh{f(\Wc_j)}{L}\right) &\subset \bigcap_{j=0}^{r+1}\ngh{\ngh{\Wc_j}{2KL}}{KL+3KL}\\
            &\subset \bigcap_{j=0}^{r+1}\ngh{\Wc_j}{6KL}=\bigcap_{j=0}^{r+1}\ngh{\Vc_j}{\lambda(\delta)},
        \end{align*}
        where the last equality is immediate from the definition of $\Wc_j$. Then \cref{eqn:diam_bound_on_Vc_j} implies that $\diam\left(g\left(\bigcap_{j=0}^{r+1}\ngh{f(\Wc_j)}{L}\right) \right) \leq \Psi(\delta,D)$. This finishes the proof \cref{claim:main_ineq_in_QI_invariance}.
 
    We now conclude verifying (CM2) by observing that 
    \begin{align*}
    \Psi_*(\delta_*,D_*)
    &=K \left(\Psi\left(\frac{1}{K}(\delta_*-L),K(D_*+L)\right)+3KL\right)
    =K (\Psi(\delta,D)+3KL), ~~\text{ and }\\
    \lambda_*(\delta_*)&=\frac{\lambda\left(\frac{\delta_*-L}{K}\right)-6KL}{K}-2L
    =\frac{\lambda(\delta)-6KL}{K}-2L. \qedhere
    \end{align*}
      \end{proof}

\section{Products of coarse median spaces}\label{sec: products}

In this section, we investigate the behavior of coarse $r$-medians under the product of metric spaces. A fundamental property of coarse medians is that they are stable under direct products. We show that the property of admitting a coarse $r$-median is indeed preserved under finite products when endowed with the $\ell_1$-metric.

\begin{proposition}\label{prop: product of coarse medians}
    Suppose $(X,d_X),(Y,d_Y)$ are two metric spaces that both admit a coarse $r$-median structure. Then $X \times Y$ endowed with the $\ell_1$-product distance admits a coarse $r$-median structure.
    Moreover, if both $X$ and $Y$ admit an $r$-median, then also $X \times Y$ does.
\end{proposition}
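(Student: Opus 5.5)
The natural candidate is the product filling. Given coarse $r$-median structures $(F^X,\ctrlf^X,\delta_0^X,\lambda^X,\Psi^X)$ and $(F^Y,\ctrlf^Y,\delta_0^Y,\lambda^Y,\Psi^Y)$, set
\[
F_k((a_0,b_0),\dots,(a_k,b_k)) := F^X_k(a_0,\dots,a_k)\times F^Y_k(b_0,\dots,b_k),
\]
with the $\ell_1$ distance $d=d_X+d_Y$ on $X\times Y$.

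The basic tool is the elementary sandwich
\[
\ngh{A}{s}\times\ngh{B}{s}\subseteq \ngh{A\times B}{2s}
\quad\text{and}\quad
\ngh{A\times B}{s}\subseteq \ngh{A}{s}\times\ngh{B}{s}.
\]
Together with the bound $d^{\Haus}(A\times B,A'\times B')\le d^{\Haus}(A,A')+d^{\Haus}(B,B')$, this transfers (F0)–(F3) with $\ctrlf(0):=2\max\{\ctrlf^X(0),\ctrlf^Y(0)\}$.

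For (F4), suppose $x=(x_1,x_2)\in\ngh{F_k}{R}$. Then $x_1\in\ngh{F^X_k}{R}$ and $x_2\in\ngh{F^Y_k}{R}$, so the factor (F4) statements give the product (F4) with $\ctrlf(R):=2\max\{\ctrlf^X(R),\ctrlf^Y(R),\ctrlf^X(0),\ctrlf^Y(0)\}$. If both factors are $r$-fillings, $\ctrlf(0)=0$ is preserved.

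(CM1) is immediate. Pick points in the factor centroids and use the first inclusion. Take $\delta_0\ge\max\{\delta_0^X,\delta_0^Y\}$ and $\lambda$ affine and non-constant with $\lambda(\delta)\ge 2\max\{\lambda^X(\delta),\lambda^Y(\delta)\}$ at $\delta_0$. One must also ensure that the reverse inclusion used in (CM2) is compatible. Concretely, (CM2) for the product will require $\lambda(\delta)\le\min\{\lambda^X(\delta'),\lambda^Y(\delta')\}$ for some $\delta'$ depending affinely on $\delta$.

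I expect this compatibility to be manageable. Choose $\lambda(\delta):=\min\{\lambda^X(\delta),\lambda^Y(\delta)\}$ on the relevant range, and a slower function at the base point; concretely set $\delta_0$ large and $\lambda(\delta)=c\,\delta$ with $c$ small. One then checks $\lambda(\delta_0)\ge2\max\lambda^{X,Y}(\delta_0^{X,Y})$ by taking $\delta_0$ large, using that the factor $\lambda$'s have positive slope. If the slopes make this awkward, rescale one factor's $\delta$ parameter; since $\Psi$ is only required to be monotone, that is harmless.

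The main obstacle is (CM2), and specifically genericity. A generic tuple in $X\times Y$ need not project to generic tuples in $X$ and $Y$. Non-genericity in a factor, witnessed by subsets $I_1,\dots,I_q$ with some $|I_j|\le r$ and a large-diameter intersection, need not lift: the product intersection is a product of a large set with the corresponding (possibly tiny) $Y$-intersection, and that still has large diameter in $\ell_1$. So in fact it does lift. Precisely, suppose $(a_i)$ is $(\delta',D')$-non-generic in $X$ via $I_1,\dots,I_q$. Then the same $I_j$ applied to the pairs give the intersection $\bigcap_j\ngh{F^X(\cdot)}{\delta'}\times\ngh{F^Y(\cdot)}{\delta'}$, which lies in the $2\delta'$-neighborhood intersection in the product. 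This intersection is nonempty in the $Y$-factor, by (F1) applied to the $b_i$ together with a common point: every $F^Y_{|I_j|-1}$ contains, up to $\ctrlf(0)$, the coarse centroid from (CM1) of $Y$, once $\delta'$ is large. Hence it has diameter $>D'$.

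Contrapositively, if $(a_i,b_i)$ is $(2\delta',D')$-generic, then both projections are $(\delta',D')$-generic, for $\delta'$ large. (CM2) then follows from the second sandwich inclusion: the product centroid at level $\lambda(\delta)$ lies in the product of the factor centroids, whose $\ell_1$ diameter is at most $\Psi^X+\Psi^Y$. So I set $\Psi(\delta,D):=\Psi^X(\delta/2,D)+\Psi^Y(\delta/2,D)$, which is monotone.

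The step I would scrutinize most is the nonemptiness of the $Y$-factor intersection for arbitrary index families. If the (CM1) centroid argument fails for small $|I_j|$, I would instead use the vertex $b_s$ for some $s\in I_j$, and enlarge $\delta$ by the diameter of the $b$'s. That fails uniformly, so the centroid route is the one to make work. For the "moreover" claim, all constants above can be chosen with $\delta_0=\Psi(0,0)=0$ and $\lambda$ linear.
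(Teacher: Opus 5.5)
You follow the paper's route: the product filling $F^X_k\times F^Y_k$, the two $\ell_1$ inclusions for (F0)--(F4) and (CM1), and a reduction of (CM2) to the factors by projecting genericity. The filling axioms and (CM1) are fine. The choice of $\lambda$ can also be made to work, e.g.\ $\lambda(\delta)=c\delta$ with $c$ below half the slopes of $\lambda^X,\lambda^Y$ and $\delta_0$ large. The problem is the genericity transfer, which is the only non-formal step. The paper asserts it without argument, the example below contradicts that assertion as stated, and your justification for it does not hold.

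\textbf{Why the lifting step fails.} You need $\bigcap_j \mathcal{N}_{\delta'}(F^Y_{|I_j|-1}(b_i : i\in I_j))\neq\emptyset$. You get this by claiming that a (CM1) point of $(b_0,\dots,b_{r+1})$ lies near every $F^Y_{|I_j|-1}$, but that is false. (F1) only says lower faces lie near higher ones. A (CM1) point is near the $r$-faces $F^Y_r(b_0,\dots,\widehat{b_j},\dots,b_{r+1})$ and nothing more. For instance, if $I_j=\{s\}$, then $F^Y_0(b_s)$ is within $\Lambda^Y(0)$ of $b_s$, which is generally far from any centroid. When the $Y$-part is empty, the product intersection is empty, so the factor witness is not a product witness.

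\textbf{Counterexample to the transfer.} Take $r=1$, $X=\mathbb{R}$ and $Y$ a tripod, both with geodesic fillings. Put $a_0=a_1=a_2=0$, and let $b_i$ lie on the $i$-th leg at distance $M\ge 4\delta'$ from the centre.
\begin{itemize}
\item Any witness family for $((a_i,b_i))_i$ contains a singleton $\{s\}$ and a set avoiding $s$. Their $Y$-faces are at distance $M$, so the $2\delta'$-neighbourhoods do not meet. Hence $((a_i,b_i))_i$ is $(2\delta',D')$-generic for every $D'$.
\item The projection $(0,0,0)$ is not $(\delta',D')$-generic once $D'<2\delta'$, via the family $\{0\},\{1\}$.
\item (CM2) has to hold for such $\delta',D'\ge\delta_0$, so the transfer cannot be used there.
\end{itemize}

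\textbf{Why this matters for $r\ge 2$.} For $r=1$ the issue can be circumvented, since every factor triple is generic once $D'$ is large compared with $\delta'$. For $r\ge 2$ it is the crux. The product centroid contains $C^X\times\{y_0\}$, where $C^X$ is the $X$-centroid at a slightly smaller level and $y_0$ is a (CM1) point for the $b_i$. So whatever $\lambda,\Psi$ you choose, (CM2) for the product forces every tuple with a large $X$-centroid to be product-non-generic.

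Factor witnesses typically use lower-dimensional faces. If $a_0,a_1$ and $a_2,a_3$ form two tight clusters far apart, a witness is $\{0,2\},\{1,3\}$. Lifting it requires $F^Y_1(b_0,b_2)$ and $F^Y_1(b_1,b_3)$ to come within $2\delta'$ of each other. This fails when $b_0,b_2,b_1,b_3$ are consecutive vertices of a large square in a flat of $Y$.

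What is missing is an argument that a large product centroid produces \emph{some} product witness, possibly a family adapted to the $b_i$ rather than a lifted factor witness. Neither lifting factor witnesses nor retuning the parameters supplies this. Separately, a transfer that holds only for large $\delta'$ could not give the ``moreover'' part, which needs $\delta_0=0$.
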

\begin{proof}
    For $\diamond \in\{X,Y\}$, let $(F^{\diamond},\ctrlf^{\diamond})$ be a coarse $r$-filling for $\diamond$ associated with a coarse $r$-median structure with parameters $$\delta^{\diamond}_0\ge0,\quad \lambda^{\diamond}:[\delta_0^{\diamond},\infty)\to[0,\infty)\quad\text{and}\quad\Psi^{\diamond}:[\delta_0^{\diamond},\infty)\times[0,\infty)\to[0,\infty).$$
    In particular, the family $F_{\diamond}=\{F_i^{\diamond}:{\diamond}^{i+1}\to\mathcal{P}({\diamond})\}_{i=0,\dots,r}$, with control functions $\ctrlf^{\diamond}$, satisfies conditions (F0)--(F4).
    Define the family of maps $F=\{F_i:(X\times Y)^{i+1}\to\mathcal{P}(X\times Y)\}_{i=0,\dots,r}$ given by 
    for any $i=0,\dots,r$ and any $(x_0,y_0),\dots,(x_i,y_i)\in (X\times Y)$. Moreover, define the map $\ctrlf:\mathbb{R}_{\ge0}\to\mathbb{R}_{\ge0}$ as
    $$
    \ctrlf(R)\coloneqq \ctrlf^X(R)+\ctrlf^Y(R),
    $$
    for all $R\ge0$.
    By definition of the $\ell_1$ product metric, the pair $(F,\ctrlf)$ satisfies properties (F0)--(F4). Hence, $(F,\ctrlf)$ is a coarse $r$-filling.
    Now, we define the constants
    $$
    \delta_0\coloneqq\delta_0^X+\delta_0^Y,
    $$
    and the functions $\lambda:[\delta_0,\infty)\to[0,\infty))$ and $\Psi:[\delta_0,\infty)\times[\delta_0,\infty)\to[0,\infty$ by
    $$
    \lambda(\delta)\coloneqq\lambda^X(\delta)+\lambda^Y(\delta),
    \quad \text{and}\quad 
    \Psi(\delta,D)\coloneqq\Psi^X(\delta,D)+\Psi^Y(\delta,D).
    $$
    Note that since $\lambda^X$ and $\lambda^Y$ are affine, then also $\lambda$ is affine; moreover, since $\Psi^X$ and $\Psi^Y$ are non-decreasing in both entries, $\Psi$ is non-decreasing as well.
    Given a tuple of $(\delta,D)$-generic points for $X\times Y$, if $\delta,D\ge\delta_0$, then the tuples given by the projection on each factor are $(\delta,D)$-generic.
    
    Since both $(F^X,\ctrlf^X,\delta_0^X,\Psi^X)$ and $(F^Y,\ctrlf^Y,\delta_0^Y,\Psi^Y)$ satisfy conditions (CM1) and (CM2), by definition of the $\ell_1$ product metric, we see that $(F,\ctrlf,\delta_0,\lambda,\Psi)$ also satisfies these conditions. Hence, we get a coarse $r$-median structure on $X\times Y$.
    Clearly, if the coarse $r$-medians on $X$ and $Y$ were both $r$-medians, then the result would be an $r$-median on $X\times Y$.
\end{proof}

\section{Applications to symmetric spaces}
\label{sec:appl_to_symm_spaces}

    \noindent \textbf{Proof of \cref{thm:coarse_median_irred_symm_sp}.} Let $\Omega$ be the convex projective model of $G/K$, i.e. there exist an isomorphism $f:G\to \Aut(\Omega^0)$ and a diffeomorphism $\phi:G/K \to \Omega$ such that $\phi (g \cdot)=f(g)\phi(\cdot)$. Let $d$ be a $G$-invariant distance function of $G$ as in the statement of the theorem. We equip $\Omega$ with the $\Aut(\Omega)^0$-invariant distance $d_\phi(x,y):=d(\phi^{-1}(x),\phi^{-1}(y))$ for all $x,y \in \Omega$. 
    
    First $\Aut(\Omega)^0$ acts transitively on $\Omega$. So, by \cref{thm:main_coarse_r_median_on_pcd}, $(\Omega,\hil)$ admits a coarse $r$-median for any $r \geq \rF(\Omega)$. By \cref{lem: flat rank for symmetric spaces}, $\rF(\Omega)=\rk(G)$.  Moreover, by \cref{lem:riem_QI_hilbert}, $(\Omega,\hil)$ is quasi-isometric to $(\Omega,d_{\phi})$. Then \cref{prop: median quasi-isometry} implies that $(\Omega,d_{\phi})$ carries a coarse $r$-median structure for any $r \geq \rk(G)$. Since $\phi:(G/K,d) \to (\Omega,d_{\phi})$ is an isometry, $(G/K,d)$ carries a coarse $r$-median structure for any $r \geq \rk(G).$

    \noindent \textbf{Proof of \cref{thm:coarse_median_red_symm_sp}.} By \cref{thm:coarse_median_irred_symm_sp}, each factor $(G_i/K_i,d_i)$ carries a coarse $r$-median structure for any $r\geq \rk(G_i)$. Let $d'=d_1+\dots+d_n$ be the $\ell_1$-distance on $Y=\prod_{i=1}^n (G_i/K_i). $  By \cref{prop: product of coarse medians}, $(Y,d')$ admits a coarse $r$-median for any $r \geq r_*=\max_{i=1}^n \rk(G_i)$. 

    Let $d$ be a $\prod_{i=1}^n G_i$-invariant distance on $Y$ as in the statement of the theorem. Then $(Y,d)$ is biLipschitz with $(Y,d')$. So, by \cref{prop: median quasi-isometry}, $(Y,d)$ carries a coarse $r$-median structure for any $r \geq r_*$.

\section{Ultralimits of coarse median spaces}\label{sec: ultralimits}
In this section, we investigate how coarse $r$-median structures behave under taking ultralimits. We will show that under suitable uniform bounds on the parameters, the ultralimit of a sequence of metric spaces, each endowed with a coarse $r$-median structure, inherits a coarse $r$-median structure. First we recall the standard definition of the ultralimit of a sequence of metric spaces; see \cite[Section 10.4]{drutuKapovic} for details.

Let $\omega$ be a non-principal ultrafilter on $\Nb$ and let \Seq{X_n,d_n} be a sequence of metric spaces. We will write $\mathbf{a}=(a_n)_{n\in\mathbb{N}}$ for an element of $\prod_n X_n$.
The product space $\prod_n X_n$ carries an extended metric $d_\omega$ given by 
$$
d_\omega(\mathbf{a},\mathbf{b})\coloneqq\omegalim d_n(a_n,b_n) \in [0,\infty].
$$
Recall that if $\seq{\alpha_n}$ is a sequence of real numbers, then there is a unique $\omegalim \alpha_n \in [-\infty,\infty]$ defined by: for every open neighborhood $U$ of $\omegalim \alpha_n$ in $[-\infty,\infty]$, $\{n \in \Nb: \alpha_n \in U\} \in \omega$. Moreover, recall that if there is an $r \in \Rb$ such that $\{n \in \Nb \mid \alpha_n \leq r\} \in \omega$, then $\omegalim \alpha_n \leq r$. Similarly, $\{n \in \Nb \mid \alpha_n \geq r\} \in \omega$ implies $\omegalim \alpha_n \geq r$.

Fix a base point $\mathbf{o}=(o_n)_{n\in\mathbb{N}}\in\prod_n X_n$. Then, we consider the set
$
X_\omega^0\coloneqq\{\mathbf{a}\in\prod_n X_n\mid d_\omega(\mathbf{a},\mathbf{o})<\infty\}.
$
Indeed, $\mathbf{a} \in X^0_{\omega}$ if and only if there exists $r=r(\mathbf{a})\geq 0$ such that $\{n \in \Nb: d_n(o_n,a_n)<r\} \in \omega$.

We notice that $d_\omega(\mathbf{a},\mathbf{b})$ is finite for all $\mathbf{a},\mathbf{b}\in X_\omega^0$. On $X_\omega^0$, consider the equivalence relation: $\mathbf{a}\sim\mathbf{b}$ if and only if $d_\omega(\mathbf{a},\mathbf{b})=0$. Then we define
$$
X_\omega\coloneqq X_\omega^0/\sim.
$$
In fact, $d_\omega$ is an actual distance on $X_\omega$. The metric space $(X_\omega,d_\omega)$ is called \emph{ultralimit} of the sequence \Seq{X_n,d_n} \wrt\ the ultrafilter $\omega$ and the base point $\mathbf{o}$. Moreover $(X_\omega,d_\omega)$ is a complete metric space.
\begin{remark}\label{rmk: ultralimit dependence on the paramenters}
    We notice that the ultralimit depends on the choice of $\omega$. On the other hand, the choice of two different base points $\mathbf{o}=(o_n)_{n\in\mathbb{N}}$ and $\mathbf{o'}=(o'_n)_{n\in\mathbb{N}}$ produces isometric ultralimits, provided $\omegalim{d_n(o_n,o'_n)}<\infty$.
\end{remark}

\begin{notation}
    Let $\omega\subset\mathcal{P}({\mathbb{N}})$ be a non-principal ultrafilter. Then, we will use the following convention.
    \begin{itemize}
        \item Let $\seq{x_n}\subset\mathbb{R}$ be a sequence of real numbers. We say that $\seq{x_n}$ is $\omega$-bounded if there exists $k\ge0$ such that $\{n\in\mathbb{N}\mid \abs{x_n}\le k\}\in\omega$. In this case, $(\omegalim x_n)$ exists and $|\omegalim x_n| \leq k$.
        \item Let $\seq{f_n:\mathbb{R}\to \mathbb{R}}$ be a sequence of functions. We say that $\seq{f_n}$ is $\omega$-bounded if for every $x\in\mathbb{R}$, the sequence $\seq{f_n(x)}$ is $\omega$-bounded. In this case, there is a well-defined function $f_{\omega}:\Rb \to \Rb$ defined by
        \begin{equation*}
            f_{\omega}(x):=\omegalim f_n(x),
        \end{equation*}
        which is called the \emph{pointwise} $\omegalim$ \emph{of the sequence} $\seq{f_n}$. 
    \end{itemize}
\end{notation}
\begin{lemma}\label{lem: omega-limit of sequences of functions}
Let $\omega\subset\mathcal{P}({\mathbb{N}})$ be a non-principal ultrafilter.
Let $\seq{f_n:\mathbb{R}_{\ge0}\to \mathbb{R}_{\ge0}}$ be an $\omega$-bounded sequence of functions. Then, the following hold.
\begin{enumerate}
\item The pointwise $\omegalim$ of $\seq{f_n}$, $f_{\omega}: \Rb_{\geq 0} \to \Rb_{\geq 0}$, exists.  
\item If $f_n$ is non-decreasing for all $n\in\mathbb{N}$, then $f_\omega$ is non-decreasing
\item If $f_n$ is affine for all $n\in\mathbb{N}$, then $f_\omega$ is affine
\end{enumerate}
\end{lemma}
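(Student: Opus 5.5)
The plan is to define $f_\omega$ pointwise and then pass each property through the $\omega$-limit using the standard fact that $\omega$-limits preserve non-strict inequalities and are linear on $\omega$-bounded sequences.

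For (1), fix $x\in\Rb_{\ge 0}$. By hypothesis $\seq{f_n(x)}$ is $\omega$-bounded, so there is $k\ge0$ with $\{n:|f_n(x)|\le k\}\in\omega$, and hence $\omegalim f_n(x)$ exists in $[-k,k]$. Since each $f_n(x)\ge 0$, the set $\{n: f_n(x)\ge 0\}=\Nb$ lies in $\omega$, so $f_\omega(x)\ge0$. Thus $f_\omega:\Rb_{\ge0}\to\Rb_{\ge0}$ is well defined.

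For (2), take $x\le y$. Then $f_n(y)-f_n(x)\ge0$ for every $n$. The $\omega$-limit is additive on $\omega$-bounded sequences: a difference of two $\omega$-bounded sequences is $\omega$-bounded, and the limit of the difference equals the difference of the limits, because $\omega$-limits commute with continuous operations on bounded sequences. Hence $f_\omega(y)-f_\omega(x)=\omegalim(f_n(y)-f_n(x))\ge0$.

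For (3), write $f_n(x)=a_nx+b_n$. Then $b_n=f_n(0)$ is $\omega$-bounded, and $a_n=f_n(1)-f_n(0)$ is $\omega$-bounded as well. Set $a=\omegalim a_n$ and $b=\omegalim b_n$. For each $x$, the sequences $a_nx$ and $b_n$ are $\omega$-bounded, so by linearity $f_\omega(x)=\omegalim(a_nx+b_n)=ax+b$. Hence $f_\omega$ is affine. The only point needing care in the whole argument is the linearity and order-preservation of $\omega$-limits on $\omega$-bounded real sequences. This is standard: any neighborhood condition defining the limit of a sum follows by intersecting the two sets in $\omega$, and order preservation follows from the closed-set characterization recalled above.
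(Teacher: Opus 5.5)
Your proposal is correct and follows essentially the same route as the paper: non-negativity and monotonicity of $f_\omega$ come from passing the pointwise inequalities $f_n(x)\ge 0$ and $f_n(x)\le f_n(y)$ through the $\omega$-limit, and affineness comes from taking $\omega$-limits of $b_n=f_n(0)$ and $a_n=f_n(1)-f_n(0)$. The only cosmetic difference is that in (2) you use additivity of the $\omega$-limit on $f_n(y)-f_n(x)$, whereas the paper applies order-preservation of $\omega$-limits directly to the two sequences.
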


\begin{proof}

\noindent (1) It suffices to show that $f_{\omega}(x)$ is non-negative. If $x \in \mathbb{R}_{\ge 0}$, then $f_n(x)\geq 0$ and thus, $f_{\omega}(x)=\omegalim f_n(x)\ge 0$.

\noindent (2)
Assume that $f_n$ is non-decreasing, for every $n \in \mathbb{N}$. Let $x, y \in \mathbb{R}_{\ge 0}$ be such that $x \le y$. 
Because each $f_n$ is non-decreasing, we have $f_n(x) \le f_n(y)$ for all $n \in \mathbb{N}$. 
Since $\omega$ is an ultrafilter, $\{n \in \mathbb{N} \mid f_n(x) \le f_n(y)\} \in \omega$. 
Applying the $\omega$-limit to both sides, $f_\omega(x)=\omegalim f_n(x) \le \omegalim f_n(y)=f_\omega(y).$

\noindent (3)
Since $f_n$ is affine for every $n \in \mathbb{N}$, there exist $a_n,b_n\ge0$ such that $f_n(x)=a_nx+b_n$ for any $x\in \mathbb{R}_{\ge 0}$ and $n\in\mathbb{N}$. Note that $b_n=f_n(0)$ and $a_n=f_n(1)-f_n(0)$. 
Because $\seq{f_n}$ is $\omega$-bounded, the sequences $\seq{f_n(0)}$ and $\seq{f_n(1)}$ are $\omega$-bounded. So there exist $a,b\ge0$ such that $\omegalim b_n=b$ and $\omegalim a_n=a$. Then, for any $x\ge0$, \begin{equation*}
f_\omega(x) = \omegalim f_n(x)= \omegalim \left( a_n x + b_n\right)= ax+b.\qedhere
\end{equation*}
\end{proof}

The main part of this section will be to prove that: given a sequence of metric spaces \Seq{X_n,d_n} with a coarse $r$-median structure with parameters 
$$(F^n,\ctrlf_n,\delta_0^n, \lambda_n:[\delta_0^n,\infty)\to[0,\infty),\Psi_n:[\delta_0^n,\infty)\times[\delta_0^n,,\infty)\to[0,\infty)),$$
the ultralimit $(X_\omega,d_\omega)$ has a coarse $r$-median structure, provided the following conditions are satisfied:
\begin{itemize}[leftmargin=14pt]
    \item $\seq{\ctrlf_n}$ is $\omega$-bounded as a sequence of real valued functions;
    \item $\seq{\delta_0^n}$ is $\omega$-bounded;
    \item $\seq{\lambda_n}$ is $\omega$-bounded as a sequence of real valued functions;
    \item $\seq{\Psi_n}$ is $\omega$-bounded \wrt\ both entries as a sequence of real valued functions;
\end{itemize}

\begin{notation}
\label{notation:bounded_parameters}
    We will say that the parameters of $(F^n,\ctrlf_n,\delta_0^n,\lambda_n,\Psi_n)_{n\in\mathbb{N}}$ are \emph{$\omega$-bounded} if the above conditions hold.
\end{notation}

We will build up the coarse $r$-median on $(X_\omega,d_\omega)$ in several steps, as we did in \cref{sec: quasi-isom}. The first step is \cref{prop: filling for ultralimit}: construct a coarse $r$-filling on $X_\omega$ by taking the ultralimit of the fillings $F^n$. The next step is \cref{lem: generic points ultralimit}: study the behavior of sequences that approximate generic points in $X_{\omega}$. In the final step, \cref{prop: median for ultralimit}, we define a coarse $r$-median on $X_\omega$. 

Let us now fix the setup that we will be working in for the rest of this section.

\medskip

\noindent \hypertarget{link:setup}{\textbf{Setup:}} \emph{Fix $r \in \mathbb{N}$. We further fix the following:}

\begin{itemize}
    \item \emph{A sequence of metric spaces with each $(X_n,d_n)$ admitting a coarse $r$-filling $(F^n,\ctrlf_n)$ where $F^n=\{F^n_i:X_n^{i+1}\to\mathcal{P}(X_n)\}_{i=0,\dots,r}$, and $\seq{\ctrlf_n}$ is $\omega$-bounded.} 
    \item \emph{A non-principal ultrafilter $\omega$ and a base point $\mathbf{o}=(o_n)_{n\in\mathbb{N}}$, and then, consider the ultralimit $(X_\omega,d_\omega)$.}
    \item \emph{For any $\mathbf{a}\in X_\omega$, we fix a sequence $(a_n)_{n\in\mathbb{N}}$ such that $\mathbf{a}=[(a_n)_{n\in\mathbb{N}}]$. We say that the \emph{sequence $(a_n)$ represents} $\mathbf{a}$.}
    \item \emph{ In addition, assume that there is a non-decreasing function $\ctrlf_{\omega}:[0,\infty) \to [0,\infty)$, which is the pointwise $\omega$-limit of $\seq{\ctrlf_n}$ (\cref{lem: omega-limit of sequences of functions}). }
    \item \emph{The constant $C_0\coloneqq\lim_{\varepsilon\to0^+}(r+1)(\ctrlf_\omega(\varepsilon+\ctrlf_\omega(0)))$. The reason behind the existence of this limit is explained at the end of the proof of \cref{prop: filling coarsely well defined}. }
    
\end{itemize}

\begin{definition}
\label{def: ultralimit of fillings}
   Let $i\in\{0,\dots,r\}$ and suppose $\mathbf{a_0},\dots,\mathbf{a_i}\in X_\omega$ are represented by $(a^0_n)_{n\in\mathbb{N}},\dots,(a^i_n)_{n\in\mathbb{N}}$, respectively. Define $F^{\omega}\coloneqq\{F^{\omega}_i:X_\omega^{i+1}\to\Pc(X_\omega)\}_{i=0,\dots,r}$ via 
    \begin{equation*}
        F^{\omega}_i(\mathbf{a_0},\dots,\mathbf{a_i})\coloneqq\{\mathbf{p}=[(p_n)_{n\in\mathbb{N}}]\mid p_n\in F_i^n(a_n^0,\dots,a_n^i)\},
    \end{equation*}
    We call $F^{\omega}$ the \emph{ultralimit of the sequence of coarse $r$-fillings} $(F^n,\ctrlf_n)$. 
\end{definition}
We now show that, since $\seq{\ctrlf_n}$ is $\omega$-bounded, the sets defined in \cref{def: ultralimit of fillings} are coarsely well-defined.
\begin{proposition}\label{prop: filling coarsely well defined}
    Let $i\in\{0,\dots,r\}$ and $\mathbf{a_0},\dots,\mathbf{a_i}\in X_\omega$. The sets $F^{\omega}_i(\mathbf{a_0},\dots,\mathbf{a_i})$ satisfy the following: 
	\begin{enumerate}[label=(\roman*)]
        \item  $d_\omega(\mathbf{o},\mathbf{p})$ is finite for every $\mathbf{p} \in F^{\omega}_i(\mathbf{a_0},\dots,\mathbf{a_i})$, and 
        \item  $F^{\omega}_i(\mathbf{a_0},\dots,\mathbf{a_i})$ is coarsely independent of the choice of the sequences representing $\mathbf{a_0}, \dots ,\mathbf{a_i}$, i.e. if $\mathbf{a_j}$ is represented by both $(a^j_n)$ and $(b^j_n)$ for each $j\in \{0,\dots,k\}$, then 
        $$
        d_{\omega}^H(\{[(p_n)_n] \in X_\omega| p_n\in F^n_i(a^0_n,\dots,a^i_n) \}, \{[(p_n)_n] \in X_\omega| p_n\in F^n_i(b^0_n,\dots,b^i_n) \}) \leq C_0.
        $$        
    \end{enumerate}
\end{proposition}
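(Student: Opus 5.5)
Part (i) and part (ii) are proved separately, both by transferring the corresponding finite-$n$ estimates for the coarse fillings $(F^n,\ctrlf_n)$ to the $\omega$-limit.

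For (i), fix $\mathbf{p}=[(p_n)]$ with $p_n\in F^n_i(a^0_n,\dots,a^i_n)$. The plan is to bound $d_n(p_n,a^0_n)$ uniformly on an $\omega$-large set. First use (F3) repeatedly together with the Lipschitz property (\cref{lem: lipschitz property}) to compare $F^n_i(a^0_n,\dots,a^i_n)$ with $F^n_i(a^0_n,\dots,a^0_n)$. Each vertex $a^j_n$ lies within $R:=\max_j d_\omega(\mathbf{a_j},\mathbf{a_0})+1$ of $a^0_n$ for $\omega$-almost every $n$. Replacing the vertices one at a time then costs at most $(i+1)\ctrlf_n(R+\ctrlf_n(0))$ in Hausdorff distance. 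Next, (F3) applied $i$ times and (F0) show that $F^n_i(a^0_n,\dots,a^0_n)$ lies in the $(i+1)\ctrlf_n(0)$-neighbourhood of $a^0_n$. Hence
\[
d_n(p_n,o_n)\le d_n(a^0_n,o_n)+(i+1)\bigl(\ctrlf_n(R+\ctrlf_n(0))+\ctrlf_n(0)\bigr).
\]
Since $\seq{\ctrlf_n}$ is $\omega$-bounded and monotone, the right-hand side is $\omega$-bounded, so $d_\omega(\mathbf{o},\mathbf{p})<\infty$. (There is a small subtlety here: the quantity $\ctrlf_n(\ctrlf_n(0)+R)$ is evaluated at a moving argument. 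Monotonicity handles this, because $\ctrlf_n(0)\le M$ on an $\omega$-large set, so we may evaluate at the fixed point $R+M$.)

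For (ii), suppose $(a^j_n)$ and $(b^j_n)$ both represent $\mathbf{a_j}$, so $\varepsilon_n:=\max_j d_n(a^j_n,b^j_n)$ has $\omega$-limit $0$. Swapping the vertices one at a time via \cref{lem: lipschitz property} gives
\[
d^{\Haus}\bigl(F^n_i(a^0_n,\dots,a^i_n),F^n_i(b^0_n,\dots,b^i_n)\bigr)\le (i+1)\,\ctrlf_n(\varepsilon_n+\ctrlf_n(0)).
\]
Given $\mathbf{p}=[(p_n)]$ in the first set, choose $q_n$ in the second set with $d_n(p_n,q_n)$ at most this bound plus $1/n$. By (i), or by this very bound, $\mathbf{q}=[(q_n)]\in X_\omega$ and $d_\omega(\mathbf{p},\mathbf{q})\le\omegalim (i+1)\ctrlf_n(\varepsilon_n+\ctrlf_n(0))$. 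The argument is symmetric in the two representatives.

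The main obstacle is identifying this $\omega$-limit with $C_0$, since $\ctrlf_n$ is evaluated at the moving point $\varepsilon_n+\ctrlf_n(0)$ and pointwise $\omega$-convergence does not directly control it. The plan is to fix $\varepsilon>0$ and a further $\eta>0$. Then $\varepsilon_n<\varepsilon$ and $\ctrlf_n(0)<\ctrlf_\omega(0)+\eta$ hold $\omega$-almost surely, and monotonicity gives $\ctrlf_n(\varepsilon_n+\ctrlf_n(0))\le\ctrlf_n(\varepsilon+\eta+\ctrlf_\omega(0))$. Taking $\omega$-limits yields at most $\ctrlf_\omega(\varepsilon+\eta+\ctrlf_\omega(0))$. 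Finally, let $\varepsilon,\eta\to0^+$. The right-hand limit exists because $\ctrlf_\omega$ is non-decreasing (\cref{lem: omega-limit of sequences of functions}(2)), and since $r\ge i$ the resulting bound is at most $C_0$. This also justifies the existence of $C_0$ asserted in the \hyperlink{link:setup}{Setup}.
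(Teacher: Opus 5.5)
Your argument is correct and follows essentially the same route as the paper. For (i), the Lipschitz property together with (F3) and (F0) traps $F^n_i(a^0_n,\dots,a^i_n)$ in an $\omega$-bounded neighbourhood of a degenerate filling; the paper uses $F^n_i(o_n,\dots,o_n)$ where you use $F^n_i(a^0_n,\dots,a^0_n)$. For (ii), swapping vertices one at a time via \cref{lem: lipschitz property} and passing to $\omega$-limits gives the Hausdorff bound. Your monotonicity argument for $\Lambda_n$ at the moving point $\varepsilon_n+\Lambda_n(0)$ is in fact more careful than the paper. The paper takes the $\omega$-limit of $(i+1)\Lambda_n(\varepsilon+\Lambda_n(0))$ without explicitly checking that its limit as $\varepsilon\to0^+$ is bounded by the constant $C_0$ defined via $\Lambda_\omega$.
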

\begin{proof}
    \noindent (\textit{i}) Let $\mathbf{p} \in F^{\omega}_i(\mathbf{a_0},\dots,\mathbf{a_i})$. Then $\mathbf{p}=[(p_n)_{n\in\mathbb{N}}]$ where $p_n\in F_i^n(a_n^0,\dots,a_n^i)$ for each $n\in \Nb$.
    
    Since $\mathbf{a_j}\in X_\omega$, for any $j \in \{0,\dots,i\}$ there exists $k_j>0$ such that 
    \begin{equation*}\label{eqn: filling ultralimit 1}
        \{n\in\mathbb{N}\mid d_n(o_n,a^j_n)\le k_j\}\in\omega.
    \end{equation*}
    Define $k=\max_{0\leq j \leq i} k_j$. Since $\omega$ is closed under taking supersets, 
    $$
    \{n\in\mathbb{N}\mid d_n(o_n,a^j_n)\le k\}\in\omega.
    $$
    Set $E:=\bigcap\limits_{j=0}^i\{n\in\mathbb{N}\mid d_n(o_n,a^j_n)\le k\}$. Then $E\in\omega$.
    It follows from \cref{lem: lipschitz property} and (F3) that for any $n\in E$,
    \begin{align}
    F_i^n(a_n^0,\dots,a_n^i)&\subseteq\ngh{F_i^n(o_n,\dots,o_n)}{(i+1)(\ctrlf_n(k+\ctrlf_n(0)))}\nonumber\\
    &\subseteq\ngh{o_n}{(i+1)(\ctrlf_n(k+\ctrlf_n(0))+\ctrlf_n(0))}.\nonumber
    \end{align}
    Let $c_n\coloneqq(i+1)(\ctrlf_n(k+\ctrlf_n(0))+\ctrlf_n(0))$.
    Then, for all $n\in E$, $$d_n(o_n,p_n)\le c_n.$$
    Since $\seq{\ctrlf_n}$ is $\omega$-bounded and non-decreasing, there exists $c>0$ such that $\{n\in \Nb\mid c_n\leq c\}\in\omega$.
    Since $\omega$ is non-principal, closed under taking supersets, and closed under intersections, we get that 
    $
    \{n\in\mathbb{N}\mid d_n(p_n,o_n)\le c\}\in\omega.
    $
    Hence $d_\omega(\mathbf{o},\mathbf{p})<\infty$.

    (\textit{ii}) We consider another collection of sequences $(\Tilde{a}^0_n)_{n\in\mathbb{N}},\dots,(\Tilde{a}^i_n)_{n\in\mathbb{N}}$ representing $\mathbf{a_0},\dots,\mathbf{a_i}$.    
    By definition $\omegalim d_n(a_n^j,\Tilde{a}_n^j)=0$ for each $j \in\{ 0,\dots,i\}.$ 
    Consequently, for any $\varepsilon>0$ 
    $$E_\varepsilon\coloneqq\{n\in\mathbb{N}\mid d_n(a^0_n,\Tilde{a}^0_n) < \varepsilon,\dots,d_n(a^i_n,\Tilde{a}^i_n)<\varepsilon\}\in\omega.$$
    Now let
    \begin{equation*}
        \Fc^n_i:=F^n_i(a^0_n,\dots,a^i_n) \text{ and } \wt{\Fc}^n_i:=F^n_i(\wt{a}^0_n,\dots,\wt{a}^i_n). 
    \end{equation*}
    Then, \cref{lem: lipschitz property} implies that for any $n\in E_\varepsilon$, $d_n^{\Haus}(\Fc_i^n,\wt{\Fc}_i^n)< C_n(\varepsilon),$
    where $C_n(\varepsilon)\coloneqq (i+1)(\ctrlf_n(\varepsilon+\ctrlf_n(0)))$.
    Since $\omega$ is closed under taking supersets, for any $\varepsilon>0$, we have
    \begin{equation*}\label{eqn: bunded haus distance of filling in the ultralimit}
         \{n\in\mathbb{N}\mid 
         d_n^{\Haus}(\Fc_i^n,\wt{\Fc}_i^n)< C_n(\varepsilon)\}\in\omega.
    \end{equation*}
    Since $\seq{\ctrlf_n}$ is non-decreasing and $\omega$-bounded, the same holds for the seqeunce of functions $\seq{C_n}$. Then, by \cref{lem: omega-limit of sequences of functions}, we conclude that $\seq{C_n}$ pointwise converges to a non-decreasing function $C:\mathbb{R}_{\ge0}\to\mathbb{R}_{\ge0}$.
   Then, since $\omega$ is a  non-principal ultrafilter, 
    \begin{equation}\label{eqn: bunded haus distance of filling in the ultralimit 2}
         \{n\in\mathbb{N}\mid 
         d_n^{\Haus}(\Fc_i^n,\wt{\Fc}_i^n)< C(\varepsilon)+\varepsilon\}\in\omega,
    \end{equation}
    for any $\varepsilon>0$. 
    
    Consider the subsets of $X_{\omega}$ defined by $\Fc^{\omega}_i:=\{[(p_n)_n] \in X_\omega| ~p_n\in \Fc^n_i \}$ and $\wt{\Fc}^{\omega}_i:=\{[(\wt{p}_n)_n] \in X_\omega| ~\wt{p}_n\in \wt{\Fc}^n_i \}$. Then, \cref{eqn: bunded haus distance of filling in the ultralimit 2} implies that 
    \begin{equation*}
        d_{\omega}^H(\Fc^{\omega}_i, \wt{\Fc}^{\omega}_i) \leq C(\varepsilon)+\varepsilon.
    \end{equation*}
    Since $C$ is non-decreasing, $C_0\coloneqq\lim\limits_{\varepsilon\to0^+}C(\varepsilon)$ exists. Moreover, as $\varepsilon$ is arbitrary in the above inequality, we have  
    \begin{equation*}
    \label{eqn:F_infty_coarsely_F_hat_infty}
         d_{\omega}^H(\Fc^{\omega}_i, \wt{\Fc}^{\omega}_i) \leq C_0. \qedhere
    \end{equation*}
    \end{proof}

\begin{proposition}\label{prop: filling for ultralimit}
     The pair $(F^{\omega}, \ctrlf_{\omega})$ (see \cref{def: ultralimit of fillings}) is a coarse $r$-filling on the ultralimit $(X_\omega,d_\omega)$. Moreover, if $\ctrlf_\omega(0)=0$, then $(F^{\omega}, \ctrlf_{\omega})$ is an $r$-filling.
\end{proposition}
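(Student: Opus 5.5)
The plan is to verify (F0)--(F4) for $(F^\omega,\ctrlf_\omega)$ one at a time. Each verification will transfer the corresponding property of $(F^n,\ctrlf_n)$ levelwise and then pass to the $\omega$-limit. The basic tool is this: if $\mathbf{x}=[(x_n)]$, $A_n\subset X_n$, and $\{n : d_n(x_n,A_n)<c_n\}\in\omega$ with $\omegalim c_n=c$, then choosing $p_n\in A_n$ with $d_n(x_n,p_n)<c_n$ gives $\mathbf{p}=[(p_n)]$ with $d_\omega(\mathbf{x},\mathbf{p})\le c$. Moreover $\mathbf{p}\in X_\omega$, since $d_n(o_n,p_n)\le d_n(o_n,x_n)+c_n$ is $\omega$-bounded. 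If $A_n$ is the level-$n$ filling, then $\mathbf{p}$ lies in the corresponding $F^\omega$-set. Since neighborhoods in the axioms are open, the inequality $\le c$ has to be converted into a strict one. I will handle this by proving each containment with an arbitrary $\varepsilon$-slack and absorbing it. For example, (F4) will be stated with $\ctrlf_\omega(R)$ replaced by $\ctrlf_\omega(R+\varepsilon)+\varepsilon$. If strictness fails, I will instead enlarge the control function to $R\mapsto \ctrlf_\omega(R+1)+1$. This is still non-decreasing and has the form the definition allows, but then $\ctrlf(0)=0$ is lost, so the ``moreover'' part needs care (see below).

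The individual axioms go as follows. (F0): $F^n_0(a_n)$ is within Hausdorff distance $\ctrlf_n(0)$ of $a_n$, so every $\mathbf{p}\in F^\omega_0(\mathbf{a})$ has $d_\omega(\mathbf{p},\mathbf{a})\le\ctrlf_\omega(0)$. Conversely, $\mathbf{a}$ is within $\ctrlf_\omega(0)$ of some $\mathbf{p}$, by choosing $p_n\in F^n_0(a_n)$ near $a_n$. (F1), (F3): given $\mathbf{p}=[(p_n)]$ in the smaller filling, the levelwise containment $p_n\in\Nc_{\ctrlf_n(0)}(F^n_k(\dots))$ lets me choose $q_n$ in the larger filling, and $[(q_n)]$ witnesses $d_\omega(\mathbf{p},F^\omega_k)\le\ctrlf_\omega(0)$. (F2) is the same argument applied in both directions of the Hausdorff distance. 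Throughout I fix the representing sequences from the Setup, so \cref{prop: filling coarsely well defined} is only needed to know that $F^\omega$ is a subset of $X_\omega$.

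For (F4), take $\mathbf{x}\in\Nc_R(F^\omega_k(\mathbf{a_0},\dots,\mathbf{a_k}))$. Then there is $\mathbf{y}=[(y_n)]$ with $y_n\in F^n_k(a^0_n,\dots,a^k_n)$ and $d_\omega(\mathbf{x},\mathbf{y})<R$, so $\omega$-almost surely $d_n(x_n,y_n)<R$. This gives $x_n\in\Nc_R(F^n_k(\dots))$. Here $x_n$ is the fixed representative of $\mathbf{x}$, which is exactly the entry used to define $F^\omega_k(\dots,\mathbf{x},\dots)$. Applying (F4) at level $n$ gives $F^n_k(a^0_n,\dots,x_n,\dots,a^k_n)\subset\Nc_{\ctrlf_n(R)}(F^n_k(\dots))$. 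The tool above then yields containment in the $\ctrlf_\omega(R)$-closed neighborhood, which I upgrade to an open neighborhood by the slack argument. The main subtlety here is keeping the representatives consistent. This works because $F^\omega$ is defined via the fixed representatives, and (F4) only replaces one entry by the fixed representative of $\mathbf{x}$.

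The main obstacle is the open-versus-closed neighborhood issue in the $r$-filling clause, where $\ctrlf_\omega(0)=0$. In that case (F1) and (F3) require honest containment in $F^\omega_k$ itself, because of the convention $\Nc_0(F)=F$, and a limit argument only gives distance $0$. Since $d_\omega$ identifies points at distance zero, I would try the following: choose $q_n\in F^n_k$ with $d_n(p_n,q_n)<\ctrlf_n(0)+1/n$. Then $[(q_n)]=[(p_n)]=\mathbf{p}$ in $X_\omega$, because $\omegalim(\ctrlf_n(0)+1/n)=0$, so $\mathbf{p}\in F^\omega_k$ exactly. The same trick handles (F0) and (F2) at scale $0$. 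This is the step I would check most carefully.
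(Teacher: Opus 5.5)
Your strategy is the paper's: transfer each axiom levelwise, then pass to the $\omega$-limit. The paper writes out only (F0), which is a non-strict Hausdorff inequality. Your concern about open neighbourhoods is justified, because with exactly $\ctrlf_\omega$ the first assertion can fail. For example, on $X_n=\mathbb{R}$ take $F^n_0(a)=\{a\}$, $F^n_1(a,b)=[\min(a,b)+c_n,\max(a,b)+c_n]$ with $c_n=1-1/n$, and $\ctrlf_n(R)=R+1$. These are coarse $1$-fillings, yet $d_\omega(\mathbf{a},F^\omega_1(\mathbf{a},\mathbf{a}))=1=\ctrlf_\omega(0)$, so (F1) fails. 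Enlarging the control function to something like $R\mapsto\ctrlf_\omega(R+1)+1$ is therefore the right fix for the coarse part. The inner shift is also what rescues (F4) at $R=0$, where your sentence ``$d_\omega(\mathbf{x},\mathbf{y})<R$, so $d_n(x_n,y_n)<R$'' gives nothing.

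The genuine gap is in the ``moreover'' clause. An $r$-filling needs (F4) with a control function vanishing at $0$, and in particular the case $R=0$: if $\mathbf{x}\in F^\omega_k(\mathbf{a_0},\dots,\mathbf{a_k})$ then $F^\omega_k(\mathbf{a_0},\dots,\mathbf{x},\dots,\mathbf{a_k})\subseteq F^\omega_k(\mathbf{a_0},\dots,\mathbf{a_k})$. Your $1/n$ trick does not reach this. The \emph{fixed} representative $(x_n)$ of $\mathbf{x}$ need not lie in $F^n_k(a^0_n,\dots,a^k_n)$; it is only $\eta_n$-close with $\omegalim\eta_n=0$. So level-$n$ (F4) must be applied at a positive radius and yields the scale $\ctrlf_n(\eta_n)$. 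Its $\omega$-limit is controlled by $\lim_{\eta\to0^+}\ctrlf_\omega(\eta)$, not by $\ctrlf_\omega(0)$.

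Under the stated hypothesis $\ctrlf_\omega(0)=0$ alone, the claim is in fact false. Take $X_n=[-5,5]^2$ with the $\ell^1$ metric and $F_0(a)=\{a\}$. Let $F_1(a,b)$ be the bounding box of $\{a,b\}$, replaced by the bounding box of $\{a,b,(5,0)\}$ when the second coordinates of $a$ and $b$ have strictly opposite signs. Set $\ctrlf(0)=0$ and $\ctrlf(R)=21$ for $R>0$. One checks this is a $1$-filling. Now choose representatives $a_n=(-2,1/n)$, $b_n=(2,1/n)$, $x_n=(0,-1/n)$. Then $\mathbf{x}\in F^\omega_1(\mathbf{a},\mathbf{b})=[-2,2]\times\{0\}$, while $F^\omega_1(\mathbf{x},\mathbf{b})=[0,5]\times\{0\}$. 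Hence no control function vanishing at $0$ works.

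What is missing is the hypothesis $\lim_{\varepsilon\to0^+}\ctrlf_\omega(\varepsilon)=0$, which the paper does impose in \cref{prop: median for ultralimit}. With it, your argument goes through using $\eta_n$ in place of $1/n$, since $\omegalim\ctrlf_n(\eta_n)\le\ctrlf_\omega(\eta)$ for every $\eta>0$. For $R>0$ you can then take the control function $R\mapsto\ctrlf_\omega(R)+R$, which still vanishes at $0$.
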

\begin{proof}
  We claim that since $(F^n,\ctrlf_n)$ satisfies (F0)--(F4), then $(F^{\omega},\ctrlf_\omega)$ also satisfies (F0)--(F4), and hence it is a coarse $r$-filling. We will only prove the case of (F0); the other cases follow by similar reasoning using the definition of ultralimit.

    Let $\mathbf{a_0}\in X_\omega$ and let \Seq{a_n^0}\ represent $\mathbf{a_0}$. Since $(F^n,\ctrlf_n)$ satisfies (F0), for any $n\in\mathbb{N}$, we have
    $$
    d^{\Haus}_n(a_n^0,F_0^n(a_n^0))\le\ctrlf_n(0).
    $$
    Then, passing to the ultralimits
    $$
    d^{\Haus}_{\omega}(\mathbf{a_0}, F^{\omega}_0(\mathbf{a_0}))=\omegalim d^{\Haus}_n(a^0_n, F^n_0(a^0_n)) \le\ctrlf_\omega(0)=\omegalim \ctrlf_n(0).
    $$
    Thus (F0) is satisfied. 

    ``Moreover part": Since $\ctrlf_{\omega}(0)=0$, then by definition $(F^{\omega},\ctrlf_\omega)$ is an $r$-filling.
\end{proof}

With a well-defined coarse $r$-filling on the ultralimit now in hand, we must analyze the behavior of sequences of tuples approaching generic tuples.

\begin{lemma}\label{lem: generic points ultralimit}    
    Consider the \hyperlink{link:setup}{Setup}. Let $\varepsilon>0$ 
    and fix $\delta,D\ge0$. Let $\mathbf{a_0},\dots,\mathbf{a_{r+1}}\in X_\omega$ be $(\delta,D)$-generic \wrt\ the coarse $r$-filling $(F^{\omega},\ctrlf_\omega)$ and let $\seq{a^i_n}_{n\in \Nb}$ represent $\mathbf{a_i}$ for $i=0,\dots,r$. Then, 
    $$\{n\in\mathbb{N}\mid (a^0_n,\dots,a^{r+1}_n) \text{ is a  $(\max\{\delta-\varepsilon,0\},D+\varepsilon)$-generic $(r+2)$-tuple}\}\in \omega.$$        
\end{lemma}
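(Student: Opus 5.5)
We argue by contradiction, using the ultrafilter dichotomy. Suppose the displayed set is not in $\omega$. Then its complement is in $\omega$: for $\omega$-almost every $n$, the tuple $(a^0_n,\dots,a^{r+1}_n)$ is $(\delta',D+\varepsilon)$-non-generic with respect to $(F^n,\ctrlf_n)$, where $\delta':=\max\{\delta-\varepsilon,0\}$. For each such $n$ there is a witnessing family of non-empty proper subsets $I^n_1,\dots,I^n_{q_n}\subseteq\{0,\dots,r+1\}$ with the following properties: $\bigcap_j I^n_j=\emptyset$; some $|I^n_j|\le r$; and there exist points $p_n,q_n$ in $\bigcap_j \ngh{F^n_{|I^n_j|-1}(a^s_n: s\in I^n_j)}{\delta'}$ with $d_n(p_n,q_n)>D+\varepsilon$. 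Only finitely many families of subsets of $\{0,\dots,r+1\}$ exist, so the ultrafilter property lets us fix one family $I_1,\dots,I_q$ that serves as the witness for an $\omega$-large set of $n$.

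Next we pass the witnesses to the limit. The main obstacle is that $\mathbf p=[(p_n)]$ and $\mathbf q=[(q_n)]$ must lie in $X_\omega$, that is, at $\omega$-bounded distance from $\mathbf o$. An arbitrary pair in the intersection may be very far apart, since $d_n(p_n,q_n)$ can be huge, and far from $\mathbf o$. We handle this as follows. First, note that $p_n$ lies within $\delta'$ of $F^n_{|I_1|-1}(a^s_n:s\in I_1)$. By the argument of \cref{prop: filling coarsely well defined}(i) (the Lipschitz property plus (F3)), this filling lies in an $\omega$-bounded neighbourhood of $o_n$, because the $a^s_n$ are at $\omega$-bounded distance from $o_n$ and $\seq{\ctrlf_n}$ is $\omega$-bounded. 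Hence $d_n(o_n,p_n)$ is $\omega$-bounded, and the same holds for $q_n$. So $\mathbf p,\mathbf q\in X_\omega$, and $d_\omega(\mathbf p,\mathbf q)\ge D+\varepsilon>D$; the distances are bounded, so the $\omega$-limit is finite.

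It remains to check that $\mathbf p,\mathbf q\in\bigcap_j\ngh{F^\omega_{|I_j|-1}(\mathbf a_s:s\in I_j)}{\delta}$. For each $j$, choose $p^j_n\in F^n_{|I_j|-1}(a^s_n:s\in I_j)$ with $d_n(p_n,p^j_n)<\delta'$. Then $\mathbf p^j:=[(p^j_n)]$ lies in $X_\omega$ by the same boundedness argument, and it belongs to $F^\omega_{|I_j|-1}(\mathbf a_s:s\in I_j)$ by \cref{def: ultralimit of fillings}. Moreover $d_\omega(\mathbf p,\mathbf p^j)\le\delta'<\delta$ whenever $\delta>0$. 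This strict inequality is exactly where the $\varepsilon$-loss in $\delta$ is used, since neighbourhoods are open. The degenerate case $\delta'=0$ with $\delta\le\varepsilon$ needs separate care: if $\delta=0$, the neighbourhood is the set itself under the paper's convention, and we take $p^j_n=p_n$. The same argument applies to $\mathbf q$.

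Thus the fixed family $I_1,\dots,I_q$ satisfies $\bigcap I_j=\emptyset$, has some $|I_j|\le r$, and gives an intersection of diameter greater than $D$ in $X_\omega$. This contradicts the $(\delta,D)$-genericity of $(\mathbf a_0,\dots,\mathbf a_{r+1})$. The $\varepsilon$ in $D+\varepsilon$ is what upgrades the limiting non-strict inequality to the strict $>D$.
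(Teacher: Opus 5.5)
Your proposal is correct and follows the same route as the paper. You assume the set is not in $\omega$, use the finiteness of the collection of index families to fix one witnessing family $I_1,\dots,I_q$ on an $\omega$-large set, and pass the witnesses to the ultralimit to contradict $(\delta,D)$-genericity in $X_\omega$. You also supply details that the paper compresses into ``passing to the ultralimit'': the $\omega$-boundedness of $d_n(o_n,p_n)$ (via the argument of part (i) of the well-definedness proposition), and the use of $\delta'<\delta$ and $D+\varepsilon>D$ to deal with open neighbourhoods and the strict inequality.
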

\begin{proof} Take $\delta'=\max\{\delta-\varepsilon,0\}$.
    Assume by contradiction that
    $$
    \{n\in\mathbb{N}\mid (a^0_n,\dots,a^{r+1}_n) \text{ is a $(\delta',D+\varepsilon)$-generic tuple}\}\not\in \omega.
    $$
    Let us denote by $E_{\varepsilon}$ the complement of this set, i.e. $E_\varepsilon$ consists of all $n\in\mathbb{N}$ such that $(a^0_n,\dots,a^{r+1}_n)$ is not a $(\delta',D+\varepsilon)$-generic tuple. 
    Since $\omega$ is an ultrafilter, we have
    $$
    E_{\varepsilon} \in \omega.
    $$
    \begin{claim}\label{claim: ultralimit generic 0}
        There exist $k\in\mathbb{N}$ and $I_{1},\dots,I_{k}\subset\{0,\dots,r+1\}$ non-empty so that
        \begin{itemize}
            \item $\bigcap_{i=1}^kI_i=\emptyset$,
            \item there exists $j\in\{1,\dots,k\}$ such that $\abs{I_j}\le r$, and
            \item if $V_{1,n}\coloneqq (a^i_n\mid i\in I_1),\dots,V_{k,n}\coloneqq(a^i_n\mid i\in I_k)$, then 
        \begin{equation}\label{eqn: claim ultralimit generic 0}
            \left\{n\in\mathbb{N}\biggm| {\diam}_{d_n}\left(\bigcap_{j=1}^k \ngh{F^n_{\abs{I_j}-1}(V_{j,n})}{\delta'}\right)>D+\varepsilon\right\}\in \omega.
        \end{equation}
        \end{itemize}
    \end{claim}
    \begin{proof}[Proof of the claim]
        Let $\Sc$ be the set of all families of non-empty subsets of $\{0,\dots,r+1\}$ with global empty intersection and at least one element of the family having cardinality at most $r$. 
        For any $\Ic=\{I_1,\dots,I_k\}\in\Sc$, denote by $V_{\Ic}^j(n)\coloneqq(a_n^i \mid i\in I_j)$ for any $j=0,\dots,k$ and $n\in\Nb$.
        Then, by definition of $(\delta',D+\varepsilon)$-genericity, the set $E_\varepsilon$ is given by
        $$
        E_\varepsilon=\bigcup_{\Ic=\{I_1,\dots,I_k\}\in\Sc}\left\{
        n\in\Nb \Biggm| {\diam}_{d_n}\left(\bigcap_{j=1}^k \ngh{F^n_{\abs{I_j}-1}(V_{\Ic}^j(n))}{\delta'}\right)>D+\varepsilon
        \right\}.
        $$
        Since this union is finite and $E_\varepsilon\in\omega$, at least one set in this union -- say $E''$ -- must belong to $\omega$. Let $\Ic \in \Sc$ be the element that corresponds to $E''$.  This $\Ic$ satisfies the claim.
    \end{proof}  

    Let $k\in\mathbb{N}$ and $I_{1},\dots,I_{k}\subset\{0,\dots,r+1\}$ be as in the statement of \cref{claim: ultralimit generic 0}. Define $V_i\coloneqq(\mathbf{a_j}\mid j\in I_i)$. Then, passing to the ultralimit, we get from \cref{eqn: claim ultralimit generic 0} that
    $$
    {\diam}_{d_\omega}\left(
    \bigcap_{i=1}^k 
    \left\{
    \mathbf{x}\in X_\omega\mid d_\omega\left(
    \mathbf{x},\left\{\left[(p_n)_n\right]\mid p_n\in F^n_{\abs{I_i}-1}(V_{i,n})\right\}\le \delta'\right)
    \right\}
    \right)\ge D+\varepsilon,
    $$
    and hence
    $$
    {\diam}_{d_\omega}\left(\bigcap_{i=1}^k \ngh{F^\omega_{{\abs{I_{i}}-1}}(V_{i})}{\delta}\right)> D.
    $$
    This contradicts the fact that $(\mathbf{a_0},\dots,\mathbf{a_{r+1}})$ is $(\delta,D)$-generic.
\end{proof}

We are now ready to assemble these tools. By combining the ultralimit filling from Proposition \ref{prop: filling for ultralimit} with the stability of generic points from Lemma \ref{lem: generic points ultralimit}, we establish our main result for this section. 

\begin{proposition}\label{prop: median for ultralimit}
   Consider the \hyperlink{link:setup}{Setup}. 
    Assume that each $(X_n,d_n)$ admits a coarse $r$-median structure with $\omega$-bounded parameters $(F^n,\ctrlf_n,\delta_0^n,\lambda_n,\Psi_n)$ (see \cref{notation:bounded_parameters}). Then $(X_\omega,d_\omega)$ admits a coarse $r$-median structure associated to $(F^\omega,\ctrlf_\omega)$ (see \cref{prop: filling for ultralimit}).

    Moreover, if $\ctrlf_\omega(0)=\lim_{\varepsilon\to0^+}\ctrlf_\omega(\varepsilon)=\omegalim\delta^n_0=\omegalim\lambda_n(0)=\lim_{\varepsilon\to0^+}\omegalim\allowbreak\Psi_n(\varepsilon,\varepsilon)=0$, then $(X_\omega,d_\omega)$ admits an $r$-median.
\end{proposition}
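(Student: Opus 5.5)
We take as candidate parameters the pointwise $\omega$-limits: $\delta_0^\omega:=\omegalim\delta_0^n$ (possibly enlarged by a small constant), $\lambda_\omega:=\omegalim\lambda_n$ and $\Psi_\omega:=\omegalim\Psi_n$, with the filling $(F^\omega,\ctrlf_\omega)$ from \cref{prop: filling for ultralimit}. By \cref{lem: omega-limit of sequences of functions}, $\lambda_\omega$ is affine and $\Psi_\omega$ is non-decreasing in each entry. Two technical issues arise. First, the domains $[\delta_0^n,\infty)$ vary with $n$; we therefore evaluate $\lambda_n,\Psi_n$ only at arguments $\ge\delta_0^n$, which holds $\omega$-a.s. once the argument exceeds $\omegalim\delta_0^n$. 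Second, $\lambda_\omega$ may be constant if the slopes $a_n\to0$. To handle both, we set $\delta_0^\omega:=\omegalim\delta_0^n+1$. If the limit slope vanishes, we replace $\lambda_\omega$ by $\lambda_\omega(\delta)+(\delta-\delta_0^\omega)$; enlarging the radius only enlarges the intersections. We then absorb the resulting loss into $\Psi$ by comparing with $\lambda_n$ at a larger argument. This is harmless for (CM2) because genericity is monotone (\cref{obs: generic_increase_parameters}).

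For (CM1), take $\mathbf{a_0},\dots,\mathbf{a_{r+1}}\in X_\omega$ with representatives $(a^i_n)$. For $\omega$-a.e.\ $n$, (CM1) in $X_n$ gives a point $p_n\in\bigcap_j\ngh{F^n_r(\dots\wh{a^j_n}\dots)}{\lambda_n(\delta_0^n)}$. Arguing as in \cref{prop: filling coarsely well defined}(i), $p_n$ is within $\omega$-bounded distance of $o_n$, so $\mathbf{p}=[(p_n)]\in X_\omega$. Choosing witnesses in each filling coordinatewise gives $d_\omega(\mathbf{p},F^\omega_r(\dots\wh{\mathbf a_j}\dots))\le\omegalim\lambda_n(\delta_0^n)$. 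The strict inequality needed for open neighborhoods follows because $\lambda_\omega(\delta_0^\omega)$ strictly exceeds that quantity, by our choice of $\delta_0^\omega$ and positivity of the slope.

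For (CM2), fix $\delta,D\ge\delta_0^\omega$ and a $(\delta,D)$-generic tuple in $X_\omega$. By \cref{lem: generic points ultralimit} with $\varepsilon<1$, the representative tuples are $\omega$-a.s.\ $(\delta-\varepsilon,D+\varepsilon)$-generic, and $\delta-\varepsilon\ge\delta_0^n$ $\omega$-a.s. Hence $\diam$ of the $\lambda_n(\delta-\varepsilon)$-centroid in $X_n$ is at most $\Psi_n(\delta-\varepsilon,D+\varepsilon)$. Now take $\mathbf{x},\mathbf{y}$ in the $\lambda_\omega'(\delta)$-centroid in $X_\omega$, where $\lambda'_\omega$ is the adjusted function chosen strictly below $\omegalim\lambda_n(\delta-\varepsilon)$. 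Their representatives then lie $\omega$-a.s.\ in the $\lambda_n(\delta-\varepsilon)$-centroids, since a strict $\omega$-limit inequality holds on an $\omega$-set. Therefore $d_\omega(\mathbf x,\mathbf y)\le\omegalim\Psi_n(\delta-\varepsilon,D+\varepsilon)\le\Psi_\omega(\delta,D+1)$, using monotonicity, and we define the final $\Psi$ as $(\delta,D)\mapsto\Psi_\omega(\delta,D+1)$.

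The main obstacle is this bookkeeping of strict versus non-strict radii and of shifted parameters. We resolve it by building the fixed shifts ($\varepsilon<1$, and $\lambda$ taken with a margin $\lambda_\omega(\delta-1)$-type offset) into the definitions, so that every limit inequality has slack. For the ``moreover'' part, all offsets can be taken $\to0$: the hypotheses make $\ctrlf_\omega(0)=0$ (so the filling is an $r$-filling), $\delta_0^\omega=0$, $\lambda$ linear, and $\Psi(0,0)=0$. Letting $\varepsilon\to0^+$ and using right-continuity at $0$ of $\Psi_\omega$ and $\ctrlf_\omega$, which is exactly the stated limit hypotheses, yields an $r$-median.
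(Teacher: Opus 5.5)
\textbf{The vanishing-slope fix fails.} Write $\lambda_n(\delta)=a_n(\delta-\delta_0^n)+b_n$ and suppose $\omegalim a_n=0$. Then $\omegalim\lambda_n(\delta')=\omegalim b_n=:b$ for every fixed $\delta'$, so no fixed argument of $\lambda_n$ gives a radius above $b$. To dominate your enlarged radius $b+(\delta-\delta_0^\omega)$ you would need arguments $\delta'_n\approx(\delta-\delta_0^\omega)/a_n\to\infty$, and at such arguments you have nothing to work with:
\begin{enumerate}
\item The $\omega$-boundedness of $\Psi_n$ is only pointwise, so $\Psi_n(\delta'_n,\cdot)$ may be $\omega$-unbounded.
\item More seriously, (CM2) in $X_n$ with parameter $\delta'_n$ applies only to $(\delta'_n,\cdot)$-generic tuples, while \cref{lem: generic points ultralimit} gives only $(\delta-\varepsilon,D+\varepsilon)$-genericity. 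By \cref{obs: generic_increase_parameters}, genericity gets \emph{stronger} as the first parameter grows (only $D$ can be enlarged for free), so monotonicity works against you.
\end{enumerate}
Your own (CM2) paragraph confirms this: it needs $\lambda'_\omega(\delta)<\omegalim\lambda_n(\delta-\varepsilon)=b$, which the enlarged function violates for every $\delta>\delta_0^\omega$. In this case you have no control on centroids of radius larger than $b$, so the enlargement of $\lambda$ cannot be absorbed into $\Psi$. The paper's proof does not cover this case either. It asserts that $\lambda_\omega$ is non-constant by \cref{lem: omega-limit of sequences of functions}, but that lemma only gives affineness. The natural repair is to add the hypothesis that the slopes $a_n$ are $\omega$-bounded away from $0$. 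This holds in \cref{cor:coarse_median_on_asymp_cone}, where every $\lambda_n$ has the slope of $\lambda$.

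\textbf{Under that hypothesis.} Let $a:=\omegalim a_n>0$. Your argument is then essentially the paper's: pointwise $\omega$-limits of the parameters, (CM1) via ultralimits of centroid points, and (CM2) via \cref{lem: generic points ultralimit}. You use fixed offsets instead of the paper's point-dependent margin $\mu$, which is a sensible way to handle the varying domains $[\delta_0^n,\infty)$ and the strict radii. However, the offsets must be a single consistent choice, and as written they are not:
\begin{enumerate}
\item Your (CM1) uses the unshifted $\lambda_\omega$, which is not below $\omegalim\lambda_n(\delta-\varepsilon)=\lambda_\omega(\delta)-a\varepsilon$, as (CM2) requires.
\item The shift $\lambda_\omega(\delta-1)$ equals $\omegalim\lambda_n(\delta_0^n)$ at $\delta_0^\omega$, so it loses the strict inequality needed in (CM1).
\end{enumerate}
Taking $\lambda_\omega(\delta-\tfrac12)$ with $\varepsilon<\tfrac12$ works for both.

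For the ``moreover'' part you cannot simply send the offsets to $0$, because an $r$-median needs $\delta_0=0$, $\lambda$ linear and $\Psi(0,0)=0$ exactly. Near $\delta=0$ you need the paper's point-dependent margin. At radius $0$ you need a separate argument; for (CM1), points at $\omega$-vanishing distance from each facet filling represent a single point of $\bigcap_j F^\omega_r(\mathbf{a_0},\dots,\wh{\mathbf{a_j}},\dots,\mathbf{a_{r+1}})$.
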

\begin{proof}    
    We will show that $(X_\omega,d_\omega)$, equipped with the coarse $r$-filling $(F^{\omega},\ctrlf_{\omega})$ (see \cref{def: ultralimit of fillings}), has a coarse $r$-median structure. Recall, we assume that any $\mathbf{a}\in X_\omega$ comes with a fixed sequence $(a_n)\in\prod_nX_n$ representing it, i.e., $\mathbf{a}=\left[(a_n)_n\right]$.

    \noindent \hypertarget{link:params}{\textbf{Parameters:}} 
    
    \begin{itemize}
        \item \emph{$\delta_0^\omega\coloneqq\omegalim\delta^n_0+C_0$, where $C_0=\lim_{\varepsilon\to0^+}(r+1)(\ctrlf_\omega(\varepsilon+\ctrlf_\omega(0)))$},
        \item \emph{$\lambda_\omega:[\delta_0^\omega,\infty)\to[0,\infty)$ defined by $\lambda_\omega(\delta)=\omegalim\lambda_n(\delta)$}, and
        \item \emph{$\Psi_\omega:[\delta_0^\omega,\infty)\times[\delta_0^\omega,\infty)\to[0,\infty)$ defined by $$\Psi_\omega(R,D)=\lim_{\varepsilon\to0^+}\left(\omegalim\Psi_n(\max\{\delta_0^\omega,R-\varepsilon\},D+\varepsilon)\right).$$} 
    \end{itemize}
     Note that all the limits considered in \hyperlink{link:params}{Parameters} above exist. Indeed, by \cref{lem: omega-limit of sequences of functions}, $\lambda_\omega$ exists and is affine and non-constant. Moreover, $\Psi_\omega$ is non-decreasing in both entries and hence $\Psi_{\omega}(\cdot,\cdot)$ exists.
    
    We prove that (CM1) and (CM2) are satisfied by $(F^{\omega},\ctrlf_\omega,\delta_0^\omega,\lambda_\omega,\Psi_\omega)$.

    \noindent(CM1):  Let $\mathbf{a_0},\cdots, \mathbf{a_{r+1}}\in X_\omega$. We will show that 
    \begin{equation*}
        \bigcap_{j=0}^{r+1} \ngh{F_r^\omega(\mathbf{a_0},\dots,\mathbf{\wh{a_j}},\dots, \mathbf{a_{r+1}})}{\lambda_\omega(\delta_0^\omega)}\neq \emptyset.
    \end{equation*}
    By applying property (CM1) to $X_n$, we can pick for each $n\in\Nb$ a point
    $$
    p_n\in\bigcap_{j=0}^{r+1} \ngh{F_r^n(a_n^0,\dots,\wh{a}_n^j,\dots, a_n^{r+1})}{\lambda_n(\delta_0^n)}.
    $$
    Pick any $j \in \{0,\dots,r+1\}$. Then, for each $n\in \mathbb{N}$, we can choose a point $q_n^j\in F^n_r(a_n^0,\dots,\wh{a_n^j},\dots, a_n^{r+1})$ such that 
    \begin{equation*}
        d_n(p_n,q_n^j)\le \lambda_n(\delta_0^n).
    \end{equation*}
    Consider the points $\mathbf{p}=[(p_n)]$ and $\mathbf{q^j}=[(q^j_n)]$ in $X_\omega$ (\cref{prop: filling coarsely well defined}($i$) implies that these points are indeed in $X_\omega$). Then 
    \begin{equation*}
    \mathbf{q^j}\in F_r^\omega(\mathbf{a_0},\dots,\mathbf{\wh{a_j}},\dots, \mathbf{a_{r+1}}) \text{  and  } d_{\omega}(\mathbf{p},\mathbf{q^j}) \leq \omegalim\lambda_n(\delta^n_0).
    \end{equation*}
    Now note that since all $\lambda_n$ are affine and the sequence $\seq{\lambda_n}$ is $\omega$-bounded, the $\omega$-limit passes through the evaluation of the bounded sequence $\seq{\delta_0^n}$. Hence, we have $\omegalim\lambda_n(\delta^n_0)\le\lambda_\omega(\delta_0^\omega)$.
    This implies
    \begin{equation*}
        \mathbf{p}\in\bigcap_{j=0}^{r+1} \ngh{F_r^\omega(\mathbf{a_0},\dots,\mathbf{\wh{a_j}},\dots, \mathbf{a_{r+1}})}{\lambda_\omega(\delta_0^\omega)}.
    \end{equation*}

    \noindent(CM2): Fix any $\delta,D\ge\delta_0^\omega$. We claim that for any $(\delta,D)$-generic $(r+2)$-tuple  $(\mathbf{a_0},\cdots, \mathbf{a_{r+1}})\in X_\omega^{r+2}$, 
    \begin{equation*}
        \diam_{d_\omega}\Bigg(\bigcap_{j=0}^{r+1} \ngh{F^{\omega}_{r}(\mathbf{a_0},\dots,\mathbf{\wh{a_j}},\dots, \mathbf{a_{r+1}})}{\lambda_\omega(\delta)}\Bigg)\le \Psi_\omega(\delta,D).
    \end{equation*}

    We now prove this claim.
    Let $(a_n^0),\dots, (a_n^{r+1})$ be the representatives of $\mathbf{a_0},\dots, \mathbf{a_{r+1}}$, respectively.
    Let $\mathbf{x}, \mathbf{y}\in \bigcap_{j=0}^{r+1} \ngh{F^{\omega}_{r}(\mathbf{a_0},\dots,\mathbf{\wh{a_j}},\dots, \mathbf{a_{r+1}})}{\lambda_\omega(\delta)}$. Let the sequences $(x_n)_{n\in\mathbb{N}}$ and $(y_n)_{n\in\mathbb{N}}$ represent $\mathbf{x}$ and $\mathbf{y}$ respectively.
    There are two cases, either $\lambda_\omega(\delta)>0$ or $\lambda_\omega(\delta)=0$.
    
    \noindent \underline{First case.} Assume that $\lambda_\omega(\delta)>0$.
    Then, by definition of $F^\omega$, we have
    that
    for any $j=0,\dots,r+1$ there exists $\mathbf{p^j}, \mathbf{q^j}\in X_\omega$ represented respectively by sequences of $p_n^j,q_n^j\in F^n_r(a_n^0,\dots,\wh{a}_n^j,\dots, a_n^{r+1})$
    such that $d_\omega(\mathbf{x},\mathbf{p^j})<\lambda_\omega(\delta)$ and $d_\omega(\mathbf{y},\mathbf{q^j})<\lambda_\omega(\delta)$. In particular, there exists some $\mu>0$ such that 
    $$
    d_\omega(\mathbf{x},\mathbf{p^j})<\lambda_\omega(\delta)-\mu\quad\text{ and }\quad d_\omega(\mathbf{y},\mathbf{q^j})<\lambda_\omega(\delta)-\mu.
    $$
    We will now consider $\varepsilon>0$ sufficiently small so that $(\lambda_{\omega}(\delta)-\mu-\varepsilon)>0.$  Since $\lambda_\omega(\delta)=\omegalim\lambda_n(\delta)$, $d_\omega(\mathbf{x},\mathbf{p^j})=\omegalim d_n(x_n,p_n^j)$, $d_\omega(\mathbf{y},\mathbf{p^j})=\omegalim d_n(y_n,p_n^j)$, and $\omega$ is closed under taking finite intersections, the following hold for any $\varepsilon>0$ sufficiently small:
    \begin{align*}
        \{n\in\mathbb{N}\mid \lambda_n(\delta)>\lambda_\omega(\delta)-\varepsilon\}&\in\omega,\\
        \{n\in\mathbb{N}\mid d_n(x_n,p_n^j)<\lambda_\omega(\delta)-\mu+\varepsilon \ \forall\ j\}&\in\omega\text{, and}\\
        \{n\in\mathbb{N}\mid d_n(y_n,q_n^j)<\lambda_\omega(\delta)-\mu+\varepsilon \ \forall\ j\}&\in\omega.
    \end{align*}
    Since $\omega$ is closed under taking finite intersections, we have 
    \begin{align*}
        \{n\in\mathbb{N}\mid d_n(x_n,p_n^j)<\lambda_n(\delta)-\mu+2\varepsilon \ \forall\ j\}&\in\omega\text{, and}\\
        \{n\in\mathbb{N}\mid d_n(y_n,q_n^j)<\lambda_n(\delta)-\mu+2\varepsilon \ \forall\ j\}&\in\omega.
    \end{align*} 
    By \cref{lem: omega-limit of sequences of functions}, $\lambda_{\omega}$ is a non-constant affine function on $\Rb_{\ge 0}$. So for any $\nu>0$ sufficiently small, and such that $\delta-\nu\ge0$, we can find $\varepsilon>0$ such that 
    $\{n\in\mathbb{N}\mid \lambda_n(\delta)-\mu+2\varepsilon\le\lambda_n(\delta-\nu)\}\in\omega$.
   
    As $\omega$ is closed under taking intersections and supersets,
    for any such $\nu>0$  the following hold:
    \begin{align*}
        E_\nu^\mathbf{x}=\{n\in\mathbb{N}\mid d_n(x_n,p_n^j)<\lambda_n(\delta-\nu) \ \forall\ j\}&\in\omega\text{, and}\\
        E_\nu^\mathbf{y}=\{n\in\mathbb{N}\mid d_n(y_n,q_n^j)<\lambda_n(\delta-\nu) \ \forall\ j\}&\in\omega.
    \end{align*}
    Moreover, by \cref{lem: generic points ultralimit} we have that for any such $\nu>0$
    $$
    E_\nu^{gen}=\{n\in\mathbb{N}\mid (a^0_n,\dots,a^{r+1}_n) \text{ is a  $(\delta-\nu,D+\nu)$-generic $(r+2)$-tuple}\}\in \omega.
    $$
    Indeed, $\max\{0,\delta-\nu\}=\delta-\nu$ because of our choice of $\nu$ as above.
    
    Therefore, for any $n\in E_\nu'\coloneqq E_\nu^\mathbf{x}\cap E_\nu^\mathbf{y}\cap E_\nu^{gen}$, we can apply (CM2)(b) and get that
    $$d_n(x_n,y_n)\le\Psi_n(\delta-\nu,D+\nu),$$
    for any sufficiently small $\nu>0$.

    Consequently, we have 
    $$d_\omega(\mathbf{x},\mathbf{y})\le\omegalim\Psi_n(\delta-\nu,D+\nu)
        \implies
        d_\omega(\mathbf{x},\mathbf{y})\le\lim_{\nu\to0^+}\omegalim\Psi_n(\delta-\nu,D+\nu)=\Psi_\omega(\delta,D).$$
    \noindent \underline{Second case.}
    Assume that $\lambda_\omega(\delta)=0$. Since $\lambda_{\omega}$ is an increasing affine function and $\delta \geq 0$, this implies that $\lambda_\omega$ must be linear and $\delta=0$. 
    This implies that $C_0=\omegalim\delta_0^n=0$, and hence
    $\delta_0^\omega=0$.
    Fix any $\varepsilon>0$.
    \begin{claim}\label{claim:ultralimits_second_case}
        There exists $\eta>0$ such that 
        $$
            E_\eta=\{n\in\mathbb{N}\mid (a_n^0,\dots,a_n^{r+1}) \text{ is a  $(\eta,D+\varepsilon)$-generic $(r+2)$-tuple}\}\in \omega.
        $$
    \end{claim}
    \begin{proof}[Proof of \cref{claim:ultralimits_second_case}]
        Assume this is not true. Then, by \cref{obs: generic_increase_parameters} we have that for any $m\in\mathbb{N}$ it holds that
        $$
            E_{\frac{1}{m}}=\{n\in\mathbb{N}\mid (a_n^0,\dots,a_n^{r+1}) \text{ is a  $(\frac{1}{m},D+\varepsilon)$-generic $(r+2)$-tuple}\}\not\in \omega.
        $$
        Then, as in \cref{claim: ultralimit generic 0}, for any $m\in\mathbb{N}$ there would  exist $k_m\in\mathbb{N}$ and $I^m_{1},\dots,I^m_{k_m}\subset\{0,\dots,r+1\}$ non-empty, such that:
        \begin{itemize}
            \item $\bigcap_{i=1}^{k_m}I^m_i=\emptyset$,
            \item there is a $j\in\{1,\dots,k_m\}$ such that $\abs{I^m_j}\le r$, and
            \item if $V_{1,n}\coloneqq (a^i_n\mid i\in I^m_1),\dots,V_{k_m,n}\coloneqq(a^i_n\mid i\in I^m_{k_m})$, then 
        \begin{equation}
            \left\{n\in\mathbb{N}\biggm| {\diam}_{d_n}\left(\bigcap_{j=1}^{k_m} \ngh{F^n_{\abs{I^m_j}-1}(V_{j,n})}{\frac{1}{m}}\right)>D+\varepsilon\right\}\in \omega.
        \end{equation}
        \end{itemize}
        Up to passing to a subsequence of $m$, we can assume that $k_m=k$ is constant. Then, by the pigeonhole principle, we can also assume that the sets $I^m_1,\dots,I^m_k$ are constant $I_1,\dots,I_k$. This would imply that
        $$
        {\diam}_{d_\omega}\left(\bigcap_{j=1}^{k} F^\omega_{{\abs{I_{j}}-1}}(V_{j})\right)\ge D+\varepsilon>D,
        $$
        where $V_j\coloneqq(\mathbf{a_i}\mid i\in I_{j})$. This would contradict the $(0,D)$-genericity of $(\mathbf{a_0},\dots,\mathbf{a_{r+1}})$.
    \end{proof}
    Let $\eta>0$ be as in \cref{claim:ultralimits_second_case}. By \cref{obs: generic_increase_parameters}, for any $0 < \eta' \le \eta$, the tuple $(a_n^0,\dots,a_n^{r+1})$ is $(\eta',D+\varepsilon)$-generic for all $n \in E_\eta$. 
    Furthermore, since $\omegalim\delta_0^n=0$, the set $F_{\eta'} = \{n\in\mathbb{N}\mid \delta_0^n \le \eta'\}$ is in $\omega$. 
    Therefore, for any $n \in E_\eta \cap F_{\eta'}\in\omega$, we can apply (CM2)(b) to $X_n$ to get that
    $$
        \diam_{d_n}\left(
        \bigcap_{j=0}^{r+1} \ngh{F^n_{r}(a_n^0,\dots,\wh{a}_n^j,\dots, a_n^{r+1})}{\eta'}\right) \leq \Psi_n(\eta',D+\varepsilon).
    $$
    Taking the $\omega$-limit on both sides, we obtain that for any $\eta'>0$ sufficiently small, it holds that
    $$
        \diam_{d_\omega}\left(
        \bigcap_{j=0}^{r+1} \ngh{F^\omega_{r}(\mathbf{a_0},\dots,\wh{\mathbf{a_j}},\dots, \mathbf{a_{r+1}})}{\eta'}\right) \leq \omegalim \Psi_n(\eta',D+\varepsilon).
    $$
    Since $\varepsilon>0$ was chosen arbitrarily, taking $\varepsilon \to 0^+$ and taking $\eta'\to0^+$, we get
    $$
        \diam_{d_\omega}\left(
        \bigcap_{j=0}^{r+1} F^\omega_{r}(\mathbf{a_0},\dots,\wh{\mathbf{a_j}},\dots, \mathbf{a_{r+1}})\right)\le \Psi_\omega(0,D).
    $$

    This finishes the proof of the second case.

    The ``moreover'' statement follows directly from the definition of the $r$-median (\cref{defn:coarse_r_median}). Indeed, since $\Psi_n$ is non-decreasing, we observe that 
    \begin{equation*} \Psi_\omega(0,0) = \lim_{\varepsilon\to0^+}\omegalim\Psi_n(0,\varepsilon) \le \lim_{\varepsilon\to0^+}\omegalim\Psi_n(\varepsilon,\varepsilon) = 0. \qedhere \end{equation*}
\end{proof}

\subsection{Asymptotic cones}
    
An asymptotic cone is a particular type of ultralimit that we are interested in. Let us fix a metric space $(X,d)$, a base point $o\in X$, and a non-principal ultrafilter $\omega$ on $\Nb$. Let us consider a sequence $\nu=(\nu_n)_{n\in\mathbb{N}}\subseteq\mathbb{R}_{\ge0}$ such that $\lim_{n\to\infty}\nu_n=+\infty$.
Consider the rescaled distances in $X$ given by $d_n\coloneqq \frac{d}{\nu_n}$ and the base point $\mathbf{o}=(o_n)_n$, with $o_n:=o$ for all $n$. Then, the ultralimit of the sequence \Seq{X,d_n} is called the \textit{asymptotic cone} of $(X,d)$ \wrt\ $\omega$, $\nu$ and $\mathbf{o}$. We denote it by $\left({\rm Cone}_{\omega}^\nu(X),d_\omega\right)$.
Indeed, the (isometry class of the) asymptotic cone depends on the choice of the ultrafilter $\omega$ and on the choice of the sequence $\nu\in\mathbb{R}^\mathbb{N}$, but not on the choice of the basepoint (\cref{rmk: ultralimit dependence on the paramenters}).
One advantage of asymptotic cones is that it transfers the study of properties of $X$ preserved under quasi-isometries to the study of topological properties of ${\rm Cone}_{\omega}^\nu(X)$.

In the classical coarse median case, the asymptotic cone of a coarse median space is a median space. In our framework, we show that a coarse $r$-median on $(X,d)$ induces an $r$-median on any $\left({\rm Cone}_{\omega}^\nu(X),d_{\omega}\right)$ under a certain technical hypothesis for the coarse $r$-median on $X$.

The result below follows from Proposition \ref{prop: filling for ultralimit} and Proposition \ref{prop: median for ultralimit}.
\begin{corollary}
\label{cor:coarse_median_on_asymp_cone}
Suppose that $(X,d)$ is a metric that admits a coarse $r$-median with parameters $(F,\ctrlf,\delta_0,\lambda,\Psi)$. Assume that
\begin{enumerate}
    \item either the coarse $r$-median on $X$ is $(\ctrlf,\Psi)$-affine, or
    \item $(X,d)$ is a geodesic metric space and the coarse $r$-median on $X$ is $\Psi$-affine.
\end{enumerate}
Then all asymptotic cones of $(X,d)$ admit an $r$-median structure. 
\end{corollary}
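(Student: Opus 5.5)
The plan is to realize the asymptotic cone as the ultralimit of the constant sequence $(X,d_n)$ with $d_n=d/\nu_n$, and then invoke \cref{prop: filling for ultralimit} and \cref{prop: median for ultralimit}. So everything reduces to writing down rescaled coarse $r$-median parameters on each $(X,d_n)$, checking that they are $\omega$-bounded in the sense of \cref{notation:bounded_parameters}, and verifying the vanishing conditions in the ``moreover'' part of \cref{prop: median for ultralimit}.

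First I reduce case (2) to case (1). If $X$ is geodesic, \cref{prop: affine_control_function weak} lets us replace $\ctrlf$ by a non-decreasing affine function $R\mapsto AR+B$, and this does not affect (CM1) or (CM2). Genericity is defined only through $F$ and not through $\ctrlf$ (\cref{def: generic_tuples_general}), so the generic sets are unchanged as well. We may therefore assume that $\ctrlf(R)=AR+B$, that $\lambda(\delta)=a(\delta-\delta_0)+b$, and that $\Psi(\delta,D)$ is affine in each coordinate. I will use the form $\Psi(\delta,D)\le \alpha\delta+\beta D+\gamma$; if $\Psi$ is only separately affine, I will bound it on the relevant region by such an expression.

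Next, the rescaling. On $(X,d_n)$ keep the same filling $F$, and set
\[
\ctrlf_n(R)=AR+B/\nu_n,\quad \delta_0^n=\delta_0/\nu_n,\quad \lambda_n(\delta)=a(\delta-\delta_0^n)+b/\nu_n,\quad \Psi_n(\delta,D)=\alpha\delta+\beta D+\gamma/\nu_n .
\]
(F0)--(F4) hold because all neighborhoods scale by $1/\nu_n$: the condition $x\in\Nc^{d_n}_R$ means $x\in\Nc^d_{\nu_nR}$, which gives containment in $\Nc^d_{A\nu_nR+B}=\Nc^{d_n}_{\ctrlf_n(R)}$. Affinity of $\ctrlf$ is exactly what makes this work. (CM1) holds since $\lambda_n(\delta_0^n)=b/\nu_n$ equals $\lambda(\delta_0)$ rescaled. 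For (CM2), observe that $(\delta,D)$-genericity for $d_n$ is the same as $(\nu_n\delta,\nu_nD)$-genericity for $d$. Moreover $\lambda_n(\delta)=\lambda(\nu_n\delta)/\nu_n$, and $\Psi(\nu_n\delta,\nu_nD)/\nu_n\le\Psi_n(\delta,D)$. This gives (CM2) whenever $\delta,D\ge\delta_0^n$. There is one caveat: when $a\nu_n$-scaling is combined with the additive part, the expression $\lambda(\nu_n\delta)/\nu_n=a(\delta-\delta_0/\nu_n)+b/\nu_n$ is indeed what we want, and the slope $a$ stays positive, so each $\lambda_n$ is non-constant.

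Finally, all of these sequences are $\omega$-bounded, in fact convergent. The limits are $\ctrlf_\omega(R)=AR$, $\omegalim\delta_0^n=0$, $\lambda_\omega(\delta)=a\delta$ (linear and non-constant, so $\omegalim\lambda_n(0)=0$), and $\Psi_\omega(\varepsilon,\varepsilon)\to0$ as $\varepsilon\to0^+$. These are precisely the hypotheses of the ``moreover'' part of \cref{prop: median for ultralimit}, together with $\ctrlf_\omega(0)=0$ and $\lim_{\varepsilon\to0}\ctrlf_\omega(\varepsilon)=0$, and that part yields an $r$-median on the cone. The base point is immaterial by \cref{rmk: ultralimit dependence on the paramenters}.

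I expect the main obstacle to be the bookkeeping. Two points need care: ensuring that the affine (not merely separately affine) bound on $\Psi$ gives $\Psi_n(\varepsilon,\varepsilon)\to0$ uniformly, and handling the domain shift $[\delta_0^n,\infty)$ in the definitions of $\lambda_n$ and $\Psi_n$. If the bilinear term is a problem, I would bound $\Psi$ on bounded rectangles instead, since only small $\varepsilon$ matter.
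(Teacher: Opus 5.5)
Your argument is the paper's own proof: you reduce case (2) to case (1) via \cref{prop: affine_control_function weak}, and you use the same rescaled parameters $\ctrlf(\nu_n\cdot)/\nu_n$, $\delta_0/\nu_n$, $\lambda(\nu_n\cdot)/\nu_n$, $\Psi(\nu_n\cdot,\nu_n\cdot)/\nu_n$. You then appeal, as the paper does, to the ``moreover'' part of \cref{prop: median for ultralimit}, checking the rescaling of (F0)--(F4) and of genericity explicitly where the paper leaves it implicit. One caveat: your fallback for a merely separately affine $\Psi$ with a bilinear term $\kappa RD$ would not work. Since $\Psi_n(\varepsilon,\varepsilon)$ then contains the term $\kappa\nu_n\varepsilon^2\to\infty$ (small scales in the cone correspond to large scales in $X$), joint affinity of $\Psi$ is genuinely needed, and that is how the paper reads the hypothesis.
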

\begin{proof} 
    We first observe that in case (2) as well, we can assume that the coarse $r$-median on $X$ is $(\ctrlf,\Psi)$-affine. Indeed, since $(X,d)$ is geodesic, \cref{prop: affine_control_function weak} implies that we can replace $\ctrlf$ by an affine function. This does not impact any of the other parameters of the coarse $r$-median.  

    So, from now on, we may assume that the coarse $r$-median on $X$, with parameters $(F,\ctrlf,\delta_0,\lambda,\Psi)$, is $(\ctrlf,\Psi)$-affine. By definition, $(F,\ctrlf)$ is a coarse $r$-filling on $X$, and let $F=\{F_i:X^{i+1}\to\Pc(X)\}_{i=0,\dots,r}$.
    Consequently, we can define the coarse $r$-filling $(F^n,\ctrlf_n)$ on $(X,d_n)$ given by  
    \begin{equation*}
        F^n_i(a_0,\dots,a_i)=F_i(a_0,\dots,a_i)\quad\text{and}\quad \ctrlf_n(\delta)=\dfrac{\ctrlf(\nu_n \delta)}{\nu_n}, 
    \end{equation*}
     for any $i\in\{0,\dots,r\}$, $(a_0,\dots,a_i)\in X^{i+1}$, and $\delta\ge0$.

    Define the constant $ \delta_{0,n}\coloneqq\frac{\delta_0}{\nu_n}$. Note that since $\nu_n\to \infty$ as $n\to \infty$, $\delta_{0,n}$ is bounded above and converges to 0. Next, consider the two functions 
    \begin{align*}
        [\delta_{0,n},\infty) \ni \delta \mapsto \lambda_n(\delta)\coloneqq \dfrac{\lambda(\nu_n\delta)}{\nu_n} \in \Rb_{\ge0}, \text{ and }\\
        [\delta_{0,n},\infty)\times [\delta_{0,n},\infty) \ni (\delta,D) \mapsto \Psi_n(\delta,D)\coloneqq \dfrac{\Psi(\nu_n\delta,\nu_nD)}{\nu_n} \in\Rb_{\ge 0}.
    \end{align*}
   
    Since $\lambda$ is non-constant affine, there exists $a>0$ such that for all $\delta\ge0$
    $$
    \omegalim\lambda_n(\delta)=a\cdot\delta.
    $$
    Since we also assumed that $\ctrlf$ and $\Psi$ are affine, they can be extended to $\ctrlf:\mathbb{R}\to\mathbb{R}$ and $\Psi:\mathbb{R}\times\mathbb{R}\to\mathbb{R}$, then there exist $A,B,C>0$ such that for any $\delta,D\ge0$ 
    $$
    \omegalim\ctrlf_n(\delta)=A\cdot \delta\quad\text{and}\quad\omegalim\Psi_n(\delta,D)=B\cdot \delta+C\cdot D.
    $$

    This shows that the coarse $r$-median structures on the sequence $\seq{X,d_n}$ have $\omega$-bounded parameters. We will now apply \cref{prop: median for ultralimit}.
    Fix a non-principal ultrafilter $\omega$, a base point $o\in X$, and a divergent sequence $(\nu_n)_{n\in\mathbb{N}}\subset\mathbb{R}_{>0}$. 
    Consider the associated asymptotic cone $\left({\rm Cone}_{\omega}^\nu(X),d_{\omega}\right)$.
    
    Define $\ctrlf_\omega:\mathbb{R}_{\ge0}\to\mathbb{R}_{\ge0},\ \lambda_\omega:\mathbb{R}_{\ge0}\to\mathbb{R}_{\ge0}$ and $\Psi_\omega:\mathbb{R}_{\ge0}\times\mathbb{R}_{\ge0}\to\mathbb{R}_{\ge0}$ as 
    $$
    \ctrlf_\omega(\delta)\coloneqq A\cdot\delta,\quad 
    \lambda_\omega(\delta)\coloneqq a\cdot\delta\quad\text{and}\quad\Psi_\omega(\delta,D)\coloneqq B\cdot \delta+C\cdot D.$$
    According to Proposition \ref{prop: filling for ultralimit} and Proposition \ref{prop: median for ultralimit}, the pair $(F,\ctrlf_\omega)$ is an $r$-filling on the asymptotic cone $(X_\omega,d_\omega)$ associated to an $r$-median with parameters $(F^\omega,\ctrlf_\omega,0,\lambda_\omega,\Psi_\omega)$.
\end{proof}

\printbibliography

\end{document}